\newcommand{\Ex}{\mathbb{E}}
\def\Pr{\mathbb{P}}
\def\P{\mathbb{P}}
\newcommand\R{\mathbb{R}}
\newcommand\TV{\mathrm{TV}}
\newcommand\tr{\operatorname{trace}}
\newcommand\dom{\operatorname{Dom}}
\newcommand{\divergence}{\operatorname{div}}
\newcommand\threebars{|\!|\!|}
\DeclareMathOperator{\sign}{sign}
\DeclareMathOperator{\diag}{diag}
\DeclareMathOperator{\sech}{sech}
\theoremstyle{plain}
\newtheorem{theorem}{Theorem}
\newtheorem{prop}[theorem]{Proposition}
\newtheorem{lemma}[theorem]{Lemma}
\newtheorem{cor}[theorem]{Corollary}
\theoremstyle{definition}
\newtheorem{defn}{Definition}
\newtheorem{example}{Example}
\newtheorem{assumption}{Assumption}
\newtheorem{algo}{Algorithm}
\theoremstyle{remark}
\newtheorem{rem}{Remark}
\numberwithin{equation}{section}
\numberwithin{section}{chapter}
\numberwithin{theorem}{section}
\numberwithin{defn}{section}
\numberwithin{assumption}{chapter}
\numberwithin{rem}{section}
\numberwithin{figure}{chapter}
\numberwithin{algo}{chapter}
\begin{document}

\frontmatter

\title{Continuous-time Random Walks for the Numerical Solution of Stochastic Differential Equations}

\author{Nawaf Bou-Rabee}
\address{Department of Mathematical Sciences, 
Rutgers University Camden, 311 N 5th Street, Camden, NJ 08102}
\email{nawaf.bourabee@rutgers.edu}
\thanks{The work of N.~B-R. was supported in part by NSF grant DMS-1212058.}

\author{Eric Vanden-Eijnden}
\address{Courant Institute of Mathematical Sciences,
New York University, 251 Mercer Street, New York, NY 10012-1185}
\email{eve2@cims.nyu.edu}
\thanks{}


\subjclass[2010]{Primary 65C30; Secondary, 60J25, 60J75}

\keywords{stochastic differential equations;
non-self-adjoint diffusions;
parabolic partial differential equation;
finite difference methods;
upwind approximations;
Kolmogorov equation;
Fokker-Planck equation;
invariant measure;
Markov semigroups;
Markov jump process;
stochastic Lyapunov function;
stochastic simulation algorithm;
geometric ergodicity}

\maketitle

\tableofcontents

\begin{abstract} This paper introduces time-continuous numerical schemes to simulate stochastic differential equations (SDEs) arising in mathematical finance, population dynamics, chemical kinetics, epidemiology, biophysics, and polymeric fluids. These schemes are obtained by spatially discretizing the Kolmogorov equation associated with the SDE in such a way that the resulting semi-discrete equation generates a Markov jump process that can be realized exactly using a Monte Carlo method. In this construction the spatial increment of the approximation can be bounded uniformly in space, which guarantees that the schemes are numerically stable for both finite and long time simulation of SDEs. By directly analyzing the generator of the approximation, we prove that the approximation has a sharp stochastic Lyapunov function when applied to an SDE with a drift field that is locally Lipschitz continuous and weakly dissipative. We use this stochastic Lyapunov function to extend a local semimartingale representation of the approximation. This extension permits to analyze the complexity of the approximation. Using the theory of semigroups of linear operators on Banach spaces, we show that the approximation is (weakly) accurate in representing finite and infinite-time statistics, with an order of accuracy identical to that of its generator. The proofs are carried out in the context of both fixed and variable spatial step sizes. Theoretical and numerical studies confirm these statements, and provide evidence that these schemes have several advantages over standard methods based on time-discretization. In particular, they are accurate, eliminate nonphysical moves in simulating SDEs with boundaries (or confined domains), prevent exploding trajectories from occurring when simulating stiff SDEs, and solve first exit problems without time-interpolation errors.   \end{abstract}


\mainmatter

\chapter{Introduction}

\section{Motivation}

Stochastic differential equations (SDEs) are commonly used to model the effects of random fluctuations in applications such as molecular dynamics \cite{ScSt1978, BrBrKa1984,AlTi1987, FrSm2002}, molecular motors \cite{berg1993random}, microfluidics \cite{La2005, Sh2005}, atmosphere/ocean sciences \cite{MaWa2006}, epidemiology \cite{MaMaRe2002,RuPi2006}, population dynamics \cite{Al2008,Br2010}, and mathematical finance \cite{KaSh1998,PlHe2006}.  In most of these applications, the SDEs cannot be solved analytically and numerical methods are used to approximate their solutions.  By and large the development of such methods has paralleled what has been done in the context of ordinary differential equations (ODEs) and this work has led to popular schemes such as Euler-Maruyama, Milstein's, etc \cite{KlPl1995,Ta1995,MiTr2004}. While the design and analysis of these schemes differ for ODEs and SDEs due to peculiarities of stochastic calculus, the strategy is the same in both cases and relies on discretizing in time the solution of the ODE or the SDE.  For ODEs, this is natural: their solution is unique and smooth, and it makes sense to interpolate their path at a given order of accuracy between snapshots taken at discrete times. This objective is what ODE integrators are designed for. The situation is somewhat different for SDEs, however. While their solutions (in a pathwise sense) may seem like an ODE solution when the Wiener process driving them is given in advance, these solutions are continuous but not differentiable in general, and an ensemble of trajectories originates from any initial point once one considers different realizations of the Wiener process.

The simulation of SDEs rather than ODEs presents additional challenges for standard integration schemes based on time-discretization. One is related to the long time stability of schemes, and their capability to capture the invariant distribution of the SDE when it exists. Such questions are usually not addressed in the context of ODEs: indeed, except in the simplest cases, it is extremely difficult to make precise statements about the long time behavior of their solutions since this typically involves addressing very hard questions from dynamical systems theory. The situation is different with SDEs: it is typically simpler to prove their ergodicity with respect to some invariant distribution, and one of the goals of the integrator is often to sample this distribution when it is not available in closed analytical form. This aim, however, is one that is difficult to achieve with standard integrators, because they typically fail to be ergodic even if the underlying SDE is. Indeed the numerical drift in standard schemes can become destabilizing if the drift field is only locally Lipschitz continuous, which is often the case in applications. These destabilizing effects typically also affect the accuracy of the scheme on finite time intervals, making the problem even more severe. Another important difference between SDEs and ODEs is that the solutions of the former can be confined to a certain region of their state space, and it is sometimes necessary to impose boundary conditions at the edge of this domain. Imposing these boundary conditions, or even guaranteeing that the numerical solution of the scheme remains in the correct domain, is typically difficult with standard integration schemes.

In view of these difficulties, it is natural to ask whether one can take a different viewpoint to simulating SDEs, and adopt an approach that is more tailored to the probabilistic structure of their solution to design stable and (weakly) accurate schemes for their simulation that are both provably ergodic when the underlying SDE is and faithful to the geometry of their domain of integration. The aim of this paper is to propose one such novel strategy. The basic idea is to discretize their solution in space rather than in time via discretization of their  infinitesimal generator.  Numerous approximations of this second-order partial differential operator are permissible including finite difference or finite volume discretizations \cite{elston1996numerical,WaPeEl2003,Ph2008,MeScVa2009,LaMeHaSc2011}.  As long as this discretization satisfies a realizability condition, namely that the discretized operator be the generator of a Markov jump process on a discrete state space, it defines a process which can be exactly simulated using the Stochastic Simulation Algorithm (SSA) \cite{gillespie1977exact, elston1996numerical, WaPeEl2003, LaMeHaSc2011}.  Note that this construction alleviates the curse of dimensionality that limits numerical PDE approaches to low dimensional systems while at the same time permitting to borrow design and analysis tools that have been developed in the numerical PDE context.  The new method gets around the issues of standard integrators by permitting the displacement of the approximation to be bounded uniformly and adaptively in space.  This feature leads to simple schemes that can provably sample the stationary distribution of general SDEs, and that remain in the domain of definition of the SDE, by construction.  In the next section, we describe the main results of the paper and discuss in detail the applications where these SDE problems arise.

\section{Main Results}

%
%

If the state space of the approximating Markov jump process, or approximation for short, is finite-dimensional, then a large collection of analysis tools can be transported from the theory of numerical PDE methods to assess the stability and accuracy of the approximation. However, if the state space is not finite-dimensional, then the theoretical framework presented in this paper and described below can be used instead. It is important to underscore that the Monte Carlo method we use to realize this approximation is local in character, and hence, our approach applies to SDE problems which are unreachable by standard methods for numerically solving PDEs. In fact, the approximation does not have to be tailored to a grid. Leveraging this flexibility, in this paper we derive realizable discretizations for SDE problems with general domains using simple finite difference methods such as upwinded and central difference schemes \cite{strikwerda2004finite,leveque2007finite,gustafsson2013time}, though we stress that other types of discretizations that satisfy the realizablility condition also fit our framework.

%
%

Since an outcome of this construction is the generator of the approximation, the proposed methods have a transparent probabilistic structure that make them straightforward to analyze using probabilistic tools.  We carry out this analysis in the context of an SDE with unbounded coefficients and  an unbounded domain.     Our structural assumptions permit the drift field of the SDE to be locally Lipschitz continuous, but require the drift field to satisfy weak dissipativity and polynomial growth conditions.  Our definition of stability is that the approximation has a stochastic Lyapunov function whenever the underlying SDE has one.   This definition is natural when one deals with SDEs with locally Lipschitz drift fields.   Indeed, global existence and uniqueness theorems for such SDEs typically rely on the existence of a stochastic Lyapunov function \cite{Kh2012,mao2007stochastic}.

%
%

In this context, here are the main results of the paper regarding realizable discretizations with gridded state spaces.  We emphasize that we do not assume that the set of all grid points is finite.
\begin{enumerate}
\item
We use Harris Theorem to prove that the approximation is geometrically ergodic with respect to an invariant probability measure \cite{MeTw2009,HiMaSt2002,HaMa2011}.   To invoke Harris theorem, we prove that the approximation preserves a sharp stochastic Lyapunov function from the true dynamics.  Sharp, here, means that this stochastic Lyapunov function dominates the exponential of the magnitude of the drift field, and can therefore be used to prove that a large class of observables (including all moments of the approximation) are integrable.  
\item
In addition to playing an important role in proving existence of an invariant probability measure and geometric ergodicity,  this stochastic Lyapunov function also helps solve a martingale problem associated to the approximation \cite{kurtz1981approximation,ReYo1999,EtKu2009}.  This solution justifies a global semimartingale representation of the approximation, and implies that Dynkins formula holds for the conditional expectation of observables that satisfy a mild growth condition.  We apply this semimartingale representation to analyze the complexity of the approximation.  Specifically, we obtain an estimate for the average number of computational steps on a finite-time interval in terms of the spatial step size parameter.  
\item
We use a variation of constants formula to quantify the global error of the approximation \cite{Pa1983}.  This formula is a continuous-time analog of the Talay-Tubaro expansion of the global error of a discrete-time SDE method \cite{TaTu1990}.  An analysis of this formula reveals that the order of accuracy of the approximation in representing finite-time conditional expectations and equilibrium expectations is given by the order of accuracy of its generator.  Geometric ergodicity and finite-time accuracy imply that the approximation can accurately represent long-time dynamics, such as equilibrium correlation functions \cite{BoVa2012}.   
\end{enumerate}

%
%

\noindent
This paper also expands on these results by considering space-discrete generators that use variable spatial step sizes, and whose state space may not be confined to a grid.  In these cases, we embed the `gridless' state space of the approximation into $\mathbb{R}^n$, and the main issues are:
\begin{itemize}
\item the semigroup of the approximation may not be irreducible with respect to the standard topology on $\mathbb{R}^n$; and 
\item the semigroup of the approximation may not even satisfy a Feller property because its associated generator is an unbounded operator and the sample paths of its associated process are discontinuous in space \cite{MeTw2009,DaZa1996}.  
\end{itemize}
To deal with this lack of irreducibility and regularity, we use a weight function to mollify the spatial discretization so that the resulting generator is a bounded linear operator.  This mollification needs to be done carefully in order for the approximation to preserve a sharp stochastic Lyapunov function from the true dynamics.  In short, if the effect of the mollification is too strong, then the approximation does not have a sharp stochastic Lyapunov function; however, if the mollification is too weak, then the operator associated to the approximation is unbounded.  

The semigroup associated to this mollified generator is Feller, but not strongly Feller, because the driving noise is still discrete and does not have a regularizing effect.  However, since the process is Feller and has a stochastic Lyapunov function, we are able to invoke the Krylov-Bogolyubov Theorem to obtain existence of an invariant probability measure for the approximation \cite{DaZa1996}.  This existence of an invariant probability measure is sufficient to prove accuracy of the approximation with respect to equilibrium expectations. To summarize, here are the main results of the paper regarding realizable discretizations with gridless state spaces.
\begin{enumerate}
\item  The approximation is stable in the sense that it has a sharp stochastic Lyapunov function, and accurate in the sense that the global error of the approximation is identical to the accuracy of its generator.  The proof of both of these properties entails analyzing the action of the generator on certain test functions, which is straightforward to do.
\item  A semimartingale representation for suitable observables of the approximation is proven to hold globally.  This representation implies Dynkins formula holds for this class of observables.
\item  The approximation has an invariant probability measure, which may not be unique.  This existence is sufficient to prove that equilibrium expectations of the SDE solution are accurately represented by the approximation.
\end{enumerate}
To be clear, the paper does not prove or disprove uniqueness of an invariant probability measure of the approximation on a gridless state space.  One way to resolve this open question is to show that the semigroup associated to the approximation is asymptotically strong Feller \cite{hairer2006ergodicity}.  

%
%

Besides these theoretical results, below we also apply the new approach to a variety of examples to show that the new approach is not only practical, but also permits to alleviate several issues that commonly arise in applications and impede existing schemes.

\begin{enumerate}

\item {\em Stiffness problems.}  Stiffness in SDEs can cause the numerical trajectory to artificially ``explode.'' For a precise statement and proof of this divergence in the context of forward Euler applied to a general SDE see \cite{hutzenthaler2011strong}.  This numerical instability is a well-known issue with explicit discretizations of nonlinear SDEs \cite{Ta2002, HiMaSt2002, MiTr2005,Hi2011,HuJeKl2012}, and it is especially acute if the SDE is stiff e.g.~due to coefficients with limited regularity, which is typically the case in applications.  Intuitively speaking, this instability is due to explicit integrators being only {\em conditionally stable}, i.e.~for any fixed time-step they are numerically stable if and only if the numerical solution remains within a sufficiently large compact set.  Since noise can drive the numerical solution outside of this compact set, exploding trajectories occur almost surely.  The proposed approximation can be constructed to be unconditionally stable, since its spatial step size can be  bounded uniformly in space.  As a result, the mean time lag between successive jumps adapts to the stiffness of the drift coefficients.

\item {\em Long-time issues.} SDEs are often used to sample their stationary distribution, which is generically not known, meaning there is no explicit formula for its associated density even up to a normalization constant.  Examples include polymer dynamics in extensional, shear, and mixed planar flows \cite{LaHuSmCh1999, HuShLa2000, HuShBaCh2002, JedeGr2002, ScBaShCh2003, ScShCh2004}; polymers in confined flows \cite{ChLa2002, JeDiScGrde2003a,JeDiScGrde2003b,JeScdeGr2004, WoShKh2004a, WoShKh2004b,HeMadeGr2006a, HeMadeGr2006b,HedeGr2007,HeMadeGr2008, Gr2011, de2011, ZhdeGr2012}; polymer conformational transitions in electrophoretic flows \cite{kim2006brownian,randall2006methods,kim2007design,trahan2009simulation,hsieh2011simulation,hsieh2012simulation}; polymer translocation through a nanopore \cite{lubensky1999driven,loebl2003simulation,matysiak2006dynamics,huopaniemi2006langevin,allen2006simulating}; and molecular systems with multiple heat baths at different temperatures \cite{eckmann1999non,eckmann2000non,rey2001exponential,MaVa2006,hairer2009slow,lin2010nonequilibrium,abrams2010large} as in temperature accelerated molecular dynamics \cite{MaVa2006,vanden2009some,abrams2010large}.  Unknown stationary distributions also arise in a variety of diffusions with nongradient drift fields like those used in atmosphere/ocean dynamics or epidemiology.  The stationary distribution in these situations is implicitly defined as the steady-state solution to a PDE, which cannot be solved in practice.  Long-time simulations are therefore needed to sample from the stationary distribution of these SDEs to compute quantities such as the equilibrium concentration of a species in population dynamics or, the polymer contribution to the stress tensor or equilibrium radius of gyration in Brownian dynamics. This however requires to use numerical schemes that are ergodic when the underlying SDEs they approximate are, which is by no means guaranteed.    We show that the proposed approximation is long-time stable as a consequence of preserving a stochastic Lyapunov function from the true dynamics.   In fact, this stochastic Lyapunov function can be used to prove that the approximation on a gridded state space is geometrically ergodic with respect to a unique invariant probability measure nearby the stationary distribution of the SDE, as we do in this paper.

\item {\em Influence of boundaries.}  SDEs with boundaries arise in chemical kinetics, population dynamics, epidemiology, and mathematical finance, where the solution (e.g., population densities or interest rates) must always be non-negative \cite{gillespie1977exact,CoInRo1985,RuPi2006,colizza2006role,Al2008,Br2010,LoKoKi2010, andersen2010simulation}.  Boundaries also arise in Brownian dynamics simulations.  For example, in bead-spring chain models upper bounds are imposed on bond lengths in order to model the experimentally observed extension of semi-flexible polymers \cite{Ot1996}.  In this context standard integrators may produce updated states that are not within the domain of the SDE problem, which in this paper we refer to as nonphysical moves.  The proposed approximation can eliminate such nonphysical moves by allowing the spatial step size to be set such that no update lies outside the domain of definition of the SDE.

 \item {\em First Exit Problems.}  It is known that standard integrators do not work well in simulating first exits from a given region because, simply put, they fail to account for the probability that the system exits the region in between each discrete time step~\cite{gobet2004exact,gobet2007discrete,gobet2010stopped}.  Building this capability into time integrators is nontrivial, e.g., one technique requires locally solving a boundary value problem per step, which is prohibitive to do in many dimensions \cite{Ma1999}.   By using a grid whose boundary conforms to the boundary of the first exit problem, the proposed approximation does not make such time interpolation errors. 
 \end{enumerate}

In addition, the examples illustrate other interesting properties of realizable discretizations such as their accuracy with respect to the spectrum of the infinitesimal generator of non-symmetric diffusions.

\section{Relation to Other Works}

When the diffusion matrix is diagonally dominant, H.~Kushner developed realizable finite difference approximations to SDEs and used probabilistic methods to prove that these finite difference schemes are convergent \cite{Ku1970,
Ku1973, Ku1975, Ku1976A, Ku1976B, Ku1977}.  When the SDE is self-adjoint and the noise is additive, realizable discretizations combined with SSA have also been developed, see \cite{elston1996numerical, WaPeEl2003,Ph2008,MeScVa2009,LaMeHaSc2011}, and to be specific, this combined approach seems to go back to at least \cite{elston1996numerical}.  Among these papers, the most general and geometrically flexible realizable discretizations seem to be the finite volume methods developed in \cite{LaMeHaSc2011}.  This paper generalizes these realizable discretizations to diffusion processes with multiplicative noise.  

Our approach is  related to multiscale methods for approximating SDEs that leverage the spectrum of suitably defined stochastic matrices \cite{Sc1999,ScHuDe2001,CoLaLeMaNaWaZu2005A,CoLa2006,NaLaCoKe2006,Ph2008,MeScVa2009,MeScVa2009,LaMeHaSc2011}.  However, to be clear, this paper is primarily about approximating the SDE solution, not coarse-graining the dynamics of the SDE.

Our approach also seems to be foreshadowed by Metropolis integrators for SDEs, in the sense that the proportion of time a Metropolis integrator spends in a given region of state space is set by the equilibrium distribution of the SDE, not the time step size \cite{RoTw1996A,BoOw2010, BoVa2010,BoVa2012,BoHa2013,BoDoVa2014, Bo2014}.  The idea behind Metropolis integrators is to combine an explicit time integrator for the SDE with a Monte Carlo method so that the resulting scheme is ergodic with respect to the stationary distribution of the SDE and finite-time accurate \cite{BoVa2010}.  However, a drawback of these methods is that they are typically limited to diffusions that are self-adjoint \cite{BoDoVa2014}, whereas the methods presented in this paper do not require this structure.  Moreover, Metropolis integrators are typically ergodic with respect to the stationary distribution of the SDE, but they may not be geometrically ergodic when the SDE is \cite{RoTw1996A,BoVa2010,BoHa2013}.   Intuitively speaking, the reason Metropolis integrators lack geometric ergodicity is that they use as proposal moves conditionally stable integrators for the SDE.  Hence, outside the region of stability of these integrators the Metropolis rejection rate tends to one.  In these unstable regions, Metropolis integrators may get stuck, and as a consequence, typically do not have a stochastic Lyapunov function.    In contrast, the proposed approximations do have a stochastic Lyapunov function for a large class of diffusion processes.  


Our approach is also related to, but different from, variable time step integrators for SDEs \cite{gaines1997variable,szepessy2001adaptive,LaMaSt2007}.   These integrators are not all adapted to the filtration given by the noise driving the approximation.  Let us comment on the adapted (or non-anticipating) technique proposed in \cite{LaMaSt2007}, which is based on a simple and elegant design principle.  The technique has the nice feature that it does not require iterated stochastic integrals.  The method adjusts its time step size at each step according to an estimate of the `deterministic accuracy' of the integrator, that is, how well the scheme captures the dynamics in the zero noise limit. The time step size is reduced until this estimate is below a  certain tolerance.  The aim of this method is to achieve stability by being deterministically accurate.  In fact, as a consequence of this accuracy, the approximation `inherits' a stochastic Lyapunov function from the SDE and is provably geometrically ergodic.  This paper \cite{LaMaSt2007} also addresses what happens if the noise is large compared to the drift.   The difference between these variable time step integrators and the proposed  approximation is that the latter sets its mean time step according to the Kolmogorov equation and achieves stability by bounding its spatial step size.

\section{Organization of Paper}  The remainder of this paper is structured as follows. \begin{description}
\item[Chapter \ref{chap:methods}] details our strategy to spatially discretize an SDE problem.
\begin{itemize}[leftmargin=-.25in]
\item \S\ref{sec:realizability} defines what it means for a spatial discretization to be realizable.
\item \S\ref{sec:gridded_vs_gridless} describes the differences and pros/cons between a gridded and gridless state space.
\item \S\ref{sec:scalar_case} derives realizable discretizations in one dimension using an upwinded finite difference, finite volume, and a probabilistic discretization.
\item \S\ref{sec:realizable_methods_2D} derives realizable discretizations in two dimension using an upwinded finite difference method.
\item \S\ref{sec:realizable_methods_nD} presents realizable discretizations in $n$-dimension, which are based on central and upwinded finite differences on a gridless state space.
\item \S\ref{sec:scaling} discusses and analyzes the scaling of the approximation with respect to the dimension of the SDE problem.
\item \S\ref{sec:generalization} generalizes realizable discretizations in $n$-dimension to diffusion matrices that can be written as a sum of rank one matrices. 
\item \S\ref{sec:diagonally_dominant_case} considers the special case of weakly diagonally dominant diffusion matrices, and proves that a second-order accurate, upwinded finite difference discretization is realizable on a gridded state space.
\end{itemize}
\item[Chapter \ref{chap:numerics}] applies realizable discretizations to SDE problems.
\begin{itemize}[leftmargin=-.25in]
\item \S\ref{sec:cubic_oscillator} (resp.~\S\ref{sec:lognormal_1d}) provides numerical evidence that the approximation is stable (i.e.~geometrically ergodic) and accurate in representing moments at finite times, mean first passage times, exit probabilities, and the stationary distribution of a one-dimensional process with additive (resp.~multiplicative) noise.
\item \S\ref{sec:downhill_time} presents an asymptotic analysis of the mean-holding time of the approximation, which is used to validate some numerical results.
\item \S\ref{sec:cir_process} shows that the approximation eliminates nonphysical moves in the Cox-Ingersoll-Ross model from mathematical finance, is accurate in representing the stationary distribution of this process, and can efficiently handle singularities in SDE problems by using adaptive mesh refinement.
\item \S\ref{sec:planar_sdes} illustrates that the approximation accurately represents the spectrum of the generator in a variety of non-symmetric diffusion processes with additive noise including SDE problems with nonlinear drift fields, internal discontinuities, and singular drift fields.
\item \S\ref{sec:lognormal_2d} shows that the approximation on a gridded state space (with variable step sizes) is accurate in representing the stationary distribution of a two-dimensional process  with multiplicative noise.
\item  \S\ref{sec:lv_process} applies the approximation to a Lotka-Volterra process from population dynamics, which shows that the approximation eliminates nonphysical moves, is suitable for long-time simulation, and can capture extinction.
 \item   \S\ref{sec:colloidal_cluster} tests the approximation on a Brownian dynamics simulation of a colloidal cluster immersed in an unbounded solvent, and shows that the jump size can be set according to a characteristic length scale in an SDE problem, namely the Lennard-Jones radius.
\end{itemize}
\item[Chapter \ref{chap:analysis}] analyzes a realizable discretization on a gridded state space.
\begin{itemize}[leftmargin=-.25in]
\item \S\ref{sec:preliminaries} precisely describes the theoretical context, and in particular, hypotheses on the SDE problem are given in Assumption~\ref{assumptions_on_drift}.
\item \S\ref{sec:geometric_ergodicity} derives a sharp stochastic Lyapunov function for the SDE solution and the approximation, and uses this Lyapunov function to show that both the SDE solution and the approximation are geometrically ergodic with respect to an invariant probability measure.
\item \S\ref{sec:properties} solves a martingale problem associated to the approximation, justifies a Dynkins formula for a large class of observables of the approximation, and applies this solution to analyze basic properties of realizations of the approximation.
\item \S\ref{sec:accuracy} proves that the approximation is accurate with respect to finite-time conditional expectations and equilibrium expectations by using a continuous-time analog of the Talay-Tubaro expansion of the global error.
\end{itemize}
\item[Chapter \ref{chap:gridless}] analyzes a realizable discretization on a gridless state space.
\begin{itemize}[leftmargin=-.25in]
\item \S\ref{sec:context} introduces a mollified generator that uses variable step sizes.
\item \S\ref{sec:feller} shows that this generator defines a bounded operator with respect to Borel bounded functions and induces a Feller semigroup.
\item \S\ref{sec:generator_accuracy} proves that this generator is accurate with respect to the infinitesimal generator of the SDE.
\item \S\ref{sec:lyapunov_gridless} shows that the associated approximation has a sharp stochastic Lyapunov function provided that the mollification is not too strong.
\item \S\ref{sec:accuracy_gridless} proves that the associated approximation is accurate with respect to finite-time conditional expectations and equilibrium expectations by using a continuous-time analog of the Talay-Tubaro expansion of the global error.
\end{itemize}
\item[Chapter \ref{chap:tridiagonal}] analyzes tridiagonal, realizable discretizations on gridded state spaces.
\begin{itemize}[leftmargin=-.25in]
\item \S\ref{sec:nu_1D} derives an explicit formula for the invariant density of the approximation.
\item \S\ref{sec:nu_accuracy_1D} quantifies the accuracy of the stationary density of the approximation.
\item \S\ref{sec:committor_1D} derives an explicit formula for the exit probability (or committor function) of the approximation.
\item \S\ref{sec:mfpt_1D} derives an explicit formula for the mean first passage time of the approximation.
\end{itemize}
\item
[Chapter \ref{chap:conclusion}] concludes the paper.
\end{description}

\section{Acknowledgements}

We wish to thank Aleksandar Donev, Weinan E, Denis Talay, Ra{\'u}l  Tempone and Jonathan Weare for useful discussions.

\chapter{Algorithms} \label{chap:methods}

\section{Realizability Condition} \label{sec:realizability}

Here we present a time-continuous approach to numerically solve an $n$-dimensional SDE with domain $\Omega \subset \mathbb{R}^n$.  The starting point is the infinitesimal generator: \begin{equation} \label{eq:generator}
L f (x) = Df(x)^T \mu(x) +  \tr( D^2 f(x) \sigma(x) \sigma(x)^T ) 
\end{equation}
associated to the stochastic process $Y$ that satisfies the SDE: \begin{equation} \label{eq:sde}
d Y = \mu(Y) dt + \sqrt{2} \sigma(Y) d W \;, \quad Y(0) \in \Omega 
\end{equation}
where $W$ is an $n$-dimensional Brownian motion and $\mu: \Omega \to \mathbb{R}^n$ and $\sigma: \Omega \to \mathbb{R}^{n \times n}$ are the drift and noise fields, respectively.   (The test function $f: \mathbb{R}^n \to \mathbb{R}$ in \eqref{eq:generator} is assumed to be twice differentiable.)  Let $L^*$ be the (formal) adjoint operator to $L$. Given an initial density $p(0,x)$, the Fokker-Planck equation: \begin{equation} \label{eq:fokkerplanck}
\frac{\partial p}{\partial t}  =  L^* p 
\end{equation}
describes the evolution of the probability density of $Y(t)$.  The time evolution of a conditional expectation of an observable is described by the Kolmogorov equation: \begin{equation} \label{eq:kolmogorov_intro}
 \frac{\partial u}{\partial t} = L u
 \end{equation}  with initial condition $u(0,x)$.  A weak solution to the SDE \eqref{eq:sde} can be obtained by solving \eqref{eq:fokkerplanck} or \eqref{eq:kolmogorov_intro}.  In this paper we consider solvers for \eqref{eq:sde} that exploit this connection.  

The essential idea is to approximate the infinitesimal generator of the SDE in \eqref{eq:generator} by a discrete-space generator of the form: \begin{equation} \label{eq:discrete_generator}
 Q f(x) =  \sum_{i=1}^K q(x,y_i(x)) \left( \vphantom{\sum} f(y_i(x)) - f(x) \right) 
\end{equation}
where we have introduced a reaction rate function  $q: \Omega \times \Omega \to \mathbb{R}$, and $K$ reaction channels: $y_i(x) \in \Omega$ for $i=1,\dotsc,K$ and for every $x \in \Omega$.    (This terminology comes from chemical kinetics \cite{gillespie1977exact}.)  Let $h$ be a spatial step size parameter and $p$ be a positive parameter, which sets the order of accuracy of the method.  We require that the generator $Q$ satisfies
\smallskip
 \begin{description}
\item[(Q1) $p$th-order accuracy] \begin{equation*}
Q f(x) = L f(x) + O(h^{p}) \quad \forall~x \in \Omega 
\end{equation*}
\item[(Q2) realizability] \begin{equation*}
q(x,y) \ge 0 \quad \forall~x, y \in \Omega \quad x \ne y 
\end{equation*}
\end{description}
\smallskip
Condition (Q1) is a basic requirement for any spatial discretization of a PDE.   If (Q1) holds, and assuming numerically stability, then the approximation is $p$th-order accurate on finite-time intervals (see e.g.~Theorem~\ref{thm:finite_time_accuracy} for a precise statement).  Condition (Q2) is not a standard requirement, and is related to the structure of the Kolmogorov equation.  In words, (Q2) states that the weights in the finite differences appearing in \eqref{eq:discrete_generator} must be non-negative. We call an operator that satisfies this condition realizable.   Without this requirement the approximation would be a standard numerical PDE solver and limited to low-dimensional systems by the curse of dimensionality.   In contrast, when (Q2) holds the generator $Q$ is realizable, in the sense that it induces a Markov jump process $X$ which can be simulated exactly in time using the Stochastic Simulation Algorithm (SSA) \cite{gillespie1977exact, elston1996numerical, WaPeEl2003, LaMeHaSc2011}.   This simulation does not require explicitly gridding up any part of state space, since its inputs -- including rate functions and reaction channels -- can be computed online.   We stress that the channels in \eqref{eq:discrete_generator} can be picked so that the spatial displacement $(y_i - x)$ is bounded for $i=1,\dotsc,K$, where as a reminder $K$ is the \# of channels.  

\medskip

To emphasize that simulating $X$ is simple, let us briefly recall how SSA works.

%
%

\begin{algo}[Stochastic Simulation Algorithm \cite{gillespie1977exact}] \label{algo:ssa}
Given the current time $t$, the current state of the process $X(t)=x$,  and the $K$ jump rates evaluated at this state $ \{ q(x,y_i(x)) \}_{i=1}^K$,
the algorithm outputs the state of the system  $X(t+ \delta t)$ at time $t + \delta t$ in two sub-steps.

\medskip

\begin{description}
\item[(Step 1)] obtains $\delta t$ by generating an exponentially distributed random variable with parameter \[
\lambda(x) =  \sum_{i=1}^K q(x,y_i(x))  
\]
\item[(Step 2)] updates the state of the system by assuming that the process moves to state $y_i(x)$ with probability: \[
\Pr( X(t + \delta t) = y_i(x)  \mid X(t)=x )  = \frac{q(x,y_i(x))}{\lambda(x)} 
\]  
\end{description}
\end{algo}

Algorithm~\ref{algo:ssa} suggests the following compact way to write the action of the generator $Q$ in \eqref{eq:discrete_generator}: \begin{equation} \label{eq:generator_compact}
Q f(x) = \lambda(x) \; \Ex (f(x+\xi_x) - f(x) )
\end{equation}
where the expectation is over the random $n$-vector $\xi_x$, whose state space is the set of all $K$ reaction channels from the state $x$ and whose probability distribution is \[
\Pr( \xi_x = y_i(x) ) =  \frac{q(x,y_i(x))}{\lambda(x)} \;, \quad 1 \le i \le K \;.
\]  

To summarize, this paper proposes to (weakly) approximate the solution of a general SDE using a Markov process that can be simulated using Algorithm~\ref{algo:ssa}, whose updating procedure entails generating a random time increment in (Step 1) and firing a reaction in (Step 2).   By restricting the firings in (Step 2) to reaction channels that are a bounded distance away from the current state $x$, the approximation is numerically stable, a point we elaborate on in \S\ref{sec:geometric_ergodicity}.   Note that the mean time increment (or mean holding time) in (Step 1) is $\lambda(x)^{-1}$.  Thus, the spatial discretization of $L$ determines the average amount of time the process spends at any state.  Morever, note that the process satisfies: $X(s) = x$ for $s \in [t,t+\delta t)$.  The latter property implies that sample paths of the process are right-continuous in time with left limits (or c\`adl\`ag).  Also note that the process only changes its state by jumps, and hence, is a pure jump process.  Since the time and state updates only depend on the current state $X(t_0)$, and the holding time in (Step 1) is exponentially distributed, this approximation has the Markov property and is adapted to the filtration of the driving noise.  Thus, the approximation is a Markov jump process.  As we show in Chapter~\ref{chap:analysis}, the Markov and non-anticipating property of the approximation greatly simplify the analysis.

\medskip

Next we discuss the state space of the approximation, and then derive schemes that meet the requirements (Q1) and (Q2). 

%
%

\section{Gridded vs Gridless State Spaces} \label{sec:gridded_vs_gridless}

We distinguish between two types of discretization based on the structure of the graph that represents the state space of the approximation.  We say that a graph is connected if there is a finite path joining any pair of its vertices.

\medskip

\begin{defn}
A gridded (resp.~gridless) state space consists of points which form a connected (resp.~disconnected) graph.
\end{defn}

\medskip

\noindent
We stress that both state spaces permit capping the spatial step size of the approximation.  Moreover, the SSA on either state space only requires local evaluation of the SDE coefficients.  Gridded state spaces are appealing theoretically because a generator on such a space can often be represented as a matrix, and in low dimensional problems, the spectral properties of these matrices (e.g.~low-lying eigenvalues, leading left eigenfunction, or a spectral decomposition) can be computed using a standard sparse matrix eigensolver.   When the state space is unbounded this matrix is infinite.  However, this matrix can still be approximated by a finite-dimensional matrix, and the validity of this approximation can be tested by making sure that  numerically computed quantities like the spectrum do not depend on the truncation order.  We will return to this point when we consider concrete examples in Chapter~\ref{chap:numerics}.  In order to be realizable, the matrix $\mathsf{Q}$ associated to the linear operator $Q$ on a gridded state space must satisfy the {\em $Q$-matrix property}: \begin{equation} \label{Qmatrix_property}
\mathsf{Q}_{ij} \ge 0  \quad \text{for all $i \ne j$} \quad \text{and} \quad   \sum_j \mathsf{Q}_{ij} = 0 \;.
\end{equation} Indeed, the $Q$-matrix property is satisfied by the matrix $\mathsf{Q}$ if and only if its associated linear operator $Q$ is realizable.   Traditional finite difference and finite volume discretizations have been successfully used to design $Q$-matrix discretizations when the generator of the SDE is self-adjoint and the noise entering the SDE is additive, see \cite{elston1996numerical, WaPeEl2003,Ph2008,MeScVa2009,LaMeHaSc2011}.  However, $Q$-matrix discretizations on a gridded state space become trickier to design when the noise is multiplicative even in low dimension.   This issue motivates designing spatial discretizations on gridless state spaces.  Let us elaborate on this point by considering realizable discretizations in one, two, and then many dimensions.   With a slight abuse of notation, we may use the same symbol to denote an operator $Q$ and its associated matrix.  The meaning should be clear from the context.  The most general realizable discretization is given in \S\ref{sec:generalization}.  

%
%

\section{Realizable Discretizations in 1D} \label{sec:scalar_case}

SDEs in one dimension possess a symmetry property, which we briefly recall since we expect the proposed schemes to accurately represent this structure.  To prove this property, it is sufficient to assume that the noise coefficient is differentiable and satisfies $\sigma(x) > 0$ for all $x \in \Omega$, where $\Omega$ is the state space of the approximation.   Under these assumptions the generator \eqref{eq:generator} can be written in conservative form as: \begin{equation} \label{eq:generator_1D_conservative_form}
( L f)(x) =  \frac{ 1 }{\nu(x)}( M(x) \nu(x) f'(x))'
\end{equation}
where $M(x) = (\sigma(x))^2$ and $\nu(x)$ is given by: \[
 \nu(x)  = \frac{1}{M(x)} \exp\left(  \int_a^x \frac{\mu(s)}{M(s)} ds \right) 
\]
for some $a \in \Omega$.  Similarly, the SDE \eqref{eq:sde} can be written in self-adjoint form: \begin{equation} \label{sde_1d}  
d Y = - M(Y) U'(Y) dt + M'(Y) dt + \sqrt{2} \sigma(Y) dW 
\end{equation}
where $U(x)$ is the free energy of $\nu(x)$ defined as: \[
U(x) = - \log( \nu(x) ) \;.
\] 
Let $\langle \cdot , \cdot \rangle_{\nu}$ denote the $\nu$-weighted $L^2$ inner product defined as: \[
\langle f , g \rangle_{\nu} = \int_{\Omega} f(x) g(x) \nu(x) dx 
\] 
where the functions $f(x)$ and $g(x)$ are square integrable with respect to the measure $\nu(x) dx$.   The generator $L$ for scalar, elliptic diffusions is self-adjoint with respect to this inner product i.e. \begin{equation} \label{eq:self_adjoint_property_1D}
\langle L f , g \rangle_{\nu} = \langle f , L g \rangle_{\nu} 
\end{equation}
for smooth and compactly supported functions $f(x)$ and $g(x)$.  Note that by taking $g(x)=1$ for every $x \in \Omega$, then $\langle L f , g \rangle_{\nu} = 0$ for all smooth and compactly supported $f(x)$, which implies that \[
L^* \nu(x) = 0 \;.
\]
 Thus,  $\nu(x)$ is an invariant density of the SDE.  Furthermore, if the function $\nu(x)$ is integrable over $\Omega$, i.e. $Z = \int_{\Omega} \nu(x) dx < \infty$, then $\nu(x)/Z$ is an invariant probability density (or stationary density) of the SDE.     

\medskip

We stress that the generator of SDEs in more than one dimension often do not satisfy the self-adjoint property \eqref{eq:self_adjoint_property_1D}.  Thus, to avoid losing generality, we will not assume the generator of the SDE is self-adjoint.  In this context we present finite difference, finite volume, and probabilistic discretizations of $L$.  

\subsection{Finite Difference}

Let $\{ x_i \}$ be a collection of grid points over $\Omega \subset \mathbb{R}$ as shown in Figure~\ref{fig:finite_difference_grid_1D}.  The distance between neighboring points is allowed to be variable. Thus, it helps to define forward, backward and average spatial step sizes: \begin{equation} \label{eq:amr}
\delta x^+_i = x_{i+1} - x_i \;, \quad \delta x^-_i =x_i - x_{i-1} \;, \quad \delta x_i = \frac{\delta x_i^+ + \delta x_i^-}{2}   \;, \quad \text{resp.}
\end{equation}
at every grid point $x_i$.  The simplest way to construct a realizable discretization of $L$ is to use an upwinded and a central finite difference approximation of the first and second-order derivatives in $L$, respectively. To derive this discretization, it is helpful to write the generator $L$ in \eqref{eq:generator_1D_conservative_form} in terms of the functions $M(x)$ and $U(x)$ as follows: \begin{equation} \label{eq:generator_for_upwind}
(L f)(x) = - M(x) U'(x) f'(x) +  (M(x) f'(x))' \;.
\end{equation}  
To obtain a realizable discretization, the first term of \eqref{eq:generator_for_upwind} is discretized using a variable step size, first-order upwind method: \begin{align*}
- M_i U'_i f'_i \approx ( - M_i U'_i \vee 0) \frac{ f_{i+1} - f_i}{ \delta x^+_i} +  ( - M_i U'_i \wedge 0 ) \frac{f_i - f_{i-1}}{\delta x^-_i}   \;.
\end{align*}
Here we employ the shorthand notation: $f_i = f(x_i)$ for all grid points $x_i$ (and likewise for other functions), and \[
a \vee b =  \max(a,b) \quad \text{and} \quad a \wedge b = \min(a,b) 
\]
for any real numbers $a$ and $b$.  By taking the finite difference approximation on the upstream side of the flow, i.e., going backward or forward in the finite difference approximation of $f'_i$  according to the sign of the coefficient $- M_i U'_i$, this discretization is able to simultaneously satisfy first-order accuracy (Q1) and realizability (Q2).  A variable step size, central scheme can be used to approximate the second-order term in \eqref{eq:generator_for_upwind}: \begin{align*}
 (M f')'_i  \approx \frac{1}{ 2 \delta x_i } \left(  (M_{i+1}+M_i) \frac{ f_{i+1} - f_i}{ \delta x^+_i} - (M_{i-1}+M_i) \frac{f_i - f_{i-1}}{\delta x^-_i}  \right) \;.
\end{align*}
It is straightforward to verify that this part of the discretization obeys (Q1) with $p=2$ and since $M(x)>0$ satisfies (Q2) as well. The resulting discrete generator is a $Q$-matrix with off-diagonal entries \begin{equation} \label{eq:tQu_1d}
\begin{dcases}
(\tilde Q_u)_{i,i+1} =  \frac{1 }{ \delta x^+_i }  \left(  ( - M_i U'_i \vee 0) +  \frac{ M_{i+1}+M_i }{2 \; \delta x_i}    \right)   \\
(\tilde Q_u)_{i,i-1} =   \frac{1 }{ \delta x^-_i } \left( - ( - M_i U'_i \wedge 0 ) +  \frac{M_{i-1}+M_i}{2 \; \delta x_i}    \right)  
\end{dcases}
\end{equation}
Note that to specify a $Q$-matrix, it suffices to specify its off-diagonal entries and then use \eqref{Qmatrix_property} to determine its diagonal entries.   Also, note that this scheme does not require computing the derivative of $M(x)$, which appears in the drift coefficient of \eqref{sde_1d}.  The overall accuracy of the scheme is first-order in space.   The approximation also preserves an invariant density, see Proposition~\ref{prop:nu_symmetry_1d} for a precise statement and proof.  Remarkably, the asymptotic mean holding time of this generator is exact when, roughly speaking, the drift is large compared to the noise.  We will prove and numerically validate this statement in \S\ref{sec:downhill_time}.


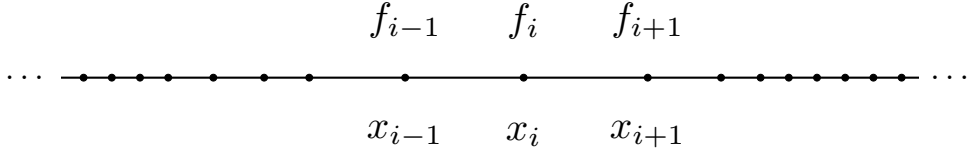
\begin{figure}\centering
\begin{tikzpicture}[scale=1.5]
\filldraw[color=black,fill=black] (0.0,0) circle (0.03);	
\filldraw[color=black,fill=black] (0.25,0) circle (0.03);			
\filldraw[color=black,fill=black] (0.5,0) circle (0.03);	
\filldraw[color=black,fill=black] (0.75,0) circle (0.03);				
\filldraw[color=black,fill=black] (1.0,0) circle (0.03);	
\filldraw[color=black,fill=black] (1.25,0) circle (0.03);			
\filldraw[color=black,fill=black] (1.65,0) circle (0.03);	
\filldraw[color=black,fill=black] (2.1,0) circle (0.03);	
\filldraw[color=black,fill=black] (2.5,0) circle (0.03);	
\filldraw[color=black,fill=black] (3.35,0) circle (0.03);	
\filldraw[color=black,fill=black] (4.4,0) circle (0.03);	
\filldraw[color=black,fill=black] (5.5,0) circle (0.03);	
\filldraw[color=black,fill=black] (6.15,0) circle (0.03);	
\filldraw[color=black,fill=black] (6.5,0) circle (0.03);	
\filldraw[color=black,fill=black] (6.75,0) circle (0.03);	
\filldraw[color=black,fill=black] (7.0,0) circle (0.03);	
\filldraw[color=black,fill=black] (7.25,0) circle (0.03);	
\filldraw[color=black,fill=black] (7.5,0) circle (0.03);	
\filldraw[color=black,fill=black] (7.75,0) circle (0.03);	
\filldraw[color=black,fill=black] (8.0,0) circle (0.03);	
\draw[-, thick](0,0.0) -- (8.0,0.0);
\node[black,scale=1.5] at (3.35,0.5) {$f_{i-1}$};
\node[black,scale=1.5] at (3.35,-0.5) {$x_{i-1}$};
\node[black,scale=1.5] at (4.4,0.5) {$f_i$};
\node[black,scale=1.5] at (4.4,-0.5) {$x_i$};
\node[black,scale=1.5] at (5.5,0.5) {$f_{i+1}$};
\node[black,scale=1.5] at (5.5,-0.5) {$x_{i+1}$};
\node[black, scale=1.25,fill=white] at (0.0,0.0) {$\dotsc$};
\node[black, scale=1.25,fill=white] at (8.2,0.0) {$\dotsc$};
\end{tikzpicture}
\caption{\small { \bf Finite Difference Grid.}
Consider non-uniformly spaced grid points $\{ x_j \}$  and corresponding function values $\{ f_j = f(x_j) \}$.  This diagram depicts a three-point stencil centered at grid point $x_i$ with corresponding function values.   This stencil is used to approximate the generator $L$ using a realizable, finite difference approximation.  See \eqref{eq:tQu_1d} for the entries of the resulting $Q$-matrix.
 }
  \label{fig:finite_difference_grid_1D}
\end{figure}


\subsection{Finite Volume}

Next we present a finite volume discretization of $L$.  This derivation illustrates the approach taken in \cite{LaMeHaSc2011} to construct realizable discretizations for self-adjoint diffusions.   For this purpose discretize the region $\Omega$ using the partition shown in Figure~\ref{fig:finite_volume_grid_1D}.  This partition consists of a set of cells $\{ \Omega_i \}$ with centers $\{ x_i \}$ spaced according to \eqref{eq:amr}.   The cells are defined as $\Omega_i = [x_{i-1/2}, x_{i+1/2}]$ where the midpoint between cell centers $x_{i+1/2} =  (x_i + x_{i+1})/2$ gives the location of each cell edge.  From \eqref{eq:amr} note that the length of each cell is $ x_{i+1/2} - x_{i-1/2} = \delta x_i$.  

Integrating the Fokker-Planck equation \eqref{eq:fokkerplanck} over cell $\Omega_i$ yields: \begin{align*}
\int_{\Omega_i} \frac{\partial p}{\partial t}(t,x) dx &= \int_{\Omega_i} \frac{\partial}{\partial x} \left( \nu(x) M(x) \frac{\partial}{\partial x} \frac{p(t,x)}{\nu(x)} \right) dx  \\
&=  \left. \left( \nu(x) M(x)  \frac{\partial}{\partial x} \frac{p(t,x)}{\nu(x)} \right) \right|^{x_{i+1/2}}_{x_{i-1/2}}  \\ 
&= F_{i+1/2} - F_{i-1/2} 
\end{align*}  Here we have introduced $F_{i+1/2}$ and $F_{i-1/2}$, which are the probability fluxes at the right and left cell edge, respectively.  Approximate the solution $p(t,x)$ to the Fokker-Planck equation by its cell average: \[
\bar p_i(t) \approx \frac{1}{\delta x_i} \int_{\Omega_i} p(t,x) dx \;.
\]  
In terms of which, approximate the probability flux using \begin{equation} \label{eq:probabiliy_flux_approximation}
F_{i+1/2} \approx \overbrace{\frac{\nu_{i+1} + \nu_i}{2}}^{\displaystyle \approx \nu_{i+1/2}} \overbrace{\frac{M_{i+1} + M_i}{2}}^{\displaystyle \approx M_{i+1/2}} \underbrace{\frac{1}{\delta x^+_i } \left( \frac{\bar p_{i+1}}{\nu_{i+1}} - \frac{\bar p_i}{\nu_i} \right)}_{\displaystyle \approx \left( \frac{\partial}{\partial x} \frac{p(t,x)}{\nu(x)} \right)_{i+1/2}}  \;.
\end{equation}  These approximations lead to a semi-discrete Fokker-Planck equation of the form: \[
\frac{d \bar p}{dt} = \tilde Q_{\nu}^{\star} \bar{p}
\]  where $\tilde Q_{\nu}^{\star}$ is the transpose of the matrix $\tilde Q_{\nu}$, whose off-diagonal entries are \begin{equation} \label{nu_symmetric_scheme_1d}
\begin{dcases}
(\tilde Q_{\nu})_{i,i+1} =  \frac{1 }{ \delta x_i \; \delta x^+_i }  \frac{M_{i+1}+M_i}{2}  \frac{\nu_{i+1} + \nu_i}{2 \nu_i}   \\
(\tilde Q_{\nu})_{i,i-1} =   \frac{1 }{ \delta x_i \; \delta x^-_i }    \frac{M_{i-1}+M_i}{2}  \frac{\nu_{i-1} + \nu_i}{2 \nu_i}  
\end{dcases}
\end{equation}
This generator is a second-order accurate approximation of $L$ and it preserves the invariant density $\nu$ of the SDE in the following way.  Let $\bar \nu$ be the invariant density $\nu$ weighted by the length of each cell \[
 \bar \nu_i = \delta x_i \; \nu_i  \;.
\]
It follows from \eqref{nu_symmetric_scheme_1d} that \[
(\tilde Q_{\nu})_{i,i+1} \; \bar \nu_i  = (\tilde Q_{\nu})_{i+1,i} \; \bar \nu_{i+1}  
\]
since from \eqref{eq:amr} $\delta x_i^+ = \delta x_{i+1}^-$.  This identity implies that $\bar \nu$ is an invariant density of $\tilde Q_{\nu}$, see Proposition~\ref{prop:nu_symmetry_1d} for a detailed proof.  Other approximations to the probability flux also have this property e.g.~the midpoint approximation of $\nu_{i+1/2}$ in \eqref{eq:probabiliy_flux_approximation} can be replaced by: \[
\nu_{i+1/2} \approx \nu_i \wedge \nu_{i+1}  \quad \text{or} \quad \nu_{i+1/2} \approx \exp\left(- \frac{ U_{i+1} + U_i }{2} \right) \;. 
\]  
(As a caveat, note that the former choice reduces the order of accuracy of the discretization because it uses the minimum function.)  In fact, the evaluation of $U$ can be bypassed by approximating the forward and backward potential energy differences appearing in \eqref{nu_symmetric_scheme_1d} by the potential force to obtain the following $Q$-matrix approximation: \begin{equation} \label{eq:tQc_1d}
\begin{dcases}
(\tilde Q_c)_{i,i+1} =  \frac{1 }{ \delta x_i \; \delta x^+_i }  \frac{M_{i+1}+M_i}{2} \exp\left( -U'_i \; \frac{\delta x_i^+}{2} \right)    \\
(\tilde Q_c)_{i,i-1} =   \frac{1 }{ \delta x_i \; \delta x^-_i }  \frac{M_{i-1}+M_i}{2} \exp\left( U'_i \; \frac{\delta x_i^-}{2} \right) 
\end{dcases}
\end{equation}
In terms of function evaluations, \eqref{eq:tQc_1d} is a bit cheaper than \eqref{nu_symmetric_scheme_1d}, and this difference in cost increases with dimension.  Under natural conditions on $M(x)$ and $U'(x)$, we will prove that \eqref{eq:tQc_1d} possesses a second-order accurate stationary density, see Proposition~\ref{nu_tQc_1d}.  

\medskip

For a more general theoretical and numerical treatment of realizable finite-volume methods for self-adjoint diffusions with additive noise we refer to \cite{LaMeHaSc2011}.


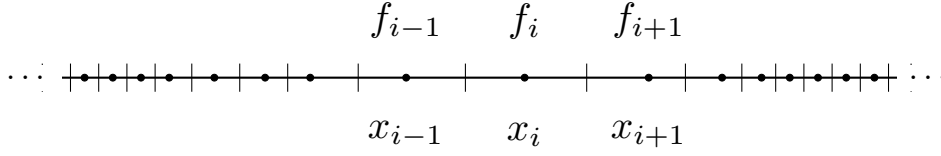
\begin{figure}
\centering
\begin{tikzpicture}[scale=1.5]
\filldraw[color=black,fill=black] (0.0,0) circle (0.03);	
\draw[-, thin](0.125,-0.125) -- (0.125,0.125);
\filldraw[color=black,fill=black] (0.25,0) circle (0.03);		
\draw[-, thin](0.375,-0.125) -- (0.375,0.125);	
\filldraw[color=black,fill=black] (0.5,0) circle (0.03);	
\draw[-, thin](0.625,-0.125) -- (0.625,0.125);	
\filldraw[color=black,fill=black] (0.75,0) circle (0.03);	
\draw[-, thin](0.875,-0.125) -- (0.875,0.125);				
\filldraw[color=black,fill=black] (1.0,0) circle (0.03);	
\draw[-, thin](1.125,-0.125) -- (1.125,0.125);				
\filldraw[color=black,fill=black] (1.25,0) circle (0.03);	
\draw[-, thin](1.45,-0.125) -- (1.45,0.125);				
\filldraw[color=black,fill=black] (1.65,0) circle (0.03);	
\draw[-, thin](1.875,-0.125) -- (1.875,0.125);				
\filldraw[color=black,fill=black] (2.1,0) circle (0.03);	
\draw[-, thin](2.3,-0.125) -- (2.3,0.125);				
\filldraw[color=black,fill=black] (2.5,0) circle (0.03);	
\draw[-, thin](2.925,-0.125) -- (2.925,0.125);
\filldraw[color=black,fill=black] (3.35,0) circle (0.03);	
\draw[-, thin](3.875,-0.125) -- (3.875,0.125);
\filldraw[color=black,fill=black] (4.4,0) circle (0.03);	
\draw[-, thin](4.95,-0.125) -- (4.95,0.125);
\filldraw[color=black,fill=black] (5.5,0) circle (0.03);	
\draw[-, thin](5.825,-0.125) -- (5.825,0.125);
\filldraw[color=black,fill=black] (6.15,0) circle (0.03);	
\draw[-, thin](6.325,-0.125) -- (6.325,0.125);
\filldraw[color=black,fill=black] (6.5,0) circle (0.03);	
\draw[-, thin](6.625,-0.125) -- (6.625,0.125);
\filldraw[color=black,fill=black] (6.75,0) circle (0.03);	
\draw[-, thin](6.875,-0.125) -- (6.875,0.125);
\filldraw[color=black,fill=black] (7.0,0) circle (0.03);	
\draw[-, thin](7.125,-0.125) -- (7.125,0.125);
\filldraw[color=black,fill=black] (7.25,0) circle (0.03);	
\draw[-, thin](7.375,-0.125) -- (7.375,0.125);
\filldraw[color=black,fill=black] (7.5,0) circle (0.03);	
\draw[-, thin](7.625,-0.125) -- (7.625,0.125);
\filldraw[color=black,fill=black] (7.75,0) circle (0.03);	
\draw[-, thin](7.825,-0.125) -- (7.825,0.125);
\filldraw[color=black,fill=black] (8.0,0) circle (0.03);	
\draw[-, thick](0,0.0) -- (8.0,0.0);
\node[black,scale=1.5] at (3.35,0.5) {$f_{i-1}$};
\node[black,scale=1.5] at (3.35,-0.5) {$x_{i-1}$};
\node[black,scale=1.5] at (4.4,0.5) {$f_i$};
\node[black,scale=1.5] at (4.4,-0.5) {$x_i$};
\node[black,scale=1.5] at (5.5,0.5) {$f_{i+1}$};
\node[black,scale=1.5] at (5.5,-0.5) {$x_{i+1}$};
\node[black, scale=1.25,fill=white] at (0.0,0.0) {$\dotsc$};
\node[black, scale=1.25,fill=white] at (8.0,0.0) {$\dotsc$};
\end{tikzpicture}
\caption{\small { \bf Finite Volume Cells.}
Consider non-uniformly spaced points $\{ x_j \}$  and corresponding function values $\{ f_j = f(x_j) \}$.  This diagram depicts a set of cells with cell centers given by the points $\{ x_j \}$ and with cell edges located in between them at $(x_j + x_k)/2$ for $k=j-1,j+1$. This notation is used to derive a realizable, finite volume discretization of the generator $L$.  See \eqref{nu_symmetric_scheme_1d} for the resulting $Q$-matrix.  
 } \label{fig:finite_volume_grid_1D}
\end{figure}


\subsection{Time-Continuous Milestoning} \label{sec:scalar_miletoning}

Finally, we consider a `probabilistic' discretization of the generator $L$.   The idea behind this Q-matrix approximation is to determine the jump rates according to the exit probability and the mean holding time of the SDE solution at each grid point.

To this end, let $\tau_{a}^x = \inf\{ t>0 \mid Y(t) = a , Y(0) = x \}$ be the first time the SDE solution hits the state `a' given that it starts at the state `x'.  Define the exit probability (or the committor function) to be $q(x) = \Pr( \tau_{b}^x < \tau_{a}^x)$, which gives the probability that the process first hits `b' before it hits `a' starting from any state `x' that is between them: $a \le x \le b$.  This function satisfies the following boundary value problem: \begin{equation} \label{eq:committor_bvp}
L q(x) = 0 \;, \quad q(a) = 0 \;, \quad q(b) = 1 \;. 
\end{equation} 
Let $z_i = q(x_i)$ for every grid point $x_i$.    Solving \eqref{eq:committor_bvp} with $a=x_{i-1}$ and $b=x_{i+1}$ yields the following exit probabilities conditional on the process starting at the grid point $x_i$ \begin{equation} \label{eq:exact_rates}
\begin{aligned}
\Pr( \tau_{x_{i+1}}^{x_i} < \tau_{x_{i-1}}^{x_i} ) = \frac{z_{i} - z_{i-1}}{z_{i+1} - z_{i-1}}  \\
\Pr( \tau_{x_{i-1}}^{x_i} < \tau_{x_{i+1}}^{x_i} ) =   \frac{z_{i+1} - z_i}{z_{i+1} - z_{i-1}} 
\end{aligned}
\end{equation} 
We use these expressions below to define the transition rates of the approximation from $x_i$ to its nearest neighbors.

To obtain the mean holding time of the approximation, recall that the mean first passage time (MFPT) to $(x_{i-1}, x_{i+1})^c$ given that $Y(t) = x$ is the unique solution to the following boundary value problem: \begin{equation} \label{eq:mfpt_bvp}
L u(x)  = -1 \;, \quad x_{i-1} < x < x_{i+1}\;,  \quad u(x_{i-1}) = u(x_{i+1}) = 0 \;.
\end{equation}  
In particular, the MFPT to $(x_{i-1},x_{i+1})^c$ given that the process starts at the grid point $x_i$ is given by \[
 \Ex ( \tau_{x_{i-1}}^{x_i} \wedge \tau_{x_{i+1}}^{x_i}  ) = u(x_i) \;.
\]

Since the process initiated at $x_i$ spends on average $u(x_i)$ time units before it first hits a nearest neighbor of $x_i$, the exact mean holding time in state $x_i$ is given by: \begin{equation} \label{eq:intensity}
 ((Q_e)_{i,i+1} + (Q_e)_{i,i-1})^{-1} = u(x_i) \;.
\end{equation}
Similarly, the solution \eqref{eq:exact_rates} specifies the transition probabilities: \begin{equation} \label{eq:transition_probabilities}
\begin{aligned}
\frac{(Q_e)_{i,i+1}}{(Q_e)_{i,i+1} + (Q_e)_{i,i-1}} = \frac{z_i - z_{i-1}}{z_{i+1} - z_{i-1}}  \\
 \frac{(Q_e)_{i,i-1}}{(Q_e)_{i,i+1} + (Q_e)_{i,i-1}} = \frac{z_{i+1} - z_i}{z_{i+1} - z_{i-1}} 
\end{aligned}
\end{equation}
Incidentally, these are the transition probabilities used in discrete-time optimal milestoning \cite{VaVeCiEl2008}, where the milestones correspond to the grid points
depicted in Figure~\ref{fig:milestones}.    Combining \eqref{eq:intensity} and \eqref{eq:transition_probabilities} yields: \begin{equation} \label{eq:Qe_1D}
\begin{dcases}
(Q_e)_{i,i+1} =   \frac{z_i - z_{i-1}}{z_{i+1} - z_{i-1}} \frac{1}{u(x_i)}   \\
(Q_e)_{i,i-1} =  \frac{z_{i+1} - z_i}{z_{i+1} - z_{i-1}} \frac{1}{u(x_i) }
\end{dcases}
\end{equation} which shows that a $Q$-matrix can be constructed in one-dimension based on the exact mean holding time and exit probability.   We emphasize that this discretization is still an approximation, and in particular, does not exactly reproduce other statistics of the SDE solution as pointed out in \cite{VaVeCiEl2008}.  We will return to this point in \S\ref{sec:mfpt_1D} where we analyze some properties of this discretization.    In \S\ref{sec:downhill_time}, we will use this discretization to validate an asymptotic analysis of the mean holding time of finite-difference approximations in one dimension.

\medskip

To recap, the construction of $Q_e$ requires the exit probability (or committor function) to compute the transition probabilities, and the MFPT to compute the mean holding time.  Obtaining these quantities entails solving the boundary value problems \eqref{eq:committor_bvp} and \eqref{eq:mfpt_bvp}.  Thus, it is nontrivial to generalize $Q_e$ to many dimensions and we stress our main purpose in constructing $Q_e$ is to benchmark other (more practical) approximations in one dimension.


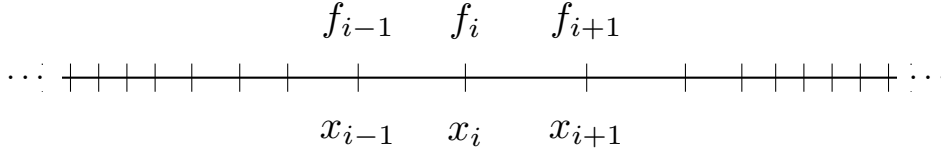
\begin{figure}
\centering
\begin{tikzpicture}[scale=1.5]
\draw[-, thin](0.125,-0.125) -- (0.125,0.125);
\draw[-, thin](0.375,-0.125) -- (0.375,0.125);	
\draw[-, thin](0.625,-0.125) -- (0.625,0.125);	
\draw[-, thin](0.875,-0.125) -- (0.875,0.125);				
\draw[-, thin](1.125,-0.125) -- (1.125,0.125);				
\draw[-, thin](1.45,-0.125) -- (1.45,0.125);				
\draw[-, thin](1.875,-0.125) -- (1.875,0.125);				
\draw[-, thin](2.3,-0.125) -- (2.3,0.125);				
\draw[-, thin](2.925,-0.125) -- (2.925,0.125);
\draw[-, thin](3.875,-0.125) -- (3.875,0.125);
\draw[-, thin](4.95,-0.125) -- (4.95,0.125);
\draw[-, thin](5.825,-0.125) -- (5.825,0.125);
\draw[-, thin](6.325,-0.125) -- (6.325,0.125);
\draw[-, thin](6.625,-0.125) -- (6.625,0.125);
\draw[-, thin](6.875,-0.125) -- (6.875,0.125);
\draw[-, thin](7.125,-0.125) -- (7.125,0.125);
\draw[-, thin](7.375,-0.125) -- (7.375,0.125);
\draw[-, thin](7.625,-0.125) -- (7.625,0.125);
\draw[-, thin](7.825,-0.125) -- (7.825,0.125);
\draw[-, thick](0,0.0) -- (8.0,0.0);
\node[black,scale=1.5] at (2.925,0.5) {$f_{i-1}$};
\node[black,scale=1.5] at (2.925,-0.5) {$x_{i-1}$};
\node[black,scale=1.5] at (3.875,0.5) {$f_i$};
\node[black,scale=1.5] at (3.875,-0.5) {$x_i$};
\node[black,scale=1.5] at (4.95,0.5) {$f_{i+1}$};
\node[black,scale=1.5] at (4.95,-0.5) {$x_{i+1}$};
\node[black, scale=1.25,fill=white] at (0.0,0.0) {$\dotsc$};
\node[black, scale=1.25,fill=white] at (8.0,0.0) {$\dotsc$};
\end{tikzpicture}
\caption{\small { \bf Milestones.}
Consider a non-uniform (bounded or unbounded) grid, a set of milestones
$\{ x_j \}$ on this grid, and corresponding function values $\{ f_j = f(x_j) \}$ as shown in this diagram.
Using this notation a realizable discretization which retains the exact transition probabilities and mean holding times
of the SDE is given in \eqref{eq:Qe_1D}.  
 }
  \label{fig:milestones}
\end{figure}


%
%

\section{Realizable Discretizations in 2D} \label{sec:realizable_methods_2D}

Here we derive realizable finite difference schemes for planar SDEs.  The main issue in this part is the discretization of the mixed partial derivatives appearing in \eqref{eq:generator}, subject to the constraint that the realizability condition (Q2) holds.  To be specific, consider an SDE whose domain $\Omega$ is in $\mathbb{R}^2$ and write its drift and noise fields as: \[
\mu(x,y) = \begin{bmatrix} \mu^1(x,y) \\ \mu^2(x,y) \end{bmatrix} \quad \text{and} \quad
 \sigma(x) \sigma(x)^T = M(x,y)  = \begin{bmatrix} M^{11}(x,y) & M^{12}(x,y) \\ M^{12}(x,y) & M^{22}(x,y) \end{bmatrix}  \;.
\]
Here $M$ is the diffusion field.   Let $S = \{ (x_i, y_j) \}$ be a collection of grid points on a rectangular grid in two-space as illustrated in Figure~\ref{fig:2d_grid}.  Given $f: \mathbb{R}^2 \to \mathbb{R}$, we adopt the notation:  

\medskip

\setlength\tabcolsep{5pt}
\setlength\extrarowheight{2pt}

\begin{center}
\begin{tabular}{cc}
shorthand & meaning \\
\hline
$f_{i,j} = f(x_i, y_j)$ &  grid point function evaluation \\ 
\hline
\begin{tabular}{c} $f_{i \pm 1, j} = f(x_{i \pm 1}, y_j)$ \\
 $f_{i, j \pm1} = f(x_i , y_{j \pm 1})$ \end{tabular} &  \begin{tabular}{c} nearest neighbor grid \\ point function evaluation \end{tabular} \\
\hline
\begin{tabular}{c} $f_{i \pm 1, j \pm 1} = f(x_{i \pm 1}, y_{j \pm 1})$ \\  
$f_{i \pm 1, j \mp 1} = f(x_{i \pm 1} , y_{j \mp 1})$ \end{tabular} & \begin{tabular}{c} next to nearest neighbor \\ grid point function evaluation  \end{tabular}
\end{tabular} 
\end{center}

\medskip

\noindent
The distance between neighboring vertical or horizontal grid points is permitted to be variable. Hence, it helps to define: \[
\begin{dcases}
\delta x^+_i = x_{i+1} - x_i \;, \quad \delta x^-_i = x_i - x_{i-1} \;, \quad \delta x_i = \frac{\delta x_i^+ + \delta x_i^-}{2}  \;, \\
\delta y^+_j = y_{j+1} - y_j \;, \quad \delta y^-_j = y_j - y_{j-1} \;, \quad \delta y_j = \frac{\delta y_j^+ + \delta y_j^-}{2}  
\end{dcases}
\]
at all grid points $(x_i, y_j) \in S$.  


\begin{figure}
\centering
\begin{tikzpicture}[scale=1.25]
\begin{scope}[very thick,decoration={
    markings,
    mark=at position 0.65 with {\arrow{>}}}
    ] 
\draw[step=1.0,gray,thin] (-0.1,0.9) grid (6.1,5.1);
\filldraw[color=black,fill=black] (2,2) circle (0.05);
\filldraw[color=black,fill=black] (2,3) circle (0.05);	
\filldraw[color=black,fill=black] (2,4) circle (0.05);
\filldraw[color=black,fill=black] (4,2) circle (0.05);
\filldraw[color=black,fill=black] (4,3) circle (0.05);		
\filldraw[color=black,fill=black] (4,4) circle (0.05);
\filldraw[color=black,fill=black] (3,2) circle (0.05);	
\filldraw[color=black,fill=black] (3,3) circle (0.05);	
\filldraw[color=black,fill=black] (3,4) circle (0.05);	
\node[black, scale=1.,fill=white] at (4.5,1.65) {$(x_{i+1}, y_{j-1})$};
\node[black, scale=1.,fill=white] at (4.75,3.0) {$(x_{i+1}, y_{j})$};
\node[black, scale=1.,fill=white] at (4.5,4.35) {$(x_{i+1}, y_{j+1})$};
\node[black, scale=1.,fill=white] at (1.5,4.35) {$(x_{i-1}, y_{j+1})$};
\node[black, scale=1.,fill=white] at (1.25,3.0) {$(x_{i-1}, y_{j})$};
\node[black, scale=1.,fill=white] at (1.5,1.65) {$(x_{i-1}, y_{j-1})$};
\node[black, scale=1.,fill=white] at (3,1.65) {$(x_{i}, y_{j-1})$};
\node[black, scale=1.,fill=white] at (3,4.35) {$(x_{i}, y_{j+1})$};
\draw[postaction={decorate}](3,3)--(3,4) ;
\draw[postaction={decorate}](3,3)--(4,3) ;
\draw[postaction={decorate}] (3,3)--(2,3) ;
\draw[postaction={decorate}](3,3)--(3,2) ;
\draw[postaction={decorate}](3,3)--(2,2) ;
\draw[postaction={decorate}](3,3)--(4,2) ;
\draw[postaction={decorate}](3,3)--(4,4) ;
\draw[postaction={decorate}](3,3)--(2,4) ;
\end{scope}
\end{tikzpicture}
\caption{\small { \bf Grid in two dimensions.}
This diagram illustrates the reaction channels from the point $(x_{i}, y_{j})$ on a gridded space in two dimensions.  These channels are used  to obtain a realizable discretization of the generator $L$ of the SDE.  Note that the channels in the diagonal and antidiagonal directions are needed to approximate mixed partial derivatives that may appear in the continuous generator $L$.   The resulting realizable discretizations can be found in \eqref{eq:tQu} and \eqref{eq:tQc_2d}.
 }
  \label{fig:2d_grid}
\end{figure}
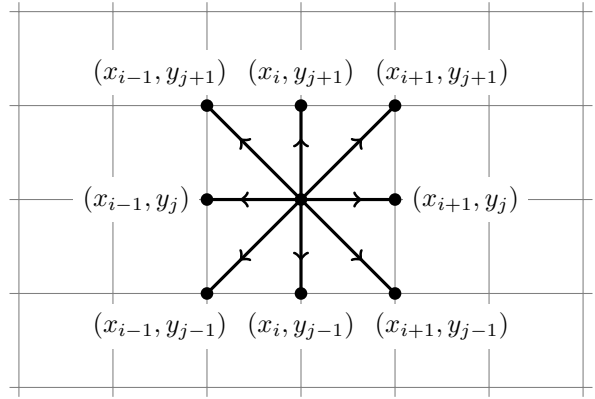


We first consider a generalization of the first-order scheme \eqref{eq:tQu_1d}.   The drift term in $L$ is approximated using an upwinded finite difference:
\begin{equation} \label{eq:upwind_b1}
\mu^1(x_i,y_j) \frac{\partial f}{\partial x}(x_i,y_j)  \approx \frac{ ( \mu^1_{i,j} \vee 0 ) }{\delta x^+_i} \left( f_{i+1,j} - f_{i,j} \right)  +  \frac{(\mu^1_{i,j} \wedge 0 )}{\delta x^-_i }  \left(  f_{i,j} -  f_{i-1,j} \right)
\end{equation}
and similarly, 
\begin{equation}\label{eq:upwind_b2}
\mu^2(x_i,y_j) \frac{\partial f}{\partial y}(x_i,y_j)  \approx \frac{( \mu^2_{i,j} \vee 0 )  }{\delta y^+_j } \left( f_{i,j+1} - f_{i,j} \right) + \frac{ ( \mu^2_{i,j} \wedge 0 )}{ \delta y^-_j  } \left(  f_{i,j} - f_{i, j-1} \right) \;.
\end{equation}
We recruit all of the channels that appear in Figure~\ref{fig:2d_grid} in order to discretize the second-order partial derivatives in $L$.  Specifically, \begin{equation} \label{eq:central_approx_2d}
\begin{aligned}
 \tr (  M(x_i,y_j) D^2f(x_i,y_j) )  &\approx  
A^{\pm}  \underset{\text{horizontal differences}}{\underbrace{(f_{i\pm1,j}-f_{i,j})}} + 
B^{\pm} \underset{\text{vertical differences}}{\underbrace{(f_{i,j\pm1}-f_{i,j})}}  \\
& + 
C^{\pm} \underset{\text{diagonal differences}}{\underbrace{(f_{i \pm 1,j \pm 1} - f_{i,j} )}} + 
D^{\pm} \underset{\text{antidiagonal differences}}{\underbrace{( f_{i \pm 1, j \mp 1} - f_{i,j} ) }}
\end{aligned}
\end{equation}
where we introduced the following labels for the rate functions: $A^{\pm}$, $B^{\pm}$, $C^{\pm}$ and $D^{\pm}$.   To be clear there are eight rate functions and eight finite differences in this approximation.  We determine the eight rate functions by expanding the horizontal/vertical finite differences in this approximation to obtain \begin{equation} \label{eq:expansions_of_channels_2d_hv}
\begin{aligned}
f_{i\pm1,j}-f_{i,j} &\approx \pm \left( \frac{\partial f}{\partial x}  \right)_{i,j}\delta x_i^{\pm} + \frac{1}{2}\left(  \frac{\partial^2 f}{\partial x^2} \right)_{i,j} (\delta x_i^{\pm})^2 \\
f_{i,j\pm1}-f_{i,j} &\approx \pm \left( \frac{\partial f}{\partial y} \right)_{i,j} \delta y_j^{\pm} + \frac{1}{2} \left( \frac{\partial^2 f}{\partial y^2}  \right)_{i,j} (\delta y_j^{\pm})^2  
\end{aligned}
\end{equation}
Similarly, we expand the diagonal/antidiagonal differences to obtain
\begin{equation} \label{eq:expansions_of_channels_2d_da}
\begin{aligned}
f_{i \pm 1,j \pm 1} - f_{i,j}  &\approx \pm  \left( \frac{\partial f}{\partial x}  \right)_{i,j}\delta x_i^{\pm}  \pm \left( \frac{\partial f}{\partial y} \right)_{i,j} \delta y_j^{\pm}  \\
& +  \frac{1}{2}\left(  \frac{\partial^2 f}{\partial x^2} \right)_{i,j} (\delta x_i^{\pm})^2 + \frac{1}{2} \left( \frac{\partial^2 f}{\partial y^2}  \right)_{i,j} (\delta y_j^{\pm})^2  \\
& + \left(  \frac{\partial^2 f}{\partial x \partial y} \right)_{i,j} \delta x_i^{\pm} \delta y_j^{\pm} \\
f_{i \pm 1, j \mp 1} - f_{i,j}  &\approx\pm  \left( \frac{\partial f}{\partial x}  \right)_{i,j}\delta x_i^{\pm}  \mp \left( \frac{\partial f}{\partial y} \right)_{i,j} \delta y_j^{\mp}  \\
& +  \frac{1}{2}\left(  \frac{\partial^2 f}{\partial x^2} \right)_{i,j} (\delta x_i^{\pm})^2 + \frac{1}{2} \left( \frac{\partial^2 f}{\partial y^2}  \right)_{i,j} (\delta y_j^{\mp})^2  \\
&- \left(  \frac{\partial^2 f}{\partial x \partial y} \right)_{i,j} \delta x_i^{\pm} \delta y_j^{\mp} 
\end{aligned}
\end{equation}
Note that the first order partial derivatives appearing in these expansions appear in plus and minus pairs, and ultimately cancel out.  This cancellation is necessary to obtain an approximation to the second-order pure and mixed partial derivatives in \eqref{eq:central_approx_2d}. Using these expansions it is straightforward to show that the following choice of rate functions \begin{align*}
A^{\pm} &= M^{11}_{i,j} (\delta x_i^{\pm} \delta x_i)^{-1} -  (M^{12}_{i,j} \vee 0) \; ( \delta x_i^{\pm} \delta y_j^{\pm})^{-1}  + (M^{12}_{i,j} \wedge 0) \; (\delta x_i^{\pm} \delta y_j^{\mp})^{-1}    \\
B^{\pm} &= M^{22}_{i,j} (\delta y_j^{\pm} \delta y_j)^{-1} -  (M^{12}_{i,j}  \vee 0) \; ( \delta x_i^{\pm} \delta y_j^{\pm})^{-1}  + (M^{12}_{i,j} \wedge 0) \; (\delta x_i^{\mp} \delta y_j^{\pm})^{-1}     \\
C^{\pm} &= (M^{12}_{i,j} \vee 0) \; ( \delta x_i^{\pm} \delta y_j^{\pm})^{-1}  \\
D^{\pm} &=- ( M^{12}_{i,j} \wedge 0 ) \; ( \delta x_i^{\pm} \delta y_j^{\mp})^{-1} 
\end{align*}
imply that the approximation in \eqref{eq:central_approx_2d} holds.   Note here that we have used an upwinded finite difference to approximate the mixed partial derivatives of $L$.  Combining the above approximations to the drift and diffusion terms yields: \begin{equation} \label{eq:tQu}
\begin{aligned}
\tilde  Q_u  f_{i,j}  & =    \left(  \frac{ \mu^1_{i,j} \vee 0 }{\delta x^+_i} + 
\frac{M^{11}_{i,j}}{ \delta x_i \delta x_i^+ }  -  \frac{M^{12}_{i,j} \vee 0}{ \delta x_i^{+} \delta y_j^{+}}  + \frac{M^{12}_{i,j} \wedge 0}{ \delta x_i^{+} \delta y_j^{-}}  \right) (f_{i+1,j}-f_{i,j}) \\
&   + \left(  - \frac{ \mu^1_{i,j} \wedge 0 }{\delta x^-_i} + 
\frac{M^{11}_{i,j}}{ \delta x_i \delta x_i^- }   -  \frac{M^{12}_{i,j} \vee 0}{ \delta x_i^{-} \delta y_j^{-}}  + \frac{M^{12}_{i,j} \wedge 0}{ \delta x_i^{-} \delta y_j^{+}}   \right)   (f_{i-1,j}-f_{i,j}) \\
&   +  \left(  \frac{ \mu^2_{i,j} \vee 0 }{\delta y^+_j} + 
\frac{M^{22}_{i,j}}{ \delta y_j \delta y_j^+ }   -  \frac{M^{12}_{i,j} \vee 0}{ \delta x_i^{+} \delta y_j^{+}}  + \frac{M^{12}_{i,j} \wedge 0}{ \delta x_i^{-} \delta y_j^{+}} \right) (f_{i,j+1}-f_{i,j}) \\
&   +  \left(  - \frac{ \mu^2_{i,j} \wedge 0 }{\delta y^-_j} + 
\frac{M^{22}_{i,j}}{ \delta y_j \delta y_j^- }   -  \frac{M^{12}_{i,j} \vee 0}{ \delta x_i^{-} \delta y_j^{-}}  + \frac{M^{12}_{i,j} \wedge 0}{ \delta x_i^{+} \delta y_j^{-}}   \right)  (f_{i,j-1}-f_{i,j}) \\
&   +  \frac{M^{12}_{i,j} \vee 0}{ \delta x_i^+ \delta y_j^+}   (f_{i+1,j+1}-f_{i,j}) +   \frac{M^{12}_{i,j} \vee 0}{ \delta x_i^- \delta y_j^-}   (f_{i-1,j-1}-f_{i,j}) \\
&   - \frac{M^{12}_{i,j} \wedge 0}{ \delta x_i^+ \delta y_j^-}  (f_{i+1,j-1}-f_{i,j})  -\frac{M^{12}_{i,j} \wedge 0}{ \delta x_i^- \delta y_j^+}  (f_{i-1,j+1}-f_{i,j}) \;.
  \end{aligned}
\end{equation}
This generator is a two-dimensional generalization of \eqref{eq:generator_for_upwind}.  However, unlike in 1D, to approximate mixed partial derivatives finite differences in the diagonal and antidiagonal channel directions are used.  To be accurate, the spurious pure derivatives produced by these differences, however, must be subtracted off the vertical and horizontal finite differences.  This subtraction affects the realizability condition (Q2), and in particular, the weights in the horizontal and vertical finite differences may become negative.   Note that this issue becomes a bit moot if the diffusion matrix $M(x,y)$ is uniformly diagonally dominant, since this property implies that \[
M^{11}_{i,j} \wedge M^{22}_{i,j} \ge | M^{12}_{i,j} |  \qquad \text{for all $(x_i, y_j) \in S$} 
\] 
and hence, $\tilde Q_u$ satisfies the realizability condition (Q2) on a gridded state space, with evenly spaced points in the $x$ and $y$ directions.   

Alternatively, if we approximate the first and second-order `pure' partial derivatives in $L$ using a weighted central difference, and use an upwinded finite difference to handle the mixed partials, we obtain the following generator:  \begin{equation} \label{eq:tQc_2d}
\begin{aligned}
 \tilde Q_c &  f_{i,j} =   \exp\left( + \frac{1}{2}  \tilde \mu^1_{i,j} \delta x_i^+ \right) 
 \left( \frac{M^{11}_{i,j}}{ \delta x_i \delta x_i^+ } -  \frac{M^{12}_{i,j} \vee 0}{ \delta x_i^{+} \delta y_j^{+}}  + \frac{M^{12}_{i,j} \wedge 0}{ \delta x_i^{+} \delta y_j^{-}} \right) (f_{i+1,j}-f_{i,j}) \\
& \qquad  + \exp\left( - \frac{1}{2}  \tilde \mu^1_{i,j} \delta x_i^-  \right) 
\left( \frac{M^{11}_{i,j}}{ \delta x_i \delta x_i^- }    -  \frac{M^{12}_{i,j} \vee 0}{ \delta x_i^{-} \delta y_j^{-}}  + \frac{M^{12}_{i,j} \wedge 0}{ \delta x_i^{-} \delta y_j^{+}}  \right)   (f_{i-1,j}-f_{i,j}) \\
& \qquad  + \exp\left( + \frac{1}{2}  \tilde \mu^2_{i,j} \delta y_j^+  \right)  
\left(  \frac{M^{22}_{i,j}}{ \delta y_j \delta y_j^+ }  -  \frac{M^{12}_{i,j} \vee 0}{ \delta x_i^{+} \delta y_j^{+}}  + \frac{M^{12}_{i,j} \wedge 0}{ \delta x_i^{-} \delta y_j^{+}} \right) (f_{i,j+1}-f_{i,j}) \\
& \qquad  + \exp\left( -\frac{1}{2}  \tilde \mu^2_{i,j} \delta y_j^-  \right) 
\left(  \frac{M^{22}_{i,j}}{ \delta y_j \delta y_j^- }   -  \frac{M^{12}_{i,j} \vee 0}{ \delta x_i^{-} \delta y_j^{-}}  + \frac{M^{12}_{i,j} \wedge 0}{ \delta x_i^{+} \delta y_j^{-}}  \right)  (f_{i,j-1}-f_{i,j}) \\
& \qquad  + \exp\left( +\frac{1}{2}  \tilde \mu^1_{i,j}  \delta x_i^+ +  \frac{1}{2}  \tilde \mu^2_{i,j} \delta y_j^+  \right)   \frac{M^{12}_{i,j} \vee 0}{ \delta x_i^+ \delta y_j^+}    (f_{i+1,j+1}-f_{i,j}) \\
 & \qquad+   \exp\left( - \frac{1}{2}  \tilde \mu^1_{i,j}  \delta x_i^- -  \frac{1}{2}  \tilde \mu^2_{i,j} \delta y_j^-  \right) \frac{M^{12}_{i,j} \vee 0}{ \delta x_i^- \delta y_j^-}    (f_{i-1,j-1}-f_{i,j}) \\
 & \qquad -\exp\left( +\frac{1}{2}  \tilde \mu^1_{i,j}  \delta x_i^+ -  \frac{1}{2}  \tilde \mu^2_{i,j} \delta y_j^-   \right)  \frac{M^{12}_{i,j} \wedge 0}{ \delta x_i^+ \delta y_j^-}  (f_{i+1,j-1}-f_{i,j}) \\
  & \qquad-\exp\left(  - \frac{1}{2}  \tilde \mu^1_{i,j}  \delta x_i^- +  \frac{1}{2}  \tilde \mu^2_{i,j} \delta y_j^+   \right)  \frac{M^{12}_{i,j} \wedge 0}{ \delta x_i^- \delta y_j^+} (f_{i-1,j+1}-f_{i,j}) \;.
\end{aligned}
\end{equation}
Here we have defined $ \tilde \mu = ( \tilde \mu^1,  \tilde \mu^2)^T$ as the solution to the linear system \[
M(x,y) \tilde \mu(x,y) = \mu(x,y) \;.
\]  Like  $\tilde Q_u$, if the diffusion matrix is diagonally dominant, then this generator is realizable on an evenly spaced grid.   In \S\ref{sec:lognormal_2d} and \S\ref{sec:lv_process}, we test  $ \tilde Q_c $  on planar SDE problems.  In these problems the diffusion matrices are not diagonally dominant, and we must use variable step sizes in order to obtain a realizable discretization. In \S\ref{sec:diagonally_dominant_case} we generalize the upwinded finite difference approximation $ \tilde Q_c $ to multiple dimensions and prove that if the diffusion matrix is uniformly diagonally dominant, then this generalized discretization is realizable.    

\medskip

To avoid assuming the diffusion matrix is diagonally dominant, we next consider approximations whose state space is gridless.    

%
%

\section{Realizable Discretizations in nD} \label{sec:realizable_methods_nD}

We now present a realization discretization on a gridless state space contained in $\Omega \subset \mathbb{R}^n$.  To introduce this approximation, set the number of channel directions equal to the dimension of the space and let $i$ be an index over these reaction channel directions $1 \le i \le n$.  Let $h_i^{\pm}(x)$ be forward/backward spatial step sizes at state $x$, let $h_i(x)$ be the average step size at state $x$ defined as \[
h_i(x) = (h_i^+(x) + h_i^-(x))/2 \;,
\] let $\{ \sigma_i(x) \}_{i=1}^n$ be the columns of the noise matrix $\sigma(x)$, and let $\tilde \mu(x)$ be a transformed drift field defined as: \begin{equation} \label{eq:transformed_drift}
M(x) \tilde \mu(x) = \mu(x) 
\end{equation}
where $M(x) = \sigma(x) \sigma(x)^T$ is the $n \times n$ diffusion matrix.  (If $M(x)$ is positive definite, then this linear system has a unique solution $\tilde \mu(x)$ for every $x \in \mathbb{R}^n$.  This condition holds in the interior of $\Omega$ for all of the SDE problems we consider in this paper.)   With this notation in hand, here is a second order accurate, realizable discretization in $n$-dimensions.

\medskip

\begin{equation} \label{eq:Qc}
\boxed{
\begin{aligned}
& Q_c f(x) =
 \sum_{i=1}^n
\frac{1}{h_i^+(x) h_i(x)}\exp\left(\quad \frac{h_i^+(x)}{2} \tilde \mu(x)^T \sigma_i(x)  \right) \left(  \vphantom{\sum} f\left( x+ h_i^+(x) \sigma_i(x)  \right) - f(x) \right)  \\
 & \qquad\qquad\quad +
 \frac{1}{h_i^-(x) h_i(x)}\exp\left(-  \frac{h_i^-(x)}{2} \tilde \mu(x)^T \sigma_i(x) \right)  \left( \vphantom{\sum} f\left( x- h_i^-(x)  \sigma_i(x) \right) - f(x) \right)  
\end{aligned}
}
\end{equation}

\medskip

\noindent
The generator $Q_c$ is a weighted central finite difference scheme in the directions of the columns of the noise coefficient matrix evaluated at the state $x$.  When the noise coefficient matrix is additive (state-independent) and isotropic (direction-independent), then the state space of this generator is gridded and the generator can be represented as matrix.   In general, we stress that the state space of this approximation is gridless, and thus the generator cannot be represented as a matrix.  Nevertheless it can be realized by using the SSA algorithm.   A realization of this process is a continuous-time random walk with a user-defined jump size, jump directions given by the columns of the noise coefficient matrix, and a mean holding time that is determined by the discretized generator.  This generator extends realizable spatial discretizations appearing in previous work \cite{elston1996numerical, WaPeEl2003,Ph2008,MeScVa2009,LaMeHaSc2011} to the SDE \eqref{eq:sde}, which generically has a non-symmetric infinitesimal generator $L$ and multiplicative noise.

The next proposition states that this generator satisfies (Q1) and (Q2).

\begin{prop} \label{prop:Qc_accuracy}
Assuming that: \[
|h_i^+(x) - h_i^-(x)| \vee (h_i^+(x))^2 \vee (h_i^-(x))^2 \le h^2 \quad \text{for all $x \in \Omega$}
\]
then the operator $Q_c$ in \eqref{eq:Qc} satisfies (Q2) and (Q1) with $p=2$.  
\end{prop}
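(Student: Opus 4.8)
The plan is to verify the two conditions separately, with (Q2) being essentially immediate and (Q1) requiring a Taylor expansion whose error terms are controlled precisely by the stated hypothesis. For (Q2), observe that every off-diagonal rate in $Q_c$ is a reciprocal of a product of the positive quantities $h_i^\pm(x)$ and $h_i(x)$ times an exponential factor. Since $h_i^\pm(x),h_i(x)>0$ and $\exp(\cdot)>0$ always, each such rate is strictly positive, so the realizability condition holds trivially.

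For (Q1), fix $x$ and abbreviate $\sigma_i=\sigma_i(x)$, $h_i^\pm=h_i^\pm(x)$, $h_i=h_i(x)$, $\tilde\mu=\tilde\mu(x)$. First I would Taylor expand each displaced evaluation to third order,
\[
f(x\pm h_i^\pm\sigma_i)-f(x)=\pm h_i^\pm\,Df(x)^T\sigma_i+\tfrac{1}{2}(h_i^\pm)^2\,\sigma_i^TD^2f(x)\sigma_i+O((h_i^\pm)^3),
\]
and expand each exponential weight as $\exp(\pm\tfrac{h_i^\pm}{2}\tilde\mu^T\sigma_i)=1\pm\tfrac{h_i^\pm}{2}\tilde\mu^T\sigma_i+O((h_i^\pm)^2)$. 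Multiplying these, dividing by $h_i^\pm h_i$, and adding the forward and backward contributions for channel $i$ (using $h_i^++h_i^-=2h_i$ so that the $O(1)$ first-derivative pieces cancel and the $\frac{h_i^\pm}{2}$ coefficients combine into $h_i$), one obtains
\[
\sigma_i^TD^2f(x)\sigma_i+(\tilde\mu^T\sigma_i)(\sigma_i^TDf(x))+\frac{(h_i^+)^2-(h_i^-)^2}{h_i}\,R_i+\frac{O((h_i^+)^3+(h_i^-)^3)}{h_i},
\]
where $R_i$ collects the common third-order coefficients of the two expansions.

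The crucial observation, and the step I expect to be the main obstacle, is that the leading error is \emph{not} automatically $O(h^2)$: a naive bound on the surviving $(h_i^\pm)^2$-terms divided by $h_i$ would yield only first order. This is exactly where the hypothesis enters. Factoring the difference of squares gives $\frac{(h_i^+)^2-(h_i^-)^2}{h_i}=\frac{(h_i^+-h_i^-)(h_i^++h_i^-)}{h_i}=2(h_i^+-h_i^-)$, which by $|h_i^+-h_i^-|\le h^2$ is $O(h^2)$. For the remaining cubic remainder, since $(h_i^\pm)^2\le h^2$ forces $h_i^\pm\le h$ while $h_i\ge h_i^\pm/2$, one has $(h_i^\pm)^3/h_i=(h_i^\pm)^2\cdot(h_i^\pm/h_i)\le 2h^2$, so it too is $O(h^2)$.

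Finally I would sum over $i$ and identify the limit with $Lf$. Because the $\sigma_i$ are the columns of $\sigma$, one has $\sum_i\sigma_i\sigma_i^T=\sigma\sigma^T=M$; hence the first term sums to $\sum_i\tr(D^2f\,\sigma_i\sigma_i^T)=\tr(D^2f\,M)$, the diffusion part of $L$. The second term sums to $\tilde\mu^T\big(\sum_i\sigma_i\sigma_i^T\big)Df=\tilde\mu^TM\,Df$, and since $M$ is symmetric with $M\tilde\mu=\mu$ we have $\tilde\mu^TM=\mu^T$, so this equals $Df^T\mu$, the drift part of $L$. Combining, $Q_cf(x)=Lf(x)+O(h^2)$ uniformly in $x$, which is (Q1) with $p=2$.
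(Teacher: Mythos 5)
Your proof is correct and takes essentially the same route as the paper's: Taylor expansion of both the displaced function values and the exponential weights, cancellation of the leading first-derivative terms via $h_i^+ + h_i^- = 2h_i$, control of the surviving second-order mismatch and cubic remainders through the stated hypothesis, and identification of the limit with $Lf$ via $\sum_i \sigma_i\sigma_i^T = M$ and $M\tilde\mu = \mu$. If anything, your explicit factorization $\frac{(h_i^+)^2-(h_i^-)^2}{h_i} = 2(h_i^+-h_i^-)$ and the bound $(h_i^\pm)^3/h_i \le 2h^2$ spell out the role of the hypothesis more transparently than the paper's brief appeal to the identities $a^2-b^2=(a+b)(a-b)$ and $a^3+b^3=(a+b)(a^2+b^2-ab)$.
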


\begin{proof}
This proof is somewhat informal.  A formal proof requires hypotheses on the SDE and the test function $f(x)$.  For clarity of presentation, we defer these technical issues to Chapter~\ref{chap:analysis}.  Here we concentrate on showing in simple terms that the approximation meets the requirements (Q1) and (Q2).  

Let $D^k f(x)(u_1, \dotsc, u_k)$ denote the $k$th-derivative of $f(x)$ applied to the vectors $u_1,\dotsc,u_k$ for any $k \ge 1$.  Note that the rate functions appearing in $Q_c$ are all positive, and thus, the realizability condition (Q2) is satisfied. To check second-order accuracy or (Q1) with $p=2$, Taylor expand the forward and backward finite differences appearing in the function $Q_cf(x)$ to obtain: \begin{align*}
  Q_c f(x) = \sum_{i=1}^n \frac{1}{h_i(x)}  Df(x)(  \sigma_i(x)) & \left( \exp\left( \tilde \mu(x)^T \sigma_i(x) \frac{h_i^+(x)}{2} \right)  \right. \\
&  \left.  - \exp\left( - \tilde \mu(x)^T \sigma_i(x) \frac{h_i^-(x)}{2} \right)  \right)  \\
 + \frac{1}{2 h_i(x)} D^2 f(x)  ( \sigma_i(x),  \sigma_i(x)) &  \left(h_i^+(x) \exp\left( \tilde \mu(x)^T \sigma_i(x) \frac{h_i^+(x)}{2} \right)  \right. \\
&  \left. + h_i^-(x) \exp\left( - \tilde \mu(x)^T \sigma_i(x) \frac{h_i^-(x)}{2} \right)  \right)  \\
 + \frac{1}{6 h_i(x)} D^3 f(x)  ( \sigma_i(x),  \sigma_i(x), \sigma_i(x)) &  \left( h_i^+(x)^2 \exp\left( \tilde \mu(x)^T \sigma_i(x) \frac{h_i^+(x)}{2} \right) \right. \\
& \left. - h_i^-(x)^2 \exp\left( - \tilde \mu(x)^T \sigma_i(x) \frac{h_i^-(x)}{2} \right)  \right) + O(h^2) \;.
\end{align*}
In order to obtain a crude estimate of the remainder term in this expansion,  we used the identity $a^3 + b^3 = (a+b) (a^2 + b^2 - ab)$ with $a=h^+_i(x)$ and $b=h^-_i(x)$, and the relation $h_i(x) = (h_i^+(x) + h_i^-(x))/2$.  A similar Taylor expansion of the exponentials yields, \begin{align*}
  Q_c f(x) &= \sum_{i=1}^n Df(x)^T  \sigma_i(x)  \tilde \mu(x)^T \sigma_i(x) + D^2 f(x)  ( \sigma_i(x),  \sigma_i(x)) +O(h^2) \\
&=   \tr\left( \left( \vphantom{\sum} Df(x) \tilde \mu(x)^T + D^2 f(x) \right)  \underbrace{\sum_{i=1}^n \sigma_i \sigma_i^T}_{=M(x)}  \right)  +O(h^2)  \\
&= \tr\left( Df(x) \mu(x)^T + D^2 f(x) M(x) \right)  +O(h^2) \\
&= L f(x) + O(h^2) 
 \end{align*}
where we used the cyclic property of the trace operator, the basic linear algebra fact that the product $\sigma \sigma^T$ is the sum of the outer products of the columns of $\sigma$, and elementary identities to estimate the remainder terms, like $a^2 - b^2 = (a+b) (a-b)$ with $a=h^+_i(x)$ and $b=h^-_i(x)$. Thus, $Q_c$ is second-order accurate as claimed.
\end{proof}


The analysis in Chapter~\ref{chap:analysis} focusses on the stability, accuracy and complexity of the generator $Q_c$, mainly because it is second-order accurate.   However, let us briefly consider two simpler alternatives, which are first-order accurate.  To avoid evaluating the exponential function appearing in the transition rates of $Q_c$ in \eqref{eq:Qc}, an upwinded generator is useful 
\begin{equation} \label{eq:Qu} \begin{aligned}
 & Q_u f(x) =
 \sum_{i=1}^n
 \frac{( \tilde \mu(x)^T \sigma_i(x)) \vee 0}{h_i^+(x)}  \left( f\left( x+ h_i^+(x) \sigma_i(x)  \right) - f(x) \right)  \\
 & \qquad\quad \quad-
 \frac{(\tilde \mu(x)^T \sigma_i(x) ) \wedge 0}{h_i^-(x)}  \left( f\left( x- h_i^-(x)  \sigma_i(x) \right) - f(x) \right) \\
 & \qquad\quad\quad+ \frac{1}{h_i^+(x) h_i(x)}  \left( f\left( x+ h_i^+(x) \sigma_i(x)  \right) - f(x) \right) \\
 & \qquad\quad\quad+  \frac{1}{h_i^-(x) h_i(x)}  \left( f\left( x- h_i^-(x)  \sigma_i(x) \right) - f(x) \right) \;.
\end{aligned}
\end{equation}
Let $\{ e_i \}_{i=1}^n$ be the standard basis of $\mathbb{R}^n$.  To avoid solving the linear system in \eqref{eq:transformed_drift}, an upwinded generator that uses the standard basis to discretize the drift term in $L$ is useful
\begin{equation} \label{eq:Quu}
\begin{aligned}
 & Q_{uu} f(x) =
 \sum_{i=1}^n
 \frac{( \mu(x)^T e_i) \vee 0}{h_i^+(x)} \left( f\left( x+ h_i^+(x) e_i  \right) - f(x) \right)  \\
 & \qquad\quad \quad-
 \frac{( \mu(x)^T e_i ) \wedge 0}{h_i^-(x)}  \left( f\left( x- h_i^-(x)  e_i \right) - f(x) \right) \\
 & \qquad \quad\quad+ \frac{1}{h_i^+(x) h_i(x)}  \left( f\left( x+ h_i^+(x) \sigma_i(x)  \right) - f(x) \right) \\
 & \qquad \quad\quad+  \frac{1}{h_i^-(x) h_i(x)}  \left( f\left( x- h_i^-(x)  \sigma_i(x) \right) - f(x) \right) \;.
\end{aligned}
\end{equation}

\begin{prop} \label{accuracy_of_Qu}
Assuming that: \[
|h_i^+(x) - h_i^-(x)| \vee h_i^+(x) \vee h_i^-(x) \le h \quad \text{for all $x \in \Omega$}
\]
then the operators $Q_u$ and $Q_{uu}$ satisfy (Q2) and (Q1) with $p=1$.  
\end{prop}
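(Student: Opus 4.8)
The plan is to mirror the argument used for $Q_c$ in Proposition~\ref{prop:Qc_accuracy}, but carried out only to first order, so that the exponential weights are replaced by the cruder upwind weights. Realizability (Q2) should fall out by inspection. For $Q_u$, the two reaction channels associated with direction $i$ are $x+h_i^+(x)\sigma_i(x)$ and $x-h_i^-(x)\sigma_i(x)$, and collecting the coefficients of $f(x+h_i^+\sigma_i)-f(x)$ and $f(x-h_i^-\sigma_i)-f(x)$ gives rates $\frac{(\tilde\mu^T\sigma_i)\vee 0}{h_i^+}+\frac{1}{h_i^+ h_i}$ and $-\frac{(\tilde\mu^T\sigma_i)\wedge 0}{h_i^-}+\frac{1}{h_i^- h_i}$, both of which are sums of non-negative terms since $a\vee 0\ge 0$, $-(a\wedge 0)\ge 0$, and the step sizes are positive. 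For $Q_{uu}$ the drift is discretized along the coordinate directions $e_i$ while the diffusion is discretized along $\sigma_i$, giving four (generically distinct) channels per direction with rates $\frac{(\mu^T e_i)\vee 0}{h_i^+}$, $-\frac{(\mu^T e_i)\wedge 0}{h_i^-}$, $\frac{1}{h_i^+ h_i}$, $\frac{1}{h_i^- h_i}$, again all non-negative. Hence (Q2) holds in both cases.

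For first-order accuracy I would Taylor expand each finite difference about $x$ and keep terms up to the order that survives division by a step size. The central (diffusion) part is common to both operators: expanding $f(x\pm h_i^\pm\sigma_i)-f(x)$ against the coefficients $\frac{1}{h_i^\pm h_i}$, the first-derivative contributions are $\pm\frac{1}{h_i}Df(x)^T\sigma_i$ and cancel exactly, while the second-derivative contributions combine with total weight $\frac{h_i^+ + h_i^-}{2h_i}=1$ to give precisely $D^2 f(x)(\sigma_i,\sigma_i)$; the next term carries the factor $\frac{(h_i^+)^2-(h_i^-)^2}{6h_i}=\frac{h_i^+ - h_i^-}{3}$, which is $O(h)$ under the hypothesis and is absorbed into the remainder. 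Summing over $i$ and using $\sum_i\sigma_i\sigma_i^T=M(x)$ together with the cyclic property of the trace yields $\tr(D^2f(x)M(x))+O(h)$, the diffusion part of $Lf$.

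The drift parts are then treated separately. For $Q_u$, writing $b_i=\tilde\mu(x)^T\sigma_i(x)$, the leading first-derivative term of the upwinded pair is $(b_i\vee 0+b_i\wedge 0)\,Df(x)^T\sigma_i = b_i\,Df(x)^T\sigma_i$, using the identity $a\vee 0+a\wedge 0=a$; the associated second-derivative terms carry an explicit factor $h_i^\pm$ and are $O(h)$. Summing, $\sum_i(\tilde\mu^T\sigma_i)(Df^T\sigma_i)=Df^T(\sum_i\sigma_i\sigma_i^T)\tilde\mu=Df^T M\tilde\mu=Df^T\mu$ by \eqref{eq:transformed_drift}, recovering the drift part of $Lf$. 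For $Q_{uu}$ the same cancellation gives leading term $(\mu^T e_i)\,Df(x)^T e_i=\mu_i\,\partial_i f$, so $\sum_i\mu_i\partial_i f=Df^T\mu$ directly, with no need to invert $M$. In both cases $Qf(x)=Lf(x)+O(h)$, which is (Q1) with $p=1$.

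The computation is routine; the only point requiring care is the accounting of which error terms genuinely scale like $h$. Unlike the second-order scheme $Q_c$, the symmetry condition $|h_i^+-h_i^-|\le h$ is not binding here, since it is already implied by $h_i^\pm\le h$; it is stated only to parallel Proposition~\ref{prop:Qc_accuracy}, where the analogous bound $|h_i^+-h_i^-|\le h^2$ is precisely what controls the $\frac{h_i^+-h_i^-}{3}D^3f$ remainder identified above. The \emph{substantive} difference from the $Q_c$ proof is that upwinding the drift, rather than folding it into an exponential weight on the central stencil, destroys the exact cancellation of the first-order error in the drift term, which is exactly why $Q_u$ and $Q_{uu}$ are only first-order accurate.
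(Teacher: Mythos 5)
Your proof is correct and follows exactly the route the paper intends: the paper omits this proof, stating only that it is ``similar to the proof of Proposition~\ref{prop:Qc_accuracy},'' and your argument is precisely that Taylor-expansion argument carried to first order, with (Q2) by inspection of the rates, the identity $a\vee 0 + a\wedge 0 = a$ handling the drift, and $\sum_i \sigma_i\sigma_i^T = M$ with $M\tilde\mu=\mu$ closing the computation. Your side observation that $|h_i^+-h_i^-|\le h$ is implied by $h_i^\pm\le h$ here (unlike the binding $h^2$ condition in the second-order case) is also accurate.
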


The proof of Proposition~\ref{accuracy_of_Qu} is similar to the proof of Proposition~\ref{prop:Qc_accuracy} and therefore omitted.

\section{Scaling of Approximation with System Size} \label{sec:scaling}

Here we briefly discuss the complexity of the SSA induced by the generator in \eqref{eq:Qc}.  We will return to this point in Chapter~\ref{chap:analysis} where we quantify the complexity of sample paths as a function of the spatial step size parameter.  

First we compare the complexity of the SSA to a standard numerical PDE solver for the Kolmogorov equation.  When the grid is bounded, even though the matrix associated to standard PDE discretizations (like finite difference, volume and element methods) is often sparse, the computational effort required to construct its entries in order to time integrate the resulting semi-discrete ODE (either exactly or approximately) scales exponentially with the number of dimensions.   On an unbounded grid, this ODE is infinite-dimensional and a standard numerical PDE approach does not apply. 

In contrast, note from Algorithm~\ref{algo:ssa} that an SSA simulation of the Markov process induced by the generator in \eqref{eq:Qc} takes as input the reaction rates and channels, which can be peeled off of  \eqref{eq:Qc}.  More to the point, both the spatial and temporal updates in each SSA step use local information.  (Put another way, in the case of a gridded state space, a step of the SSA method only requires computing a row of a typically sparse matrix associated to the generator, not the entire matrix.)    Thus, the cost of each SSA step in high dimension is much less severe than standard numerical PDE solvers and remains practical.   

Next consider the average number of SSA steps required to reach a finite time.  Referring to Algorithm~\ref{algo:ssa}, the random time increment $t_1 - t_0$ gives the amount of time the process spends in state $X(t_0)$ before jumping to a new state $X(t_1)$.  In each step of the SSA method, only a single degree of freedom is updated because the process moves from the current position to a new position in the direction of only one of the $n$ columns of the noise matrix.  In other words, the method involves local moves, and in this way, it is related to splitting methods for SDEs and ODEs based on local moves, for example the per-particle splitting integrator proposed for Metropolis-corrected Langevin integrators in \cite{BoVa2012} or J-splitting schemes for Hamiltonian systems \cite{FeWa1994}.  However, the physical clock in splitting algorithms does not tick with each local move as it does in (Step 1) of the SSA method.  Instead the physical time in a splitting scheme does not pass until all of the local moves have happened.   In contrast, the physical clock ticks with each local move of the SSA method. This difference implies that the mean holding time of the SSA method is inversely proportional to dimension.   

The easiest way to confirm this point is to consider a concrete example, like evolving $n$ decoupled oscillators using the SSA induced by \eqref{eq:Qc}.  In this example, the mean time step for a global move of the system -- where each oscillator has taken a step --  cannot depend on dimension because the oscillators are decoupled.     For simplicity assume the SDE for each oscillator is $d Y = \mu(Y) dt + \sqrt{2} dW$ and the spatial step size in every reaction channel direction is uniform and equal to $h$, and let $x = (x_1, \dotsc, x_n)^T$.  Then, from \eqref{eq:Qc} the total jump rate is:   \[
\lambda(x) = \sum_{i=1}^n \lambda_i(x)
\]
where $\lambda_i(x)$ is the jump rate of the $i$th oscillator: \[
\lambda_i(x) = \frac{2}{h^2} \cosh(\mu(x_i) \frac{h}{2} ) \;.
\]
Expanding the total jump rate about $h=0$ yields \[
\lambda(x) = \frac{2n}{h^2} + \frac{1}{4} \sum_{i=1}^n \mu(x_i)^2 + O(h^2) \;.
\] Similarly, expanding the mean holding time about $h=0$ yields \[
\lambda(x)^{-1} = \frac{h^2}{2 n} - \frac{h^4}{16 n^2} \sum_{i=1}^n \mu(x_i)^2 + O(h^6) \;.
\]
Thus, the total jump rate is roughly $O( (2 n) / h^2)$ and the mean holding time scales like $O(h^2 / (2 n))$ with dimension $n$, as expected.  

To summarize, we see that given the nature of the SSA method, accuracy necessitates a smaller mean time step for each local move and the overall complexity of the SSA method is no different from a splitting temporal integrator that uses local moves.   

%
%

\section{Generalization of Realizable Discretizations in nD} \label{sec:generalization}

Suppose the diffusion matrix can be written as a sum of rank-$1$ matrices: \begin{equation} \label{eq:assumption_on_M}
M(x) = \sum_{i=1}^s w_i(x) \eta_i(x) \eta_i(x)^T
\end{equation} where $\eta_i(x) \in \mathbb{R}^n$ is an $n$-vector and $w_i(x)\ge 0$ is a non-negative weight function for all $x \in \Omega$, and for $1 \le i \le s$.  Since the diffusion matrix $M(x)$ is positive definite, it must be that $s \ge n$ and that the set of vectors $\{ \eta_i(x) \}_{i=1}^s$ spans $\mathbb{R}^n$ for every $x \in \mathbb{R}^n$.   For example, since $M(x) = \sigma(x) \sigma(x)^T$, the diffusion matrix can always be put in the form of \eqref{eq:assumption_on_M} with $s=n$, unit weights $w_i(x) = 1$, and $\eta_i(x) = \sigma_i(x)$ for $i=1,\dotsc,n$, or equivalently, \[
M(x) = \sum_{i=1}^n \sigma_i(x) \sigma_i(x)^T \;.
\] 
The following generator generalizes \eqref{eq:Qc} to diffusion matrices of the form \eqref{eq:assumption_on_M}.

\medskip

\begin{equation} \label{eq:generalizedQc}
\boxed{
\begin{aligned}
 \mathcal{Q}_c f(x) =
 \sum_{i=1}^n
\frac{w_i(x)}{h_i^+(x) h_i(x)}\exp\left(\quad \frac{h_i^+(x)}{2} \tilde \mu(x)^T \eta_i(x)  \right) &\left( f\left( x+ h_i^+(x) \eta_i(x)  \right) - f(x) \right)  \\
  \qquad\qquad +
 \frac{w_i(x)}{h_i^-(x) h_i(x)}\exp\left(-  \frac{h_i^-(x)}{2} \tilde \mu(x)^T \eta_i(x) \right) & \left( f\left( x- h_i^-(x)  \eta_i(x) \right) - f(x) \right)  \;.
\end{aligned}
}
\end{equation}

\medskip

\noindent
This generator is a weighted, central finite difference approximation along the directions given by the vectors $\{\eta_i(x)\}_{i=1}^s$. To be sure $\tilde \mu$ in \eqref{eq:generalizedQc} is the transformed drift field introduced earlier in \eqref{eq:transformed_drift}.  The next proposition states that this generalization of $Q_c$ meets the requirements (Q1) and (Q2).    

\begin{prop} \label{prop:generalized_Qc_accuracy}
Suppose that the diffusion matrix satisfies \eqref{eq:assumption_on_M}.  Assuming that: \[
|h_i^+(x) - h_i^-(x)| \vee (h_i^+(x))^2 \vee (h_i^-(x))^2 \le h^2 \quad \text{for all $x \in \Omega$}
\]
then the operator $\mathcal{Q}_c$ in \eqref{eq:generalizedQc} satisfies (Q2) and (Q1) with $p=2$.  
\end{prop}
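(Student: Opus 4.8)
The plan is to follow verbatim the strategy used for Proposition~\ref{prop:Qc_accuracy}, since $\mathcal{Q}_c$ is obtained from $Q_c$ by replacing each column $\sigma_i(x)$ with the direction $\eta_i(x)$, inserting the weight $w_i(x)$, and summing over the rank-one terms of the decomposition \eqref{eq:assumption_on_M}. Realizability (Q2) requires essentially no work: every transition rate in \eqref{eq:generalizedQc} is a product of $w_i(x)\ge 0$, reciprocals of positive step sizes, and an exponential, hence is non-negative. The content is therefore entirely in establishing second-order accuracy, i.e.\ (Q1) with $p=2$.

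For (Q1), I would Taylor expand each forward difference $f(x+h_i^+(x)\eta_i(x))-f(x)$ and backward difference $f(x-h_i^-(x)\eta_i(x))-f(x)$ to third order along the direction $\eta_i(x)$, while simultaneously expanding the exponential weights $\exp(\pm\tfrac{h_i^{\pm}}{2}\tilde\mu^T\eta_i)$. Grouping the outcome by the order of the derivative of $f$ produces three families of terms: a first-derivative family with coefficient $\tfrac{w_i}{h_i}\bigl(\exp(\tfrac{h_i^+}{2}\tilde\mu^T\eta_i)-\exp(-\tfrac{h_i^-}{2}\tilde\mu^T\eta_i)\bigr)$, a second-derivative family with coefficient $\tfrac{w_i}{2h_i}\bigl(h_i^+\exp(\cdots)+h_i^-\exp(\cdots)\bigr)$, and a third-derivative family whose coefficient involves $(h_i^+)^2\exp(\cdots)-(h_i^-)^2\exp(\cdots)$. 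This is exactly the block structure that appeared in the proof of Proposition~\ref{prop:Qc_accuracy}.

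The next step is to expand the exponentials once more and read off the leading behaviour of each coefficient. Using $h_i=(h_i^++h_i^-)/2$, the first-derivative coefficient reduces to $w_i\,\tilde\mu(x)^T\eta_i(x)$ and the second-derivative coefficient reduces to $w_i$; the third-derivative family, together with every correction term generated by the exponentials, is $O(h^2)$ under the hypothesis $|h_i^+-h_i^-|\vee(h_i^+)^2\vee(h_i^-)^2\le h^2$ (the crucial identities being $(h_i^+)^2-(h_i^-)^2=2h_i(h_i^+-h_i^-)$ and $a^3+b^3=(a+b)(a^2+b^2-ab)$, just as before). Collecting the surviving terms gives
\[
\mathcal{Q}_c f(x)=\sum_{i} w_i(x)\Bigl(Df(x)^T\eta_i(x)\,\tilde\mu(x)^T\eta_i(x)+D^2f(x)(\eta_i(x),\eta_i(x))\Bigr)+O(h^2).
\]
Finally I would invoke the rank-one decomposition \eqref{eq:assumption_on_M}: since $\sum_i w_i\eta_i\eta_i^T=M(x)$, the first family becomes $Df^T M\tilde\mu=Df^T\mu$ by the defining relation \eqref{eq:transformed_drift}, and the second family becomes $\tr(D^2f\,M)$. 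These are precisely the two terms of $L$ in \eqref{eq:generator}, yielding $\mathcal{Q}_c f=Lf+O(h^2)$.

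The one genuinely delicate point, and the step I expect to require the most care, is the bookkeeping of the remainder: one must verify that every term discarded in passing from the exact expansion to the leading coefficients is controlled by $h^2$ uniformly in $x$. The step-size hypothesis is tailored for exactly this, but it is the place where a careless estimate could hide a spurious $O(h)$ contribution, for instance from the asymmetry $h_i^+\neq h_i^-$ in the first-derivative family. As in Proposition~\ref{prop:Qc_accuracy}, a fully rigorous treatment would also need growth hypotheses on $f$, $\mu$, and $\sigma$ to make the Taylor remainders and the bound uniform; I would defer those technical conditions to Chapter~\ref{chap:analysis} and keep the present argument at the same informal level.
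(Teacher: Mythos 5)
Your proposal is correct and follows essentially the same route as the paper's own proof: realizability from $w_i(x)\ge 0$, then the same two-stage Taylor expansion (finite differences grouped into first-, second-, and third-derivative families, followed by expansion of the exponential weights) with the identities $a^2-b^2=(a+b)(a-b)$ and $a^3+b^3=(a+b)(a^2+b^2-ab)$ controlling the remainder, and finally the rank-one decomposition \eqref{eq:assumption_on_M} together with $M\tilde\mu=\mu$ from \eqref{eq:transformed_drift} to recover $Lf(x)+O(h^2)$. The paper keeps the argument at the same informal level you adopt, deferring the uniform remainder estimates and regularity hypotheses to the later analysis, so nothing is missing.
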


\begin{proof} The operator $\mathcal{Q}_c$ satisfies the realizability condition (Q2) since $w_i(x) \ge 0$ by hypothesis on the diffusion field $M(x)$.  
To check accuracy, Taylor expand the forward/backward differences appearing in $\mathcal{Q}_cf(x)$ to obtain: \begin{gather*}
  \mathcal{Q}_c f(x) = \sum_{i=1}^s \frac{w_i}{h_i}  Df^T  \eta_i  \left( \exp\left( \tilde \mu^T \eta_i \frac{h_i^+}{2} \right)  - \exp\left( - \tilde \mu^T \eta_i \frac{h_i^-}{2} \right)  \right)  \\
 + \frac{w_i}{2 h_i} D^2 f  ( \eta_i,  \eta_i)   \left(h_i^+ \exp\left( \tilde \mu^T \eta_i \frac{h_i^+}{2} \right) + h_i^- \exp\left( - \tilde \mu^T \eta_i \frac{h_i^-}{2} \right)  \right)  \\
+ \frac{w_i}{6 h_i} D^3 f  ( \eta_i,  \eta_i, \eta_i)   \left( (h_i^+)^2 \exp\left( \tilde \mu^T \eta_i \frac{h_i^+}{2} \right) - (h_i^-)^2 \exp\left( - \tilde \mu^T \eta_i \frac{h_i^-}{2} \right)  \right) + O(h^2) \;.
\end{gather*}
A similar Taylor expansion of the exponentials yields, \begin{align*}
  \mathcal{Q}_c f(x) &= \sum_{i=1}^s w_i Df^T  \eta_i  \tilde \mu^T \eta_i + w_i D^2 f  ( \eta_i,  \eta_i) +O(h^2) \\
&=   \tr\left( ( Df \tilde \mu^T + D^2 f ) \left( \underbrace{\sum_{i=1}^s w_i \eta_i \eta_i^T}_{=M(x)} \right) \right)  +O(h^2)  \\
&= \tr\left( Df(x) \; \mu(x)^T + D^2 f(x) \; M(x) \right)  +O(h^2) \\
&= Lf(x) + O(h^2) 
 \end{align*}
where we used \eqref{eq:assumption_on_M}. Thus, $\mathcal{Q}_c$ is second-order accurate as claimed. 
\end{proof}

As an application of Proposition~\ref{prop:generalized_Qc_accuracy} we next show  that realizable discretizations on gridded state spaces are possible to construct if the diffusion matrix of the SDE is diagonally dominant.

%
%

\section{Weakly Diagonally Dominant Case} \label{sec:diagonally_dominant_case}

 Here we extend the upwinded finite-difference approximation $\tilde Q_c$ in \eqref{eq:tQc_2d} from two to many dimensional SDEs assuming that the diffusion matrix is weakly diagonally dominant.   We prove that this extension is realizable and second-order accurate under this assumption.  Let us quickly recall what it means for a matrix to be weakly diagonally dominant.

\begin{defn}
An $n \times n$ matrix $A$ is {\em weakly diagonally dominant} if there exists an $n$-vector $y = (y_1, \dotsc, y_n)^T$ with positive entries such that \begin{equation} \label{eq:weakly_diagonally_dominant}
|A_{ii}| y_i \ge \sum_{j \ne i} |A_{ij}| y_j 
\end{equation} for all $i \in \{1,\dotsc,n \}$.    
\end{defn}

The following lemma is straightforward to derive from this definition.

\begin{lemma} \label{lem:pap}
Let $A$ be a square matrix satisfying \eqref{eq:weakly_diagonally_dominant}.  Then there exists a positive diagonal matrix $P$ such that the square matrix $\tilde A = PAP$  is diagonally dominant. 
\end{lemma}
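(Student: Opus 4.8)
The plan is to produce the matrix $P$ explicitly rather than argue for its existence abstractly. Let $y = (y_1, \dotsc, y_n)^T$ be the positive vector supplied by the hypothesis that $A$ is weakly diagonally dominant, so that \eqref{eq:weakly_diagonally_dominant} holds for every index $i$. I would simply take $P = \diag(y_1, \dotsc, y_n)$, which is a positive diagonal matrix precisely because every $y_i > 0$.

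The core of the argument is a one-line computation of the entries of the symmetric congruence $\tilde A = PAP$. Since $P$ is diagonal, $\tilde A_{ij} = y_i\, A_{ij}\, y_j$ for all $i,j$; in particular $\tilde A_{ii} = y_i^2 A_{ii}$ while $\tilde A_{ij} = y_i y_j A_{ij}$ for $i \ne j$. To establish that $\tilde A$ is (row) diagonally dominant I would verify, for each fixed $i$, the inequality $|\tilde A_{ii}| \ge \sum_{j \ne i} |\tilde A_{ij}|$. Substituting the entries turns this into $y_i^2 |A_{ii}| \ge y_i \sum_{j \ne i} y_j |A_{ij}|$, and dividing through by the positive factor $y_i$ reduces it to $y_i |A_{ii}| \ge \sum_{j \ne i} y_j |A_{ij}|$, which is exactly the weak diagonal dominance inequality \eqref{eq:weakly_diagonally_dominant}. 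Hence every row of $\tilde A$ is dominant and the conclusion follows.

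There is no genuine obstacle here: the content of the lemma is that the weighting vector appearing in the definition of weak diagonal dominance can be absorbed into a symmetric scaling of the matrix. The only points demanding a moment of care are that the scaling must be applied on \emph{both} sides (so that each off-diagonal entry acquires the product $y_i y_j$ while the diagonal acquires $y_i^2$), and that positivity of $y_i$ is what licenses cancelling the common factor $y_i$ without reversing the inequality. Both are immediate, so the proof is complete once $P = \diag(y_1,\dotsc,y_n)$ is exhibited and this substitution is carried out.
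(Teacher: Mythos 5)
Your proof is correct, and it is exactly the ``straightforward'' argument the paper has in mind when it states the lemma without proof: taking $P = \diag(y_1,\dotsc,y_n)$ with $y$ the weight vector from \eqref{eq:weakly_diagonally_dominant}, the row-dominance inequality for $\tilde A = PAP$ reduces, after cancelling the positive factor $y_i$, to the hypothesis itself. Your choice of scaling is also the one consistent with the lemma's later use, where the step sizes are set via $\Delta_i = h/P_{ii}$.
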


Let $\Delta(x) = (\Delta_1(x), \dotsc, \Delta_n(x))^T$ be a field of spatial step sizes, which permit a different spatial step size for each coordinate axis.  
Let $\tilde \mu$ be the transformed drift field defined in \eqref{eq:transformed_drift}.  Here is a multi-dimensional generalization of $\tilde Q_c$
\begin{equation}  \label{eq:tQc}
\tilde{\mathcal{Q}}_c f(x) = \lambda(x) \;  \Ex ( f(x+ \xi_x) - f(x) ) 
\end{equation}
where the expectation is over the random vector $\xi_x$ which has a discrete probability distribution given by: \begin{equation}
\label{eq:xi_x}
\begin{aligned} 
 \xi_x \sim   \lambda(x)^{-1} & \left( \vphantom{\sum_{\substack{
            1\le i \le n\\
             j>i}}}  \sum_{i=1}^n \;  \delta( y_i^+) \exp\left( \frac{1}{2} \tilde \mu^T (y_i^+ - x) \right)  \; \left( \frac{M_{ii}}{\Delta_i^2} - \sum_{j \ne i} \frac{|M_{ij}|}{\Delta_i \Delta_j}  \right) \right. \\
& + \vphantom{\sum_{\substack{
            1\le i \le n\\
             j>i}}}  \sum_{i=1}^n \;  \delta( y_i^-) \exp\left( \frac{1}{2} \tilde \mu^T (y_i^- - x) \right)  \; \left( \frac{M_{ii}}{\Delta_i^2} - \sum_{j \ne i} \frac{|M_{ij}|}{\Delta_i \Delta_j}  \right)  \\
&  +  \sum_{\substack{ 
            1\le i \le n\\
             j>i}} \delta( y_{ij}^{+,d}) \; \exp\left( \frac{1}{2} \tilde \mu^T (y_{ij}^{+,d} - x) \right)  \; \left( \frac{M_{ij} \vee 0}{\Delta_i \Delta_j} \right) \\
&  +  \sum_{\substack{ 
            1\le i \le n\\
             j>i}} \delta( y_{ij}^{-,d}) \; \exp\left( \frac{1}{2} \tilde \mu^T (y_{ij}^{-,d} - x) \right)  \; \left( \frac{M_{ij} \vee 0}{\Delta_i \Delta_j} \right) \\
&   -  \sum_{\substack{
            1\le i \le n\\
             j>i}} \delta( y_{ij}^{+,a} ) \; \exp\left( \frac{1}{2} \tilde \mu^T (y_{ij}^{+,a} - x) \right)  \; \left( \frac{M_{ij} \wedge 0}{\Delta_i \Delta_j} \right)   \\
&   - \left.  \sum_{\substack{
            1\le i \le n\\
             j>i}} \delta( y_{ij}^{-,a} ) \; \exp\left( \frac{1}{2} \tilde \mu^T (y_{ij}^{-,a} - x) \right)  \; \left( \frac{M_{ij} \wedge 0}{\Delta_i \Delta_j} \right)  \right)
\end{aligned}
\end{equation} 
where $\delta(y)$ is a point mass concentrated at $y$, $\lambda(x)$ is a normalization constant, $\lambda(x)^{-1}$ is the mean holding time, and we have introduced the following reaction channels:
\begin{equation} \label{eq:2nsqrd_channels}
\begin{cases}
y_i^{\pm} =x \pm e_i \Delta_i \;,  ~~  1 \le i \le n  \\
y_{ij}^{\pm,d} = x \pm e_i \Delta_i \pm e_j \Delta_j \;, ~~  1 \le i \le n \;, ~~ j > i \\
y_{ij}^{\pm,a} = x \pm e_i \Delta_i \mp e_j \Delta_j \;, ~~  1 \le i \le n \;, ~~ j> i 
\end{cases}
\end{equation}
The superscripts $d$ and $a$ in the reaction channels above refer to the diagonal and anti-diagonal directions in the $ij$-plane, respectively.  

The following proposition states that $\tilde{\mathcal{Q}}_c$ satisfies (Q1) and (Q2).

\begin{prop}
Consider the generator $\tilde{\mathcal{Q}}_c$ given in \eqref{eq:tQc}. If for all $x \in \mathbb{R}^n$, the Cholesky factorization of the diffusion matrix $M(x)$ has at most two nonzero entries in every column, then there exists a field $\Delta$ such that $\tilde{\mathcal{Q}}_c$ in \eqref{eq:tQc} is realizable and second-order accurate. 
\end{prop}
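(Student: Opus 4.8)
The plan is to handle realizability (Q2) and accuracy (Q1) separately, and to show that the sole role of the Cholesky hypothesis is to secure realizability through weak diagonal dominance; accuracy needs nothing beyond positive definiteness of $M$. For accuracy I would argue exactly as in the proof of Proposition~\ref{prop:generalized_Qc_accuracy}. The forward and backward channels in \eqref{eq:2nsqrd_channels} are symmetric (the steps $\pm e_i \Delta_i$ have equal magnitude), so any field $\Delta$ with $\max_i \Delta_i(x) \le h$ turns \eqref{eq:tQc} into a weighted central scheme. Taylor expanding the $2n^2$ finite differences in \eqref{eq:xi_x} together with the exponential weights, the coordinate channels $y_i^\pm$ reproduce the pure second derivatives and the drift contribution, while the diagonal and anti-diagonal channels $y_{ij}^{\pm,d}, y_{ij}^{\pm,a}$ reproduce the mixed derivatives; the spurious pure second derivatives generated by the off-diagonal channels are cancelled precisely by the subtracted terms $-\sum_{j\ne i}|M_{ij}|/(\Delta_i\Delta_j)$ in the coordinate rates, so the coefficient of $\partial_i^2 f$ reconstructs $M_{ii}$ exactly, independently of the value of that coordinate rate. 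The first-order terms from the exponentials combine with the diffusive reconstruction through $M\tilde\mu=\mu$ to give $\tilde{\mathcal{Q}}_c f(x)=Lf(x)+O(h^2)$.

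For realizability I observe that the diagonal and anti-diagonal rates $\frac{M_{ij}\vee 0}{\Delta_i\Delta_j}$ and $-\frac{M_{ij}\wedge 0}{\Delta_i\Delta_j}$ are non-negative by construction and the exponential prefactors strictly positive, so only the coordinate rates need attention. Setting $y_i=1/\Delta_i$, the coordinate rate $\frac{M_{ii}}{\Delta_i^2}-\sum_{j\ne i}\frac{|M_{ij}|}{\Delta_i\Delta_j}$ is non-negative for every $i$ if and only if $M_{ii}y_i\ge\sum_{j\ne i}|M_{ij}|y_j$, which is exactly weak diagonal dominance \eqref{eq:weakly_diagonally_dominant} of $M(x)$ with positive weight vector $y$. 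Thus the proposition reduces to producing, for each $x$, a positive $y(x)$ witnessing weak diagonal dominance of $M(x)$; then $\Delta_i(x)=1/y_i(x)$, rescaled by a single positive scalar so that $\max_i\Delta_i(x)\le h$, gives both properties, since an overall rescaling changes no ratio $\Delta_i/\Delta_j$ and hence leaves realizability intact.

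The heart of the argument, and the step I expect to be the main obstacle, is deducing weak diagonal dominance from the Cholesky hypothesis. Write $M=LL^T$ with $L$ lower triangular and $L_{kk}>0$; the hypothesis says each column $k$ has, besides its positive diagonal entry, at most one further nonzero entry $L_{p_k,k}$ with $p_k>k$. Writing $M=\sum_k \ell_k\ell_k^T$ as a sum of column outer products, each $\ell_k$ has at most two nonzero entries, and a short computation shows that every off-diagonal $M_{ij}$ with $i>j$ receives a contribution from the single column $k=j$, being nonzero only when $p_j=i$, with $M_{ij}=L_{ij}L_{jj}$. Associating the edge $\{k,p_k\}$ to each column possessing an off-diagonal entry yields a graph on $\{1,\dots,n\}$ in which every edge joins a lower to a strictly higher index and each vertex is the lower endpoint of at most one edge; such a graph is acyclic, hence a forest.

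On a forest the one-per-edge constraints $L_{kk}\,y_k=|L_{p_k,k}|\,y_{p_k}$ can always be solved by rooting each tree, fixing the root weight, and propagating outward, acyclicity ruling out any consistency obstruction. With these balanced weights I would then check that weak diagonal dominance holds with equality at each vertex $i$: the ``upward'' term $|M_{i,p_i}|y_{p_i}=|L_{p_i,i}|L_{ii}y_{p_i}=L_{ii}^2 y_i$, and each ``downward'' term $|M_{ik}|y_k=|L_{ik}|L_{kk}y_k=L_{ik}^2 y_i$ for $k$ with $p_k=i$, so that $\sum_{j\ne i}|M_{ij}|y_j=\bigl(L_{ii}^2+\sum_{k:p_k=i}L_{ik}^2\bigr)y_i=M_{ii}y_i$. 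This establishes weak diagonal dominance (with equality, which the weak notion permits) for $M(x)$ at every $x$, completing the reduction and hence the proposition; the only genuinely new content is this forest/balanced-weight construction, the rest being the routine Taylor expansion already used for $\mathcal{Q}_c$.
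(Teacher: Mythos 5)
Your proposal is correct, but it reaches the result by a genuinely different route at the decisive step. The paper's proof is short and citation-driven: it invokes Theorem 9 of \cite{BoChPaTo2005} to convert the Cholesky sparsity hypothesis directly into weak diagonal dominance of $M(x)$, applies Lemma~\ref{lem:pap} to produce a positive diagonal $P(x)$ with $\tilde M = PMP$ diagonally dominant, sets $\Delta_i = h/P_{ii}$, and then --- rather than Taylor-expanding anything --- exhibits $M$ explicitly as a sum of $n^2$ rank-one matrices with non-negative coefficients along the channel directions \eqref{eq:2nsqrd_channels}, so that $\tilde{\mathcal{Q}}_c$ becomes a special case of $\mathcal{Q}_c$ in \eqref{eq:generalizedQc} with $h_i^{\pm}=h$ and both (Q1) with $p=2$ and (Q2) drop out of Proposition~\ref{prop:generalized_Qc_accuracy} at once. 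You do two things differently. First, you decouple accuracy from realizability by noting that the cancellation of the spurious pure second derivatives against the subtracted terms $-\sum_{j\ne i}|M_{ij}|/(\Delta_i\Delta_j)$ is an algebraic identity valid for any positive $\Delta$, irrespective of the sign of the coordinate rates; this is correct and isolates the Cholesky hypothesis as serving realizability alone. Second, and more substantially, you prove the needed direction of the cited theorem from scratch: your identification $M_{ij}=L_{ij}L_{jj}$ for $i>j$ (nonzero only when $p_j=i$) and $M_{ii}=L_{ii}^2+\sum_{k:\,p_k=i}L_{ik}^2$ is right, the edge set $\{k,p_k\}$ is indeed a forest (each vertex is the lower endpoint of at most one edge, since distinct columns have distinct lower endpoints, so the minimal-index vertex of a putative cycle yields a contradiction), the balance equations $L_{kk}y_k=|L_{p_k,k}|y_{p_k}$ propagate consistently on each tree, and your verification gives $\sum_{j\ne i}|M_{ij}|y_j = M_{ii}y_i$ at vertices with an upward edge and strict inequality at roots and isolated vertices --- which the weak notion \eqref{eq:weakly_diagonally_dominant} permits. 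The paper's route buys brevity and reuse of the machinery of \S\ref{sec:generalization}; yours buys self-containedness (the external citation is replaced by a transparent combinatorial argument) plus a sharper piece of information the paper does not record, namely that with the balanced weights the coordinate rates vanish identically at every non-root vertex, so the realizability achieved on this matrix class is tight. Your homogeneity remark (rescaling $\Delta$ by a positive scalar leaves the sign condition invariant, since it is equivalent to $M_{ii}y_i \ge \sum_{j\ne i}|M_{ij}|y_j$ with $y_i = 1/\Delta_i$) is needed and correct; and, like the paper, you leave the pointwise-in-$x$ construction without any regularity claim on $x\mapsto\Delta(x)$, which is all that the proposition's ``there exists a field $\Delta$'' demands.
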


\begin{proof}
This proof is an application of Proposition~\ref{prop:generalized_Qc_accuracy}.   To apply this proposition we show that if the Cholesky factorization of $M(x)$ has at most two nonzero entries per column, then $M(x)$ can be put in the form of \eqref{eq:assumption_on_M}.   According to Theorem 9 of  \cite{BoChPaTo2005}, the Cholesky factorization of a symmetric matrix with positive diagonals has at most two nonzero entries in every column if and only if it is weakly diagonally dominant.  Since $M(x)$ is symmetric positive definite by hypothesis, and since the Cholesky factorization of $M(x)$ has at most two nonzero entries per column also by hypothesis, $M(x)$ is weakly diagonally dominant for all $x \in \mathbb{R}^n$.  Hence, by Lemma~\ref{lem:pap} there exists a positive diagonal matrix $P(x)$ such that: \begin{equation} \label{eq:PMP}
\tilde M(x) = P(x) M(x) P(x)
\end{equation} is diagonally dominant for each $x \in \mathbb{R}^n$.  Define a field of step sizes $\Delta$ by setting $\Delta_i = h/P_{ii}$.  Hence, we have that: \begin{align*}
 M =& \sum_{i=1}^n M_{ii} e_i e_i^T + \sum_{j>i} M_{ij} (e_i e_j^T + e_j e_i^T ) \\ 
= & \sum_{i=1}^n ( \tilde M_{ii} - \sum_{j \ne i} | \tilde M_{ij} | ) \frac{(y_i^+ - x)}{h} \frac{(y_i^+ - x)}{h}^T \\
  + &\sum_{\substack{
            1\le i \le n\\
             j>i}}   (  \tilde M_{ij} \vee 0) \; \frac{(y_{ij}^{+,d} - x)}{h} \frac{(y_{ij}^{+,d} - x)}{h}^T \\
 - &\sum_{\substack{
            1\le i \le n\\
             j>i}}  ( \tilde M_{ij} \wedge 0) \;  \frac{(y_{ij}^{+,a} - x)}{h} \frac{(y_{ij}^{+,a} - x)}{h}^T            
\end{align*}
where we used the forward direction channels defined in \eqref{eq:2nsqrd_channels}.  Note that the coefficient of each rank one matrix in this last expression is non-negative because $\tilde M(x)$ is diagonally dominant.   Thus, $M$ is in the form of \eqref{eq:assumption_on_M} with $s=n^2$ and note that $\tilde{\mathcal{Q}}_c$ is a special case of $\mathcal{Q}_c$ in \eqref{eq:generalizedQc} with $h_i^{\pm}(x) = h$.  The desired result follows from Proposition~\ref{prop:generalized_Qc_accuracy}.
\end{proof}

We have the following corollary.

\begin{cor}
If the $n \times n$ diffusion matrix $M(x)$ is diagonally dominant for all $x \in \mathbb{R}^n$, then $\tilde{\mathcal{Q}}_c$ in \eqref{eq:tQc} is realizable on a gridded state space in $\R^n$.
\end{cor}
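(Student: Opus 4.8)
The plan is to treat the diagonally dominant case as the specialization of the preceding proposition in which the rescaling matrix $P$ of Lemma~\ref{lem:pap} may be taken to be the identity; this collapses the step-size field to a constant and confines all reaction channels to a single fixed lattice, which is exactly what ``gridded'' requires.

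First I would observe that diagonal dominance of $M(x)$ is precisely weak diagonal dominance \eqref{eq:weakly_diagonally_dominant} with the choice $y=(1,\dots,1)^T$, since $|M_{ii}|\ge\sum_{j\ne i}|M_{ij}|$ is \eqref{eq:weakly_diagonally_dominant} with all $y_i=1$. Consequently the hypotheses of the preceding proposition are met at every $x\in\R^n$, so $\tilde{\mathcal{Q}}_c$ is realizable and second-order accurate; this already supplies the (Q1)/(Q2) content, and the only remaining task is to verify that the state space is gridded.

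Next I would pin down the step sizes. When $A=M(x)$ is itself diagonally dominant, the identity matrix is an admissible choice of the positive diagonal matrix in Lemma~\ref{lem:pap}, because $\tilde M = IMI = M$ is diagonally dominant by assumption. Following the construction in the proof of the preceding proposition, the step sizes are then $\Delta_i = h/P_{ii} = h$ for every coordinate $i$ and every $x$. Since $\Delta$ is thus constant, independent of both $i$ and $x$, each channel in \eqref{eq:2nsqrd_channels} reduces to $y_i^{\pm}=x\pm h e_i$, $y_{ij}^{\pm,d}=x\pm h e_i\pm h e_j$, and $y_{ij}^{\pm,a}=x\pm h e_i\mp h e_j$, i.e.\ an integer combination of the fixed vectors $\{h e_k\}_{k=1}^n$ added to $x$. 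Hence, starting from any $x_0$, every state reachable under $\tilde{\mathcal{Q}}_c$ lies on the translated lattice $x_0 + h\mathbb{Z}^n$.

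Finally I would establish connectivity of this lattice to conclude it is gridded in the sense of \S\ref{sec:gridded_vs_gridless}. From \eqref{eq:xi_x} the rate attached to the axis channel $y_i^{\pm}$ is a strictly positive multiple of $M_{ii}-\sum_{j\ne i}|M_{ij}|\ge 0$, so when the dominance is strict the nearest-neighbor moves $x\mapsto x\pm h e_i$ are all available, and these alone join any two points of $x_0+h\mathbb{Z}^n$ by a finite path; the graph is therefore connected and the state space gridded. I expect this last connectivity step to be the only real obstacle: in the borderline case where some row satisfies \eqref{eq:weakly_diagonally_dominant} with equality, the corresponding axis rate can vanish and the pure nearest-neighbor argument fails. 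I would dispose of this either by invoking the diagonal and antidiagonal channels $y_{ij}^{\pm,d}$ and $y_{ij}^{\pm,a}$, which carry the off-diagonal mass $|M_{ij}|/h^2$ and restore the missing edges, or by reading ``diagonally dominant'' in the strict sense, under which all $2n$ axis rates are positive and connectivity is immediate.
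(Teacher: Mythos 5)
Your proposal is correct, and its mathematical core is the same as the paper's: both arguments reduce the corollary to Proposition~\ref{prop:generalized_Qc_accuracy} by putting $M$ in the form \eqref{eq:assumption_on_M} with $s=n^2$, non-negative weights, and constant step sizes $h_i^{\pm}(x)=h$. The paper does this by writing out the rank-one decomposition $M=\sum_{i}\bigl(M_{ii}-\sum_{j\ne i}|M_{ij}|\bigr)e_ie_i^T+\sum_{j>i}(M_{ij}\vee 0)(e_i+e_j)(e_i+e_j)^T-\sum_{j>i}(M_{ij}\wedge 0)(e_i-e_j)(e_i-e_j)^T$ directly and noting its coefficients are non-negative by diagonal dominance, whereas you invoke the preceding (weakly diagonally dominant) proposition together with Lemma~\ref{lem:pap} specialized to $P=I$; unwinding that proposition's proof with $P=I$ yields exactly the displayed decomposition, so the two routes coincide. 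Where you genuinely go beyond the paper is the connectivity check demanded by the definition of a gridded state space in \S\ref{sec:gridded_vs_gridless}: the paper's proof only observes that every channel in \eqref{eq:2nsqrd_channels} lands on $x_0+h\mathbb{Z}^n$ and asserts griddedness, while you correctly flag that at equality, $M_{ii}=\sum_{j\ne i}|M_{ij}|$, the axis rate in \eqref{eq:xi_x} vanishes. Your first fix (fall back on the diagonal/antidiagonal channels) is the right one, and strictness is not needed: positive definiteness of $M$ forces every degenerate row to retain a nonzero off-diagonal entry (otherwise $M_{ii}=0$), so each coordinate always participates in some active channel, and the state space --- understood as the set of points the chain can actually reach --- is the sublattice generated by the active channel vectors, which forms a connected graph. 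Note, however, that this sublattice may be strictly smaller than $x_0+h\mathbb{Z}^n$: for instance, with constant $M$ in $n=3$ given by $M_{ii}=2$ and $M_{ij}=1$ (positive definite, every row degenerate), all axis rates vanish and the reachable set is the checkerboard sublattice generated by the vectors $h(e_i+e_j)$, which is still connected; so the corollary holds without the strict reading, with ``grid'' interpreted as this (possibly proper) sublattice.
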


As far as we can tell,  realizable, finite difference discretizations of this type seem to have been first developed by H.~Kushner \cite{Ku1976A, Ku1976B, Ku1977}.

\begin{proof}
In this case the diffusion matrix can be written as:  \begin{align*}
 M  =& \sum_{i=1}^n (  M_{ii} - \sum_{j \ne i} |  M_{ij} | ) e_i e_i^T \\
   + &\sum_{\substack{
            1\le i \le n\\
             j>i}}   (   M_{ij} \vee 0) \;  (e_i + e_j) (e_i + e_j)^T   \\
 - &\sum_{\substack{
            1\le i \le n\\
             j>i}}  (  M_{ij} \wedge 0) \;  (e_i - e_j) (e_i - e_j)^T           
\end{align*}
and the coefficient of each rank one matrix in this last expression is non-negative because $M(x)$ is diagonally dominant by hypothesis.  Thus, $M$ is in the form of \eqref{eq:assumption_on_M} with $s=n^2$ and note that $\tilde{\mathcal{Q}}_c$ is a special case of $\mathcal{Q}_c$ in \eqref{eq:generalizedQc} with $h_i^{\pm}(x) = h$.  Note also that the state space is gridded since the jumps are either to nearest neighbor grid points: $x \pm h e_i$ for all $1 \le i \le n$, or to next to nearest neighbor grid points: $x \pm h e_i \pm h e_j$ and $x \pm h e_i \mp h e_j$ for all $1 \le i \le n$ and $j>i$.   
\end{proof}

\chapter{Examples \& Applications} \label{chap:numerics}

\section{Introduction} \label{sec:intro_to_examples}

In this chapter we apply realizable discretizations to \begin{enumerate}
\item a cubic oscillator in 1D with additive noise as described in \S\ref{sec:cubic_oscillator};
\item a log-normal process in 1D with multiplicative noise as described in \S\ref{sec:lognormal_1d};
\item the Cox-Ingersoll-Ross process in 1D with multiplicative noise as described in \S\ref{sec:cir_process};
\item a non-symmetric Ornstein-Uhlenbeck process as described in \S\ref{sec:planar_ou};
\item the Maier-Stein SDE in 2D with additive noise as described in \S\ref{sec:doublewell};
\item simulation of a particle in a planar square-well potential well in 2D with additive noise as described in \S\ref{sec:squarewell};
\item simulation of a particle in a worm-like chain potential well in 2D with additive noise as described in \S\ref{sec:wlc};
\item a log-normal process in 2D with multiplicative noise as described in \S\ref{sec:lognormal_2d};
\item a Lotka-Volterra process in 2D with multiplicative noise as described in \S\ref{sec:lv_process}; and,
\item a Brownian dynamics simulation of the collapse of a colloidal cluster in 39D with and without hydrodynamic interactions as described in \S\ref{sec:colloidal_cluster}.
\end{enumerate}
\noindent
For the 1D SDE problems, we numerically test two realizable spatial discretizations with a gridded state space.  The first uses an upwinded (resp.~central) finite difference method to approximate the first (resp.~second) order derivative in \eqref{eq:generator}.
\begin{equation} \label{eq:Qu_1d}
\begin{dcases}
(\tilde Q_u)_{i,i+1} =  \frac{1 }{ \delta x^+_i }  \left(  ( \mu_i \vee 0) +  \frac{ M_i }{\; \delta x_i}    \right)   \\
(\tilde Q_u)_{i,i-1} =   \frac{1 }{ \delta x^-_i } \left( - ( \mu_i\wedge 0 ) +  \frac{ M_i}{\; \delta x_i}    \right)  
\end{dcases}
\end{equation}
The second generator uses a weighted central finite difference scheme to approximate the derivatives in \eqref{eq:generator}.
\begin{equation} \label{eq:Qc_1d}
\begin{dcases}
(\tilde Q_c)_{i,i+1} =  \frac{1 }{ \delta x_i \; \delta x^+_i } M_i \exp\left(  \frac{\mu_i}{M_i} \; \frac{\delta x_i^+}{2} \right)    \\
(\tilde Q_c)_{i,i-1} =   \frac{1 }{ \delta x_i \; \delta x^-_i }  M_i \exp\left( -\frac{\mu_i}{M_i} \; \frac{\delta x_i^-}{2} \right) 
\end{dcases}
\end{equation}
These discretizations are slight modifications to the discretizations given in \eqref{eq:tQu_1d} and \eqref{eq:tQc_1d}, respectively.  For the planar SDE problems, we apply a 2D version of \eqref{eq:Qc_1d} given in \eqref{eq:tQc_2d}.  When the noise is additive, we use an evenly spaced grid.   When the noise is multiplicative, we use adaptive mesh refinement as described in \S\ref{sec:amr_1d} in 1D and \S\ref{sec:amr_2d} in 2D.  For the 39D SDE problem, we apply the generator on a gridless state space given in \eqref{eq:Qc}.   

%
%

\section{Cubic Oscillator in 1D with Additive Noise} \label{sec:cubic_oscillator}

This SDE problem is a concrete example of exploding numerical trajectories.  Consider \eqref{eq:sde} with $\Omega=\mathbb{R}$, $\mu(x) = -x^3$, and $\sigma(x) = 1$ i.e.~\begin{equation} \label{eq:co_sde}
dY=-Y^3 dt + \sqrt{2} dW \;, \quad Y(0) \in \mathbb{R} \;.
\end{equation}
The solution to \eqref{eq:co_sde} is geometrically ergodic with respect to a stationary distribution with density \begin{equation} \label{eq:co_nu}
\nu(x) =  Z^{-1} \exp( - x^4/4 ) \;, \quad \text{where $Z = \int_{\mathbb{R}} \exp(-x^4/4) dx$} \;.
\end{equation}
However, explicit methods are transient in this example since the drift entering \eqref{eq:co_sde} is only locally Lipschitz continuous. More concretely, let $\{ \tilde X_n \}$ denote a (discrete-time) Markov chain produced by forward Euler with time step size $h_t$.  Then, for any $h_t>0$ this Markov chain satisfies \[
\Ex_x \tilde X^2 _{ \lfloor t/h_t \rfloor} \to \infty \quad \text{as $t \to \infty$} \;,
\]
where $\Ex_x$ denotes expectation conditional on $\tilde X_0=x$.  (See Lemma 6.3 of \cite{HiMaSt2002} for a simple proof, and \cite{HuJeKl2012} for a generalization.)  Metropolizing explicit integrators, like forward Euler, mitigates this divergence.  Indeed, the (discrete-time) Markov chain produced by a Metropolis integrator $\{ \bar X_n \}$ is ergodic with respect to the stationary distribution of the SDE, and hence, \[
\Ex_x \bar X^2_{\lfloor t/h_t \rfloor} \to \int_{\mathbb{R}} x^2 \nu(x) dx \quad \text{as $t \to \infty$} \;.
\]
However, the rate of convergence of this chain to equilibrium is not geometric, even though the SDE solution is geometrically ergodic \cite{RoTw1996B,BoVa2010}.  The severity of this lack of geometric ergodicity was recently studied in \cite{BoHa2013}, where it was shown that the deviation from geometric ergodicity is exponentially small in the time step size.  This result, however, does not prevent the chain from ``getting stuck'' (rarely accepting proposal moves) in the tails of the stationary density in \eqref{eq:co_nu}.  

In this context we test the generators $\tilde Q_u$ in \eqref{eq:Qu_1d} and $\tilde Q_c$ in \eqref{eq:Qc_1d} on an infinite, evenly spaced grid on $\mathbb{R}$.  Figure~\ref{fig:cubic_oscillator_geometric_ergodicity} plots sample paths produced by the SSA induced by these generators.  The initial condition is large and positive.  For comparison, we plot a sample path (in light grey) of a Metropolis integrator with the same initial condition.  At this initial condition, the Metropolis integrator rarely accepts proposal moves and gets stuck for the duration of the simulation.  In contrast, the figure shows that the proposed approximations do not get stuck, which is consistent with Theorem~\ref{thm:geometric_ergodicity_Qc}. The inset in the figure illustrates that the time lag between jumps in the SSA method is small (resp.~moderate) at the tails (resp.~middle) of the stationary density.  This numerical result manifests that the mean holding time adapts to the size of the drift.  Note that the time lag for $\tilde Q_c$ is smaller than for $\tilde Q_u$.  In the next section, this difference in time lags is theoretically accounted for by an asymptotic analysis of the mean holding time.    Figure~\ref{fig:cubic_oscillator_accuracy} provides evidence that the approximations are accurate in representing the stationary distribution, mean first passage time, and committor function.  To produce this figure, we use results described in Chapter~\ref{chap:tridiagonal}.  In particular, in the scalar case, the numerical stationary density, mean first passage time and committor satisfy a three-term recurrence relation, which can be exactly solved as detailed in Sections~\ref{sec:nu_1D}, \ref{sec:mfpt_1D} and \ref{sec:committor_1D}, respectively.  We also checked that these solutions agree -- up to statistical error -- with empirical estimates obtained by an SSA simulation.  

\begin{rem}
It is known that the rate of convergence (and hence accuracy) of the solution of standard time integrators to first exit problems drops because they fail to account for the probability that the system exits in between each discrete time step~\cite{gobet2004exact,gobet2007discrete,gobet2010stopped}.   Building this capability into time integrators may require solving a boundary value problem per step, which is prohibitive to do in many dimensions \cite{Ma1999}.  By keeping time continuous, and provided that the grid is adjusted to fit the boundaries of the region of the first exit problem, the proposed approximations take these probabilities into account.  As a consequence, in the middle panel of Figure~\ref{fig:cubic_oscillator_accuracy}  we observe that the order of accuracy of the approximations with respect to first exit times is identical to the accuracy of the generator, that is, there is no drop of accuracy as happens with time discretizations.  To be precise, this figure illustrates the accuracy in these approximations with respect to the mean first passage time of the cubic oscillator to $(0,2)^c$ and shows that the generators $\tilde Q_u$ and $\tilde Q_c$ are $O(\delta x)$ and $O(\delta x^2)$, respectively.  
\end{rem}


\begin{figure}[ht!]
\begin{center}
\includegraphics[width=0.8\textwidth]{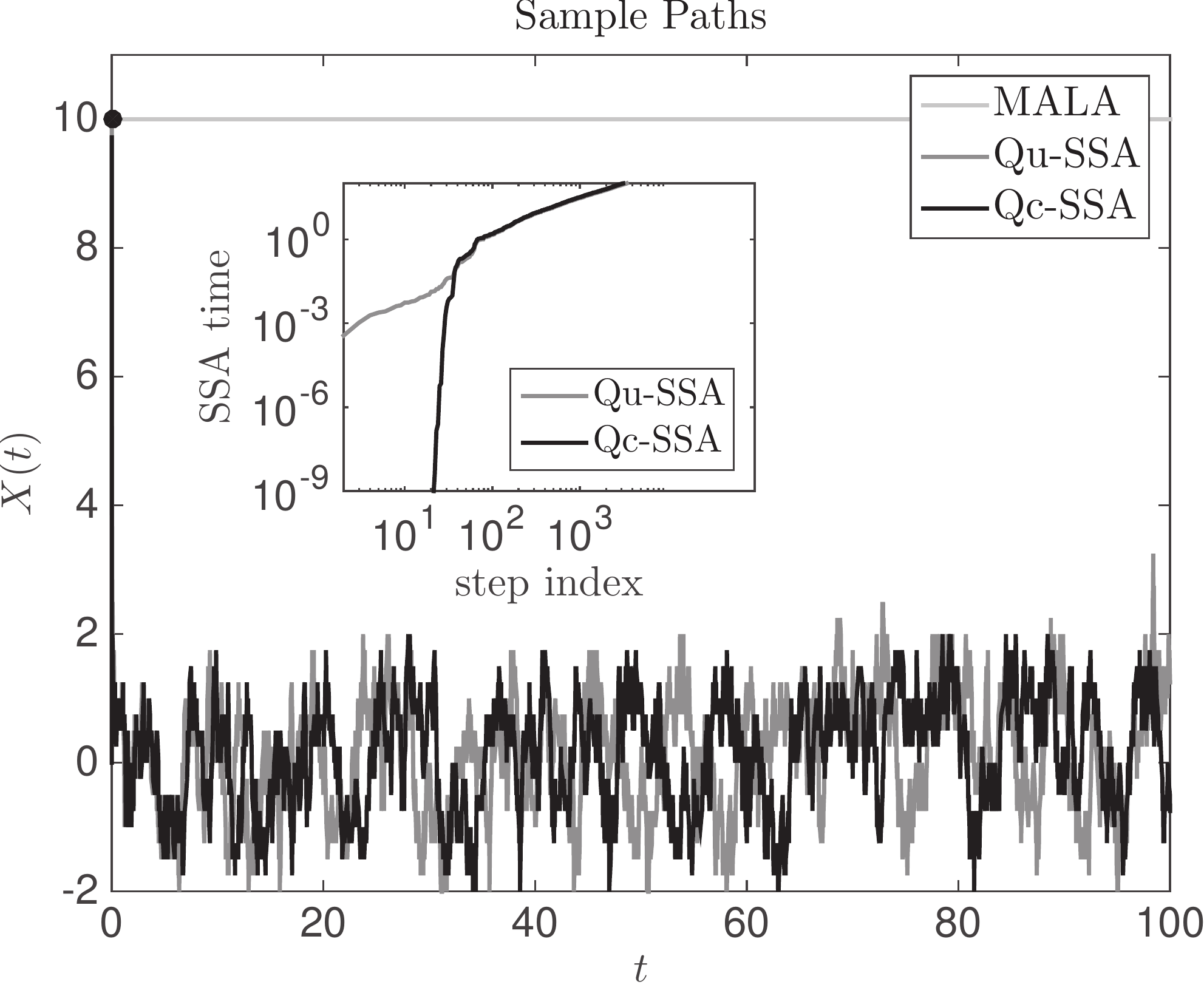}  
\end{center}
\caption{ \small {\bf Cubic Oscillator.}    
                                                The main figure plots sample paths produced by a Metropolis integrator (MALA) and the SSA 
                                                induced by $\tilde Q_u$ in \eqref{eq:tQu_1d} and $\tilde Q_c$ in \eqref{eq:tQc_1d} 
                                                over the time interval $[0,100]$.
                                                The temporal step size of MALA and the spatial step size of the SSA integrators are 
                                                set at $\delta t =0.015$ and $\delta x = 0.25$, respectively.  
                                                The mean holding time for the SSA integrators operated at this spatial step size is approximately $\langle \delta t \rangle = 0.03$.
                                                At the initial condition marked by the black dot at $t=0$, the true dynamics is dominated by the deterministic drift, 
                                                 which causes the true solution to drop toward the origin.   Notice that MALA gets stuck at high energy, 
                                                 which illustrates that it is not geometrically ergodic.
                                                In contrast, SSA is geometrically ergodic (on an infinite grid), and hence, does not get stuck.
                                                The inset shows that the SSA time adapts to the size of the drift: when the drift is large (resp.~small) the lag times are
                                                smaller (resp.~larger).  
}
\label{fig:cubic_oscillator_geometric_ergodicity}
\end{figure}

\begin{figure}[ht!]
\begin{center}
\includegraphics[width=0.52\textwidth]{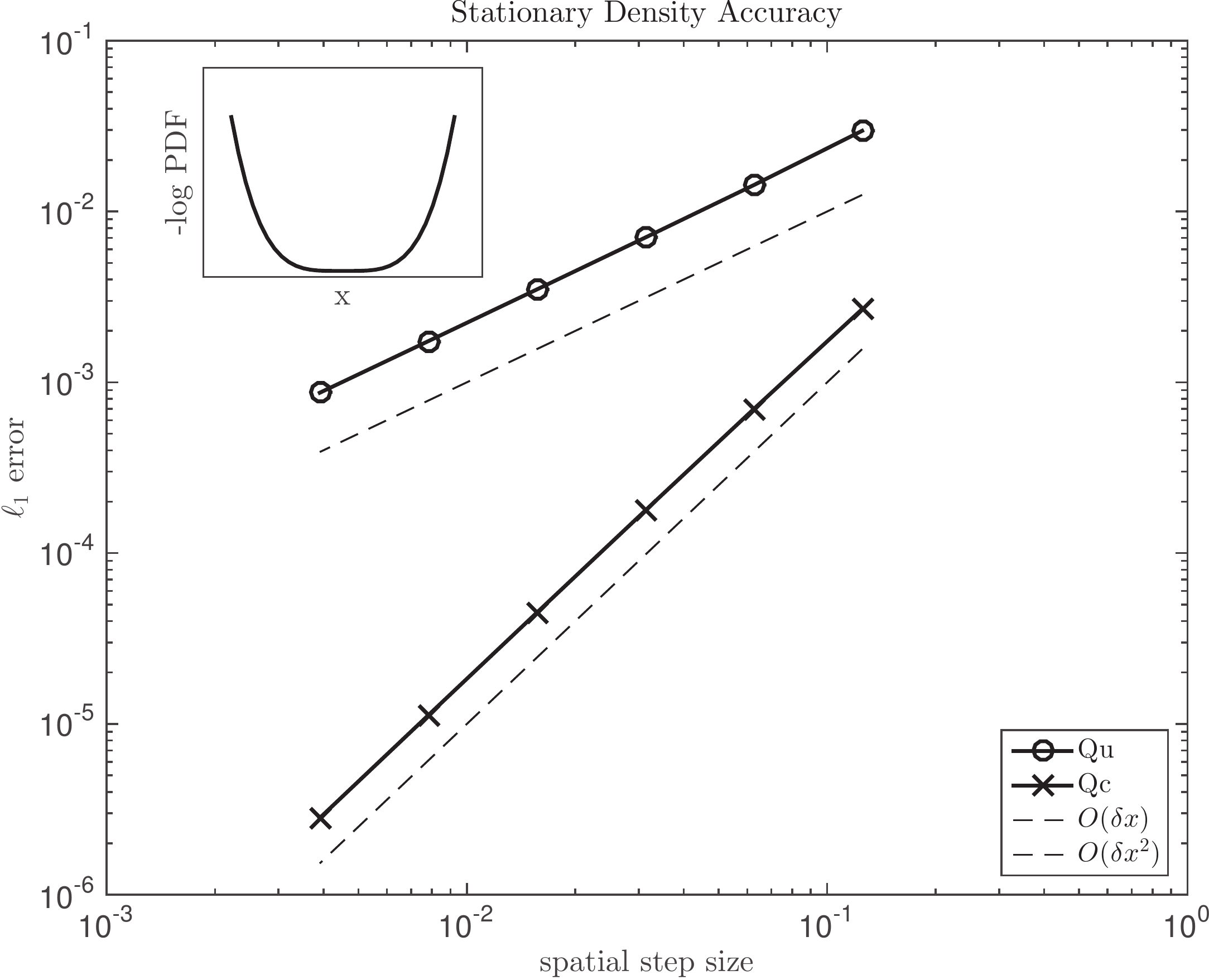}  \\
  \vspace{1em}
\includegraphics[width=0.52\textwidth]{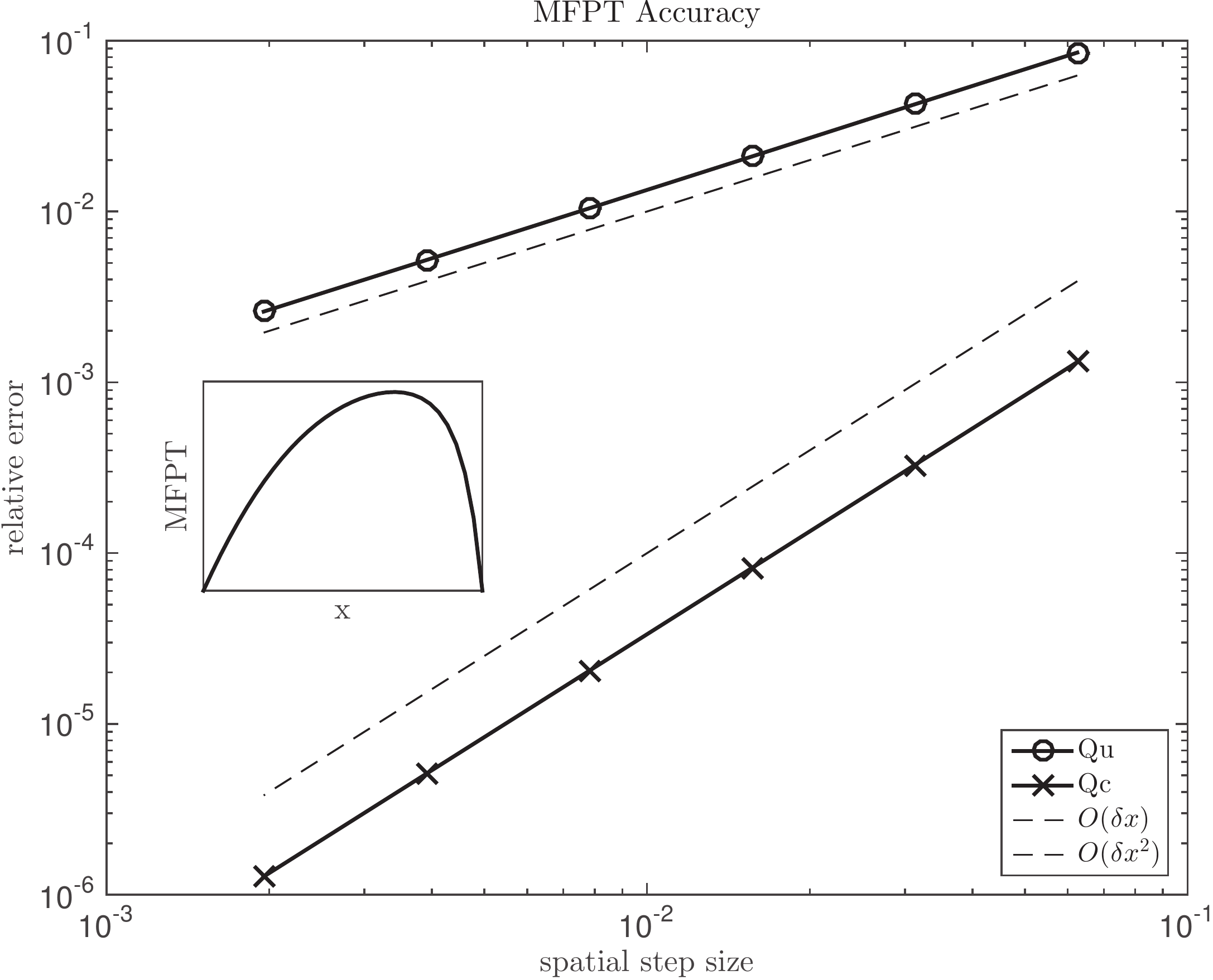} \\
  \vspace{1em}
\includegraphics[width=0.52\textwidth]{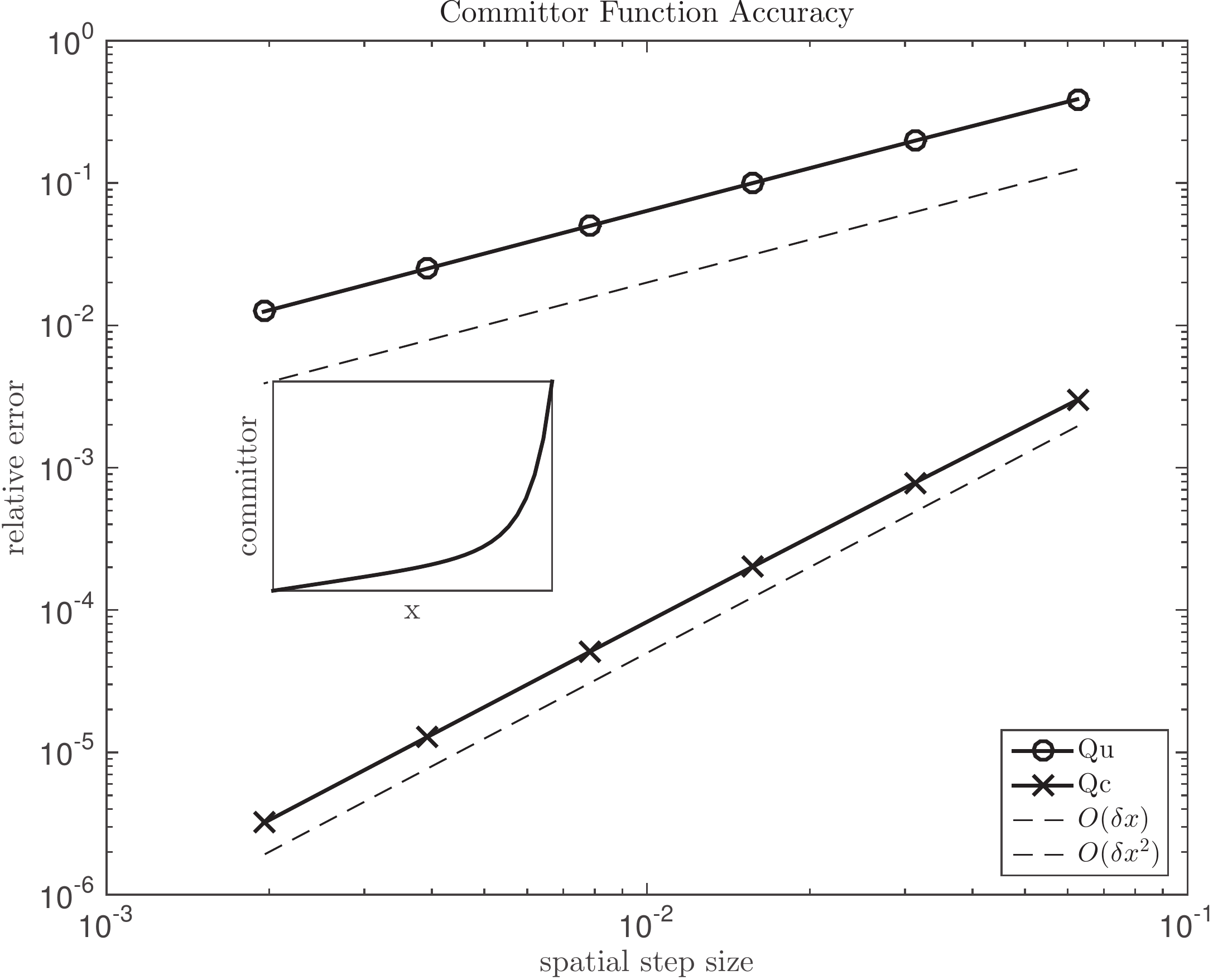}  
\end{center}
\caption{ \small {\bf Cubic Oscillator.}    
	From top: accuracy of the generators $\tilde Q_u$ in \eqref{eq:tQu_1d} and $\tilde Q_c$ in \eqref{eq:tQc_1d}  
	with respect to the stationary density, mean first passage time, and committor function of the SDE.  A formula for the true solutions
	is explicitly available in this example and serves as a benchmark for comparison.  For the numerical solution, we use the formulas given in 
	Sections~\ref{sec:nu_1D}, \ref{sec:mfpt_1D} and \ref{sec:committor_1D}.    In all cases we see that $\tilde Q_u$ (resp.~$\tilde Q_c$)
	is first (resp.~second) order accurate in representing these quantities.  The reference solutions are graphed in the insets.
}
\label{fig:cubic_oscillator_accuracy}
\end{figure}


%
%

\section{Asymptotic Analysis of Mean Holding Time} \label{sec:downhill_time}

Consider the following instance of \eqref{eq:sde} \begin{equation} \label{eq:scale_sde}
d Y = \mu(Y) dt + \sqrt{2} dW \;, \quad Y(0) \in \mathbb{R} \;.
\end{equation}   Assume the drift entering \eqref{eq:scale_sde} is differentiable and satisfies a dissipativity condition, like \[
\sign(x) \mu(x) \to -\infty  \qquad \text{as $|x| \to \infty$} \;.
\] Hence, for sufficiently large $|x|$ the SDE dynamics is dominated by the drift, and asymptotically, the time it takes for the SDE solution $Y(t)$ to move a fixed distance in space can be derived from analyzing the ODE \begin{equation} \label{eq:scalar_ode}
\dot Y = \mu( Y) \;, \quad Y(0) = x \;.
\end{equation}   Equation~\eqref{eq:scalar_ode} implies that the time lapse between $Y(0)=x_i$ and $Y(t^*) = x_{i-1}$ where $x_i > x_{i-1} \gg 0$ satisfies\begin{equation} \label{time_lapse}
t^e = \int_{x_{i-1}}^{x_i} \frac{d x}{ | \mu(x) | } \;.
\end{equation}
By using integration by parts and the mean value theorem, observe that \eqref{time_lapse} can be written as: \begin{align}
t^e &=   \frac{\delta x}{| \mu(x_i) |} - \int_{x_{i-1}}^{x_i} (x-x_{i-1}) \mu(x)^{-2} \mu'(x) dx \nonumber \\
&=t^* - \frac{1}{2} \frac{\mu'(\xi)}{\mu(\xi)^2 } \delta x^2   \label{te_asymptote}
\end{align}
for some $\xi \in [x_{i-1}, x_i]$, and where we have introduced: $t^* = \delta x / |\mu_i|$.   Let us compare $t^e$ to the mean holding times predicted by the generators: $\tilde Q_u$ in \eqref{eq:Qu_1d} and $\tilde Q_c$ in \eqref{eq:Qc_1d}. For clarity, we assume that $\delta x = \delta x_i^+ = \delta x_i^-$.  

By hypothesis, $\mu_i = \mu(x_i)$ is less than zero if $x_i$ is large and positive, and hence, from \eqref{eq:Qu_1d} the mean holding time of $\tilde Q_u$ can be written as: \begin{align*}
t^u = ( (\tilde Q_u)_{i,i+1} + (\tilde Q_u)_{i,i-1} ) ^{-1} = \frac{\delta x^2 }{ 2 + |\mu_i| \delta x } 
\end{align*} This expression can be rewritten as \begin{equation} \label{tu_asymptote}
t^u = t^* - t^* \frac{ 2 }{ 2 + |\mu_i| \delta x}\;.
\end{equation}
 From \eqref{tu_asymptote}, we see that $t^u$ approaches $t^*$, as the next Proposition states. 

\begin{prop}
For any $\delta x>0$, the mean holding time of $\tilde Q_u$ satisfies: \[
\frac{|t^u - t^*|}{t^*} \to 0  \quad \text{as $|x_i| \to \infty$} \;. 
\] 
\end{prop}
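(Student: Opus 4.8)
The plan is to read the result directly off the explicit mean-holding-time formula already established in \eqref{tu_asymptote}. That identity reads
\[
t^u = t^* - t^* \, \frac{2}{2 + |\mu_i| \, \delta x},
\]
so dividing through by $t^* = \delta x / |\mu_i| > 0$ isolates the relative error with no further work:
\[
\frac{|t^u - t^*|}{t^*} = \frac{2}{2 + |\mu_i| \, \delta x}.
\]
Hence the proposition reduces to a single limit: showing the right-hand side tends to $0$ as $|x_i| \to \infty$ while $\delta x > 0$ is held fixed.

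First I would invoke the dissipativity hypothesis $\sign(x)\mu(x) \to -\infty$ as $|x| \to \infty$, which is precisely the statement that $|\mu_i| = |\mu(x_i)| \to \infty$ along any sequence $|x_i| \to \infty$ (whether $x_i \to +\infty$, where $\mu_i < 0$, or $x_i \to -\infty$, where $\mu_i > 0$). Since $\delta x$ is a fixed positive constant, the product $|\mu_i| \, \delta x \to \infty$, so the denominator $2 + |\mu_i|\, \delta x$ diverges and the ratio vanishes. One can make the rate explicit via the bound $\frac{|t^u - t^*|}{t^*} \le \frac{2}{|\mu_i|\, \delta x}$, exhibiting $O(1/|\mu_i|)$ decay.

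There is no genuine obstacle here: the algebra was done in deriving \eqref{tu_asymptote}, and the proof is a one-line limit. The only points worth stating carefully are that the limit is taken at \emph{fixed} $\delta x$ (it is the product $|\mu_i|\,\delta x$, not $|\mu_i|$ alone, that forces the denominator to infinity), and that the closed form $t^u = \delta x^2 / (2 + |\mu_i|\, \delta x)$ is in fact valid for both signs of $\mu_i$ — only $|\mu_i| = (\mu_i \vee 0) - (\mu_i \wedge 0)$ enters the total jump rate in \eqref{eq:Qu_1d} — so the argument applies uniformly as $x_i \to +\infty$ and as $x_i \to -\infty$, covering the full statement $|x_i| \to \infty$.
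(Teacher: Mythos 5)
Your proof is correct and follows the paper's own route exactly: both read the relative error $|t^u-t^*|/t^* = 2/(2+|\mu_i|\,\delta x)$ directly off the identity \eqref{tu_asymptote} and conclude from the dissipativity hypothesis $\sign(x)\mu(x)\to-\infty$ that $|\mu_i|\to\infty$ at fixed $\delta x$, so the ratio vanishes. Your two added remarks --- the explicit $O(1/|\mu_i|)$ rate and the observation that only $|\mu_i|$ enters the total rate in \eqref{eq:Qu_1d}, so the formula holds for both signs of $\mu_i$ --- are accurate refinements of what the paper leaves implicit.
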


\noindent
Likewise, if the second term in \eqref{te_asymptote} decays faster than the first term, then the relative error between $t^e$ and $t^*$ also tends to zero, and thus, the estimate predicted by $\tilde Q_u$ for the mean holding time asymptotically agrees with the exact mean holding time. This convergence happens if, e.g., the leading order term in $\mu(x)$ is of the form $-a \; x^{2p+1}$ for $p\ge0$ and $a>0$. 

Repeating these steps for the mean holding time predicted by $\tilde Q_c$ yields: \begin{equation} \label{tbar_asymptote}
t^c = ( (\tilde Q_{c})_{i,i+1} + (\tilde Q_{c})_{i,i-1} ) ^{-1} =  \frac{\delta x^2}{2} \sech(\mu_i \delta x)  \;.
\end{equation}
It follows from this expression that even though $\tilde Q_c$ is a second-order accurate approximation to $L$, it does not capture the right asymptotic mean holding time, as the next Proposition states.

\begin{prop}
For any $\delta x>0$, the mean holding time of $\tilde Q_c$ satisfies: \[
\frac{|t^c - t^*|}{t^*} \to 1  \quad \text{as $|x_i| \to \infty$} \;. 
\] 
\end{prop}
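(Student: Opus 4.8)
The plan is to compute the ratio $t^c/t^*$ directly from the closed-form expression for $t^c$ in \eqref{tbar_asymptote} together with the definition $t^* = \delta x/|\mu_i|$, and then extract its limit as $|x_i|\to\infty$. Since $\sech$ is even, we may write
\[
\frac{t^c}{t^*} = \frac{(\delta x^2/2)\,\sech(\mu_i\,\delta x)}{\delta x/|\mu_i|} = \frac{|\mu_i|\,\delta x}{2}\,\sech\!\left(|\mu_i|\,\delta x\right).
\]

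First I would set $y = |\mu_i|\,\delta x$ and observe that the dissipativity hypothesis $\sign(x)\mu(x)\to-\infty$ forces $|\mu_i|\to\infty$, and hence $y\to\infty$, as $|x_i|\to\infty$ for fixed $\delta x>0$. Next I would invoke the elementary asymptotic $\sech(y) = 2/(e^y+e^{-y}) \sim 2e^{-y}$ as $y\to\infty$, so that the ratio above behaves like $y\,e^{-y}$. Since $y\,e^{-y}\to 0$, this yields $t^c/t^*\to 0$. Finally, writing $|t^c-t^*|/t^* = |t^c/t^* - 1|$ and passing to the limit gives $|0-1| = 1$, which is the claim.

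This computation has no genuine obstacle; the only points requiring a little care are (i) using that $\mu_i<0$ for large positive $x_i$ together with the evenness of $\sech$ to replace $\mu_i\,\delta x$ by $|\mu_i|\,\delta x$, and (ii) confirming via the dissipativity assumption that $|\mu_i|$ (and not merely $|x_i|$) diverges, which is what drives $y\to\infty$. The conceptual content is simply that the hyperbolic secant decays exponentially in $|\mu_i|\,\delta x$ whereas $t^*$ decays only algebraically, so $t^c$ is exponentially smaller than $t^*$ in the large-drift regime. This is in sharp contrast to the upwinded scheme $\tilde Q_u$, whose relative error instead tends to zero, and it explains at the level of mean holding times why $\tilde Q_c$, despite being second-order accurate as an approximation to $L$, fails to reproduce the correct downhill timescale.
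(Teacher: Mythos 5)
Your proof is correct and is precisely the argument the paper intends: the paper states the proposition as an immediate consequence of \eqref{tbar_asymptote}, namely that $t^c/t^* = \tfrac{1}{2}|\mu_i|\,\delta x\,\sech(\mu_i\,\delta x) \to 0$ because $\sech$ decays exponentially in $|\mu_i|\,\delta x$ while $t^*$ decays only algebraically, whence $|t^c-t^*|/t^* \to 1$. Your two points of care (evenness of $\sech$, and the dissipativity condition forcing $|\mu_i|\to\infty$) are exactly the details the paper leaves implicit.
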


\noindent
Simply put, the mean holding time of $\tilde Q_c$ converges to zero too fast.  This analysis is confirmed in Figure~\ref{fig:cubic_oscillator_asymptotic_time_rate}, which shows that the mean holding time for $\tilde Q_u$ agrees with the mean holding time of the generator $Q_e$ that uses the exact mean holding time.  This generator was introduced in \S\ref{sec:scalar_miletoning}, see \eqref{eq:Qe_1D} for its definition.  This numerical result agrees with the preceding asymptotic analysis.  In contrast, note that the mean holding time of $Q_c$ is asymptotically an underestimate of the exact mean holding time, as predicted by \eqref{tbar_asymptote}.


\begin{figure}[ht!]
\begin{center}
\includegraphics[width=0.8\textwidth]{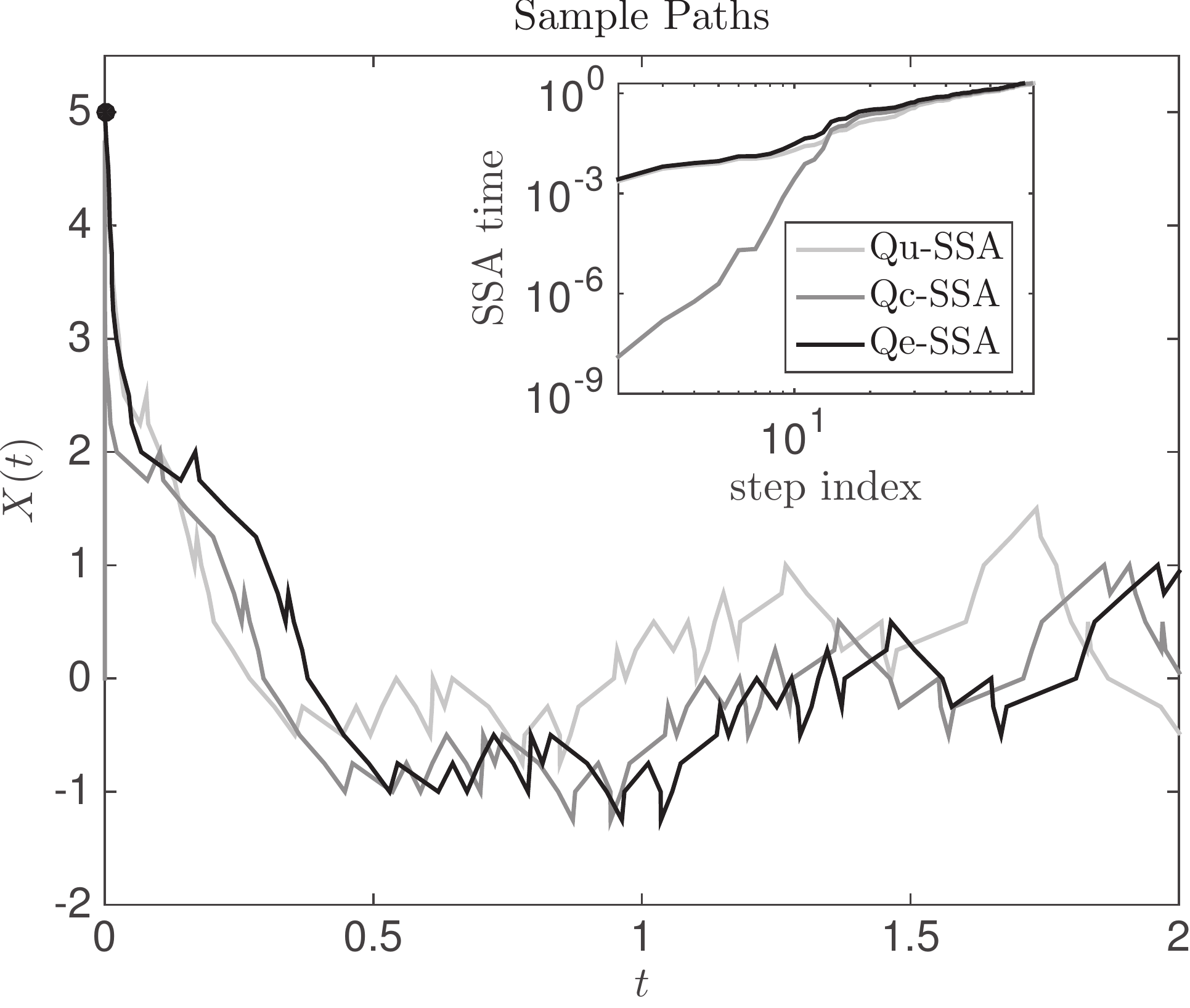}  
\end{center}
\caption{ \small {\bf Cubic Oscillator Simulation.}    
                                                This figure plots paths produced by the SSA induced by $\tilde Q_u$ (light grey), $\tilde Q_c$ (grey) and $Q_e$ (black).  
                                                In all cases the meshes are equidistant with $\delta x = 0.25$ and the initial condition is marked by the black dot at $t=0$.  
                                                The paths are driven by the same sequence of uniform random variables.   
                                                 We stress that the SSA produced by $Q_e$ uses the exact mean holding time of the SDE solution, which far enough 
                                                 from the origin is well approximated by \eqref{te_asymptote}. 
                                                The figure in the inset shows how the SSA time evolves with the SSA step index.  
                                                Far away from the origin, the dynamics is dominated by the deterministic drift, which causes the walkers to move toward the origin and 
                                                then become localized around the origin.  The SSA associated to $\tilde Q_c$ moves to the origin too fast because 
                                                its mean holding time tends to zero too fast, as \eqref{tbar_asymptote} predicts.   
                                                In contrast, the mean holding times for $Q_u$ and $Q_e$ are in close agreement far from the origin
                                                (where the asymptotic analysis in \S\ref{sec:downhill_time} is valid), but closer to the origin, this agreement is not apparent, 
                                                as expected.
}
\label{fig:cubic_oscillator_asymptotic_time_rate}
\end{figure}


%
%

\section{Adaptive Mesh Refinement in 1D} \label{sec:amr_1d}

The subsequent scalar SDE problems have a domain $\Omega = \mathbb{R}^+$, and may have a singularity at the origin for certain parameter values.  To efficiently resolve this singularity, we will use adaptive mesh refinement, and specifically, a variable step size infinite grid $S = \{ x_i \} \in \mathbb{R}^+$ that contains more points near the singularity at $x=0$.  We will construct this grid by mapping the grid $S$ to logspace to obtain a transformed grid $\hat S$ on $\mathbb{R}$ defined as \[
\hat S = \{ \xi_i \} \;, \quad \xi_i = \log(x_i)\;, \quad \forall~x_i \in S \;.
\]  Note that as $\xi_i \to -\infty$ (resp.~$\xi_i \to \infty$) we have that $x_i \to 0$ (resp.~$x_i \to \infty$).  For the transformed grid in logspace, we assume that the distance between neighboring grid points is fixed and given by $\delta \xi$, i.e., \[
\delta \xi = \xi_{i+1} - \xi_i \quad \text{for all $i \in \mathbb{Z}$} \;.
\]  
Since \[
x_{i+1} = \exp(\xi_{i+1}) = \exp(\delta \xi) \exp(\xi_i) =  \exp( \delta \xi) x_i
\] it follows from the definitions introduced in \eqref{eq:amr} that: \begin{equation} \label{eq:amr_exp}
\delta x_i^+ = (\exp(\delta \xi) - 1) x_i \;, \quad \delta x_i^- = (1- \exp(-\delta \xi) ) x_i \;, \quad \delta x_i = \sinh(\delta \xi) x_i \;. 
\end{equation}

%
%

\section{Log-normal Process in 1D with Multiplicative Noise} \label{sec:lognormal_1d}

Consider \eqref{eq:sde} with  \[
\Omega=\mathbb{R}^+ \;, \quad \mu(x) = - x \log x + x \;, \quad M(x) = (\sigma(x))^2 = x^2  \;,
\]
and initial condition $Y(0) \in \Omega$.  This process has a lognormal stationary distribution with probability density: \begin{equation} \label{eq:ln_nu_1d}
\nu(x) = \frac{1}{\sqrt{2 \pi}} \exp\left( - \frac{1}{2} (\log(x))^2 - \log(x) \right) \;.
\end{equation}
In fact,  $\log(Y(t))$ satisfies an Ornstein-Uhlenbeck equation with initial condition $\log(Y(0))$.  Exponentiating the solution to this Ornstein-Uhlenbeck equation yields \begin{equation*}
Y(t) = \exp\left( e^{-t} \log{Y(0)} + \sqrt{2} \int_0^t e^{-(t-s)} dW(s) \right) \;.
\end{equation*}
It follows that: \begin{equation} \label{eq:ln_moment_1d}
\Ex_x ( Y(t)^2 ) = \exp\left( 2 e^{-t} \log(x) + 2 (1 - e^{-2 t }) \right) \;.
\end{equation}
The formulas \eqref{eq:ln_nu_1d} and \eqref{eq:ln_moment_1d} are useful to numerically validate the approximations.

Figure~\ref{fig:ln_bvp} show numerical results of applying the generators in  \eqref{eq:Qu_1d} and \eqref{eq:Qc_1d} to solve the boundary value problems associated to the mean first passage time and exit probability (or committor function) on the interval $[1/2,5]$ and using an evenly spaced mesh.   Figure~\ref{fig:ln_nu_accuracy} illustrates that these generators are accurate with respect to the stationary density \eqref{eq:ln_nu_1d} using the adaptive mesh refinement described  in \S\ref{sec:amr_1d}.   Figure~\ref{fig:ln_ssa} plots sample paths produced by SSA for these generators and the SSA time as a function of the step index.  Figure~\ref{fig:ln_weak_accuracy} illustrates the accuracy of the proposed approximations with respect to the mean-squared displacement $\Ex_x(Y(1)^2)$ with an initial condition of $Y(0)=x=2$.    The relative error is plotted as both a function of the spatial step size (top panel) and the mean lag time (bottom panel).  Finally, Figure~\ref{fig:ln_complexity} plots the average number of computational steps (top panel) and the mean lag time (bottom panel) as a function of the spatial step size.  The statistics are obtained by averaging this data over $100$ SSA realizations with initial condition $X(0)=1$ and time interval of simulation of $[0,10]$.

%
\begin{figure}[ht!]
\begin{center} 
\includegraphics[width=0.65\textwidth]{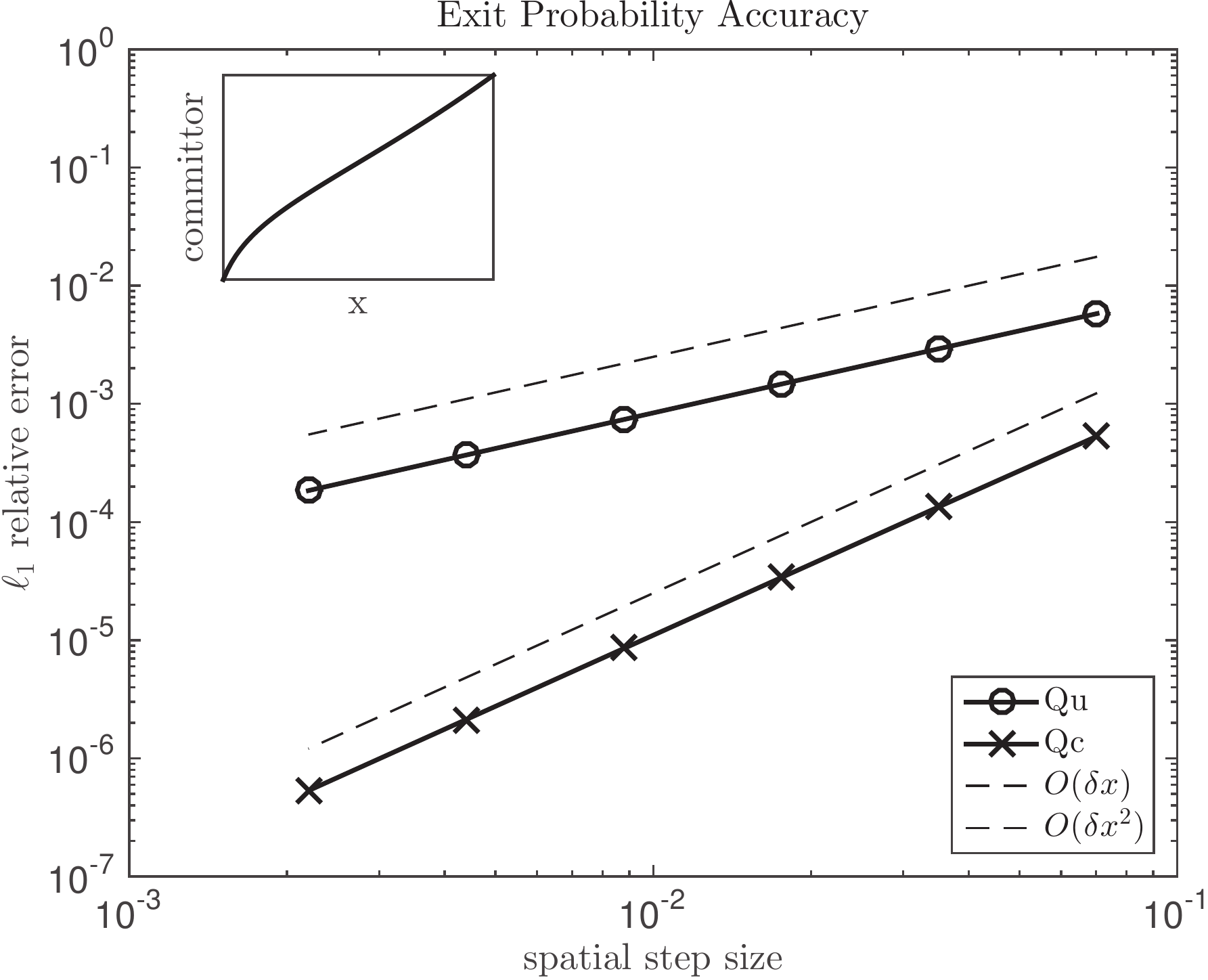}  \\
\vspace{0.25in}
\includegraphics[width=0.65\textwidth]{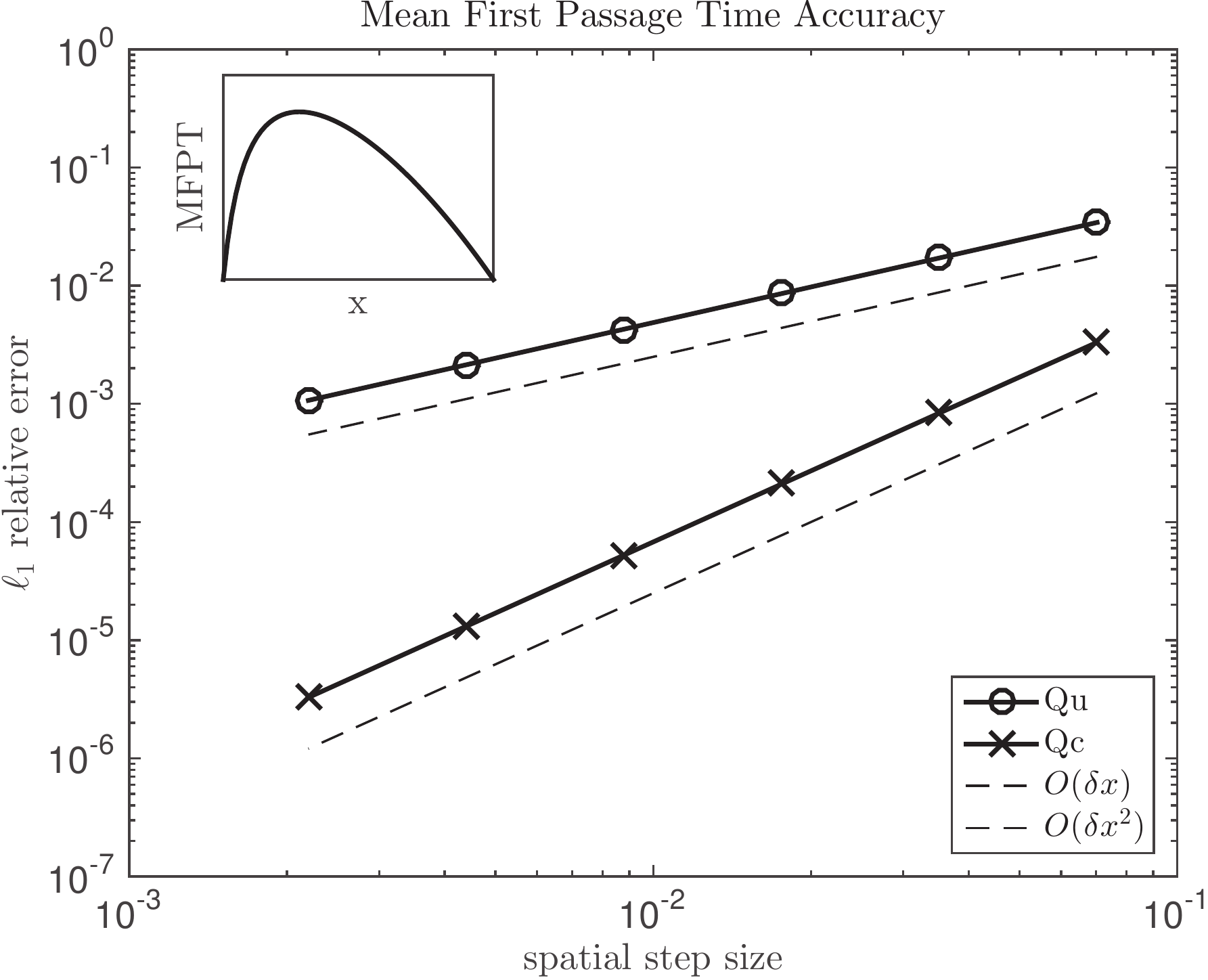} 
\caption{ \small {\bf Log-normal Process in 1D.}   This figure confirms that the realizable discretizations given in \eqref{eq:Qu_1d} and \eqref{eq:Qc_1d} are accurate with respect to the mean first passage time (bottom panel) and exit probability or committor function (top panel) computed on the interval $[1/2,5]$.  The reference solution is shown in the inset.  An evenly spaced mesh is used to produce this numerical result, with spatial step sizes as indicated in the figures.   The figure confirms that $\tilde Q_u$ (resp.~$\tilde Q_c$) is first  (resp.~second) order accurate.  
} \label{fig:ln_bvp} \end{center}
\end{figure}

\begin{figure}[ht!]
\begin{center} 
\includegraphics[width=0.65\textwidth]{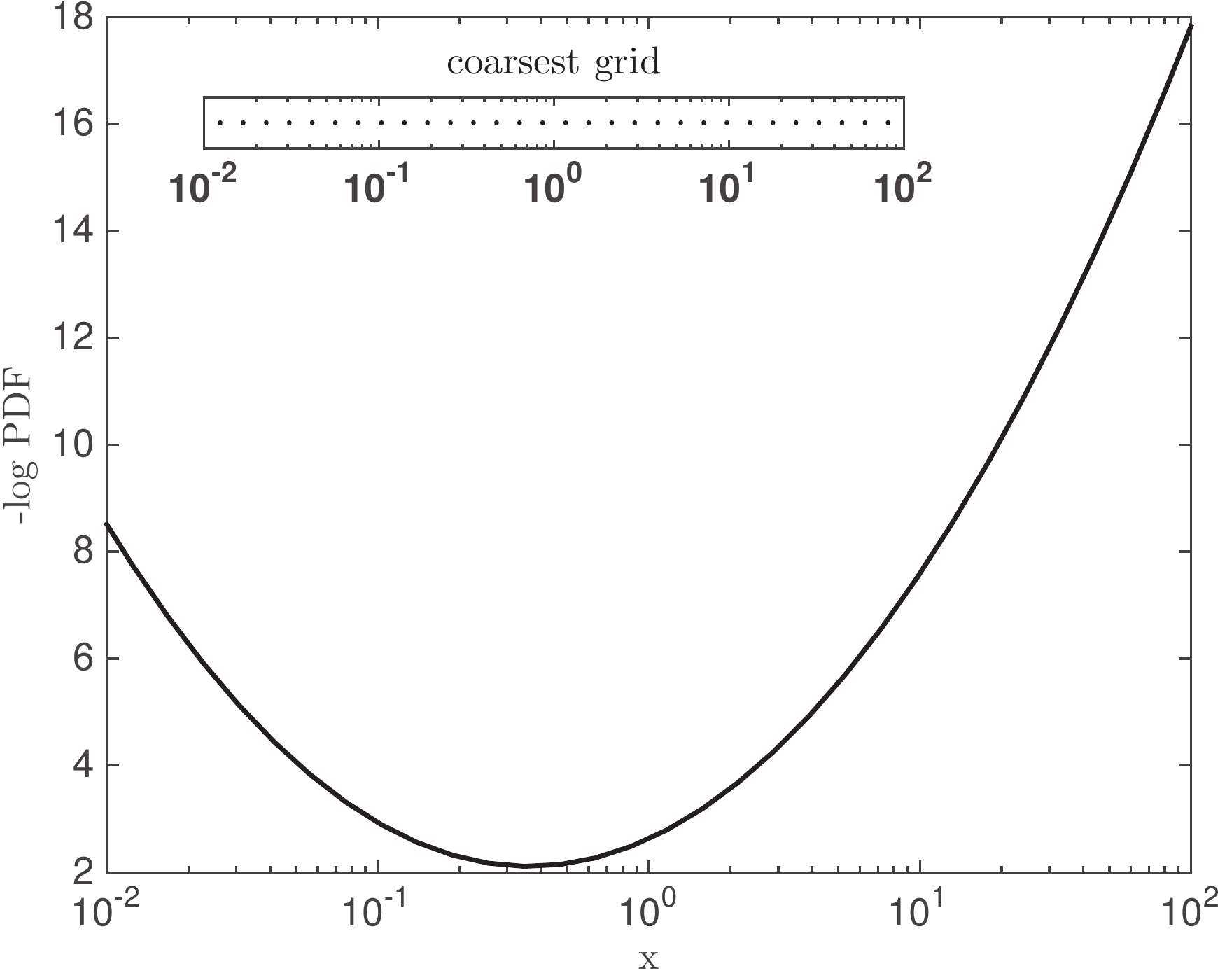} \\
\vspace{0.25in}
\includegraphics[width=0.70\textwidth]{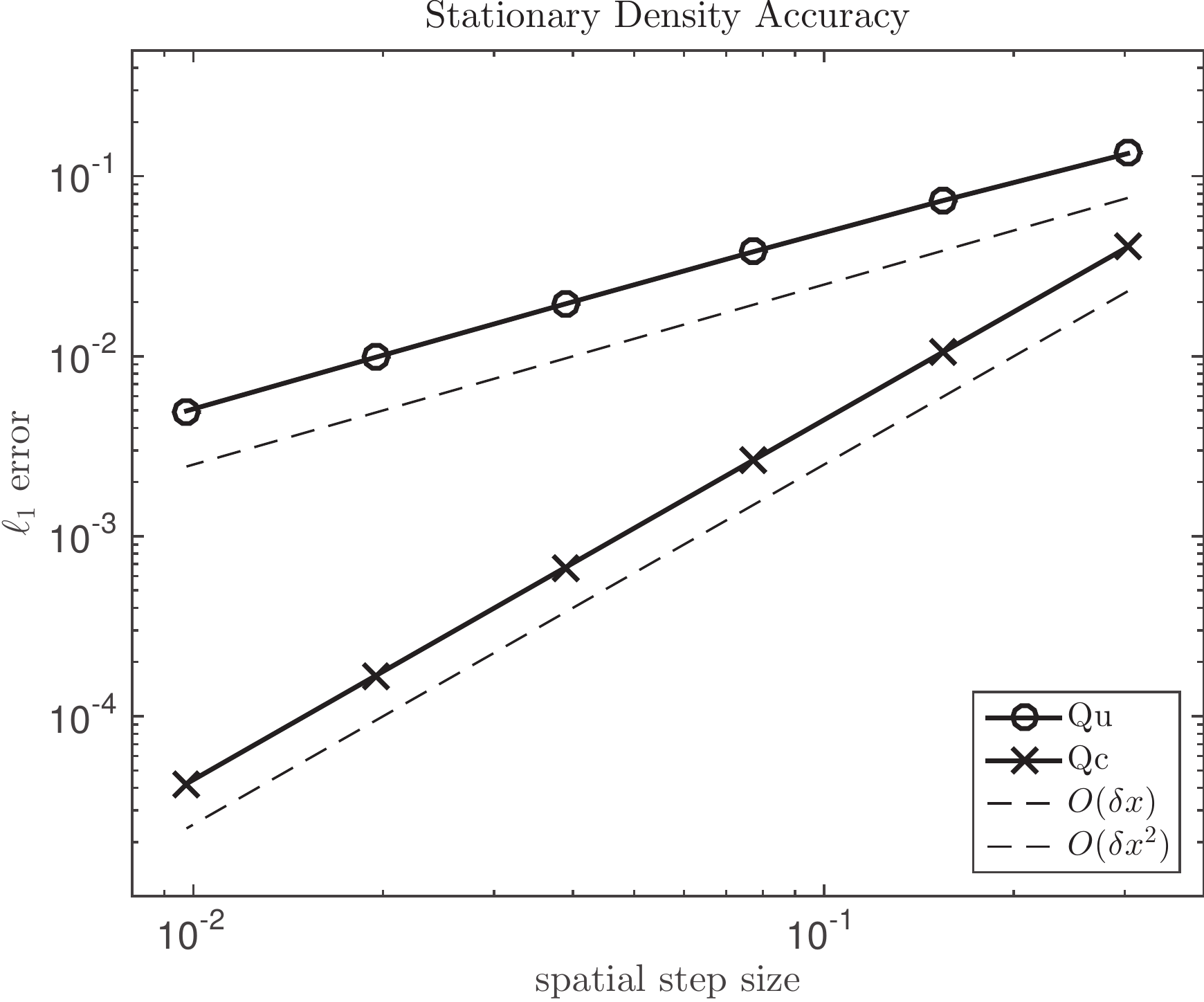}  
\caption{ \small {\bf Log-normal Process in 1D.}  The top panel plots the free energy of the stationary density of the SDE. The bottom panel plots the accuracy of $\tilde Q_u$ and $\tilde Q_c$ in representing this stationary density as a function of the spatial step size $\delta \xi$ in log-space.  The benchmark solution is the stationary density \eqref{eq:ln_nu_1d} evaluated on the variable step size grid described in \S\ref{sec:amr_1d}.   Note that the central scheme $\tilde Q_c$ appears to be second-order accurate in representing the stationary  probability distribution of the SDE,  which numerically supports Theorem~\ref{thm:nu_accuracy}.
} \label{fig:ln_nu_accuracy} \end{center}
\end{figure}

\begin{figure}[ht!]
\begin{center} 
\includegraphics[width=0.70\textwidth]{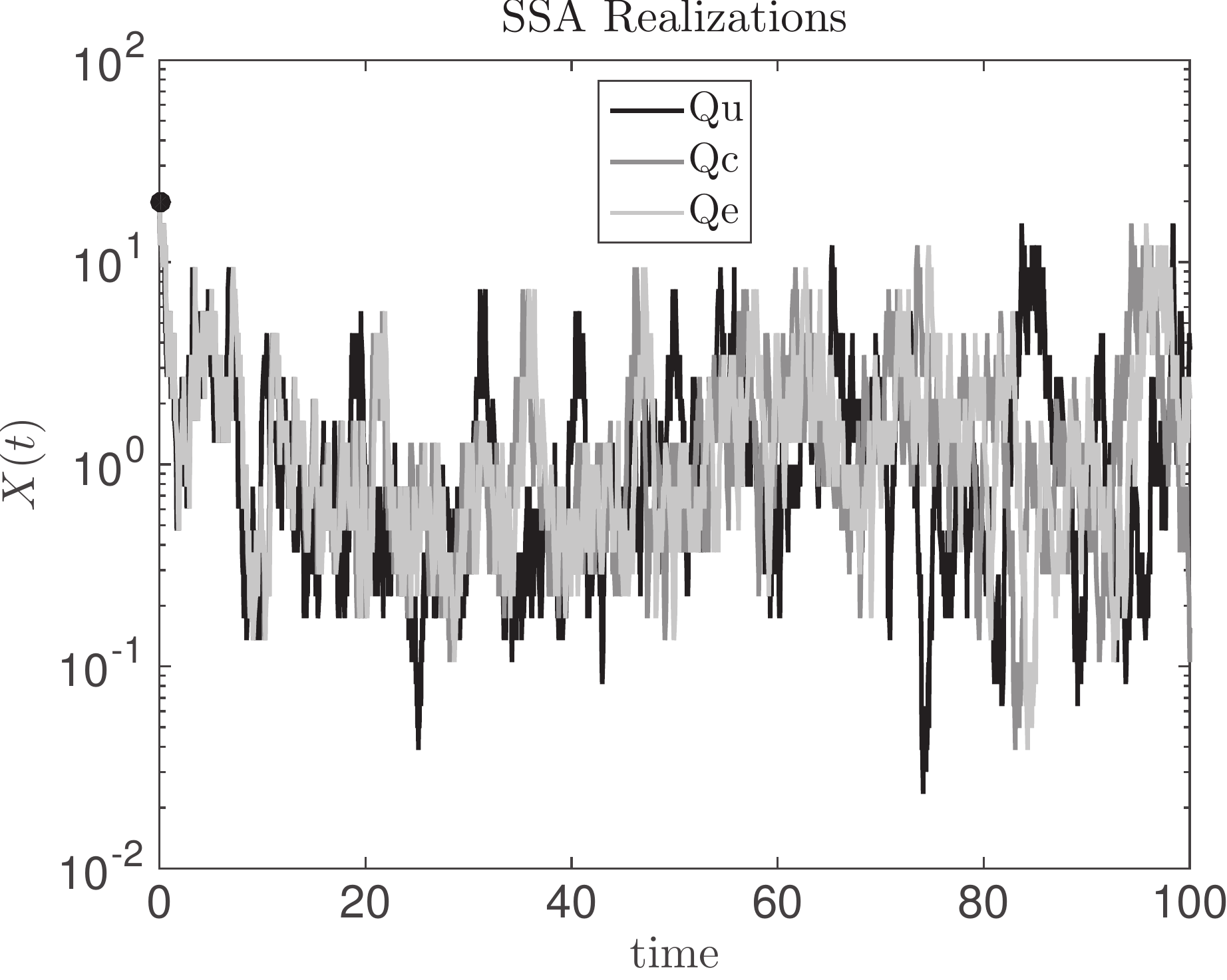}  \\
\vspace{0.25in}
\includegraphics[width=0.60\textwidth]{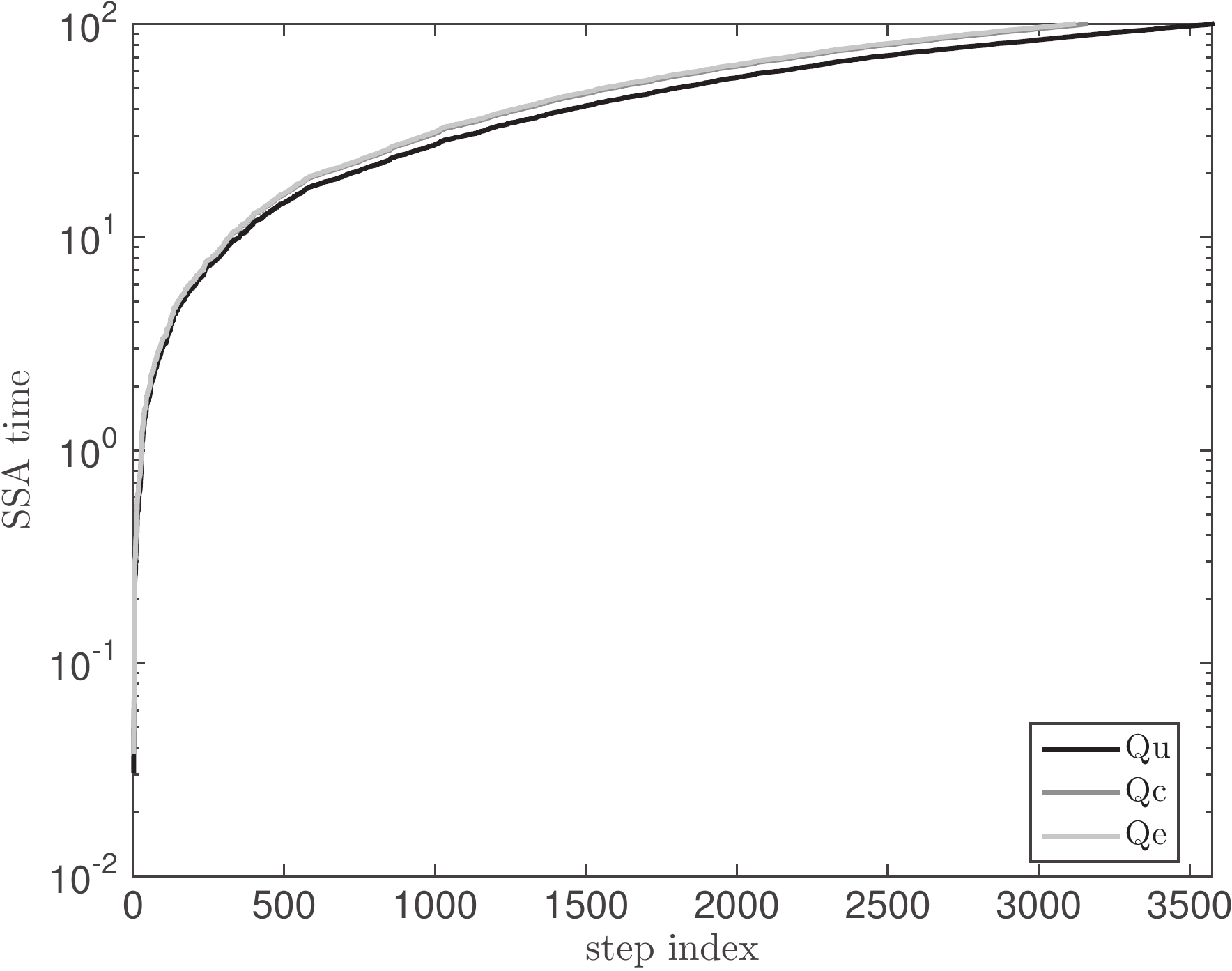} 
\caption{ \small {\bf Log-Normal Process in 1D.}   The top panel plots realizations of the generators $\tilde Q_u$, $\tilde Q_c$, and $Q_e$ produced using the SSA integrator with the initial condition at $X(0)=20$ marked by the black dot at $t=0$.  Recall that $Q_e$ reproduces the true mean holding time of the process, see \eqref{eq:Qe_1D} for its definition.  The state space of the processes is the variable step size grid described in \S\ref{sec:amr_1d} with a grid size in log-space of $\delta \xi=0.25$.  The time interval of simulation is $[0,100]$.  The mean SSA time for all three methods is approximately $0.03$. The bottom panel plots the SSA time as a function of the computational step index.  This figure illustrates that the SSA integrators never exit the domain of definition of the coefficients of the SDE.
} \label{fig:ln_ssa} \end{center}
\end{figure}

\begin{figure}[ht!]
\begin{center} 
\includegraphics[width=0.65\textwidth]{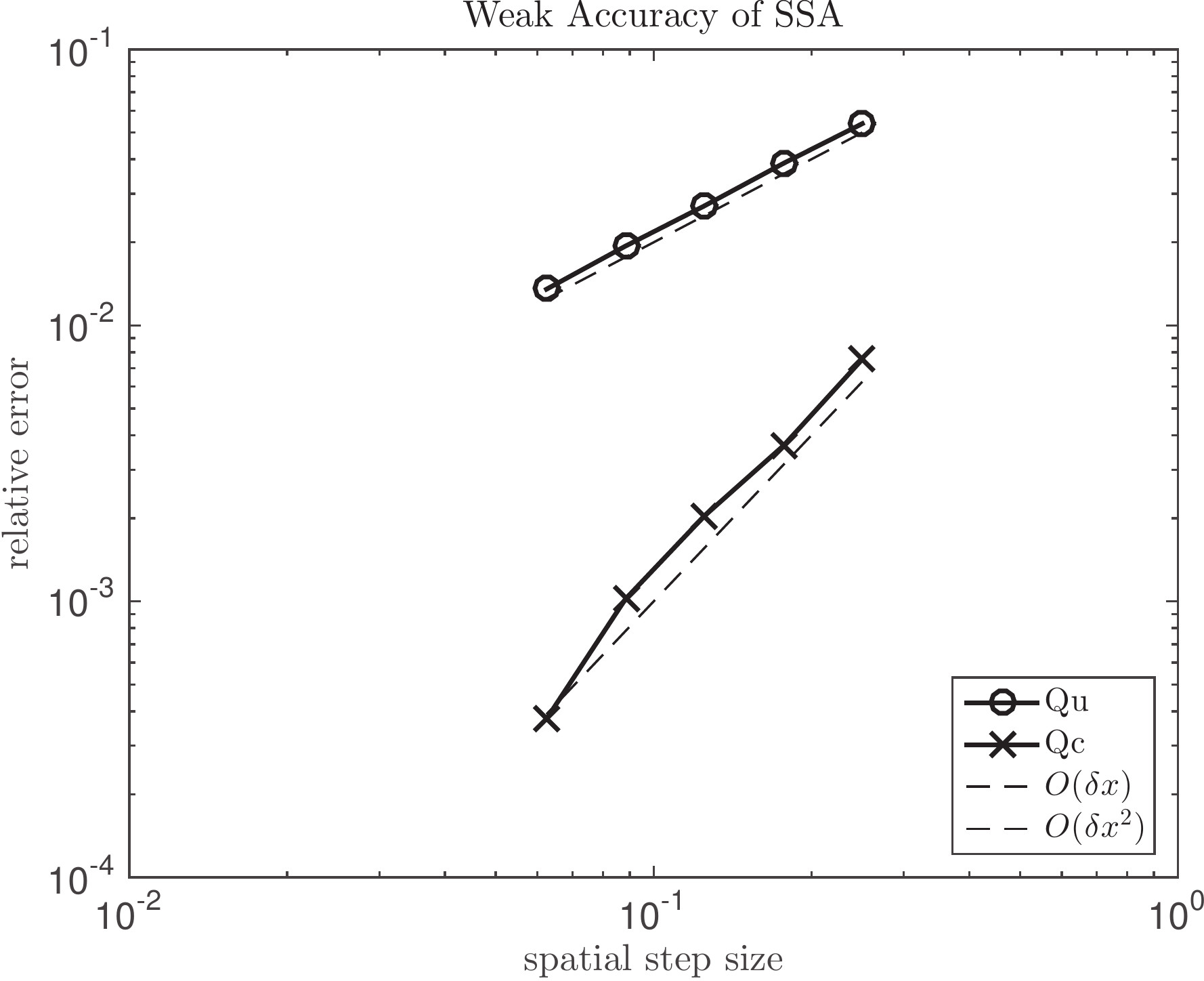}  \\
\vspace{0.25in}
\includegraphics[width=0.65\textwidth]{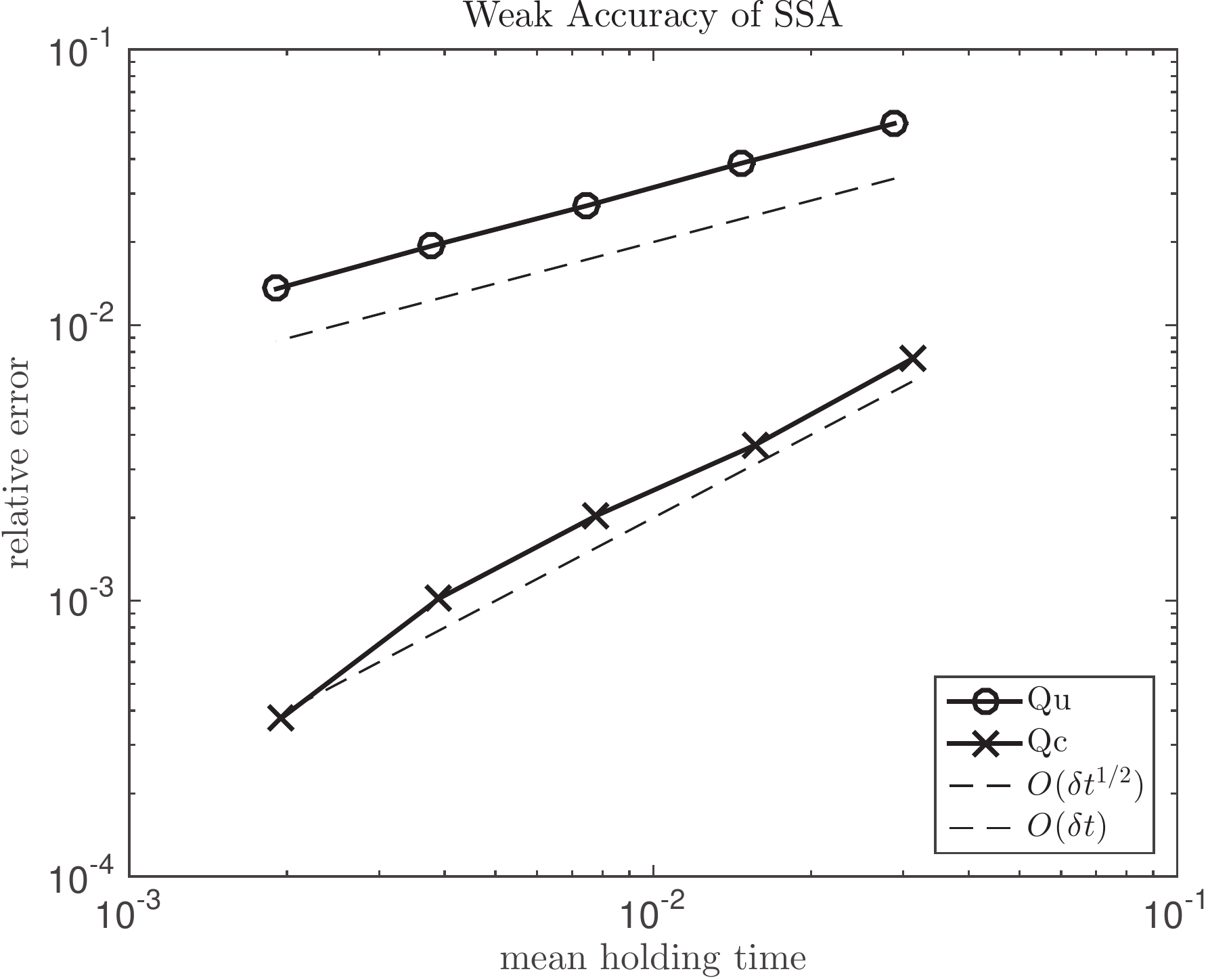} 
\caption{ \small {\bf Log-Normal Process in 1D.}  
The top (resp.~bottom) panel plots the accuracy of the  generators $\tilde Q_u$ and $\tilde Q_c$ with respect to $\Ex_x(Y(t)^2)$ as a function of spatial step size (resp.~mean holding time) with initial condition $Y(0)=2$ and terminal time $t=1$.  Note that the central scheme $\tilde Q_c$ appears to be second-order accurate,  which numerically supports Theorem~\ref{thm:finite_time_accuracy}.  This figure was produced using $10^8$ SSA realizations.
} \label{fig:ln_weak_accuracy} \end{center}
\end{figure}

\begin{figure}[ht!]
\begin{center} 
\includegraphics[width=0.65\textwidth]{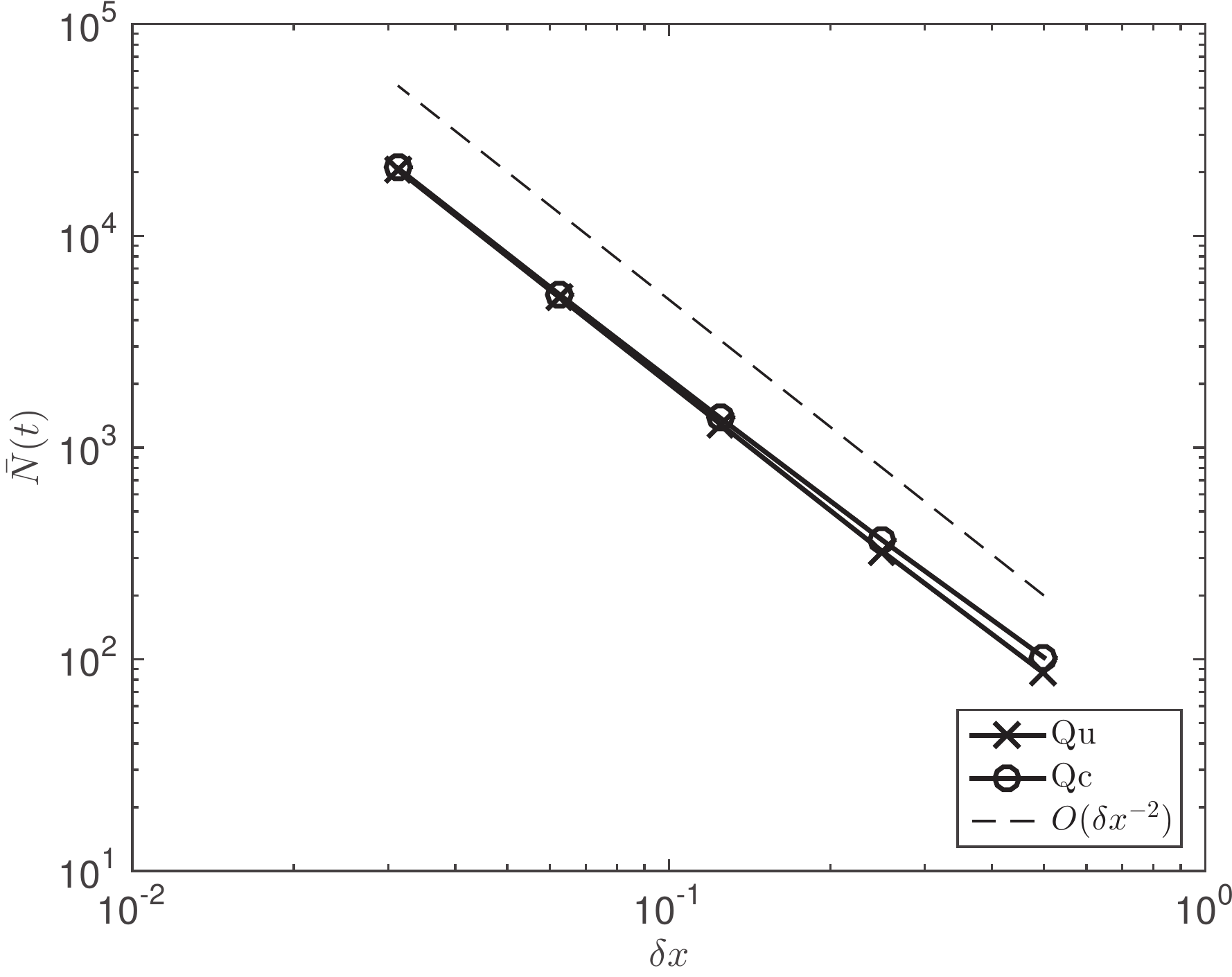}  \\
\vspace{0.25in}
\includegraphics[width=0.65\textwidth]{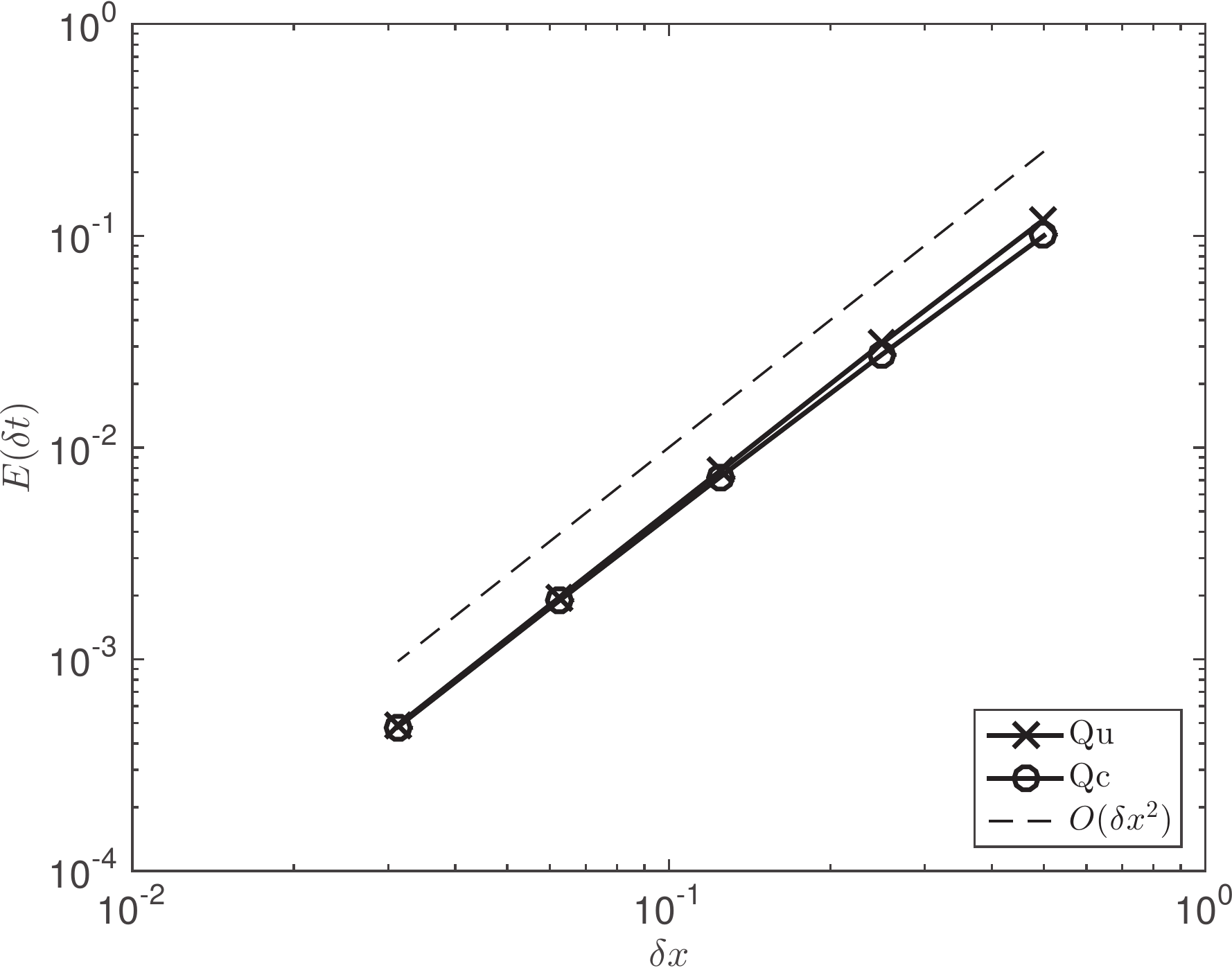} 
\caption{ \small {\bf Log-Normal Process in 1D.}   
These statistics are produced using the SSA induced by the generators $\tilde Q_u$ in  \eqref{eq:Qu_1d} and $\tilde Q_c$ in \eqref{eq:Qc_1d} as indicated
in the figure Legend.    The top panel of this figure plots the mean number of computational steps of the SSA integrator as a function of the spatial step size.
The bottom panel of this figure plots the mean holding time of the SSA integrator as a function of the spatial step size.
The statistics are obtained by taking a sample average over $100$ realizations with initial condition at $X(0)=1$ and a time interval of simulation of $[0,10]$.  
This figure validates Proposition~\ref{lem:barN}.  
} \label{fig:ln_complexity} \end{center}
\end{figure}


\section{Cox-Ingersoll-Ross Process in 1D with Multiplicative Noise} \label{sec:cir_process}

Consider a scalar SDE problem with boundary \begin{equation} \label{eq:cir_sde}
d Y= \beta ( \alpha - Y) dt + \sigma \sqrt{Y} dW \;, \quad Y(0) \in \mathbb{R}^+ \;.
\end{equation}
In quantitative finance, this example is a simple model for short-term interest rates \cite{CoInRo1985}.  It is known that discrete-time integrators may not satisfy the non-negativity condition: $Y(t) \ge 0$ for all $t > 0$ \cite{LoKoKi2010, andersen2010simulation}.  Corrections to these integrators that enforce this condition tend to be involved, specific to this example, or lower the accuracy of the approximation.   This issue motivates applying realizable discretizations to this problem, which can preserve non-negativity by construction.

According to the Feller criteria, the boundary at zero is classified as: \begin{itemize}
\item natural if $ 2 \beta \alpha / \sigma^2 \ge 1$;
\item regular if $1 > 2 \beta \alpha / \sigma^2 > 0$; and, 
\item absorbing if $\beta \alpha < 0$.
\end{itemize}
When the boundary at zero is natural (resp.~regular), the process is positive (resp.~non-negative).  When the origin is attainable, the boundary condition at the origin is treated as reflective.  If $\beta>0$ and $\alpha>0$, the SDE solution is ergodic with respect to a stationary distribution with density proportional to: \begin{equation} \label{eq:nu_cir}
\nu(x) = x^{ \frac{2 \beta \alpha}{ \sigma^2} - 1 } \exp\left( - \frac{2 \beta}{\sigma^2} x \right) \;.
\end{equation}
We emphasize that $\nu(x)$ is integrable over $\mathbb{R}^+$ if and only if $\beta>0$ and $\alpha>0$.   Note that when the boundary at zero is regular, the stationary density in \eqref{eq:nu_cir} is unbounded at the origin.   

The numerical tests assess how well the approximation captures the true long-time behavior of the SDE.  To carry out this assessment, we estimate the difference between the numerical stationary densities and \begin{equation} \label{eq:bar_nu}
\bar \nu_i = Z^{-1} \int_{ \frac{x_i+x_{i-1}}{2} }^{ \frac{x_i + x_{i+1}}{2} } \nu(x) dx  \quad \text{for all $x_i \in S$}
\end{equation} where the constant $Z$ is chosen so that $\sum_i \bar \nu_i = 1$.   Figures~\ref{fig:cir_process_natural} and \ref{fig:cir_process_regular} illustrate the results of using  adaptive mesh refinement.  The top panels of Figures~\ref{fig:cir_process_natural} and \ref{fig:cir_process_regular} plot the free energy of the stationary density when the boundary at zero is natural and regular, respectively. The benchmark solution is the cell-averaged stationary density \eqref{eq:nu_cir} on the adaptive mesh shown in the inset.  Note that when the boundary at zero is natural, the minimum of the free energy of the stationary density is centered away from zero.   In contrast, when the boundary is regular, the minimum of the free energy is attained at the grid point closest to zero.   The bottom panels of Figures~\ref{fig:cir_process_natural} and \ref{fig:cir_process_regular} plot the $\ell_1$ error in the numerical stationary densities as a function of the mesh size  in logspace $\delta \xi$.   These numerical results confirm the theoretically expected rates of convergence when the mesh size is variable.  


\begin{figure}[ht!]
\begin{center}
\includegraphics[width=0.65\textwidth]{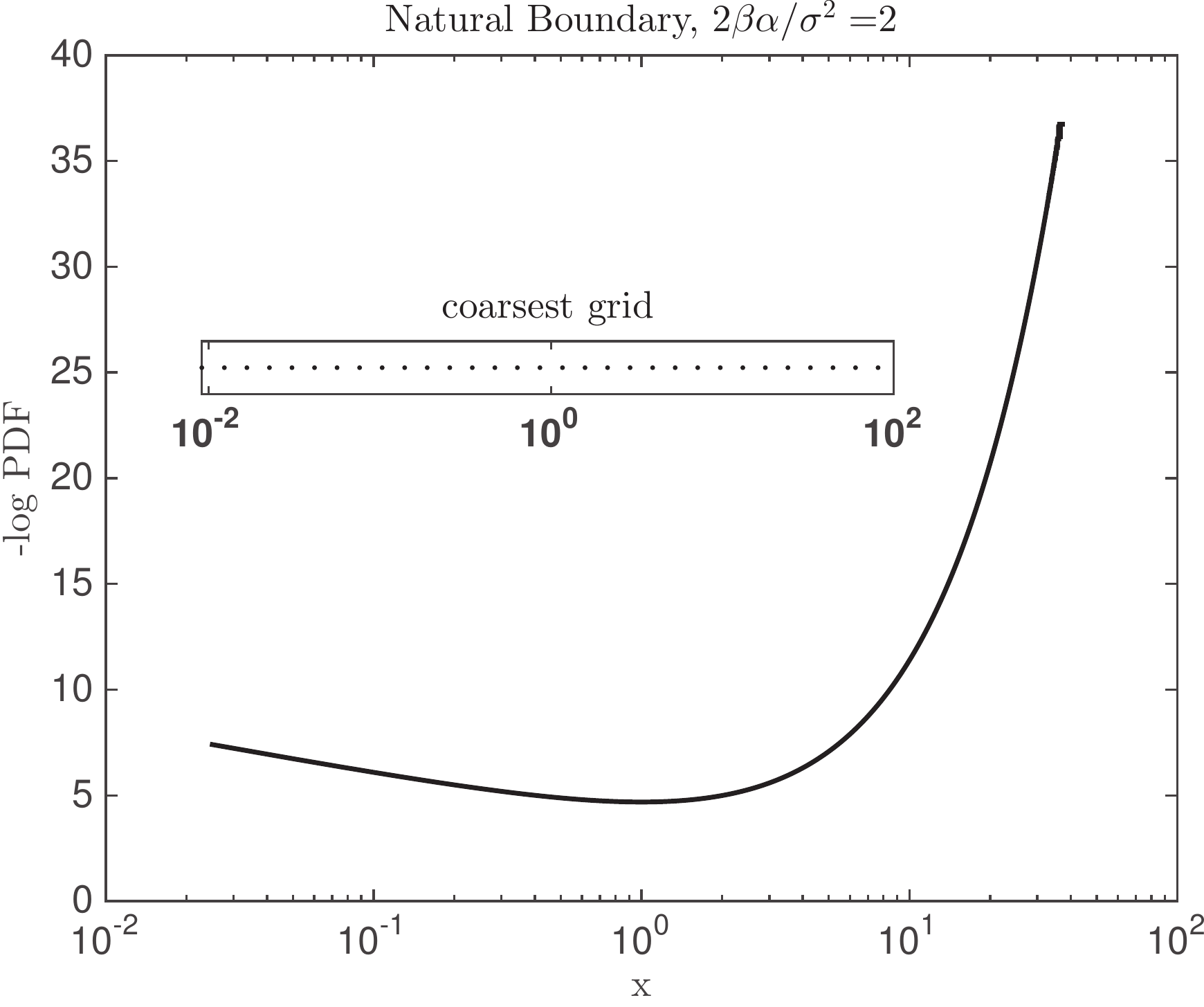}  \\ \vspace{0.25in}
\includegraphics[width=0.65\textwidth]{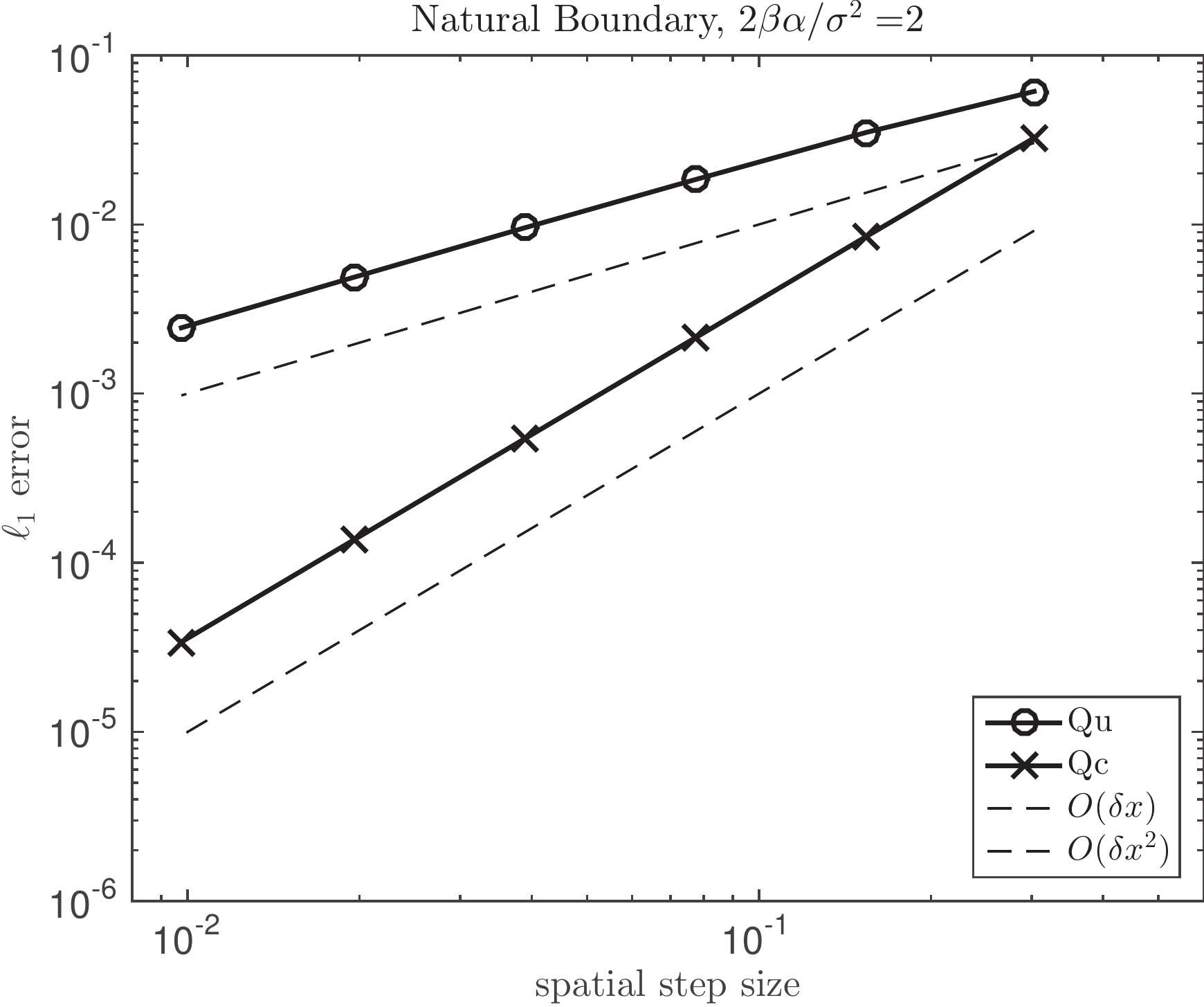} 
\end{center}
\caption{ \small {\bf Cox-Ingersoll-Ross Simulation.}   
The top panel plots the free energy of the stationary density of the SDE, when the boundary is natural. The bottom panel plots the accuracy of $\tilde Q_u$ and $\tilde Q_c$ in representing this stationary density as a function of the spatial step size $\delta \xi$ in log-space.  The benchmark solution is the cell averaged stationary density \eqref{eq:bar_nu} using the variable step size grid described in \S\ref{sec:amr_1d}.   Note that the central scheme $\tilde Q_c$ is second-order accurate in representing the stationary  probability distribution of the SDE,  which numerically supports Theorem~\ref{thm:nu_accuracy}.
}
\label{fig:cir_process_natural}
\end{figure}

\begin{figure}[ht!]
\begin{center}
\includegraphics[width=0.65\textwidth]{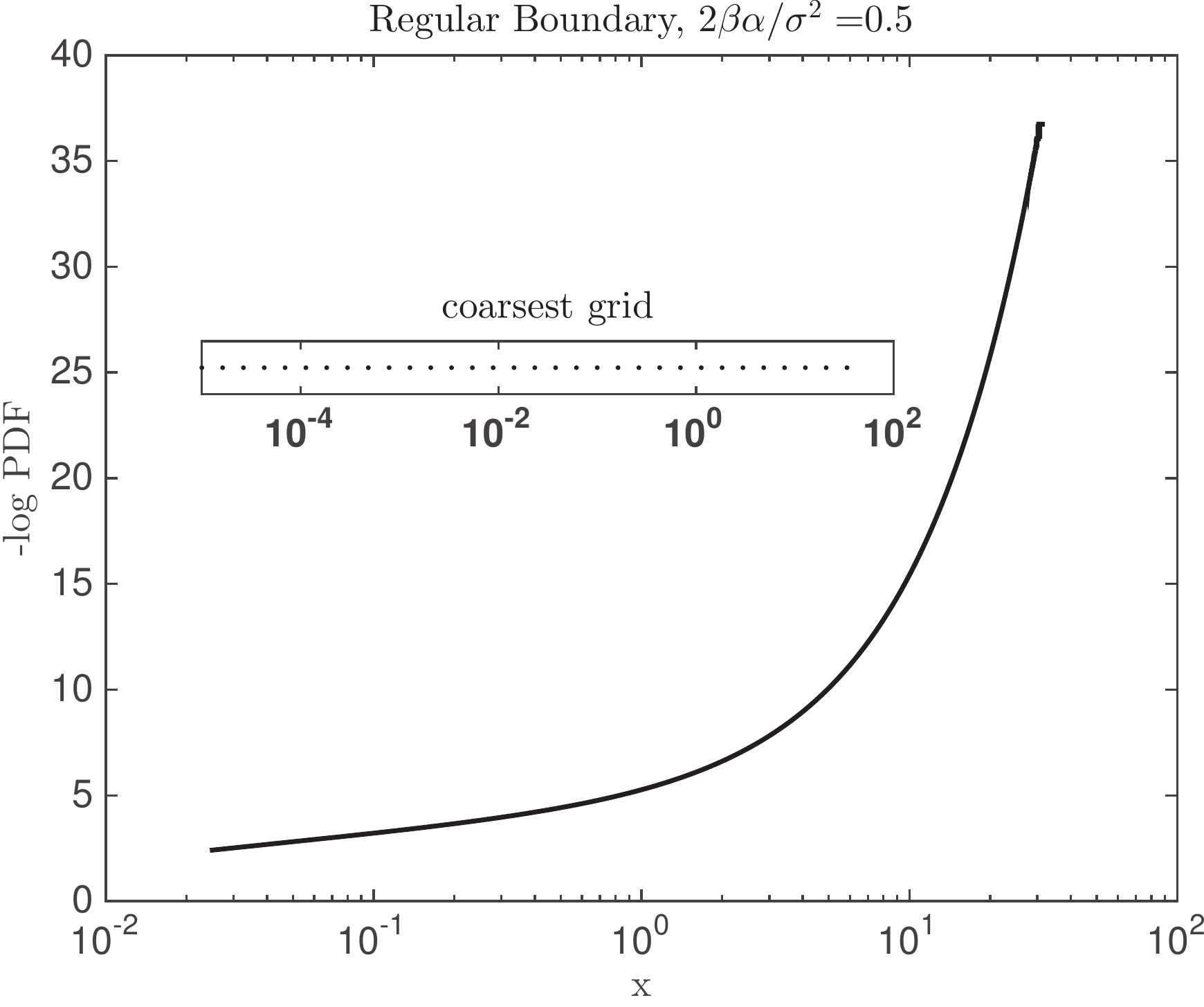}  \\ \vspace{0.25in}
\includegraphics[width=0.65\textwidth]{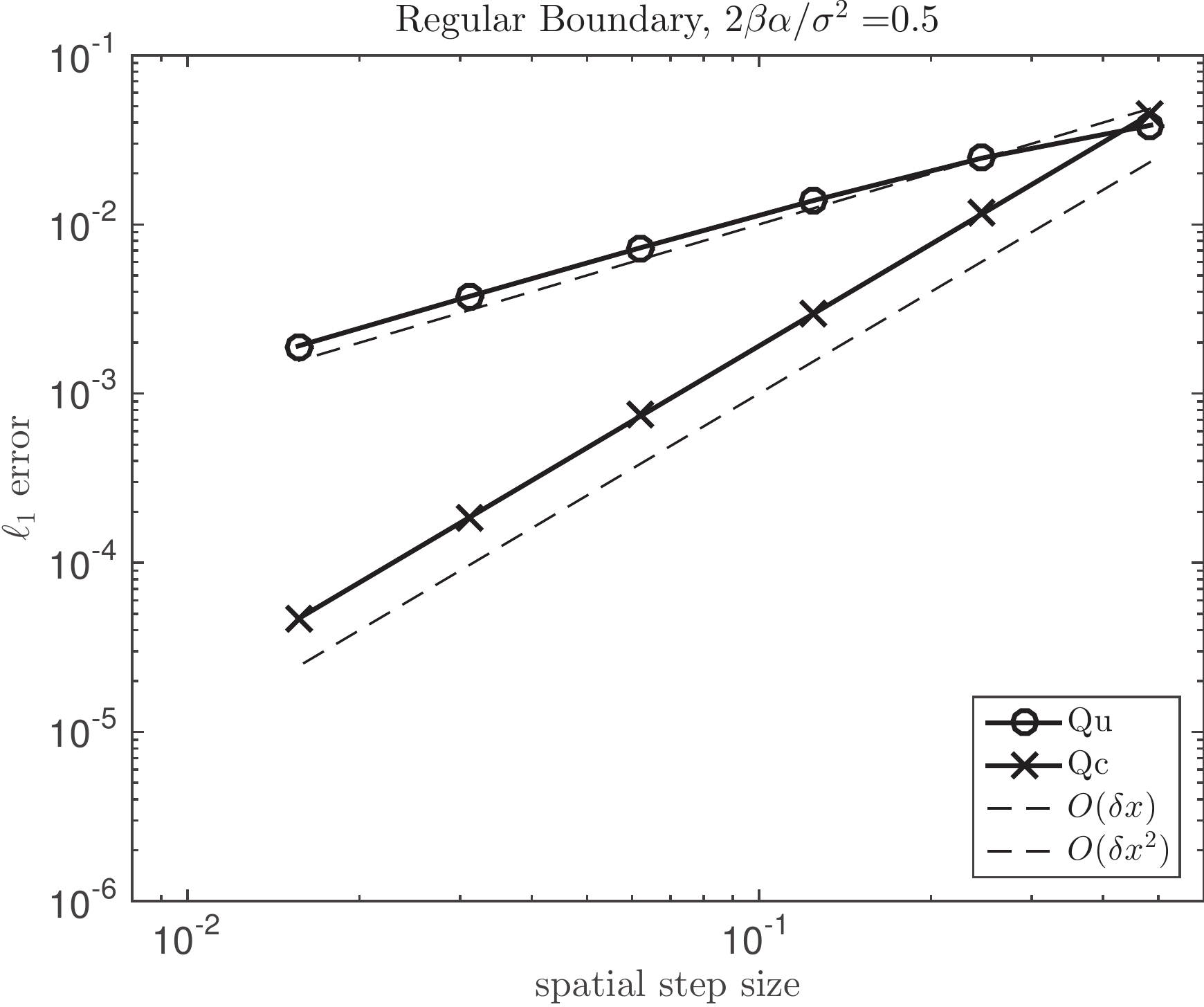}  
\end{center}
\caption{ \small {\bf Cox-Ingersoll-Ross Simulation.}   
The top panel  plots the free energy of the stationary density of the SDE, when the boundary is regular.  The bottom panel plots the accuracy of $\tilde Q_u$ and $\tilde Q_c$ in representing this stationary density as a function of the spatial step size parameter $\delta \xi$ in log-space.   The benchmark solution is the cell averaged stationary density \eqref{eq:bar_nu} using the variable step size grid described in  \S\ref{sec:amr_1d}.  Note that the central scheme $\tilde Q_c$ is second-order accurate in representing the stationary  probability distribution of the SDE,  which numerically supports Theorem~\ref{thm:nu_accuracy}.  
}
\label{fig:cir_process_regular}
\end{figure}


\section{SDEs in 2D with Additive Noise} \label{sec:planar_sdes}

The examples in this part are two-dimensional SDEs of the form  \begin{equation} \label{eq:sde_planar_additive}
d Y = b(Y) dt - DU(Y) dt + \sqrt{2 \beta^{-1}} d W \;, \quad Y(0) \in \Omega \subset \mathbb{R}^2 \;,
\end{equation}  
where $U: \mathbb{R}^2 \to \mathbb{R}$ is a potential energy function (which will vary for each example), $\beta$ is the inverse `temperature', and $b(x)$ is one of the following flow fields:
 \begin{equation} \label{flows}
b(x) \in \left\{  \underset{\text{rotational}}{\underbrace{\begin{pmatrix} \gamma x_2 \\ -\gamma x_1 \end{pmatrix}}}, 
 \underset{\text{extensional}}{\underbrace{\begin{pmatrix} \gamma x_2 \\ \gamma x_1 \end{pmatrix}}},
 \underset{\text{shear}}{\underbrace{\begin{pmatrix} \gamma x_2 \\ 0 \end{pmatrix}}},
 \underset{\text{nonlinear}}{\underbrace{\begin{pmatrix}  - \gamma x_1 x_2^2 \\ 0 \end{pmatrix}}} \right\}   \;, \quad  x = (x_1, x_2)^T 
\end{equation}
where $\gamma$ is a flow strength parameter.   In the flow-free case ($\gamma=0$) and in the examples we consider, the generator of the SDE in \eqref{eq:sde_planar_additive} is self-adjoint with respect to the probability density: \[
\nu(x) = Z^{-1} \exp(-\beta U(x))  \;, \quad Z = \int_{\Omega} \exp(-\beta U(x)) dx \;.
\]
Moreover, the process $Y$ is reversible \cite{HaPa1986}.  Realizable discretizations have been developed for such diffusions with additive noise in arbitrary dimensions, see \cite{MeScVa2009,LaMeHaSc2011}. With flow ($\gamma \ne 0$), however, the generator is no longer self-adjoint, the equilibrium probability current is nonzero, and the stationary density of this SDE is no longer $\nu(x)$, and in fact, is not explicitly known. In this context we will assess how well the second-order accurate generator $\tilde Q_c$ in \eqref{eq:tQc_2d} reproduces the spectrum of the infinitesimal generator of the SDE $L$.  Since the noise is additive, the state space of the approximation given by $\tilde Q_c$ is a grid over $\Omega \subset \mathbb{R}^2$.  Moreover, this generator can be represented as an infinite-dimensional matrix.  In order to use sparse matrix eigensolvers to compute eigenvalues of largest real part and the leading left eigenfunction of $\tilde Q_c$, we will approximate this infinite-dimensional matrix using a finite-dimensional matrix.   For this purpose, we truncate the grid as discussed next.

\begin{rem}[Pruned Grid] \label{rem:pruning}
Let $\{ x_{ij}  \in \mathbb{R}^2 \mid (i,j) \in \mathbb{Z}^2 \}$ be the set of all grid points and consider the magnitude of the drift field evaluated at these grid points: $\{ | \mu(x_{ij}) | \}$.  In order to truncate the infinite-dimensional matrix associated to $\tilde Q_c$, grid points at which the magnitude of the drift field is above a critical value $E^{\star}$ are pruned from the grid, and the jump rates to these grid points are set equal to zero.  The price of this type of truncation is an unstructured grid that requires using an additional data structure to store a list of neighbors to each unpruned grid point.  To control errors introduced in this pruning, we verify that computed quantities do not depend on the parameter $E^{\star}$.   
\end{rem}

\begin{rem}[Error Estimates] \label{rem:error}
In the linear context, we compare to the exact analytical solution.  In the nonlinear context, and in the absence of an analytical solution, we measure error relative to a converged numerical solution.  
\end{rem}

%
%

\subsection{Planar, Asymmetric Ornstein-Uhlenbeck Process}  \label{sec:planar_ou}

Consider \eqref{eq:sde_planar_additive} with the following two dimensional potential energy: \begin{equation} \label{linear}
U(x) = \frac{1}{2} (x_1^2 + x_2^2 ) \;, \qquad x = (x_1,x_2)^T  \in \mathbb{R}^2 \;,
\end{equation}
and the linear flows defined in \eqref{flows}.  We will use this test problem to assess the accuracy of the generator $\tilde Q_c$ in representing the spectrum of $L$.

In the linear context \eqref{eq:sde_planar_additive} with $\beta=2$,  the SDE \eqref{eq:sde_planar_additive} can be written as: \begin{equation} \label{ou_sde}
d Y =  C Y dt +   d W 
\end{equation}
where $C$ is a $2 \times 2$ matrix.  The associated generator $L$ is a two-dimensional, non-symmetric Ornstein-Uhlenbeck operator.  Let $\lambda_1$ and $\lambda_2$ denote the eigenvalues of the matrix $C$.  If $\Re(\lambda_1)<0$ and  $\Re(\lambda_2) < 0$, then $L$ admits a unique Gaussian stationary density $\nu(x)$ \cite[See Section 4.8]{IkWa1989}, and in addition, the spectrum of $L$ in $L_{\nu}^p$ (for $1 < p < \infty$) is given by: \begin{equation} \label{sigma_L}
\sigma(L) = \{ n_1 \lambda_1 + n_2 \lambda_2 ~:~ n_1, n_2 \in \mathbb{N}_0 \} \;.
\end{equation}
The original statement and proof of \eqref{sigma_L} in arbitrary, but finite, dimension can be found in \cite{MePaPr2002}.  This result serves as a benchmark for the approximation.   In the numerical tests, we select the flow rate to be $\gamma=1/2$, which ensures that the eigenvalues of the matrix $C$ have strictly negative real parts for each of the linear flow fields considered.  

Figure~\ref{fig:ou_stationary_densities} plots contours of the free energy of the numerical stationary density at $\delta x=0.3$ for the linear flow fields considered.   At this spatial step, the $\ell_1$ error in the stationary densities shown is about $1 \%$.  Figure~\ref{fig:ou_stationary_density_accuracy} plots the $\ell_1$ difference between the numerical and exact stationary densities. Figure~\ref{fig:ou_sigma_L} plots the first twenty eigenvalues of $L$ as predicted by \eqref{sigma_L}.  Figure~\ref{fig:ou_sigma_L_accuracy} plots the relative error between the twenty eigenvalues of largest real part of $L$ and $Q_c$.  The eigenvalues of $L$ are obtained from \eqref{sigma_L}, and the eigenvalues of $\tilde Q_c$ are obtained from using a sparse matrix eigensolver and the truncation of $\tilde Q_c$ described in Remark~\ref{rem:pruning}. This last figure suggests that the accuracy of $\tilde Q_c$ in representing the spectrum of $L$ is, in general, first-order.  


\begin{figure}[ht!]
\begin{center}
\includegraphics[width=\textwidth]{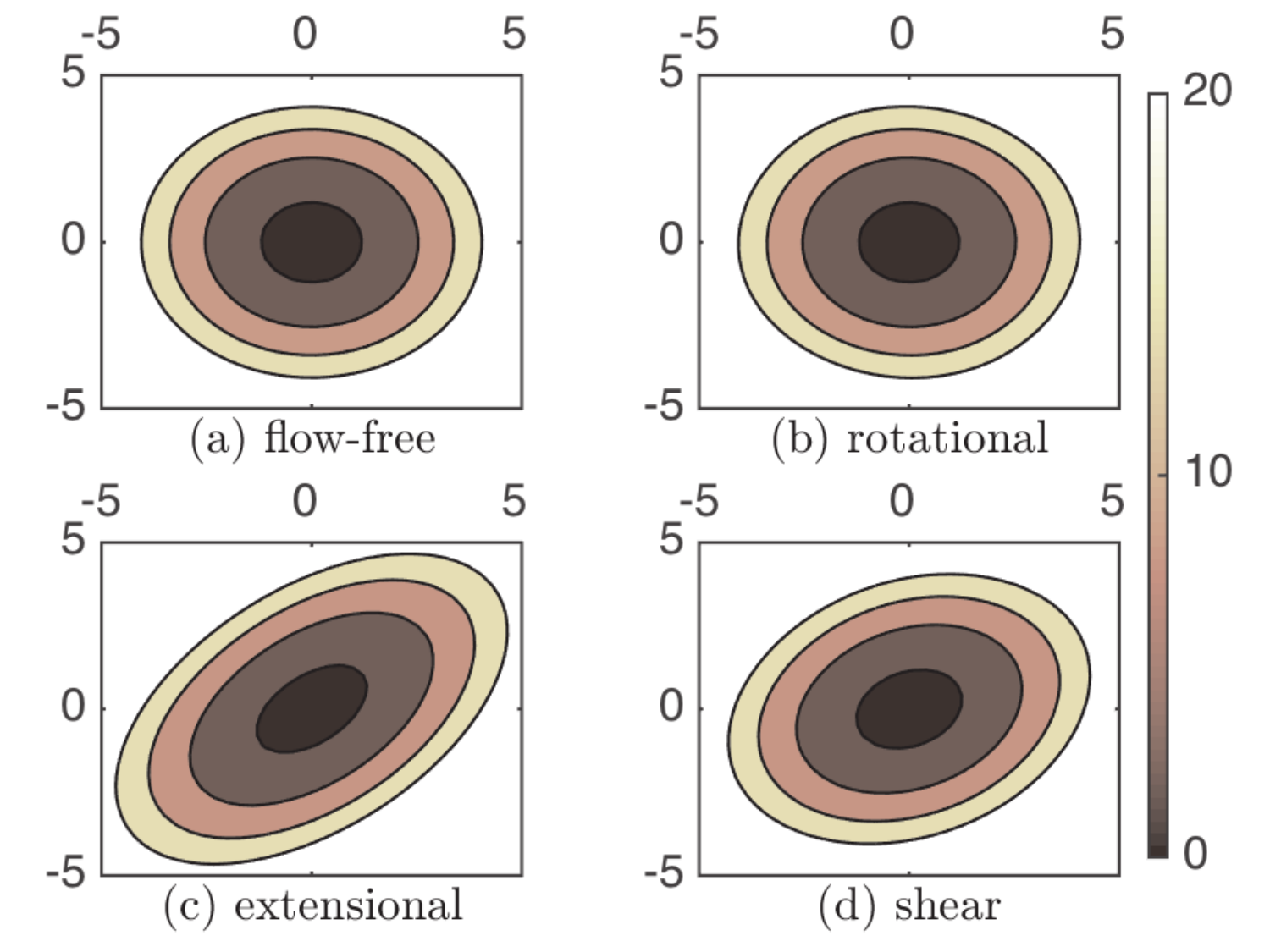} 
\end{center}
                \caption{\small {\bf Planar, Asymmetric Ornstein-Uhlenbeck Process.}
                                                These figures plot contours of the free energy of the numerical stationary density associated to $\tilde Q_c$
                                                 in the flow-free (a), rotational (b), extensional (c), and shear flow (d) cases.  The grid size is selected to be $h=0.3$, which
                                                yields an approximation to the true stationary density that is within $1 \%$ error in the $\ell_1$ norm.
                                                The circular contour lines of the stationary densities in the (a) flow-free 
                                                and (b) rotational cases are identical.  In the (c) extensional and (d) shear flow cases, the contours are elliptical.  
                                                The flow strength parameter is set at: $\gamma = 1/2$.    
                }
                \label{fig:ou_stationary_densities}
\end{figure}

\begin{figure}[ht!]
\begin{center}
\includegraphics[width=0.8\textwidth]{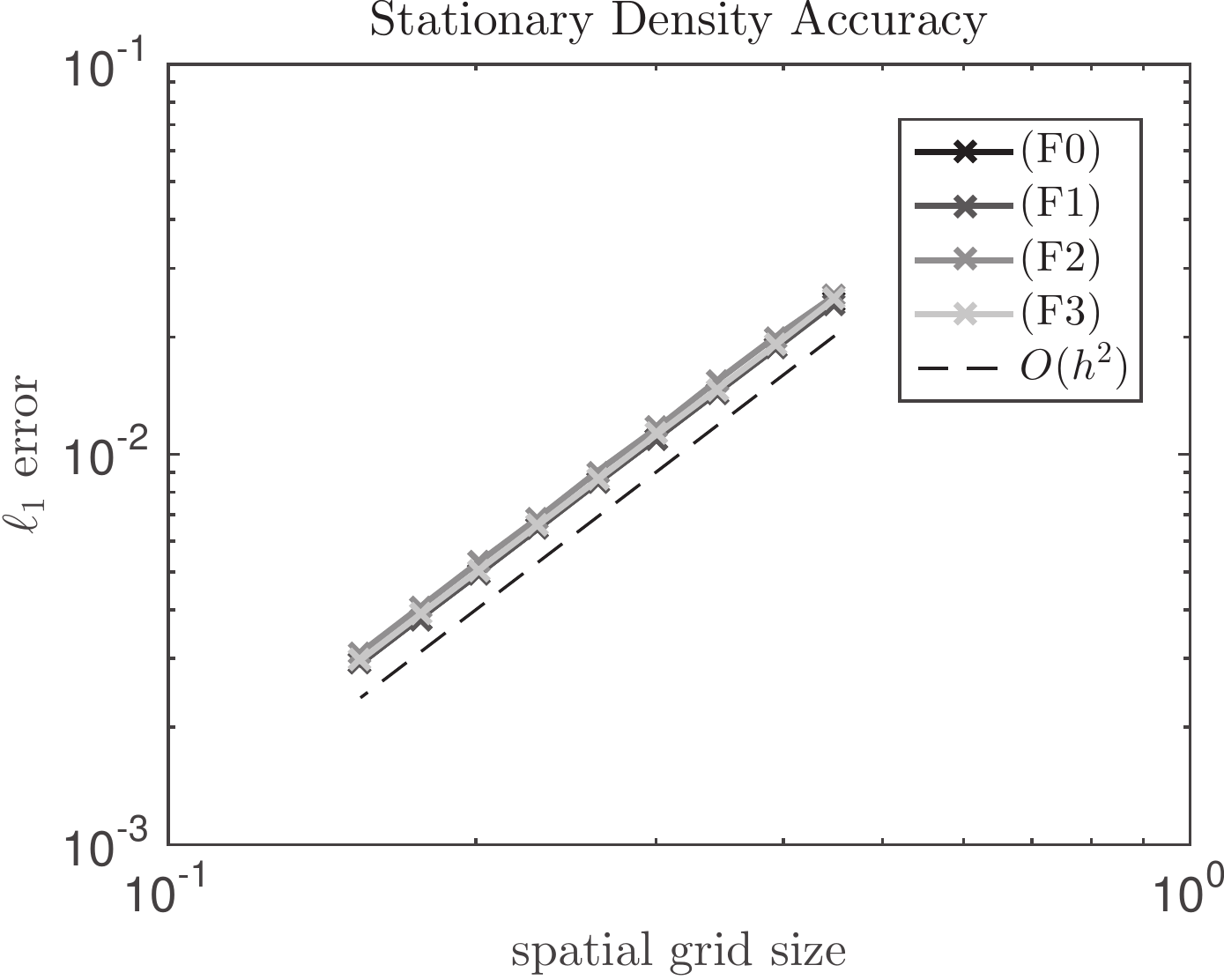}
\end{center}
                \caption{\small {\bf Planar, Asymmetric Ornstein-Uhlenbeck Process.}
                               		 These figures plot the $\ell_1$ error in the estimate of the true stationary density as a function of grid size for $\tilde Q_c$ in the 
                                                flow-free (F0), rotational (F1), extensional (F2), and shear (F3) flow cases, as indicated in the figure legend.
                                                The figure shows that this approximation is second-order accurate in estimating the stationary density.
                                                 The flow strength parameter is set at: $\gamma = 1/2$.   
                                                 This numerical result is consistent with Theorem~\ref{thm:nu_accuracy}.
                                                }
                \label{fig:ou_stationary_density_accuracy}
\end{figure}

\begin{figure}[ht!]
\begin{center}
\includegraphics[width=0.48\textwidth]{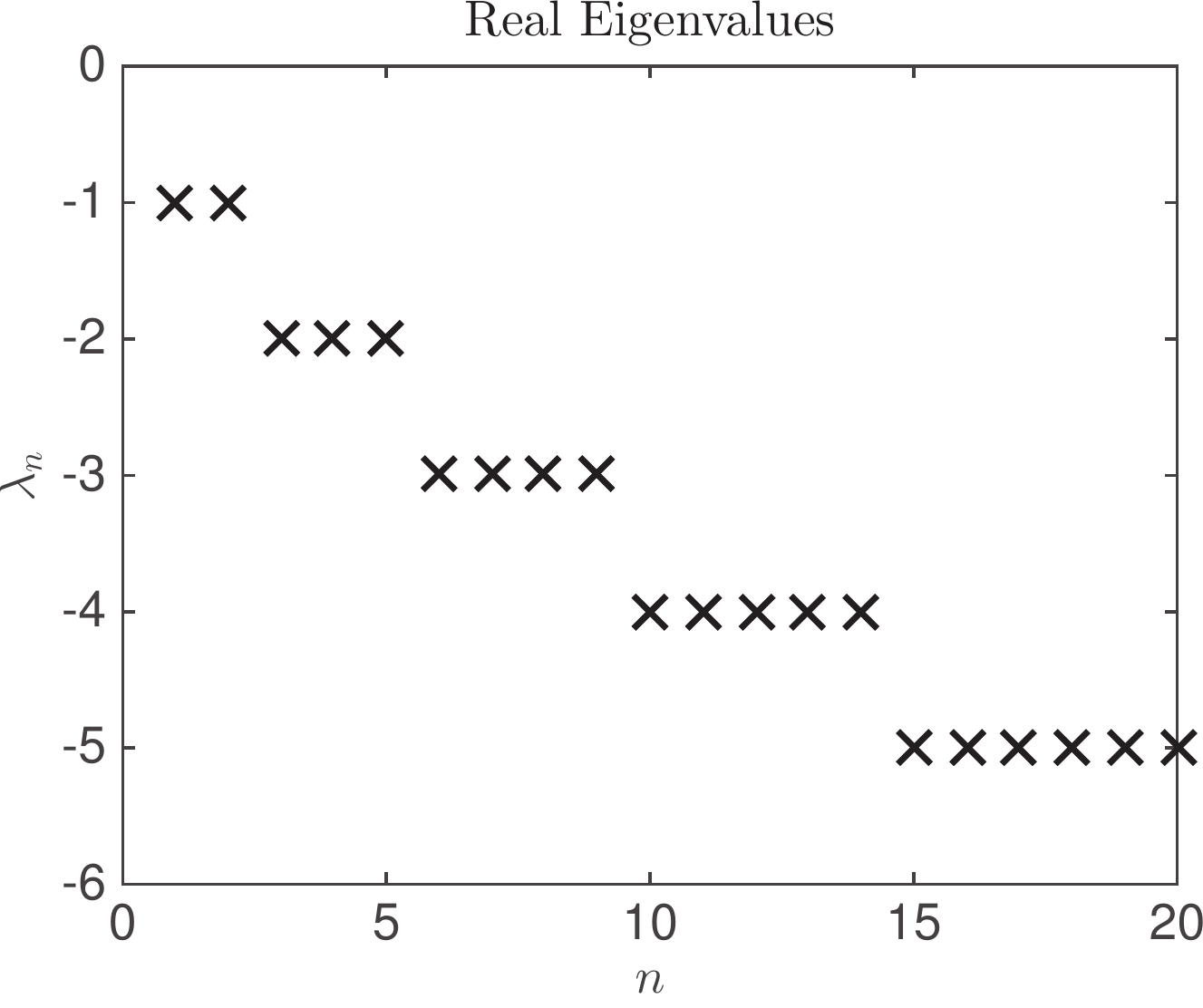}
\includegraphics[width=0.48\textwidth]{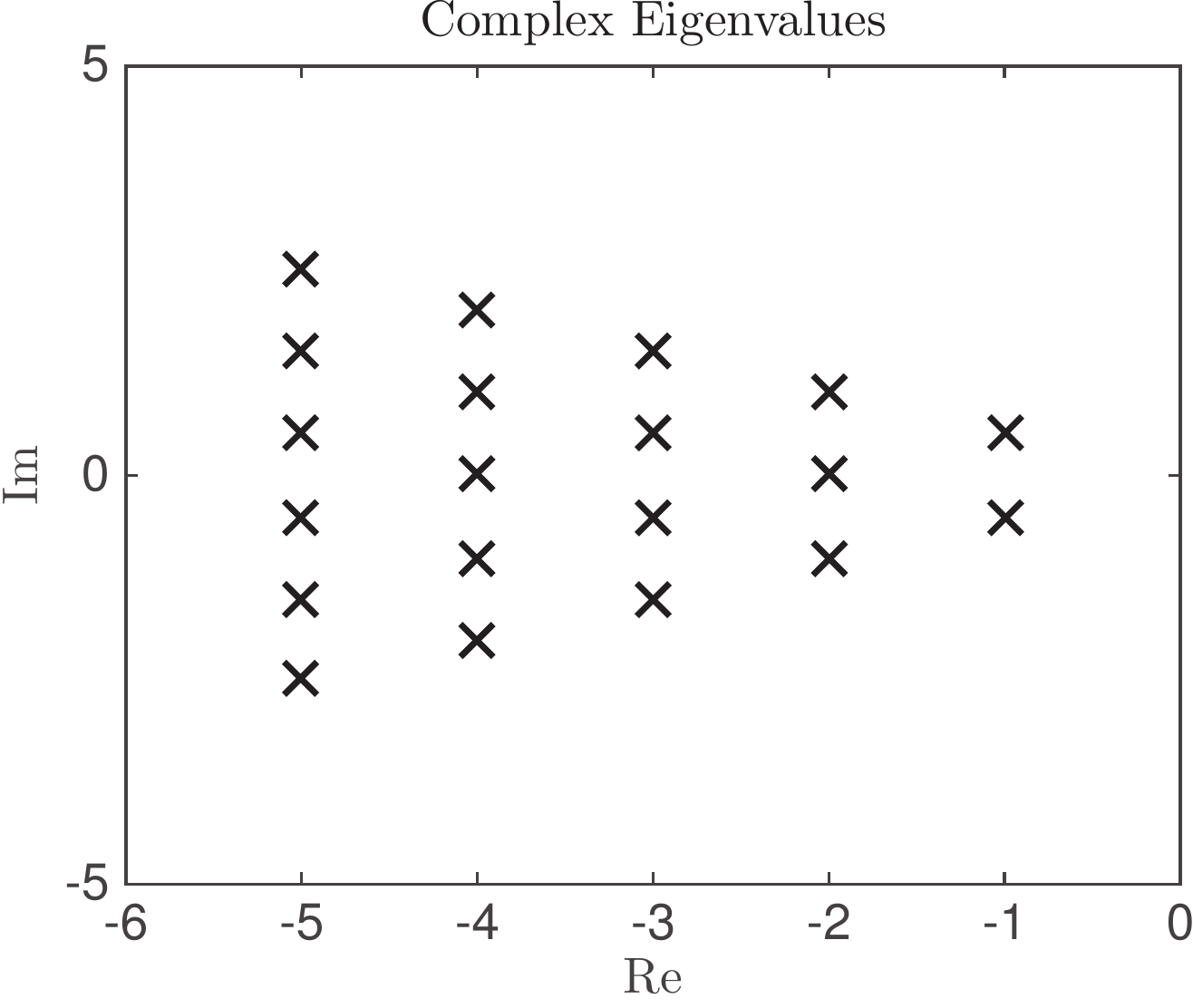} \\
\hbox{
                \hspace{1in} (a) flow-free 
               \hspace{1.5in} (b) rotational 
        }  
\vspace{1em}
\includegraphics[width=0.48\textwidth]{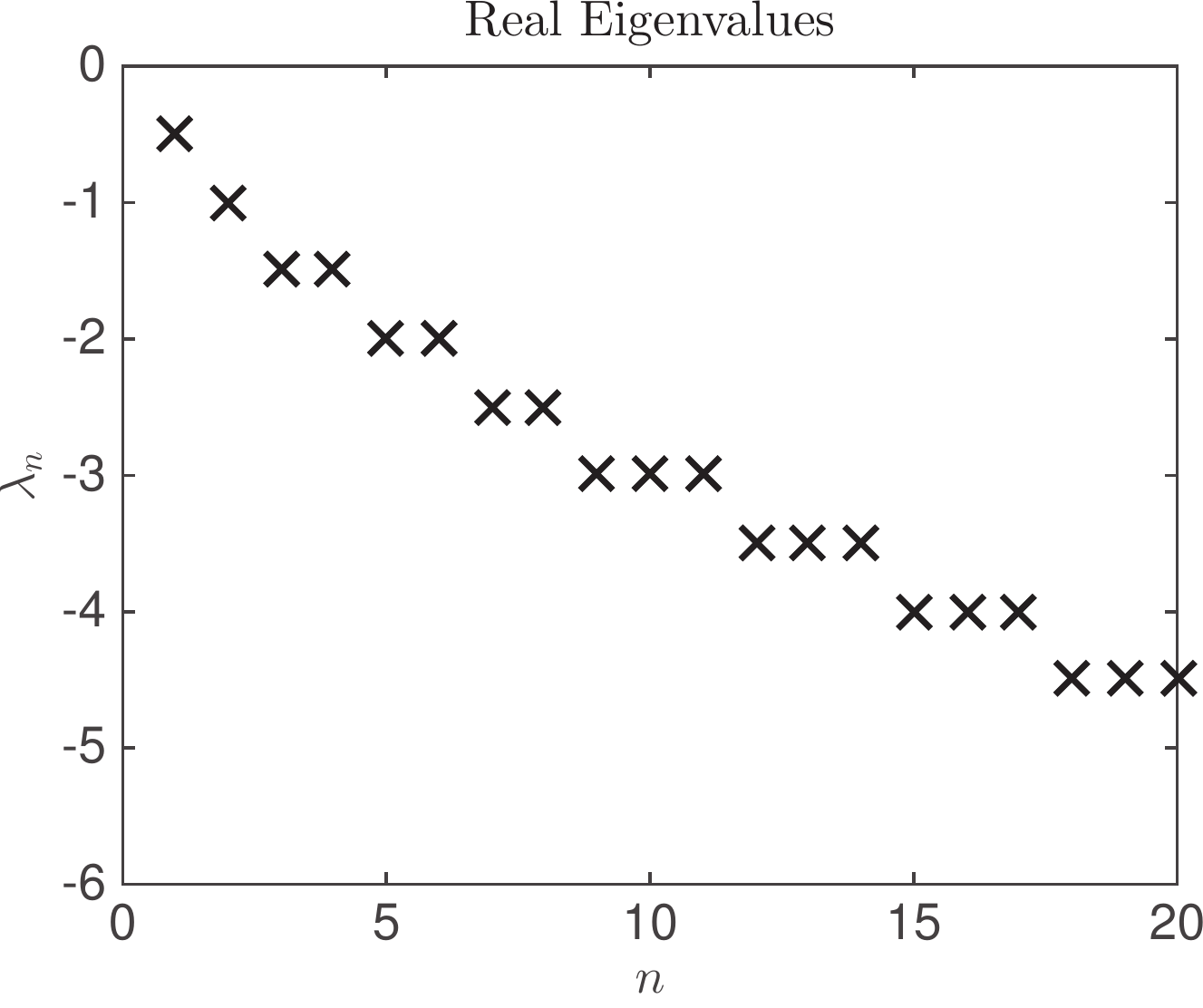}
\includegraphics[width=0.48\textwidth]{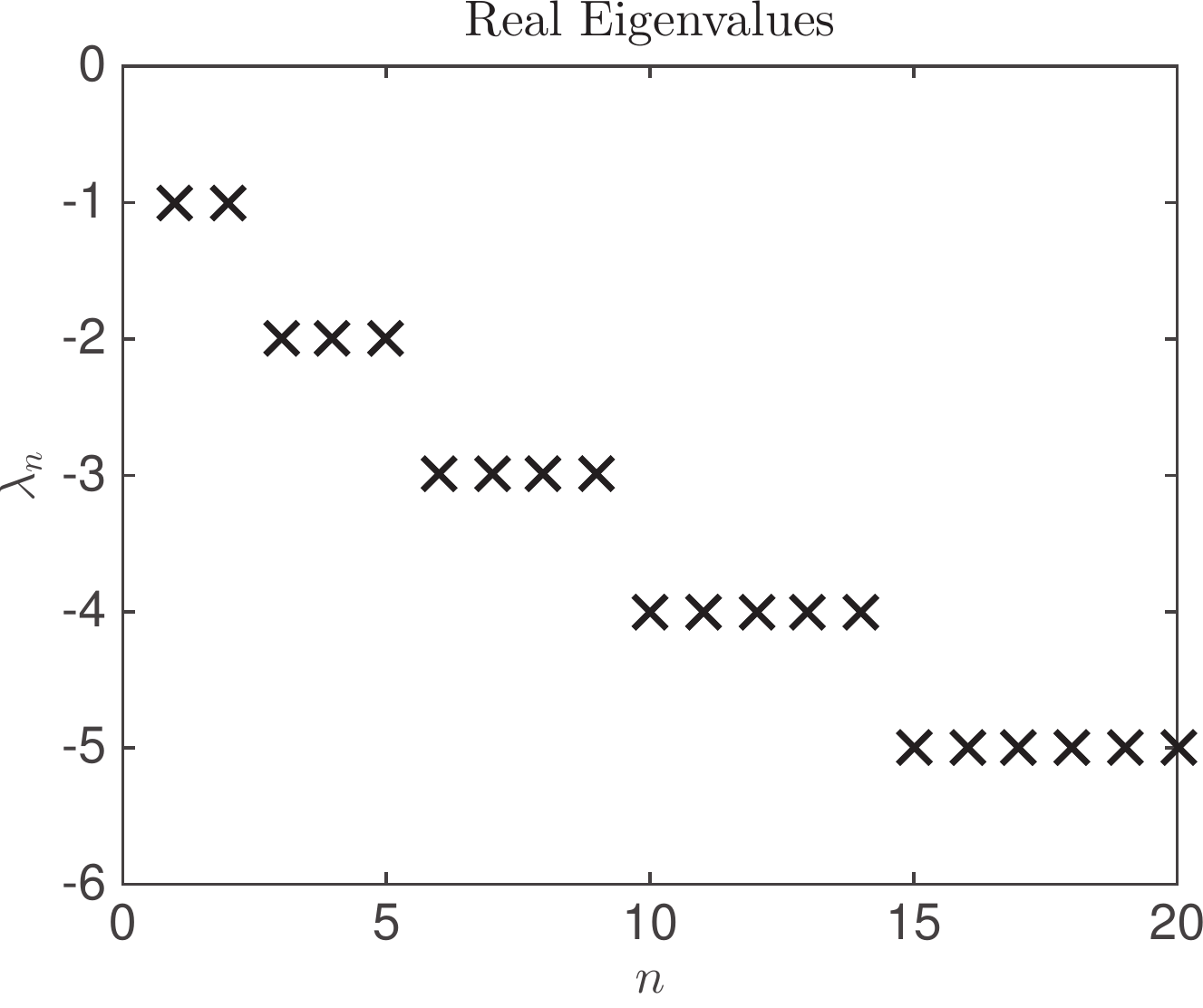}
\hbox{
                \hspace{0.0in} (c) extensional
               \hspace{1.75in} (d) shear 
        }           
        \end{center}
        
         \caption{\small {\bf Planar, Asymmetric Ornstein-Uhlenbeck Process.}
                                                These figures plot the twenty  eigenvalues of largest nontrivial real part of the operator $L$
                                                as predicted by the formula given in \eqref{sigma_L}.  
                                                In (a), (c) and (d) the formula implies that the eigenvalues are strictly real, while in the rotational flow case shown in 
                                                (b) the eigenvalues are complex.  The flow strength parameter is set at: $\gamma = 1/2$.   These eigenvalues serve
                                                as a benchmark solution to validate the accuracy of $\tilde Q_c$ in representing the spectrum of $L$.
                }
                \label{fig:ou_sigma_L}
\end{figure}

\begin{figure}[ht!]
\begin{center}
\includegraphics[width=0.8\textwidth]{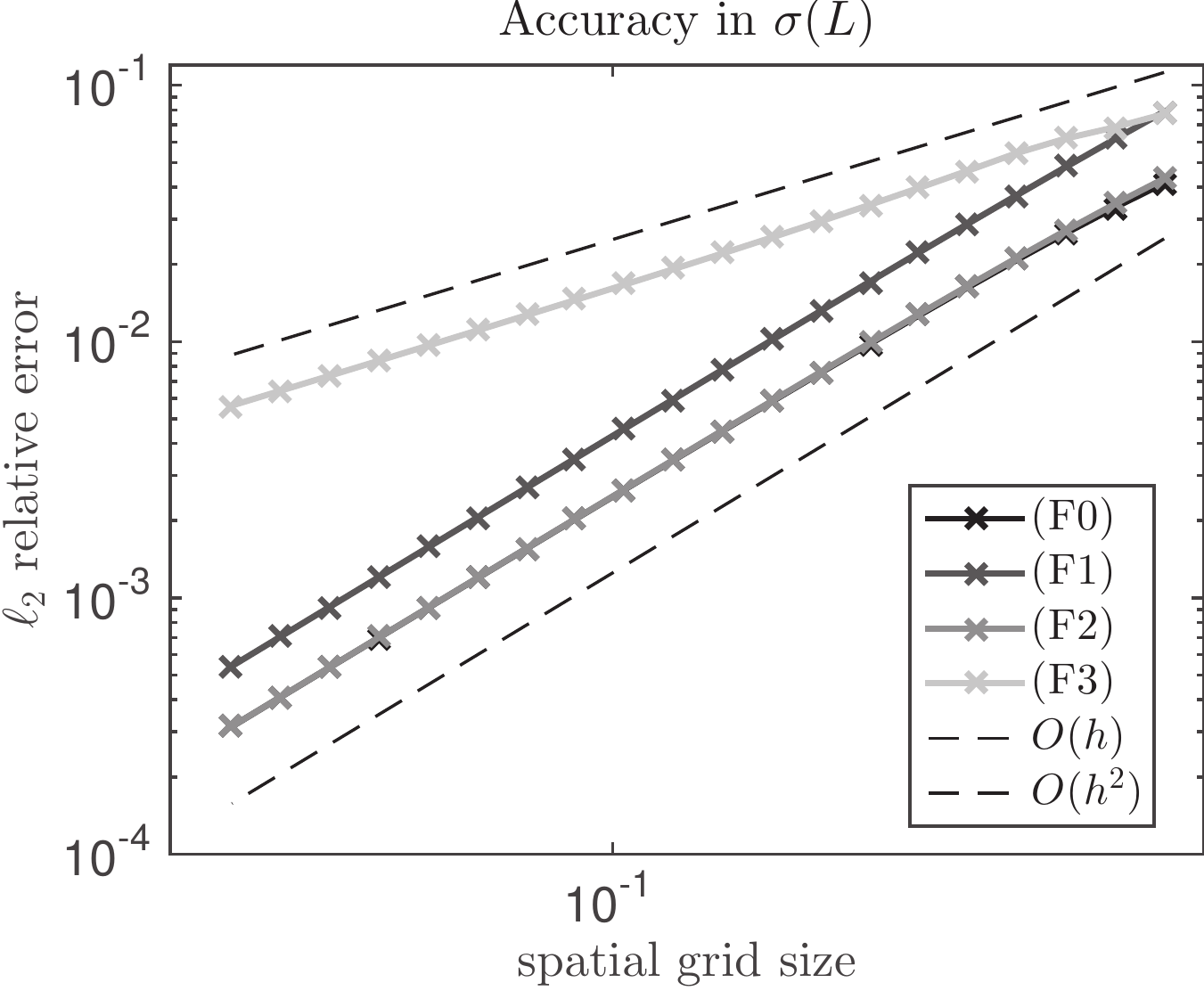}
\end{center}
                \caption{\small {\bf Planar, Asymmetric Ornstein-Uhlenbeck Process.}
                                                These figures plot the relative error of the numerical estimate produced by $\tilde Q_c$ of the twenty eigenvalues of largest nontrivial real part 
                                                of the operator $L$  in the flow-free (F0), rotational (F1), extensional (F2), and shear (F3) flow cases, as indicated in the figure legend.
                                                The figures suggest that the accuracy of $\tilde Q_c$ in representing
                                                these eigenvalues is $\mathcal{O}(h)$ in general, even though it is $\mathcal{O}(h^2)$ in the flow-free (F0), rotational (F1), 
                                                and extensional (F2) flow cases. The flow strength parameter is set at: $\gamma = 1/2$.  
                }
                \label{fig:ou_sigma_L_accuracy}
\end{figure}


\subsection{Maier-Stein SDE: Locally Lipschitz Drift \& Nonlinear Flow} \label{sec:doublewell}

The Maier-Stein system is governed by an SDE of the form \eqref{eq:sde_planar_additive} with $\Omega=\mathbb{R}^2$, the nonlinear flow field in \eqref{flows}, $\beta=1$, and \begin{equation} \label{maierstein}
U(x) =  \frac{x_1^4}{4} - \frac{x_1^2}{2} + \mu \frac{(1 + x_1^2) x_2^2}{2} \;,  \qquad x = (x_1,x_2)^T  \in \mathbb{R}^2 \;.
\end{equation}
In the flow-free case ($\gamma=0$), the Maier-Stein SDE describes the evolution of a self-adjoint diffusion.  In this case, the governing SDE is an overdamped Langevin equation with a drift that is the gradient of a double well potential with two minima located at the points $(\pm 1, 0)^T$.  When $\gamma > 0$, this process is no longer self-adjoint and the drift cannot be written as the gradient of a potential \cite{maier1992transition,maier1993escape,maier1996scaling}.  Our interest in this  system is to verify that \eqref{eq:tQc_2d} is convergent for SDEs of the type \eqref{eq:sde_planar_additive} with a nonlinear flow field.  Figure~\ref{fig:E2_Stationary_Density_Verification} reproduces slices of the stationary density, which is consistent with Figure~5 of \cite{valeriani2007computing}.  Contours of the free energy of the stationary density at $\gamma=0$ and $\gamma=4$ are shown in Figure~\ref{fig:E2_stationary_densities}. Finally, Figure~\ref{fig:E2_sigma_L_accuracy} shows that the spectrum of $L$ is accurately represented by $\tilde Q_c$.


\begin{figure}[ht!]
\includegraphics[width=0.65\textwidth]{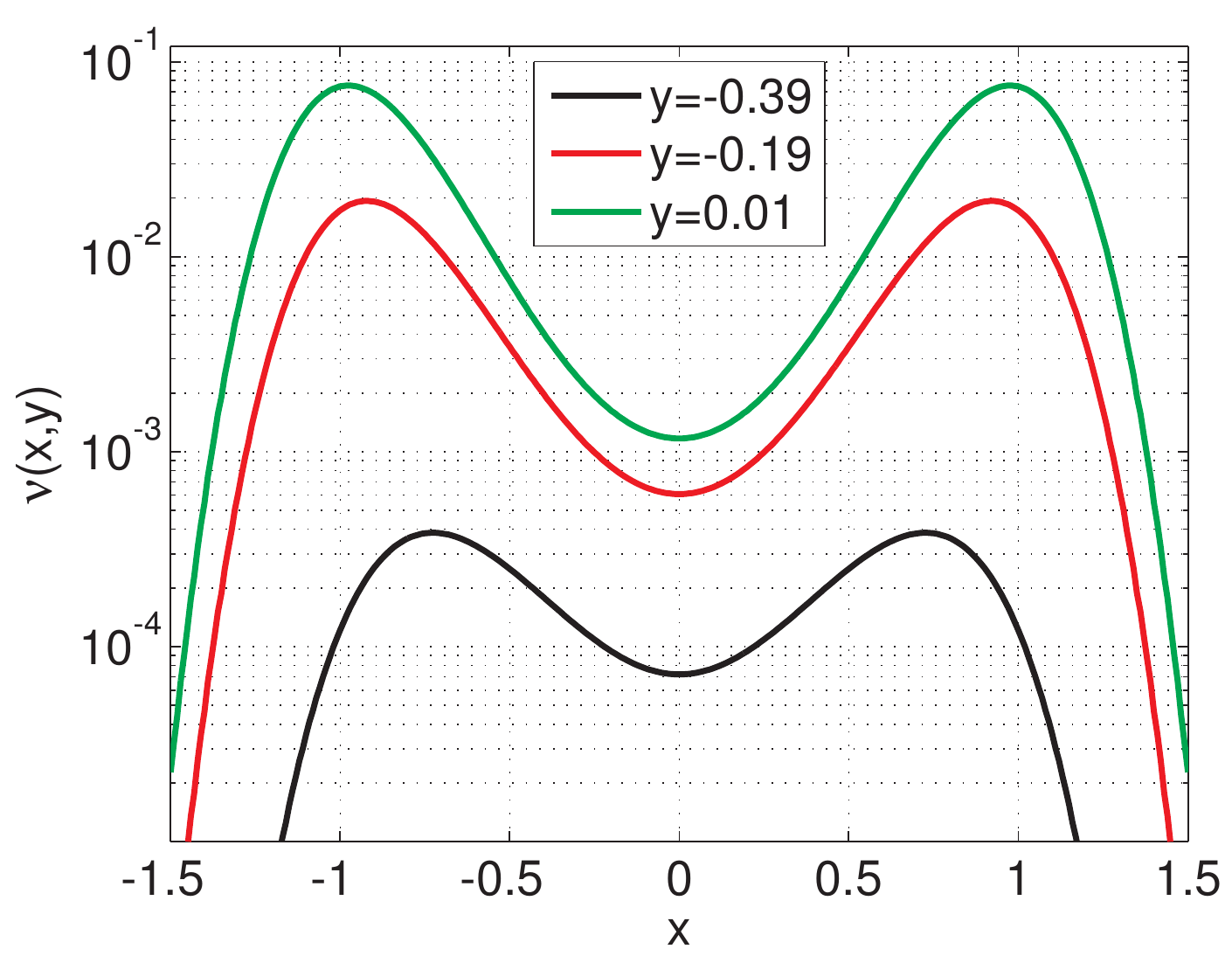} \\
\includegraphics[width=0.65\textwidth]{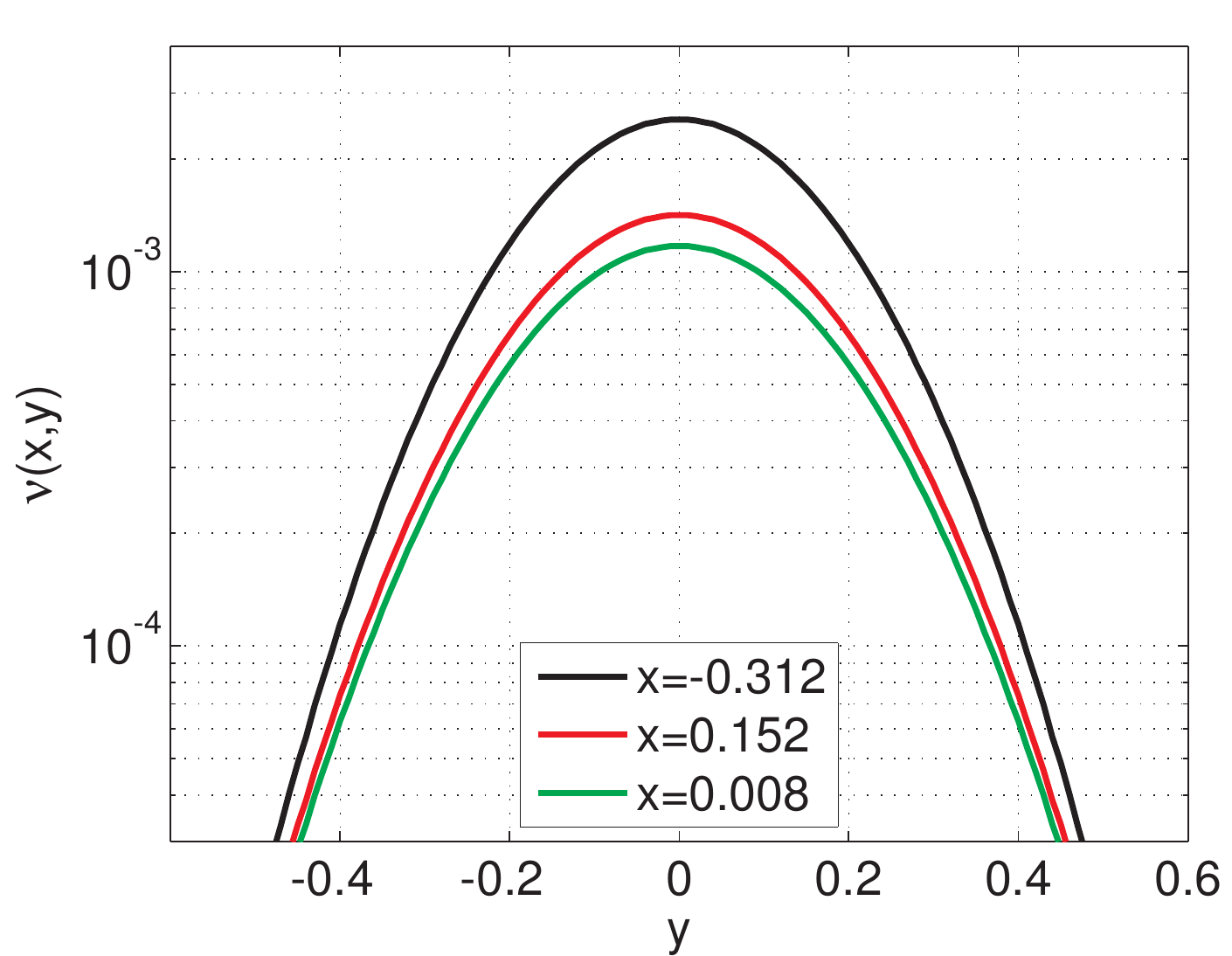} 
                \caption{\small {\bf Maier-Stein: Slices of Stationary Density.} 
                                                These figures plot the intersection of the stationary density of the Maier-Stein system
                                                with the planes indicated in the legends with parameters: $\mu=2$ and $\gamma=4.67$.  These graphs agree
                                                with those given in Figure~5 of \cite{valeriani2007computing}.
                }
                \label{fig:E2_Stationary_Density_Verification}
\end{figure}

\begin{figure}[ht!]
\begin{center}
\includegraphics[width=\textwidth]{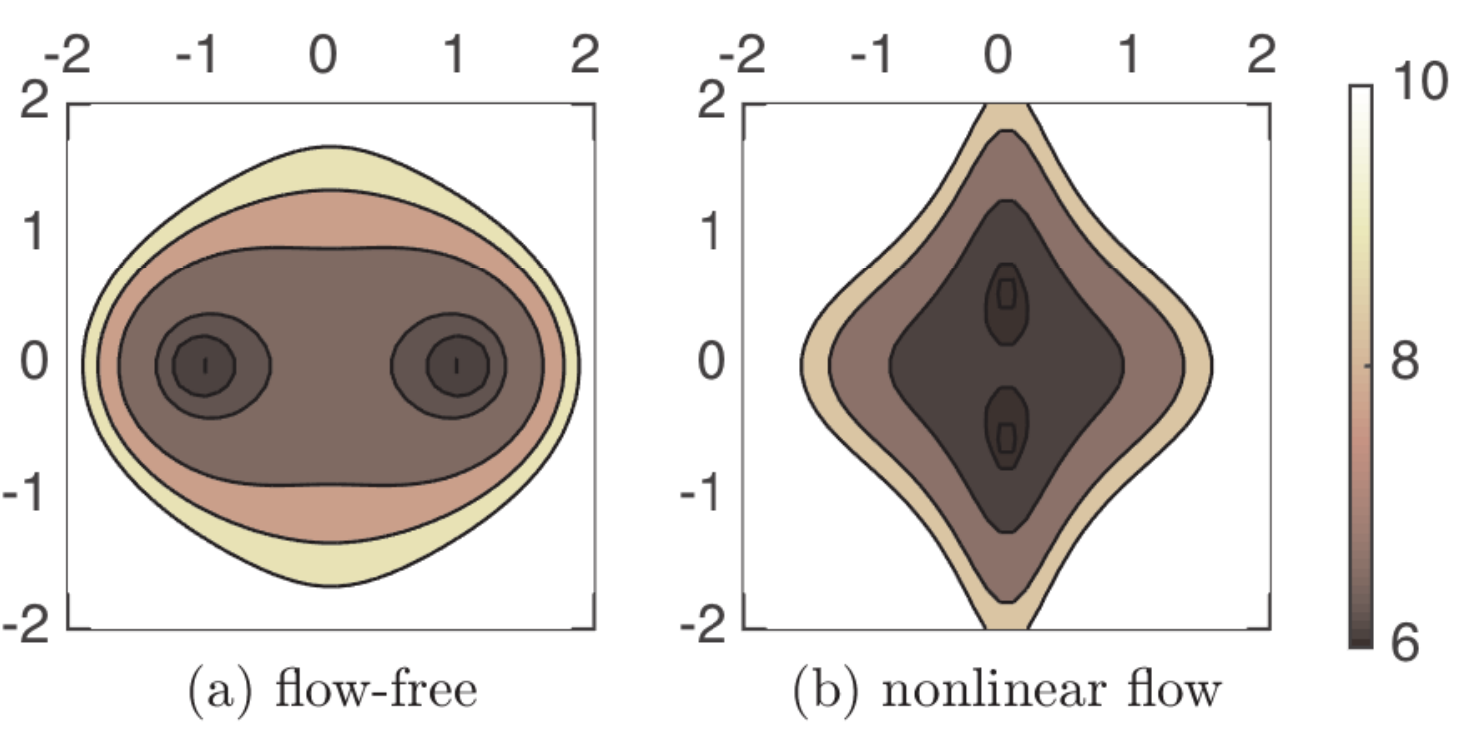}
\end{center}
                \caption{\small {\bf Maier-Stein: Stationary Densities.} 
                				These figures plot contours of the free energy of the 
					stationary density for the generator $\tilde Q_c$ with grid size $h=0.05$.
					The test problem is a particle in the double-well potential given in \eqref{maierstein} and the nonlinear flow field given in \eqref{flows}.
					The flow strength parameters are set at: $\gamma=0$ (left panel) and $\gamma=4$ (right panel).
                }
                \label{fig:E2_stationary_densities}
\end{figure}

\begin{figure}[ht!]
\begin{center}
\includegraphics[width=0.8\textwidth]{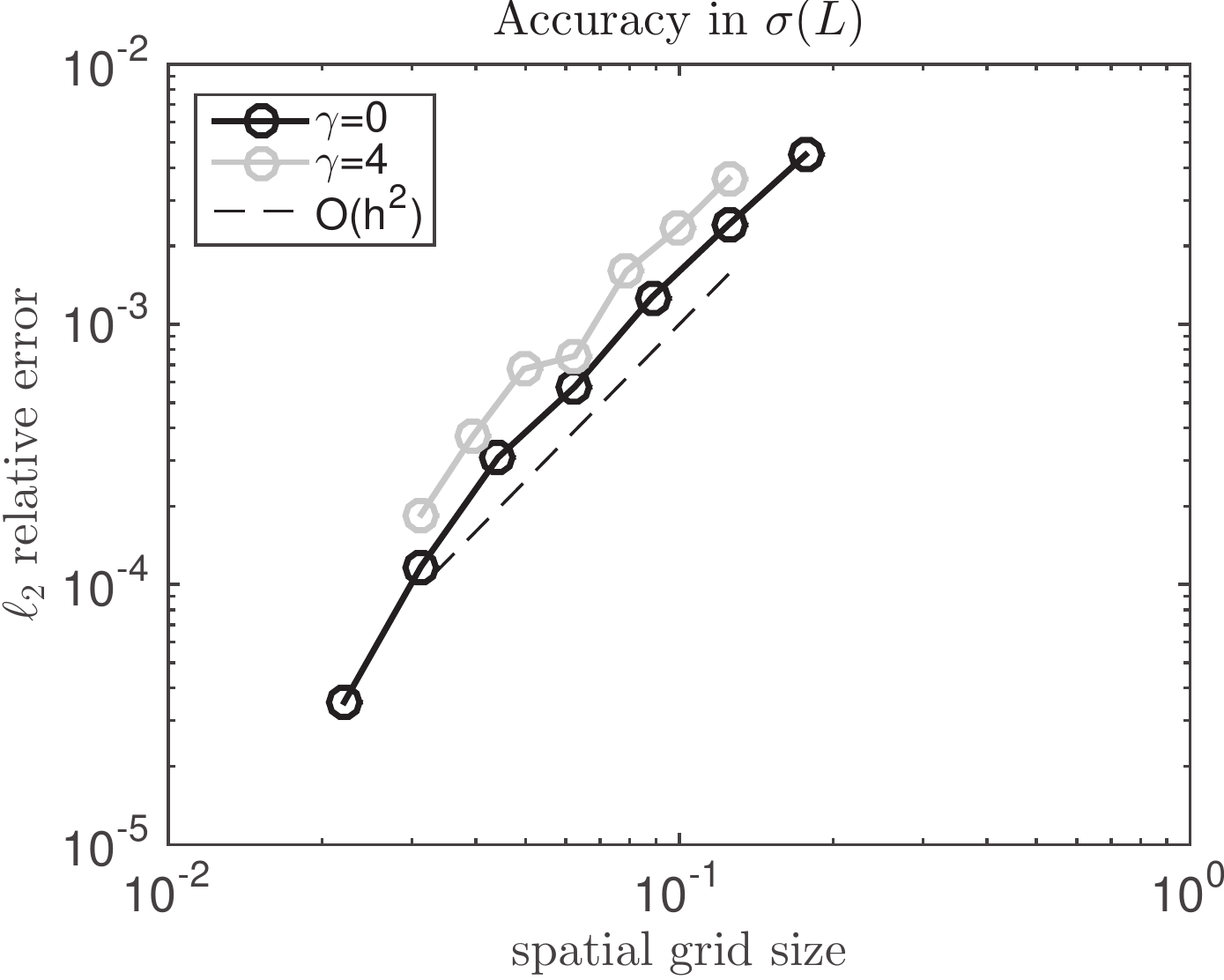}
\end{center}
                \caption{\small {\bf Maier-Stein: Accuracy in $\sigma(L)$.}
                                                This figure illustrates the convergence in the $\ell_2$ norm of the numerical estimate produced by $\tilde Q_c$ of the twenty eigenvalues of largest nontrivial real part 
                                                of the operator $L$ for the values of the flow strength parameter $\gamma$ indicated in the legend.  
                }
                \label{fig:E2_sigma_L_accuracy}
\end{figure}


\subsection{Square Well SDE: Discontinuous Potential} \label{sec:squarewell}

Here, we test $\tilde Q_c$ in \eqref{eq:tQc_2d} on a non-self-adjoint diffusion whose coefficients have internal discontinuities.  Specifically, consider \eqref{eq:sde_planar_additive} with the linear flow fields in \eqref{flows}, $\beta=1$, and a regularized square well potential: \begin{equation} \label{squarewell}
\begin{aligned}
U(x) &= \tanh\left(\frac{\max(x_1,x_2) - d_1}{\epsilon} \right) -  \tanh\left(\frac{\max(x_1,x_2) - d_2}{\epsilon} \right)   \\
 x &= (x_1,x_2)^T  \in \Omega = [-1,1] \times [-1,1]
  \end{aligned}
\end{equation}
where $d_1$ and $d_2$ specify the location of the discontinuity and $\epsilon$ is a smoothness parameter.  We assume that the solution is periodic at the boundaries of $\Omega$ and we discretize the domain so that all of the grid points are in the interior of $\Omega$.  Since the potential energy is nearly discontinuous, we replace the jump rates in \eqref{eq:Qc} with the following rates which avoid differentiating the potential energy \begin{equation} \label{sw_rates}
\begin{aligned}
q(x,y) = \frac{\beta^{-1}}{h^2}  \exp\left(-\frac{\beta}{2} (U(y)-U(x) - b(x)^T (y-x) ) \right)  \;.
\end{aligned}
\end{equation} 
To be clear, $x,y$ in \eqref{sw_rates} are $2$-vectors.  Contours of the numerical stationary density are plotted in Figure~\ref{fig:E3_stationary_densities_known} when the stationary density is known.   The spatial grid size used is $h=0.1$, which is selected so that the numerical densities represent the true stationary densities to within $1 \%$ accuracy in the $\ell_1$ norm.  Figure~\ref{fig:E3_stationary_densities_unknown} shows a similar plot in the flow cases for which the stationary densities are unknown.  In this case we checked that at the spatial grid size of $h=0.1$ the stationary density is well-resolved.  To accurately represent the probability flux through the square well, we construct the coarsest cell-centered mesh to have borderlines that coincide with the discontinuities of the square well.   Subsequent refinements involve halving the size of each cell, so that the discontinuities always lie on a cell edge.   Figure~\ref{fig:E3_sigma_L_accuracy} illustrates the $\ell_2$ convergence of the numerical estimate to the first twenty eigenvalues of the continuous operator $L$.  As a side note, one may be able to construct a more accurate generator in this context by extending the spatial discretization introduced in \cite{Wa2008} to non-self-adjoint diffusions.


\begin{figure}[ht!]
\begin{center}
\includegraphics[width=\textwidth]{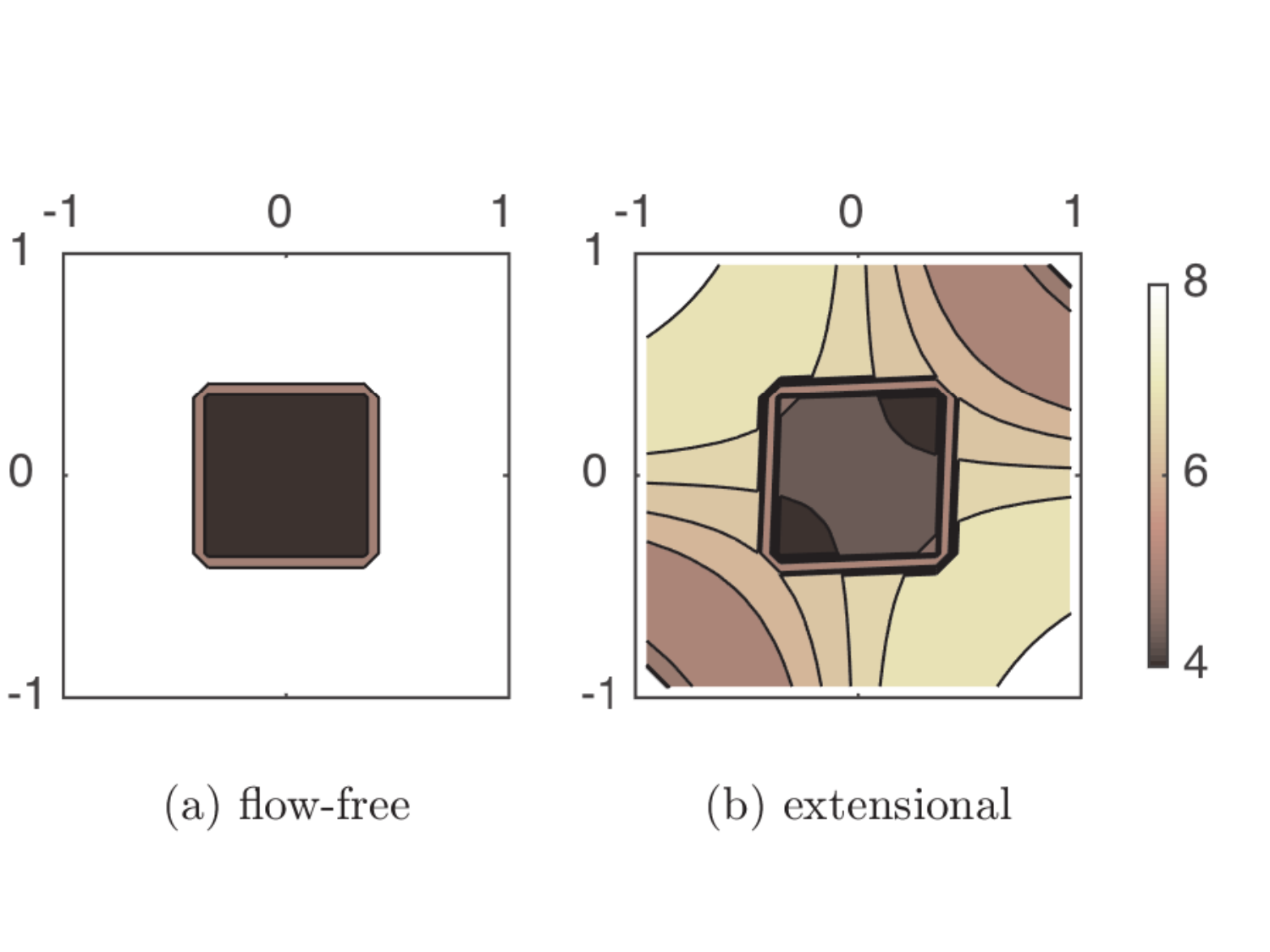} 
\end{center}
                \caption{\small {\bf Square Well: Stationary Densities (known).} 
                				These figures plot contours of the free energy of the numerical stationary density 
					for a particle in the potential given by \eqref{squarewell} with $\beta=1$, $d_1=0.4$, $d_2=-0.4$, $\epsilon=0.001$, 
					and the flow-free (left panel) and extensional flow (right panel) cases with $\gamma=2$.  For these flows a formula for the stationary
					density $\nu(x)$ is available, and the numerical densities shown are within $1 \%$ accurate in the $\ell_1$norm.}
                \label{fig:E3_stationary_densities_known}
\end{figure}

\begin{figure}[ht!]
\begin{center}
\includegraphics[width=\textwidth]{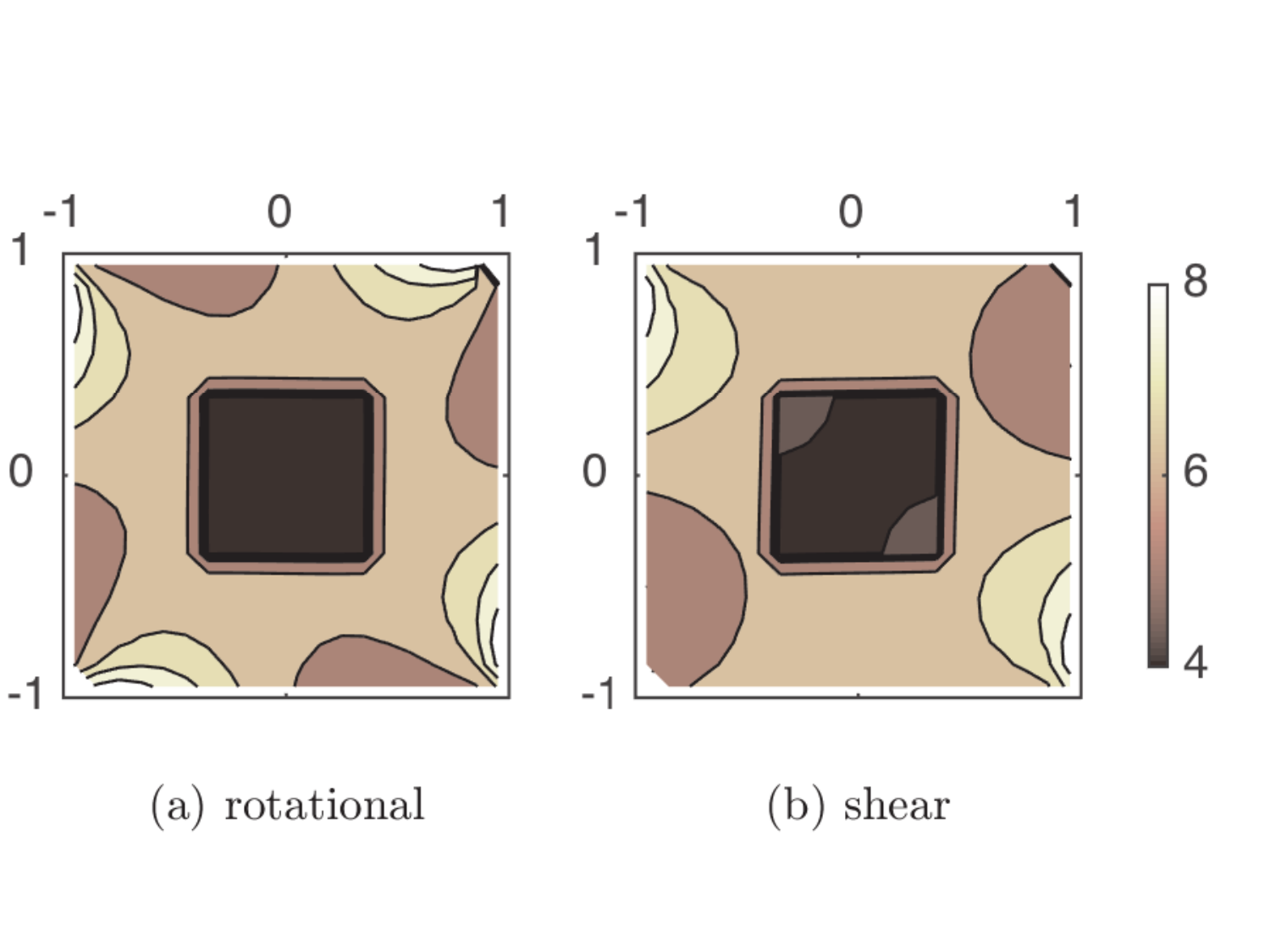}
\end{center}
                \caption{\small {\bf Square Well: Stationary Densities (unknown).} 
                				These figures plot contours of the free energy of the numerical stationary density 
					for a particle in the potential given by \eqref{squarewell} with $\beta=1$, $d_1=0.4$, $d_2=-0.4$, $\epsilon=0.001$, 
					and the shear (left panel) and rotational flow (right panel)  cases with $\gamma=2$.  For these flows a formula for the stationary
					density $\nu(x)$ is not available, but we verified that these numerical solutions are well-resolved.
					}
                \label{fig:E3_stationary_densities_unknown}
\end{figure}

\begin{figure}[ht!]
\begin{center}
\includegraphics[width=0.8\textwidth]{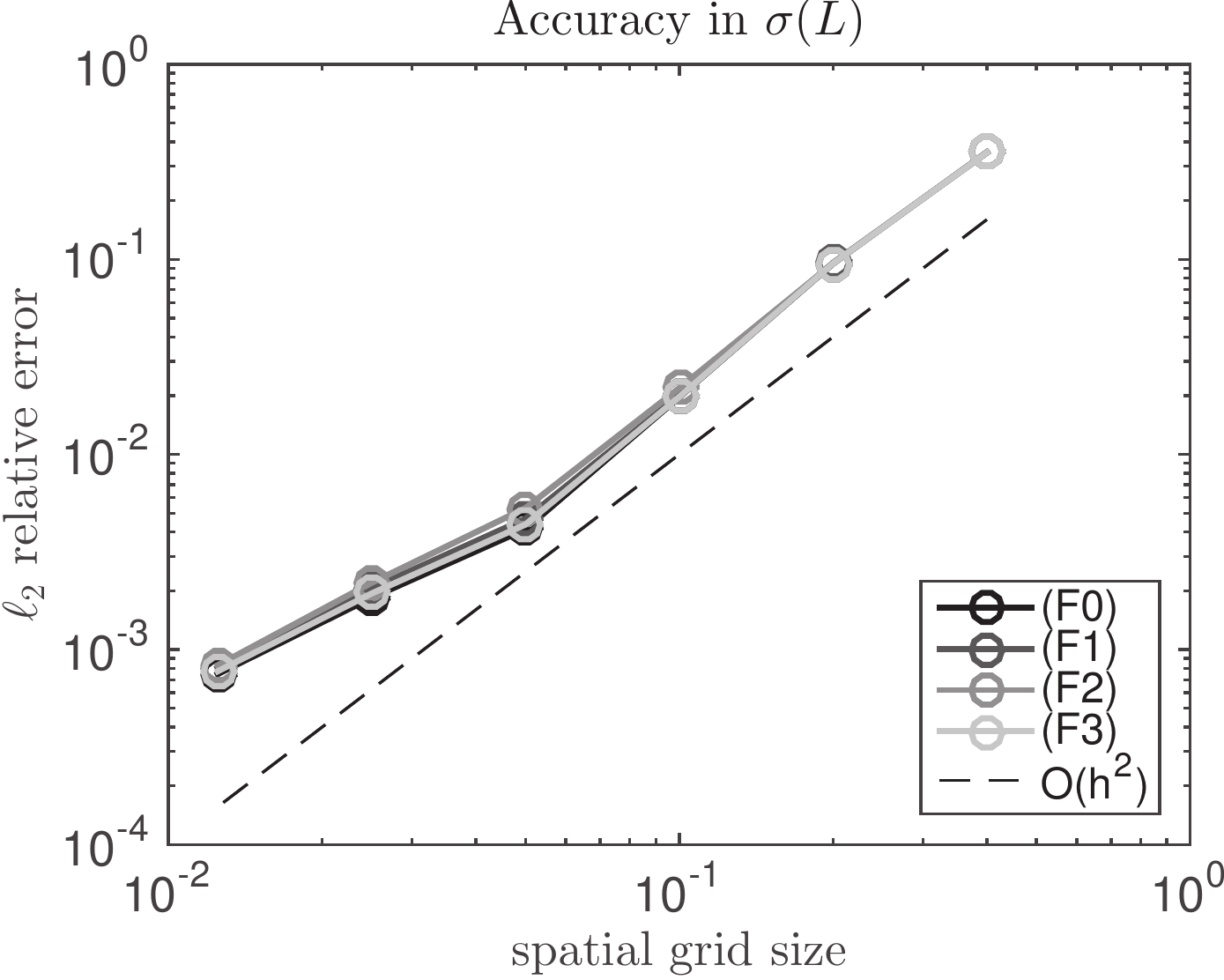}
\end{center}
                \caption{\small {\bf Square Well: Accuracy in $\sigma(L)$.}
                                                This figure plots the relative error in the $\ell_2$ norm of the numerical estimate produced by $\tilde Q_c$ of the twenty eigenvalues of largest nontrivial real part 
                                                of the operator $L$.   The parameters here are set at: $\gamma=1$ and $\beta=1$.  The figure suggests that the relative error
                                                of $\tilde Q_c$ in representing the spectrum of $L$ is about second-order in the spatial step size.  The slight divergence at small spatial step size is attributed to errors in the sparse matrix eigensolver used.
                }
                \label{fig:E3_sigma_L_accuracy}
\end{figure}


\subsection{Worm-Like Chain SDE: Singular Drift} \label{sec:wlc}

Consider \eqref{eq:sde_planar_additive} with the linear flow fields in \eqref{flows}, $\beta=1$, and the potential energy given by: \begin{equation} \label{wlc}
U(x) = \frac{x_2^2}{2} + \frac{1}{1-|x_1|}  - | x_1 | + 2 x_1^2  \;,  \qquad x = (x_1,x_2)^T  \in \Omega = (-1,1) \times \mathbb{R} \;,
\end{equation}
This system is an example of an SDE with a boundary and with a drift vector field that has singularities at the boundaries of $\Omega$: 
$(\pm 1,x_2)^T$ for all $x_2 \in \mathbb{R}$. 
Because of this singularity, standard integrators for \eqref{eq:sde_planar_additive} may diverge in finite time even in the flow-free case.  
Contour lines of the numerical stationary density $\nu(x)$ are plotted in 
Figure~\ref{fig:E4_stationary_densities_known} and \ref{fig:E4_stationary_densities_unknown} when
the stationary density is known and unknown, respectively, and for a flow rate of $\gamma=5$.    We choose a high flow rate to make the numerical test more rigorous.
The accuracy of the spectrum of the generator $\tilde Q_c$ is assessed in Figure~\ref{fig:E4_sigma_L_accuracy}.  This figure suggests that $\tilde Q_c$ is second-order accurate in representing the spectrum of the generator $L$.  


\begin{figure}[ht!]
\begin{center}
\includegraphics[width=\textwidth]{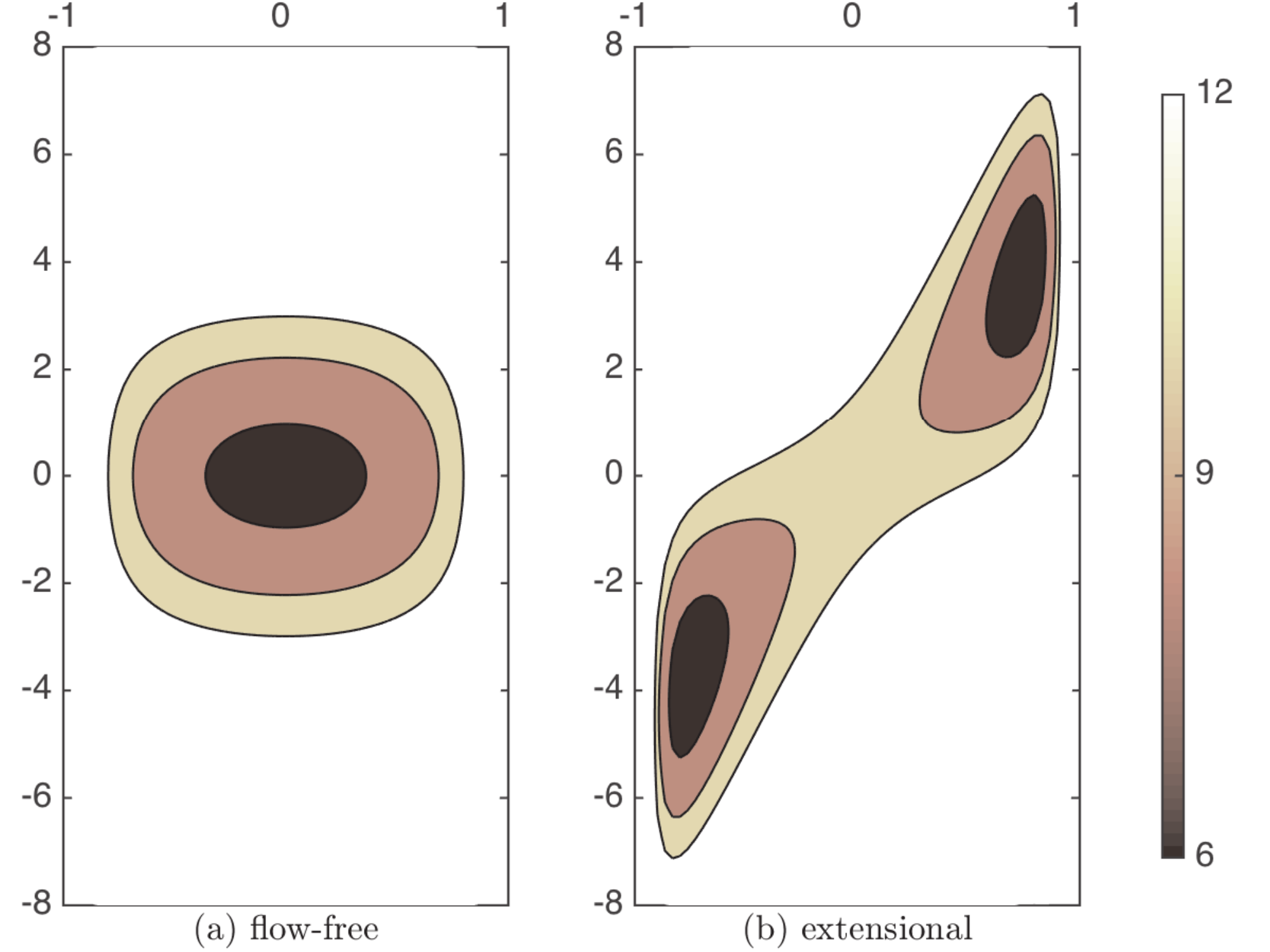} 
\end{center}
                \caption{\small {\bf Worm-Like Chain:  Stationary Densities (Known).} 
                				These figures plot contours of the stationary density 
					for a particle in the potential given by \eqref{wlc},
					and from left, flow-free and the extensional flow
					given in \eqref{flows} with $\gamma=5$ and $\beta=1$.  For these flows a formula for the stationary
					density $\nu(x)$ is available, and the numerical densities shown are within $1 \%$ accurate in the $\ell_1$norm.
                }
                \label{fig:E4_stationary_densities_known}
\end{figure}

\begin{figure}[ht!]
\begin{center}
\includegraphics[width=\textwidth]{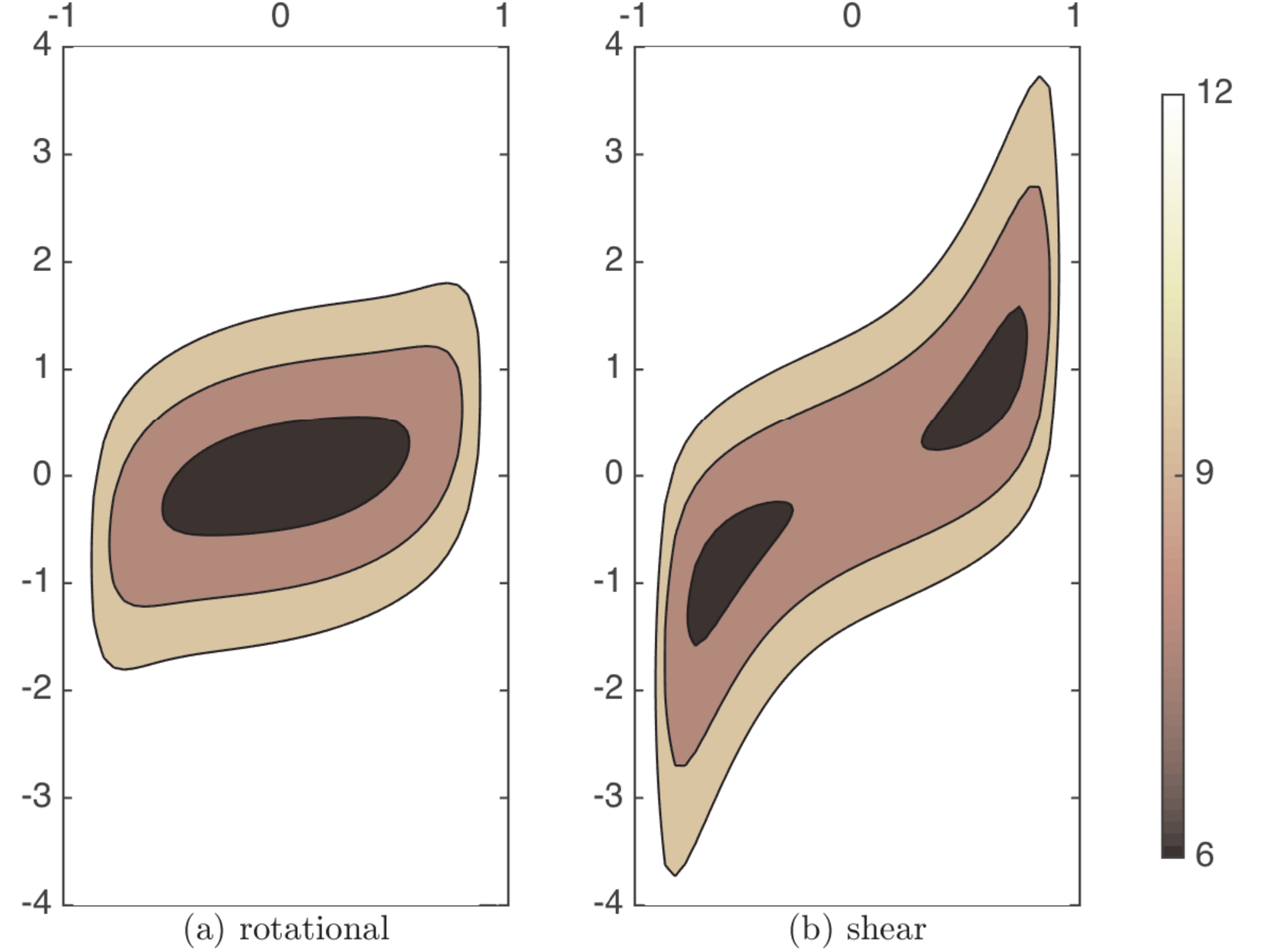}
\end{center}
                \caption{\small {\bf Worm-Like Chain:  Stationary Densities (Unknown).} 
                				These figures plot contours of the free energy of the numerical stationary density 
					for a particle in the potential given by \eqref{wlc},
					and from left, rotational and shear flows
					given in \eqref{flows} with $\gamma=5$ and $\beta=1$. 
                }
                \label{fig:E4_stationary_densities_unknown}
\end{figure}

\begin{figure}[ht!]
\begin{center}
\includegraphics[width=0.8\textwidth]{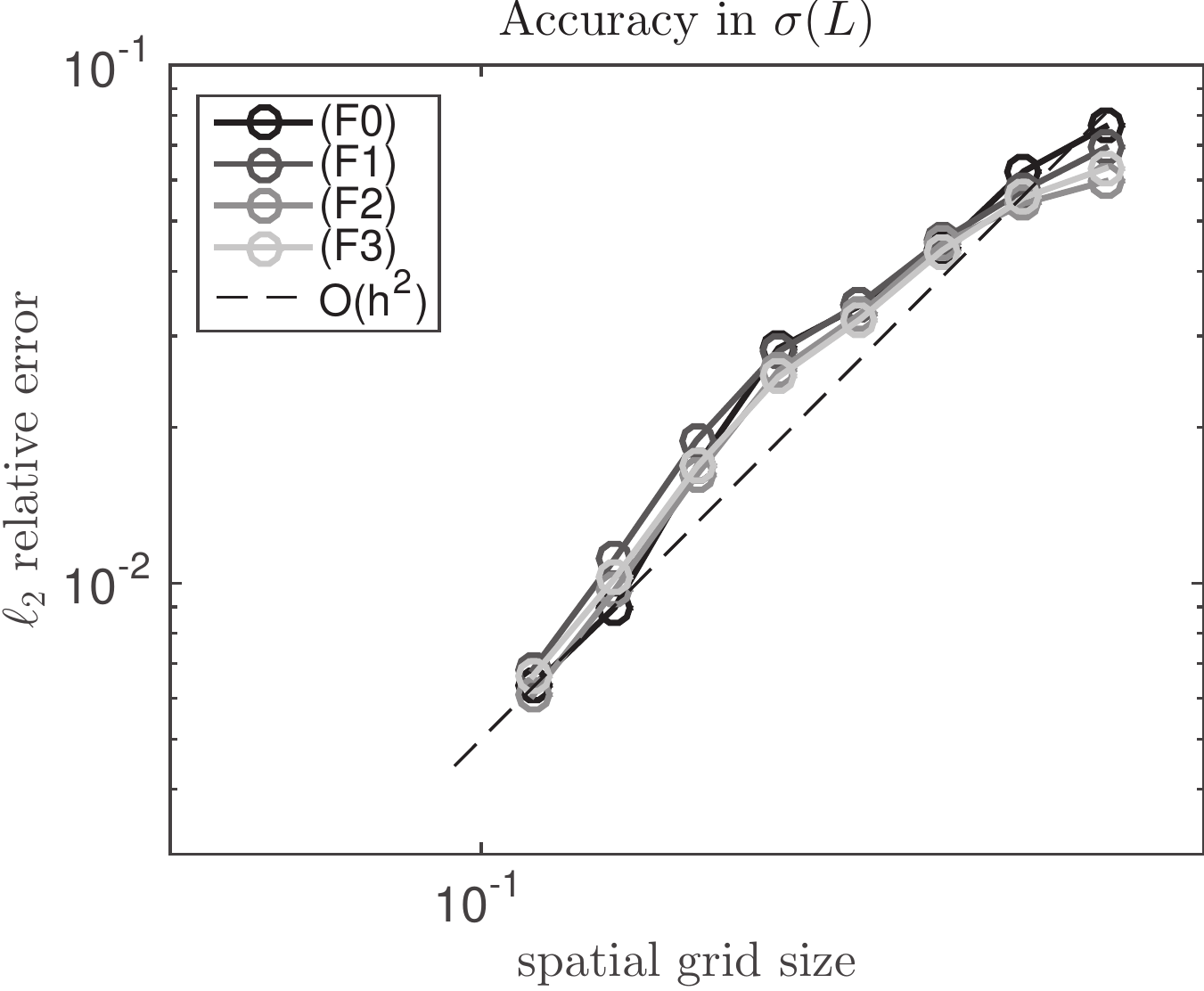}
\end{center}
                \caption{\small {\bf Worm-Like Chain: Accuracy in $\sigma(L)$.}
                                               This figure plots the relative error in the $\ell_2$ norm of the numerical estimate produced by $\tilde Q_c$ of the twenty eigenvalues of largest nontrivial real part 
                                               of the operator $L$.   Parameters are set at $\gamma=1$ and $\beta=1$.   This figure suggests that $\tilde Q_c$ is about second-order accurate
					in representing the spectrum of the generator $L$.  
                }
                \label{fig:E4_sigma_L_accuracy}
\end{figure}


\section{Adaptive Mesh Refinement in 2D} \label{sec:amr_2d}

For the subsequent planar SDE problems with multiplicative noise, we use a variable step size grid with the property that the generator $\tilde Q_c$ in \eqref{eq:tQc_2d} evaluated on this grid is realizable.  Recall, if the diffusion matrix is diagonally dominant, a grid with equal spacing in the horizontal and vertical directions suffices.  If the diffusion matrix is constant, but not diagonally dominant,  the ratio of the spacing in the horizontal and vertical directions can be chosen so that the grid has the desired property, since a $2 \times 2$, symmetric, and positive-definite matrix is always weakly diagonally dominant \cite{BoChPaTo2005}.  In general, if the diffusion matrix is neither diagonally dominant nor constant, it is not straightforward to construct a grid such that $\tilde Q_c$ is realizable on this grid.  However, in the specific context of the log-normal and Lotka-Volterra examples considered in this paper, the following lemma states that a grid with the desired property exists.  The key idea in this construction is to make the grid evenly spaced in logspace.    

To state this lemma, define: \[
\diag(x) = \begin{bmatrix} x_1 & 0 \\ 0 & x_2 \end{bmatrix} \;, \quad \text{for any $x = (x_1,x_2)^T \in \mathbb{R}^2$} \;.
\]

\begin{lemma} \label{lem:log_grid_2d}
Consider the generator $\tilde Q_c$ in \eqref{eq:tQc_2d} with domain $\mathbb{R}^2_+$, an arbitrary drift field, and a diffusion matrix of the form: \begin{equation} \label{eq:diffusion_matrix_planar}
M(x) = \diag(x) \begin{bmatrix} M^{11} & M^{12} \\ M^{12} & M^{22} \end{bmatrix} \diag(x) \;.
\end{equation}
Assume that \begin{equation} \label{eq:detM_condition}
M^{12} \ne 0 \;, \quad \det \begin{bmatrix} M^{11} & M^{12} \\ M^{12} & M^{22} \end{bmatrix}  > 0 \;, \quad M^{11} + M^{22} > 0 \;.
\end{equation}
In this context, $\tilde Q_c$ in \eqref{eq:tQc_2d} is realizable on a gridded state space.
\end{lemma}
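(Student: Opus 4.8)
The plan is to exhibit a grid that is uniform in logarithmic coordinates along each axis and to show that, on such a grid, realizability (Q2) of $\tilde Q_c$ collapses to two scalar inequalities in the constant entries $M^{11},M^{12},M^{22}$ and the two log-space mesh widths. First I would place the grid points at $x_i=e^{\xi_0+i\,\delta\xi}$ and $y_j=e^{\eta_0+j\,\delta\eta}$, so that, exactly as in \eqref{eq:amr_exp}, the spacings obey $\delta x_i^{\pm},\delta x_i\propto x_i$ and $\delta y_j^{\pm},\delta y_j\propto y_j$ with constants depending only on $\delta\xi$ or $\delta\eta$. By \eqref{eq:diffusion_matrix_planar} the diffusion field at a grid point is $M^{11}_{i,j}=x_i^2M^{11}$, $M^{22}_{i,j}=y_j^2M^{22}$, $M^{12}_{i,j}=x_iy_jM^{12}$.

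The crucial observation is that, upon substitution into the horizontal and vertical rate functions of \eqref{eq:tQc_2d}, every factor $x_i$ and $y_j$ cancels: each pure term $M^{11}_{i,j}/(\delta x_i\delta x_i^{\pm})$ and each cross term $M^{12}_{i,j}/(\delta x_i^{\pm}\delta y_j^{\pm})$ becomes independent of $(i,j)$. Thus a continuum of pointwise constraints reduces to finitely many conditions in $\delta\xi,\delta\eta$ alone. The diagonal and antidiagonal rates are automatically nonnegative, since they carry $M^{12}_{i,j}\vee0$ or $-(M^{12}_{i,j}\wedge0)$ times positive exponential prefactors; only the four horizontal/vertical rates can violate (Q2). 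Setting $c=|M^{12}|$ (the two signs of $M^{12}$ yield identical conditions once the surviving upwind cross-term is selected) and using $\cosh>1$ to drop the milder member of each pair, I would reduce realizability to
\[
M^{11}\,(1-e^{-\delta\eta})\ \ge\ c\,\sinh(\delta\xi),\qquad M^{22}\,(1-e^{-\delta\xi})\ \ge\ c\,\sinh(\delta\eta).
\]

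The hypotheses \eqref{eq:detM_condition} force the constant matrix to be positive definite, so $M^{11}>0$ and $M^{22}>0$. Fixing the ratio $r=\delta\eta/\delta\xi$ and letting $\delta\xi\to0$, the expansions $1-e^{-t}=t+O(t^2)$ and $\sinh t=t+O(t^3)$ show the two inequalities hold to leading order precisely when $c/M^{11}<r<M^{22}/c$; this interval is nonempty exactly because $M^{11}M^{22}-c^2=M^{11}M^{22}-(M^{12})^2$ is the positive determinant in \eqref{eq:detM_condition}. I would pick any such $r$, choose $\delta\xi$ small enough that the strict leading-order inequalities persist at finite step size, and set $\delta\eta=r\,\delta\xi$. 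On the resulting geometric grid---a connected, hence gridded, state space---every rate of $\tilde Q_c$ is nonnegative, giving (Q2). The step I expect to be delicate is the exact cancellation of grid-point dependence under the log-space substitution, as this is what turns realizability into the two constant inequalities whose joint feasibility is equivalent to $\det>0$.
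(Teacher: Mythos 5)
Your proposal is correct and follows essentially the same route as the paper's proof: a grid uniform in log-space so that the factors $x_i,y_j$ cancel out of the rates, reduction of the four horizontal/vertical sign conditions to the pair $M^{11}(1-e^{-\delta\eta})\ge|M^{12}|\sinh(\delta\xi)$ and $M^{22}(1-e^{-\delta\xi})\ge|M^{12}|\sinh(\delta\eta)$ via $e^{a}-1>1-e^{-a}$, and choice of an aspect ratio in the window whose nonemptiness is exactly $\det>0$ (your $r$ is the reciprocal of the paper's $\alpha$). The only cosmetic difference is that you finish with a leading-order Taylor expansion plus continuity in the step size, where the paper establishes the same smallness claim by showing $\overline\phi(\epsilon)=\sinh(\alpha\epsilon)/(1-e^{-\epsilon})$ is increasing and $\underline\phi(\epsilon)=(1-e^{-\alpha\epsilon})/\sinh(\epsilon)$ is decreasing via l'H\^{o}pital's Monotonicity Rule, both tending to $\alpha$ as $\epsilon\to0$.
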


\begin{proof}
To prove this lemma we construct a grid such that $\tilde Q_c$ in \eqref{eq:tQc_2d} is realizable on this grid.  Let $S = \{ (x_i, y_j) \} \subset \mathbb{R}^2_+$ be the desired grid.  We define $S$ to be the image of an evenly spaced grid in logspace: \[
\hat S = \{ (\xi_i, \eta_j) \in \mathbb{R}^2  \mid \quad \delta \xi = \xi_{i+1} - \xi_i \;, \quad  \delta \eta = \eta_{j+1} - \eta_j \}  
\] where $\delta \xi $ and $\delta \eta$ set the grid size in the horizontal and vertical directions in logspace, respectively.  We choose the spacing in logspace so that $S$ has the desired property in physical space.  Specifically, map every point $(\xi_i, \eta_j) \in \hat S$ to a point in $(x_i,y_j) \in S$ using the transformation: $x_i = \exp(\xi_i)$ and $y_j = \exp(\eta_j)$.    Suppose that \[
\delta \xi = \alpha \epsilon \quad \text{and} \quad \delta \eta = \epsilon
\] where $\epsilon$ and $\alpha$ are positive parameters.  We choose these parameters so that $\tilde Q_c$ evaluated on $S$ is realizable.  Note that the spacing between neighboring horizontal and vertical grid points in physical space satisfy: \begin{equation}
\label{eq:horizontal_grid_spacing}
\begin{cases}
\delta x_i^+ = x_{i+1} - x_i = ( \exp( \alpha \epsilon) - 1) x_i \\
\delta x_i^- = x_{i} - x_{i-1} = ( 1 - \exp(-  \alpha \epsilon)) x_i  \\
\delta x_i = \dfrac{\delta x_i^+ + \delta x_i^-}{2} = \sinh( \alpha \epsilon) x_i
\end{cases}
\end{equation} and  \begin{equation} \label{eq:vertical_grid_spacing}
\begin{cases}
\delta y_j^+ = y_{j+1} - y_j = ( \exp( \epsilon) - 1) y_j \\
\delta y_j^- = y_{j} - y_{j-1} = ( 1 - \exp(-\epsilon)) y_j  \\
\delta y_j = \dfrac{\delta y_j^+ + \delta y_j^-}{2} = \sinh(\epsilon) y_j
\end{cases}
\end{equation}

Referring to \eqref{eq:tQc_2d}, a necessary and sufficient condition for realizability of $\tilde Q_c$ is that all of the following inequalities hold: \begin{equation} \label{eq:tQc_conditions}
\begin{aligned}
& \frac{M^{11}_{i,j}}{ \delta x_i \delta x_i^+ }  - \frac{M^{12}_{i,j} \vee 0}{ \delta x_i^+ \delta y_j^+ } + \frac{M^{12}_{i,j} \wedge 0}{ \delta x_i^+ \delta y_j^- } \ge 0 \\
& \frac{M^{11}_{i,j}}{ \delta x_i \delta x_i^- }  - \frac{M^{12}_{i,j} \vee 0}{ \delta x_i^- \delta y_j^- } + \frac{M^{12}_{i,j} \wedge 0}{ \delta x_i^- \delta y_j^+ } \ge 0 \\
& \frac{M^{22}_{i,j}}{ \delta y_j \delta y_j^+ }  - \frac{M^{12}_{i,j} \vee 0}{ \delta x_i^+ \delta y_j^+ } + \frac{M^{12}_{i,j} \wedge 0}{ \delta x_i^- \delta y_j^+ } \ge 0 \\
&\frac{M^{22}_{i,j}}{ \delta y_j \delta y_j^- } - \frac{M^{12}_{i,j} \vee 0}{ \delta x_i^- \delta y_j^- } + \frac{M^{12}_{i,j} \wedge 0}{ \delta x_i^+ \delta y_j^- } \ge 0 
\end{aligned}
\end{equation}
Let us express these inequalities in terms of $\alpha$ and $\epsilon$.  Substitute \eqref{eq:diffusion_matrix_planar}  and the grid spacings \eqref{eq:horizontal_grid_spacing} and \eqref{eq:vertical_grid_spacing} into \eqref{eq:tQc_conditions} and simplify to obtain: \begin{equation} \label{eq:tQc_conditions_simplified}
\begin{aligned}
& \frac{M^{11}}{ \sinh(\alpha \epsilon) }  - \frac{M^{12} \vee 0}{ \exp(\epsilon) - 1 } + \frac{M^{12} \wedge 0}{ 1 - \exp(-\epsilon) } \ge 0 \\
& \frac{M^{11}}{ \sinh(\alpha \epsilon) }  - \frac{M^{12} \vee 0}{ 1 - \exp(-\epsilon) } + \frac{M^{12} \wedge 0}{ \exp(\epsilon) - 1} \ge 0 \\
& \frac{M^{22}}{\sinh( \epsilon) }  - \frac{M^{12} \vee 0}{  \exp(\alpha \epsilon) - 1 } + \frac{M^{12} \wedge 0}{ 1 - \exp(-\alpha \epsilon) } \ge 0 \\
&\frac{M^{22}}{ \sinh( \epsilon) } - \frac{M^{12} \vee 0}{ 1 - \exp(-\alpha \epsilon) } + \frac{M^{12} \wedge 0}{ \exp(\alpha \epsilon) - 1 } \ge 0 
\end{aligned}
\end{equation}
Applying the elementary inequality \[
\exp(a) - 1  > 1 - \exp(-a) \quad \forall ~a \in \mathbb{R}
\] to \eqref{eq:tQc_conditions_simplified} yields the following sufficient condition for realizability \begin{equation} \label{eq:tQc_conditions_simplified_2}
\begin{aligned}
 \frac{M^{11}}{ |M^{12}|  }  \ge \frac{\sinh(\alpha \epsilon)}{1 - \exp(-\epsilon) }  \quad \text{and} \quad \frac{ 1 - \exp(-\alpha \epsilon)}{\sinh( \epsilon) }  \ge \frac{| M^{12} |}{M^{22} }  
\end{aligned}
\end{equation}
The hypothesis in \eqref{eq:detM_condition} implies that \[
M^{11}>0 \;, \quad M^{22} > 0 \quad \text{and} \quad \frac{M^{11}}{|M^{12}|}  - \frac{|M^{12}|}{M^{22}} > 0 \;.
\]  Hence, we can choose $\alpha$ to satisfy: \[
\frac{M^{11}}{|M^{12}|} > \alpha > \frac{|M^{12}|}{M^{22}} >0 \;.
\]  This choice of $\alpha$ implies \eqref{eq:tQc_conditions_simplified_2} holds if $\epsilon$ is small enough.  Indeed, a straightforward application of l'H\^{o}pital's Monotonicity Rule implies that \begin{equation} \label{eq:amr_2d_inequality}
\overline \phi(\epsilon) = \frac{\sinh(\alpha \epsilon)}{1 - \exp(-\epsilon) }  \ge \alpha \ge \frac{ 1 - \exp(-\alpha \epsilon)}{\sinh( \epsilon) } = \underline \phi(\epsilon) 
\end{equation} for any $\epsilon > 0$.  Specifically, to prove the upper bound, note that: \[
\lim_{s \to 0} \overline \phi(s) = \alpha \;.
\]  Moreover, l'H\^{o}pital's Monotonicity Rule implies that the function $\overline \phi(s)$ is increasing since the function: \[
s \mapsto \alpha \cosh(\alpha s) \exp(s)
\] is increasing with $s$.  Thus, $\overline \phi(s)$ is increasing.  Similarly, $\underline \phi(s)$ is decreasing since the function: \[
s \mapsto \alpha \exp(-\alpha s) \sech(s)
\] is decreasing with $s$, and $\lim_{s\to0} \underline \phi(s) = \alpha$.
\end{proof} 

\section{Log-normal Process in 2D with Multiplicative Noise} \label{sec:lognormal_2d}

We derive a log-normal process by transforming a two-dimensional Ornstein-Uhlenbeck process.    Thus, we obtain a log-normal process whose transition and equilibrium probability distributions are explicitly known and can be used as benchmarks to validate the approximation given by $\tilde Q_c$ in \eqref{eq:tQc_2d}.  First, let us recall some basic facts about Ornstein-Uhlenbeck processes.  Consider a two-dimensional, non-symmetric Ornstein-Uhlenbeck process \begin{equation} \label{eq:asymmetric_ou_2d}
d Y = A Y dt + \sqrt{2} B dW \;, ~~ Y(0) = x \in \mathbb{R}^2 \;.
\end{equation}   Let $M$ be the diffusion matrix defined as: \[
M  = B B^T = \begin{bmatrix} M^{11} & M^{12} \\ M^{12} & M^{22} \end{bmatrix} \;.
\] Recall that the transition probability of $Y(t)$ is a two-dimensional Gaussian distribution with mean vector $\xi(x,t)$ and covariance matrix $\Sigma(t)$ given by: \begin{align*}
& \xi(x,t) =  \exp(t A) x \;, \\
& \Sigma(t) =2 \int_0^t \exp(s A)  M \exp(s A^T) ds  
\end{align*}  
See, e.g., \cite[Section 4.8]{IkWa1989}.
To numerically compute $\Sigma(t)$ for any $t>0$, observe that the covariance matrix $\Sigma(t)$ satisfies the Lyapunov equation: \begin{align*}
\Sigma(t) A^T + A \Sigma(t) &= 2 \left( \int_0^t \frac{d}{ds} \exp(s A)  M \exp(s A^T) ds \right) \;, \\
& = 2 \left( \exp(t A) M (\exp(t A))^T  - M  \right) \;.
\end{align*}
The MATLAB command {\tt lyap} can be used to solve equations of this type.  Specifically, given the matrices $\mathcal{A}$ and $\mathcal{B}$ as inputs, {\tt lyap} solves for the unknown matrix $\mathcal{X}$ which satisfies the equation:  \[
\mathcal{A} \mathcal{X} + \mathcal{X}  \mathcal{A}^T + \mathcal{B} = 0 \;.
\]
Defining the inputs to {\tt lyap} as: \[
 \mathcal{A} = A \;,~~\mathcal{B} = -2 \left( \exp(t A) M (\exp(t A))^T  - M  \right) \;,
\] the function will output the desired covariance matrix.   (A sufficient condition for a unique solution to this Lyapunov equation is that all of the eigenvalues of the matrix $\mathcal{A}$ have negative real parts.)  
If the eigenvalues of the matrix $A$ in \eqref{eq:asymmetric_ou_2d} all have strictly negative real part, passing to the limit in the above expressions as $t\to \infty$ implies that the equilibrium distribution is Gaussian with zero mean vector and a covariance matrix $\Sigma_{\infty}$ that satisfies the Lyapunov equation: \begin{equation} \label{lyapunov}
\Sigma_{\infty} A^T + A \Sigma_{\infty}  = - 2 M \;.
\end{equation}

Consider the transformation of $Y(t) = (Y_1(t), Y_2(t))^T$ given by \[
X = ( \exp(Y_1), \exp( Y_2) )^T \;.
\] Under this transformation, the linear SDE \eqref{eq:asymmetric_ou_2d} becomes: \begin{equation} \label{eq:lognormal_2d}
d X = \mu(X) dt +  \sqrt{2}  \sigma(X) dW \;, ~~ X(0) \in \mathbb{R}_+^2 
\end{equation}
where the drift and diffusion fields are: \begin{align*}
\mu(x) &= \begin{bmatrix}  
M^{11} x_1 \\
M^{22} x_2
\end{bmatrix} + \diag(x,y) A \begin{bmatrix} \log(x) \\ \log(x_2) \end{bmatrix} \\
M(x)  &= ( \sigma  \sigma^T)(x) \\
&= \diag(x_1,x_2) \begin{bmatrix} M^{11}  & M^{12}  \\  M^{12}  & M^{22} \end{bmatrix} \diag(x_1,x_2) = \begin{bmatrix} M^{11} x_1^2 & M^{12} x_1 x_2 \\  M^{12} x_1 x_2 & M^{22} x_2^2 \end{bmatrix}
\end{align*}
By change of variables, the (log-normal) probability and stationary densities of the transformed process $X(t)$ are given by: \begin{equation} \label{eq:lognormal_densities_2d}
\left.\begin{aligned}
\tilde p_t(x) &= p_t(\log(x_1), \log(x_2)) (x_1 x_2 )^{-1}  \\
 \tilde \nu(x) &= \nu(\log(x_1), \log(x_2)) (x_1 x_2 )^{-1}
 \end{aligned} \right\}
 \qquad x = (x_1, x_2)^T
\end{equation} where $p_t(x)$ and $\nu(x)$ are the (normal) probability and stationary densities of the Ornstein-Uhlenbeck process $Y(t)$, respectively.  

\medskip

Figure~\ref{fig:ln_process} illustrates stationary density accuracy of $\tilde Q_c$ in \eqref{eq:tQc_2d} using the variable step size grid derived in the preceding Lemma~\ref{lem:log_grid_2d}.  

\begin{figure}[ht!]
\begin{center}
\includegraphics[width=\textwidth]{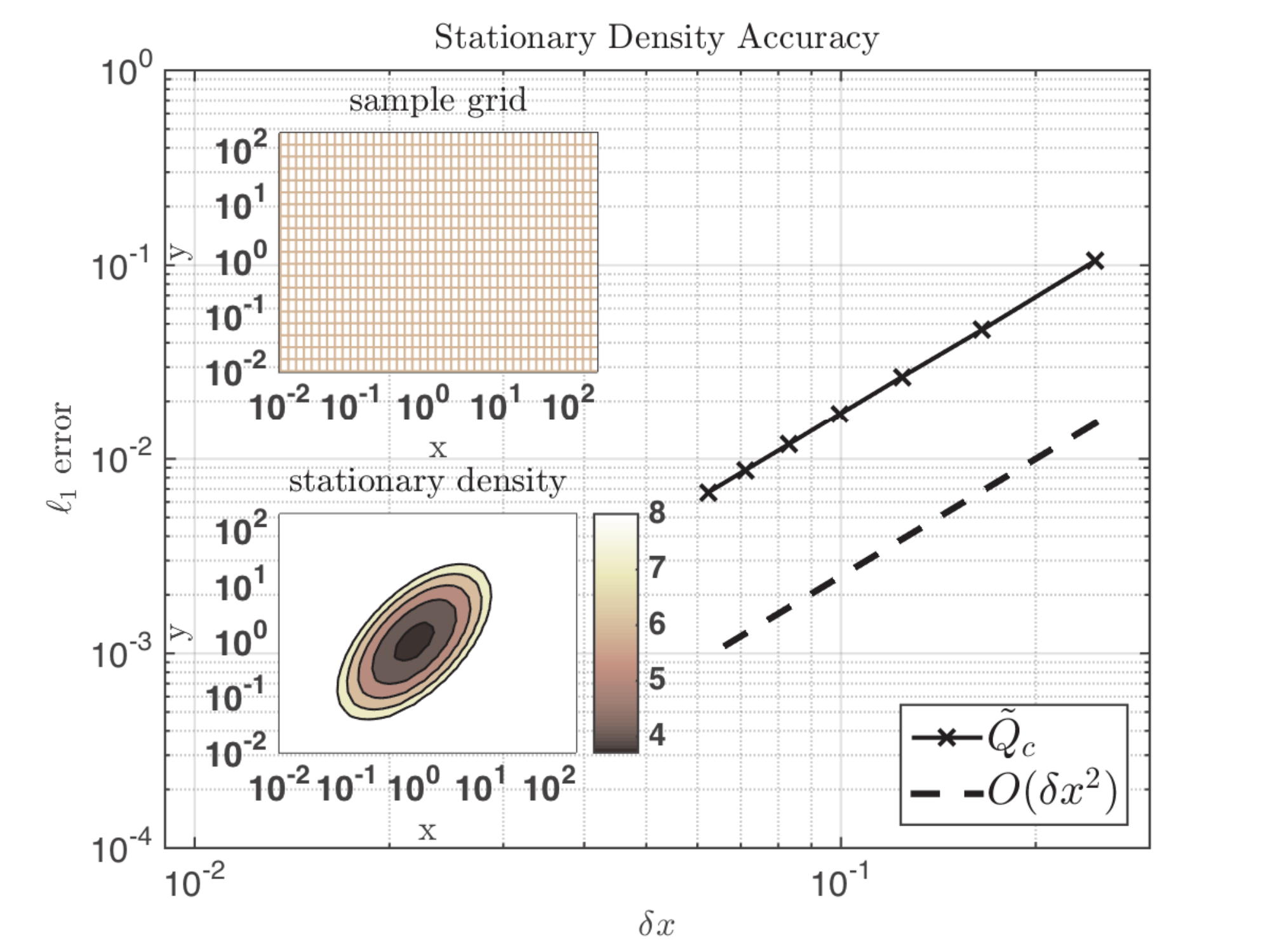}  
\end{center}
\caption{ \small {\bf Log-normal Process in 2D.}    
	This figure illustrates stationary density accuracy of $\tilde Q_c$ in \eqref{eq:tQc_2d}.  
	The grid used is evenly spaced in logspace as shown in the upper left inset.  Contours of the stationary density are plotted in the lower left inset.   
	The spatial step size shown is with respect to the spacing of the grid in logspace.   
	In this example the benchmark solution is the true stationary density given by the formula in \eqref{eq:lognormal_densities_2d}.
	See Lemma~\ref{lem:log_grid_2d} for more details about the grid.
}
\label{fig:ln_process}
\end{figure}

\section{Lotka-Volterra Process in 2D with Multiplicative Noise} \label{sec:lv_process}

Consider \eqref{eq:sde} with $\Omega=\mathbb{R}^2_+$ and set: \begin{align*}
\mu(x) = \begin{bmatrix} 
  k_1 x_1 - x_1 x_2 - \gamma_1 x_1^2 \\
 -k_2 x_2 + x_1 x_2 - \gamma_2 x_2^2 
 \end{bmatrix} \;, \quad \text{and} \quad
 (  \sigma   \sigma^T)(x) = \diag(x) \begin{bmatrix} M^{11} & M^{12} \\ M^{12} & M^{22} \end{bmatrix} \diag(x) \;.
\end{align*}  
This SDE is a model of interaction between a population of $x_1$ prey and $x_2$ predators that includes environmental fluctuations.   Recall that when $\gamma_1=\gamma_2=0$, the noise-free system reduces to an ODE with a fixed point at $(k_2,k_1)$ and a first integral given by the following function: $V(x) = k_2 \log(x_1)+k_1 \log(x_2)- (x_1+x_2)$. The diffusion field of this SDE satisfies the hypotheses in Lemma~\ref{lem:log_grid_2d}.  Thus, we will be able to construct a grid with the property that $\tilde Q_c$  in \eqref{eq:tQc_2d} evaluated on this grid is realizable.  

We use some known theoretical results on this SDE problem to benchmark the generator $\tilde Q_c$ \cite{RuPi2006}.  For this purpose, it helps to define the parameters: $c_1 = k_1 - \frac{1}{2} M^{11}$ and $c_2 = k_2 + \frac{1}{2} M^{22}$. According to Theorem 1 of \cite{RuPi2006}, for every $t>0$ the probability distribution of the SDE is absolutely continuous with respect to Lebesgue measure on $\Omega$.  This property implies that the solution is strictly positive for all $t>0$.  This positivity, however, does not prevent the probability distribution of the solution from converging in the long time limit to a measure supported only on the boundaries of $\Omega$, in which case one or both of the populations may become extinct.  In fact, the finite-time probability distribution of the SDE solution converges to an invariant probability measure (IM) with the following support properties: 

\medskip

\begin{description}
\item[If $c_1 > \gamma_1 c_2 $] then the IM is supported on $\mathbb{R}^2_+$.
\item[If $ \gamma_1 c_2 > c_1 > 0$] then the IM is supported on $\mathbb{R}_+ \times \{ 0 \}$ with marginal density of $x_1$ given by: \begin{equation} \label{eq:marginal_in_x}
f_{\star}(x_1) = Z^{-1} x_1^{ \frac{2 c_1}{M^{11}} - 1} \exp \left( - \frac{2 x_1}{M^{11}} \right) 
\end{equation} where $Z$ is the following normalization constant: \[
Z = \left( \frac{M^{11}}{2} \right)^{ \frac{2 c_1}{M^{11}}}  \Gamma\left( \frac{2 c_1}{M^{11}} \right)  \;.
\]
\item[If $c_1 < 0$] then the IM is atomic at the origin.
\end{description}

\medskip

Note that if $2 c_1 / M^{11} < 1$ (or $k_1 < M^{11}$), then the marginal stationary density in \eqref{eq:marginal_in_x} is unbounded at $x=0$.   Figures~\ref{fig:lv_process_c1gc2} and~\ref{fig:lv_process_c1lc2} use $\tilde Q_c$ in \eqref{eq:tQc_2d} with the variable step size grid described in Lemma~\ref{lem:log_grid_2d} to numerically verify these theoretical results. 


\begin{figure}[ht!]
\begin{center}
\includegraphics[width=0.9\textwidth]{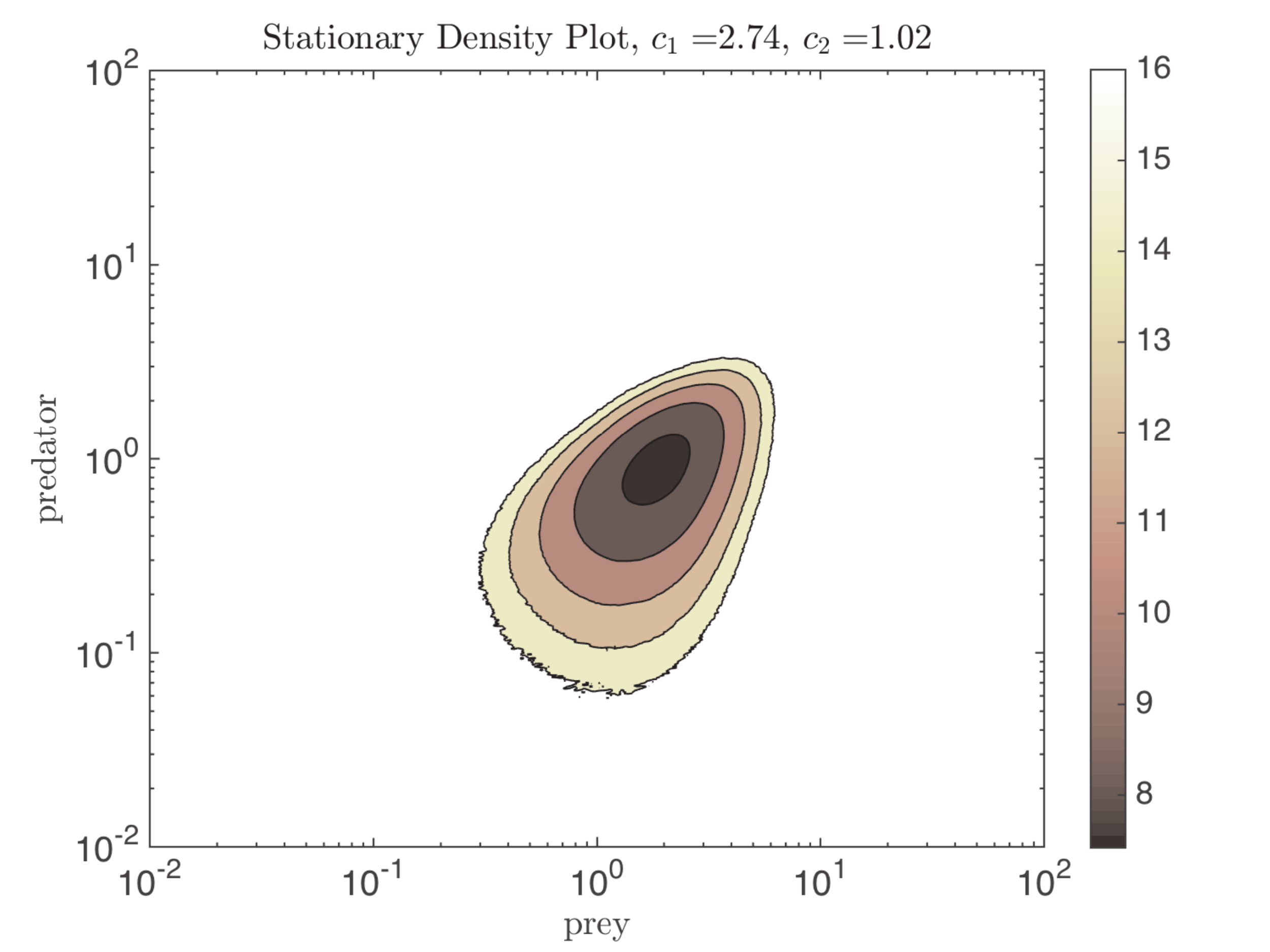}  
\end{center}
\caption{ \small {\bf Lotka-Volterra (LV) Process.}    
	      This figure plots contours of the numerical stationary density for the LV process produced by the generator $\tilde Q_c$ in \eqref{eq:tQc_2d} 
	      with the variable step size grid described in Lemma~\ref{lem:log_grid_2d}. The parameters are selected so that the true stationary distribution is 
	      supported on $\mathbb{R}^2_+$. This figure is in agreement with Theorem 1 of \cite{RuPi2006}.  
}
\label{fig:lv_process_c1gc2}
\end{figure}

\begin{figure}[ht!]
\begin{center}
\includegraphics[width=0.9\textwidth]{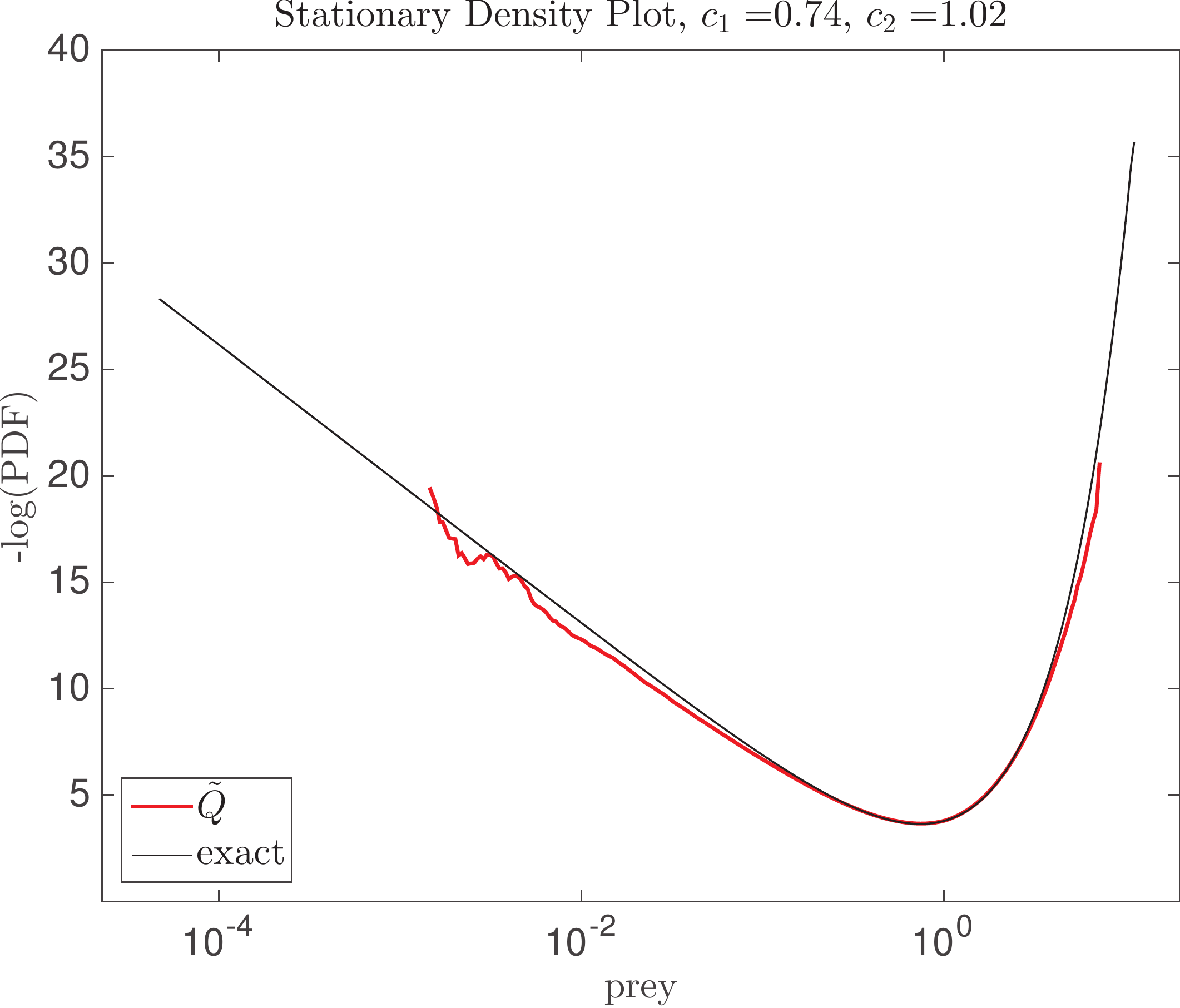}  
\end{center}
\caption{ \small {\bf Lotka-Volterra (LV) Process.}    
	      Here we numerically verify Theorem 1 of \cite{RuPi2006} using  the generator $\tilde Q_c$ in \eqref{eq:tQc_2d} 
	      with the variable step size grid described in Lemma~\ref{lem:log_grid_2d}.
              The figure plots the density of the stationary distribution for the LV process in a parameter regime where it is supported on $\mathbb{R}_+ \times \{ 0 \}$.  
	      In this case in the long time limit, the `predator population' (or $x_2$ degree of freedom) becomes extinct,
                and an explicit formula for the stationary density in the `prey population' (or $x_1$ degree of freedom) is given in \eqref{eq:marginal_in_x}, which we use
                to benchmark the numerically produced solution.
}
\label{fig:lv_process_c1lc2}
\end{figure}

\section{Colloidal Cluster in 39D with Multiplicative Noise}
 \label{sec:colloidal_cluster}

Consider a Brownian Dynamics (BD) simulation of a cluster consisting of $13$ particles governed by the following SDE: \begin{equation} \label{sde_bd}
\begin{aligned}
& d Y = - M(Y) D\mathcal{E}(Y) dt + \beta^{-1} \divergence M(Y) dt + \sqrt{2 \beta^{-1}} \sigma(Y) dW \;, ~~ Y(0) \in \mathbb{R}^n \;, \\
& \sigma(x) \sigma(x)^T = M(x) \;,
\end{aligned}
\end{equation}
where $\mathcal{E}(x)$ is the potential energy of the system, $M(x)$ is the mobility matrix (or what we have been calling the diffusion matrix), and $W$ is a $n$-dimensional Brownian motion.  To be clear, the total dimension of this system is $n=3 \times 13 = 39$.  This BD model of a colloidal gel demonstrates that  hydrodynamic interactions can influence the collapse dynamics of the cluster \cite{FuTa2010} as verified below.

The potential energy $\mathcal{E}(x)$ involves short-range, pair-wise interactions between particles.   It features a very steep hard core with a short-range attractive tail.    When hydrodynamic interactions are accounted for, the mobility matrix depends on the state of the system and its entries involve long-range interactions that decay like $1/r$ with distance $r$ between particle pairs.  The precise form of $\mathcal{E}$ and $M$ are derived below.  In the situations we consider, the divergence of the mobility matrix appearing in \eqref{sde_bd} vanishes.  Additionally, this diffusion is self-adjoint with respect to the density: \begin{equation} \label{eq:nu}
\nu(x) = \exp(- \beta \mathcal{E}(x) ) \;.
\end{equation}
For background on the numerical treatment of self-adjoint diffusions with multiplicative noise see \cite{BoDoVa2014}.  

For this BD problem, numerical stability and accuracy impose demanding time step size requirements for time integrators due to the presence of singular coefficients in the drift of the governing SDE.  Another issue with this SDE problem is that the hydrodynamic interactions between particles induce long-range interactions, which leads to an SDE with multiplicative noise.  

\paragraph{Potential Energy}

Consider a collection of $N$ identical spherical particles with radius $a$.
Let $q = (q_1, \dotsc, q_{N}) \in \mathbb{R}^{3 N}$ denote the position of all $N$ particles where $q_i \in \mathbb{R}^3$ for $i=1,\dotsc,N$.  
The distance between the ith and jth particle is the usual Euclidean one:
\begin{equation} 
d(q_i, q_j) = | q_i - q_j | \;.
\end{equation}
In terms of this distance, the total energy is a sum of pair-wise potentials over all particle pairs:
\begin{equation} \label{total_energy}
\mathcal{E}(q) =  \sum_{\substack{
            1\le i \le n\\
             j>i}}  U(d(q_i, q_j) ) \;,
\end{equation}
Following the description in \cite{FuTa2010}, we assume that the pair-wise potential energy $U(r)$ is given by:
\begin{equation} \label{LJpair}
U(r) = U_{\text{SC}}(r) + U_{\text{AO}}(r) \;,
\end{equation}
which is a sum of a soft-core potential: \begin{equation} \label{eq:soft_core}
U_{\text{SC}}(r)  = \epsilon_{\text{SC}} \left( \frac{2 a}{r} \right)^{24} \;,
\end{equation}
and an attractive potential that is defined in terms of the inter-particle force $F_{\text{AO}}(r)$ it exerts as follows: \begin{equation}
U_{\text{AO}}(r) = \int_r^{\infty} F_{\text{AO}}(s) ds \;, \quad 
F_{\text{AO}}(r) = \begin{cases} 
c_{\text{AO}} (D_2^2 - D_1^2) & \text{if  $r < D_1$} \\
c_{\text{AO}} (D_2^2 - r^2) & \text{if  $D_1 \le r < D_2$} \\
0 & \text{otherwise} 
\end{cases}
\end{equation}
Here, we have introduced the parameters $\epsilon_{\text{SC}}$, $c_{\text{AO}}$, $D_1$, and $D_2$.   Figure~\ref{fig:graph_of_U} plots graphs of $U(r)$ and $F(r)$ for the parameter values given in Table~\ref{tab:colloidal_gellation_simulation_parameters}.  Note that $U(r)$ induces a short-range force. Further, because of the $1/r^{24}$-singularity in $U_{\text{SC}}(r)$ in \eqref{eq:soft_core} at $r=0$, this force makes the resulting SDE stiff.   Since the energy is short-range, it is not uniformly coercive.  Thus, the invariant density in \eqref{eq:nu} is not integrable on $\mathbb{R}^n$ and the system is not ergodic.  In the numerical tests, we observe that -- despite this non-ergodicity -- for most sample paths, the cluster does not evaporate over the time-span of simulation.

\paragraph{Mobility Matrix}

Consider again a collection of $N$ identical spherical particles with radius $a$.
Here we describe a commonly used, implicit model for the effect of the solvent known as the Rotne-Pragner-Yamakawa (RPY) approximation.  This model results in solvent-mediated interactions between the $N$ spherical particles through the mobility matrix $M(q)$.  Let $I_{3 \times 3} $ be the $3 \times 3$  identity matrix, $R_{\text{hydro}}$ be the hydrodynamic radius of each bead, and $\eta_s$ be the solvent viscosity.  The RPY mobility matrix is given by: \begin{equation} \label{RPYmobility}
M( q ) = \begin{bmatrix} \Omega_{1,1} & \dotsc & \Omega_{1,N} \\
\vdots & \ddots & \vdots \\
\Omega_{N,1} & \dotsc & \Omega_{N, N} 
\end{bmatrix} \;,~~
\Omega_{i,j} = \begin{cases} \frac{1}{\zeta} I_{3 \times 3} \;,  & \text{if} ~~ i=j  \\
\Omega_{RPY}(q_i -q_j) \;, & \text{otherwise} 
\end{cases}
\end{equation}
for all $ q \in \mathbb{R}^{3 N}$.  Here, we have introduced the $3 \times 3$ matrix $\Omega_{RPY}( x )$ defined as:
\begin{equation} \label{rpy}
\Omega_{RPY}( x ) =  \frac{1}{\zeta} \left(  C_1(x) I_{3 \times 3} + C_2(x) \frac{x}{|x|} \otimes \frac{x}{|x|}  \right) 
\end{equation} where $C_1(x)$ and $C_2(x)$ are the following scalar-valued functions: \[
C_1(x) = \begin{cases} \frac{3}{4} \left( \frac{R_{\text{hydro}}}{|x|} \right) + \frac{1}{2} \left( \frac{R_{\text{hydro}}}{|x|} \right)^3 \;, & \text{if} ~~|x|>2 R_{\text{hydro}}   \\
 1 - \frac{9}{32} \left( \frac{|x|}{R_{\text{hydro}}} \right) \;, & \text{otherwise}   \end{cases} 
 \]  and, \[
 C_2(x) = \begin{cases}  \frac{3}{4} \left( \frac{R_{\text{hydro}}}{|x|} \right) - \frac{3}{2} \left( \frac{R_{\text{hydro}}}{|x|} \right)^3 \;, & \text{if} ~~|x|>2 R_{\text{hydro}}   \\ ~
  \frac{3}{32} \left( \frac{|x|}{R_{\text{hydro}}} \right) \;,  & \text{otherwise}  \end{cases} 
\]
The quantity $1/\zeta$ is the mobility constant produced by a single bead translating in an unbounded solvent at a constant velocity: $\zeta = 6 \pi \eta_s R_{\text{hydro}}$.     The approximation \eqref{RPYmobility} preserves the physical property that the mobility matrix is positive definite, satisfies $(\divergence M)(q) =0$ for all $q \in \mathbb{R}^{3 N}$,  and is exact up to $\mathcal{O}((R_{\text{hydro}}/r_{ij})^4)$ where $r_{ij}$ is the distance between distinct particles $i$ and $j$.  To read more about the RPY approximation see \cite{RoPr1969}. 

\paragraph{Description of Numerical Test}

We are now in position to repeat the simulation in \cite{FuTa2010,DeUsDeGrDo2014} using the generator $Q_c$ in \eqref{eq:Qc}. Consider $13$ particles  initially placed on the vertices and center of an icosahedron of edge length $8.08$, which corresponds to a radius of gyration of $7.68$.  Note that this edge length is slightly greater than the equilibrium bond length, which is $6.4$.   Thus, without noise, the cluster would simply relax to an icosahedron with an edge length equal to the equilibrium bond length.  To prevent exploding trajectories that can occur if any two particles get too close, we set the spatial grid size to be $10 \%$ of the equilibrium radius or $0.32$.    This choice ensures that the numerical approximation to the repulsive force cannot cause any particle to take a single spatial step that is larger than the cutoff radius of the short-range interaction.  Thus, the approximation to the repulsive force between every particle pair is sufficiently resolved in space to prevent explosion.  Figure~\ref{fig:bd_simulation_sample_path} and \ref{fig:bd_simulation_mean_Rg} illustrate the results produced by an SSA integrator induced by the generator $Q_c$ in \eqref{eq:Qc} operated at this spatial step size.  Other simulation parameters are given in Table~\ref{tab:colloidal_gellation_simulation_parameters}.   An important physical parameter is the Brownian time-scale defined as:  $t_B=\beta \eta_s a^3$.   In order to compare to the data from \cite{DeUsDeGrDo2014}, the time span of simulation is long and set equal to $T= 400 t_B=1065$.   We emphasize that the spatial grid size with and without hydrodynamic interactions is set equal to $10 \% a$ or $\delta x = 0.32$.    Finally, Figure~\ref{fig:bd_simulation_Nbar} validates Proposition~\ref{lem:barN}, which quantifies the average number of computational steps of the SSA integrator as a function of the spatial step size.  


\noindent
\begin{table}
\centering
\begin{tabular}{|c|c|c|}
\hline
\multicolumn{1}{|c|}{\bf Parameter} & \multicolumn{1}{|c|}{\bf Description} &  \multicolumn{1}{|c|}{\bf Value(s)}   \\
\hline
\hline
\multicolumn{3}{|c|}{ {\em Physical Parameters} } \\
\hline
  \hline
$N$ & \# of particles & $13$  \\
\hline
$\eta_s$ & solvent viscosity & $1$ \\
  \hline
$\beta^{-1}$ & temperature factor & $12.3$  \\
  \hline
  $a$ & {\bf particle radius} & $3.2$  \\
\hline
  $R_{\text{hydro}}$ & hydrodynamic radius & \{$a$, $0$\}  \\
\hline
$\epsilon_{\text{SC}}$ & parameter in $U_{\text{SC}}$ & $10$ \\
\hline
$D_1$ & parameter in $U_{\text{AO}}$ & $2.245 a$ \\
\hline
$D_2$ & parameter in $U_{\text{AO}}$ & $2.694 a$ \\
\hline
$c_{\text{AO}}$ & parameter in $U_{\text{AO}}$ & $58.5/a^3$ \\
\hline
\hline
\multicolumn{3}{|c|}{{\em  Numerical Parameters }} \\
\hline
\hline
$\delta x$ & {\bf spatial step size} &  $10 \% \; a$ \\
\hline
$T$ & {\bf time-span of simulation} & $1065$ \\
\hline
\end{tabular}
\caption{ \small {\bf Colloidal Collapse Simulation Parameters.}  Physical parameter values are taken from: \cite{FuTa2010,DeUsDeGrDo2014}.  
}
\label{tab:colloidal_gellation_simulation_parameters}
\end{table}


\begin{figure}[ht!]
\begin{center}
\includegraphics[width=0.8\textwidth]{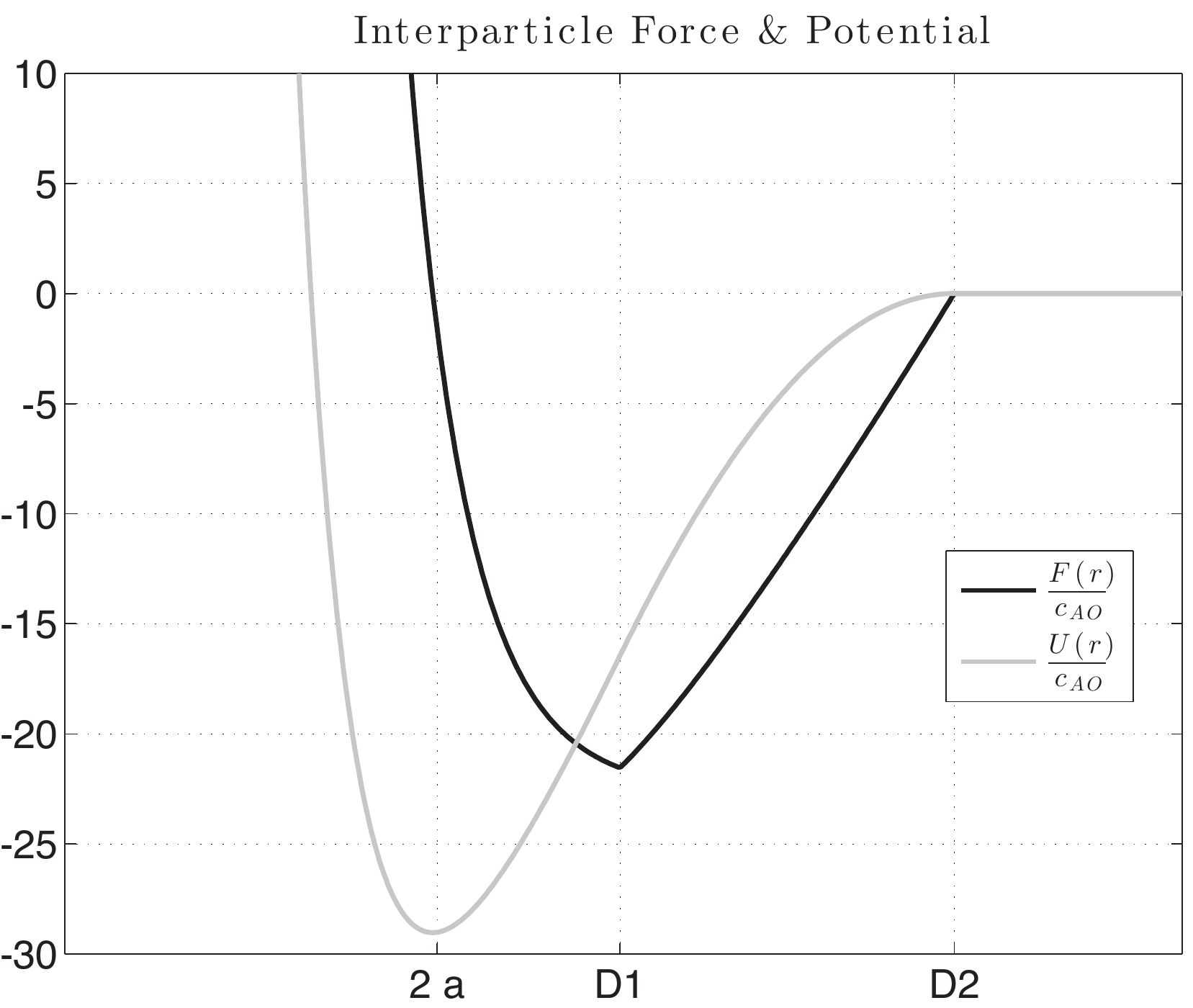}
\end{center}
                \caption{\small  {\bf Graph of pair-wise potential energy and force.}
                			Graph of the inter-particle energy $U(r)/c_{\text{AO}}$ given in \eqref{LJpair} and corresponding inter-particle force $F(x)/c_{\text{AO}} = -U'(r)/c_{\text{AO}}$ 
				as a function of the inter-particle distance $r$ with $c_{\text{AO}} =  1.7853$ and $a=3.2$.  The range of interaction is specified by the parameter $D_2$,
				which here is set equal to $D_2 = 2.694 a \approx 8.6$.  The energy is minimized at twice the particle radius $a$
				with minimum value: $U(2 a) \approx -51.82$.  Up to a factor $c_{\text{AO}} $, this graph agrees with Fig.~S1 in the 
				Supplementary Material accompanying \cite{FuTa2010}.
                }
                \label{fig:graph_of_U}
\end{figure}

\begin{figure}[ht!]
\begin{center}
\includegraphics[width=0.8\textwidth]{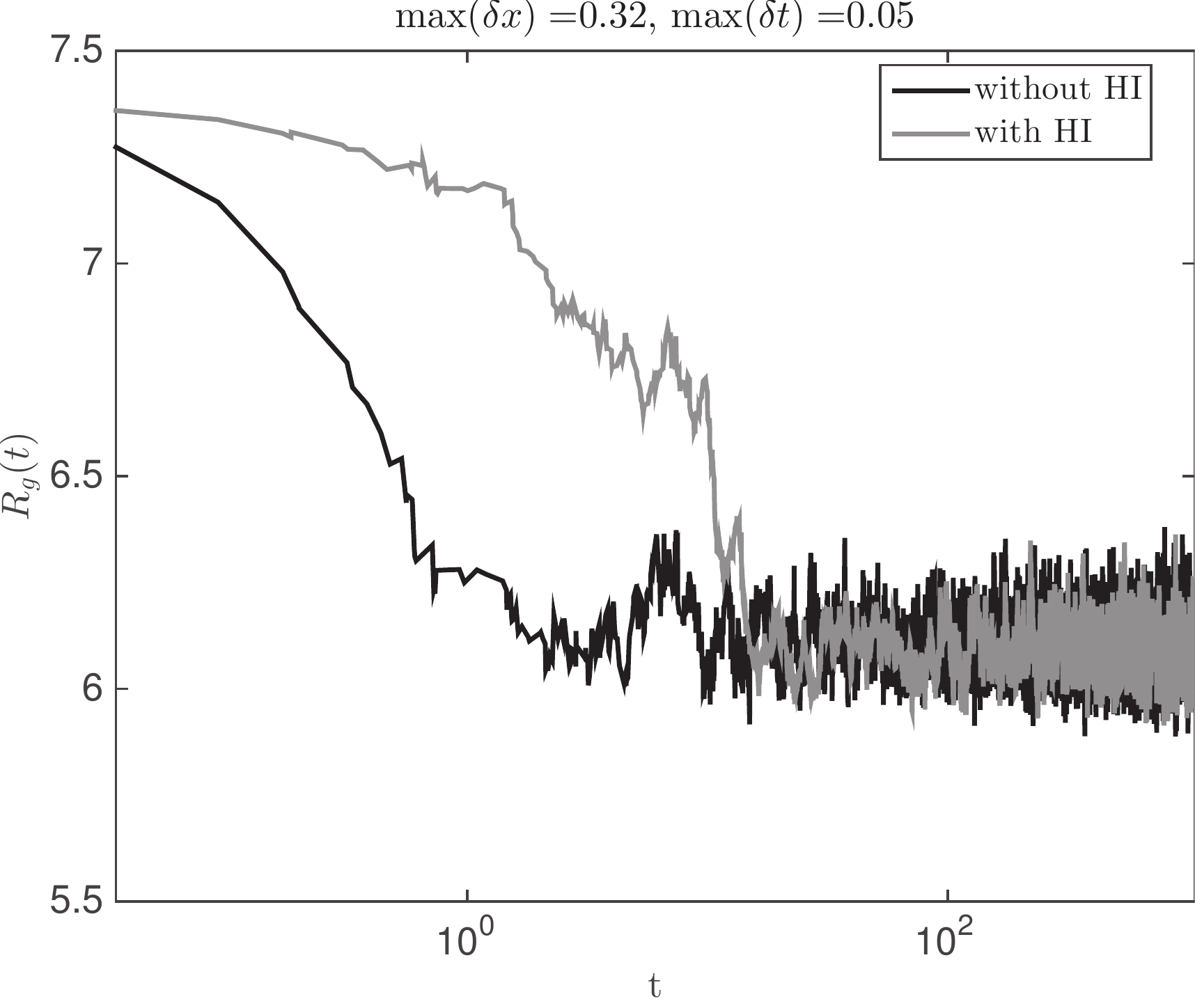} \\
\includegraphics[width=0.8\textwidth]{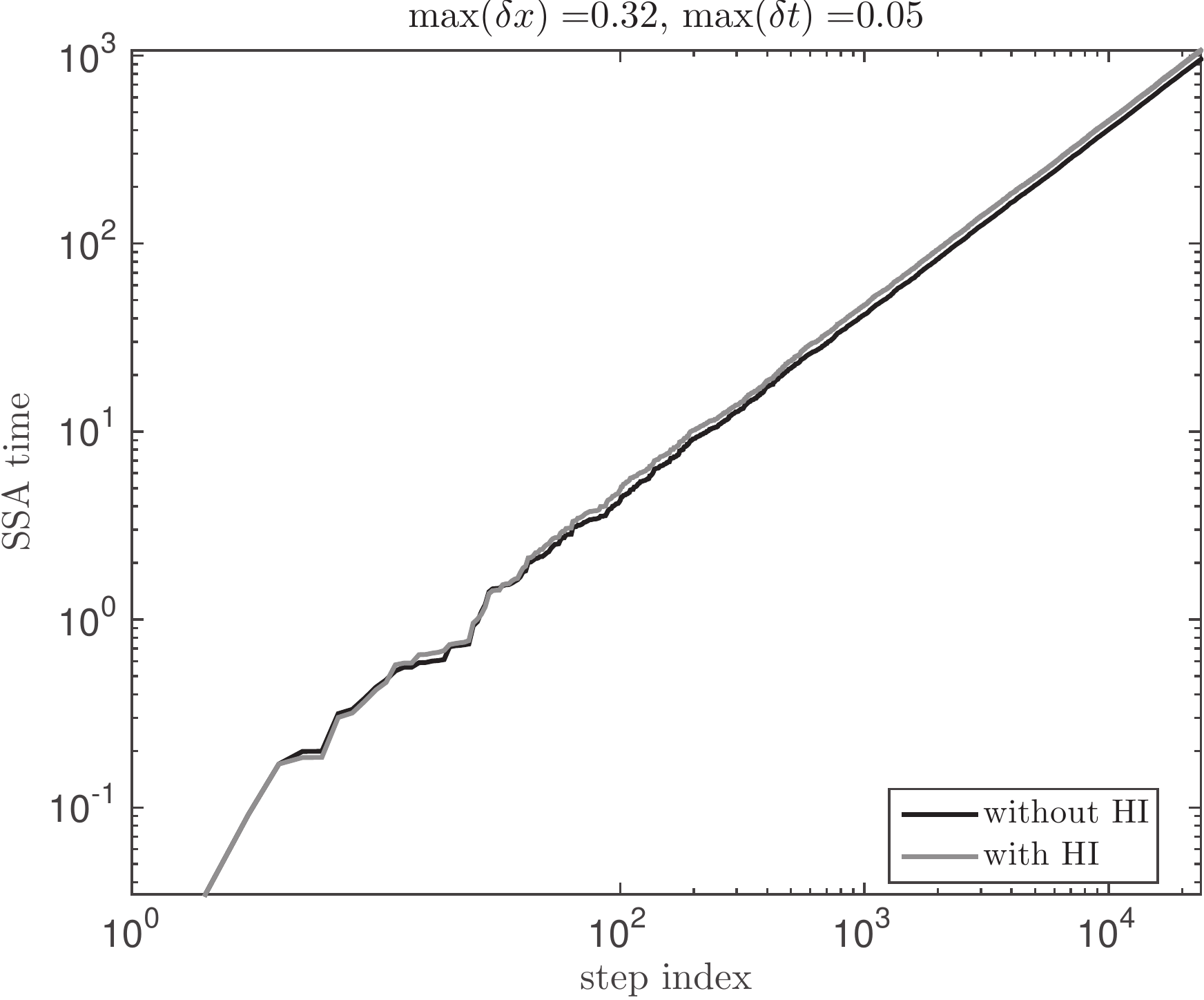}   
\end{center}
\caption{ \small {\bf Brownian Dynamics Simulation in 39D.}      
  This figure shows sample paths generated by an SSA integrator of the collapse of a colloidal cluster.
 The top panel shows a sample trajectory with and without hydrodynamic interactions (HI).
  The bottom panel shows the SSA time vs step index for these sample trajectories.
 These sample paths are produced using the SSA induced by the generator $Q_c$ in \eqref{eq:Qc} operated at a spatial step size
 set equal to $10 \% a$, where $2 a$ is the equilibrium bond length from Figure~\ref{fig:graph_of_U}.
}
\label{fig:bd_simulation_sample_path}
\end{figure}

\begin{figure}[ht!]
\begin{center}
\includegraphics[width=0.8\textwidth]{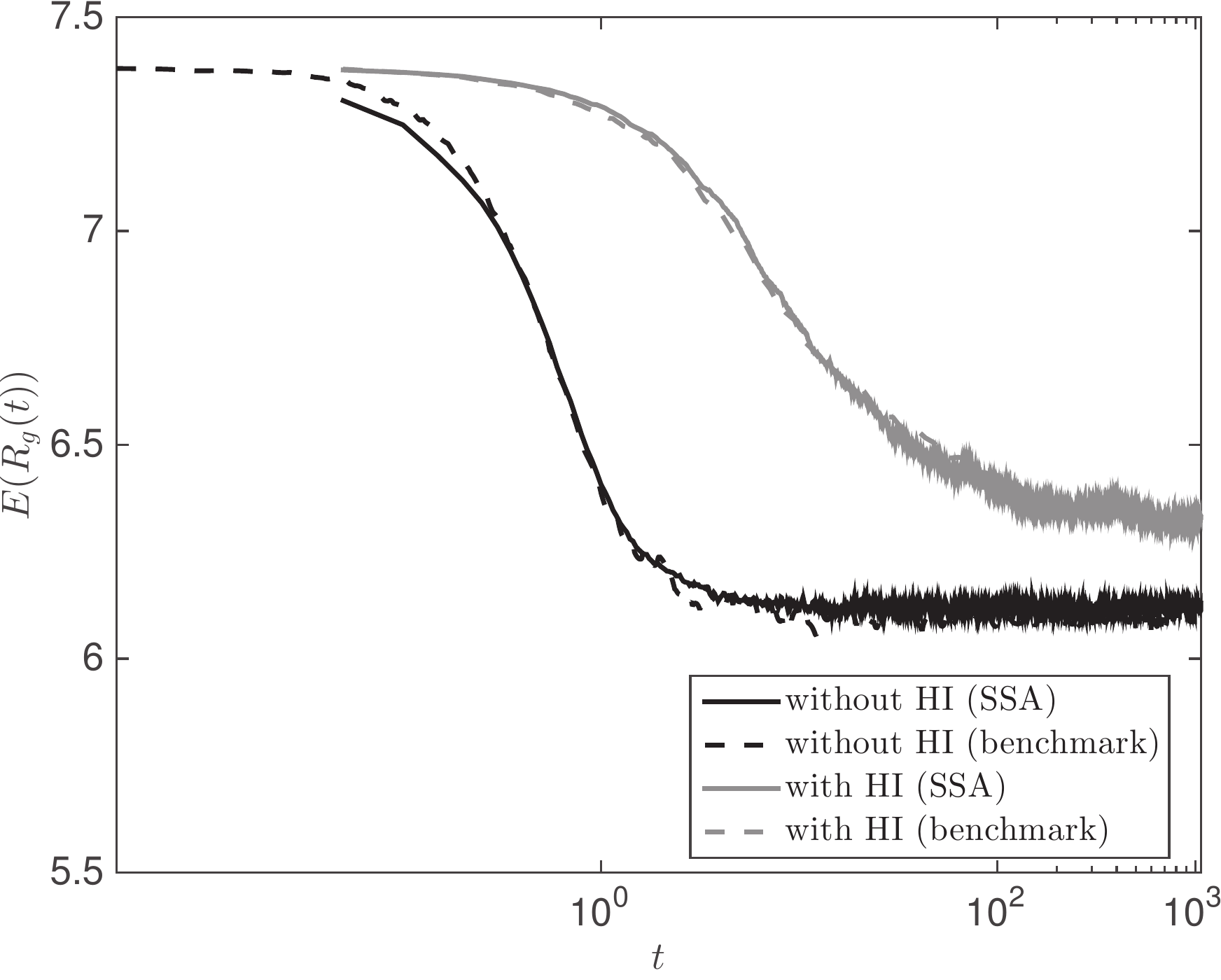}  
\end{center}
\caption{ \small {\bf Brownian Dynamics Simulation in 39D.}    
 This figure shows the mean radius of gyration
  of the colloidal cluster as a function of time over the interval $[0,T]$ computed using an SSA integrator (solid)
  and a benchmark (dashed) from \cite{FuTa2010} with and without hydrodynamic interactions (HI).    
   This numerical result is produced using the SSA induced by the generator $Q_c$ in \eqref{eq:Qc} operated at a spatial step size
 set equal to $10 \% a$, where $2 a$ is the equilibrium bond length  from Figure~\ref{fig:graph_of_U}.
  }
\label{fig:bd_simulation_mean_Rg}
\end{figure}

\begin{figure}[ht!]
\begin{center}
\includegraphics[width=0.8\textwidth]{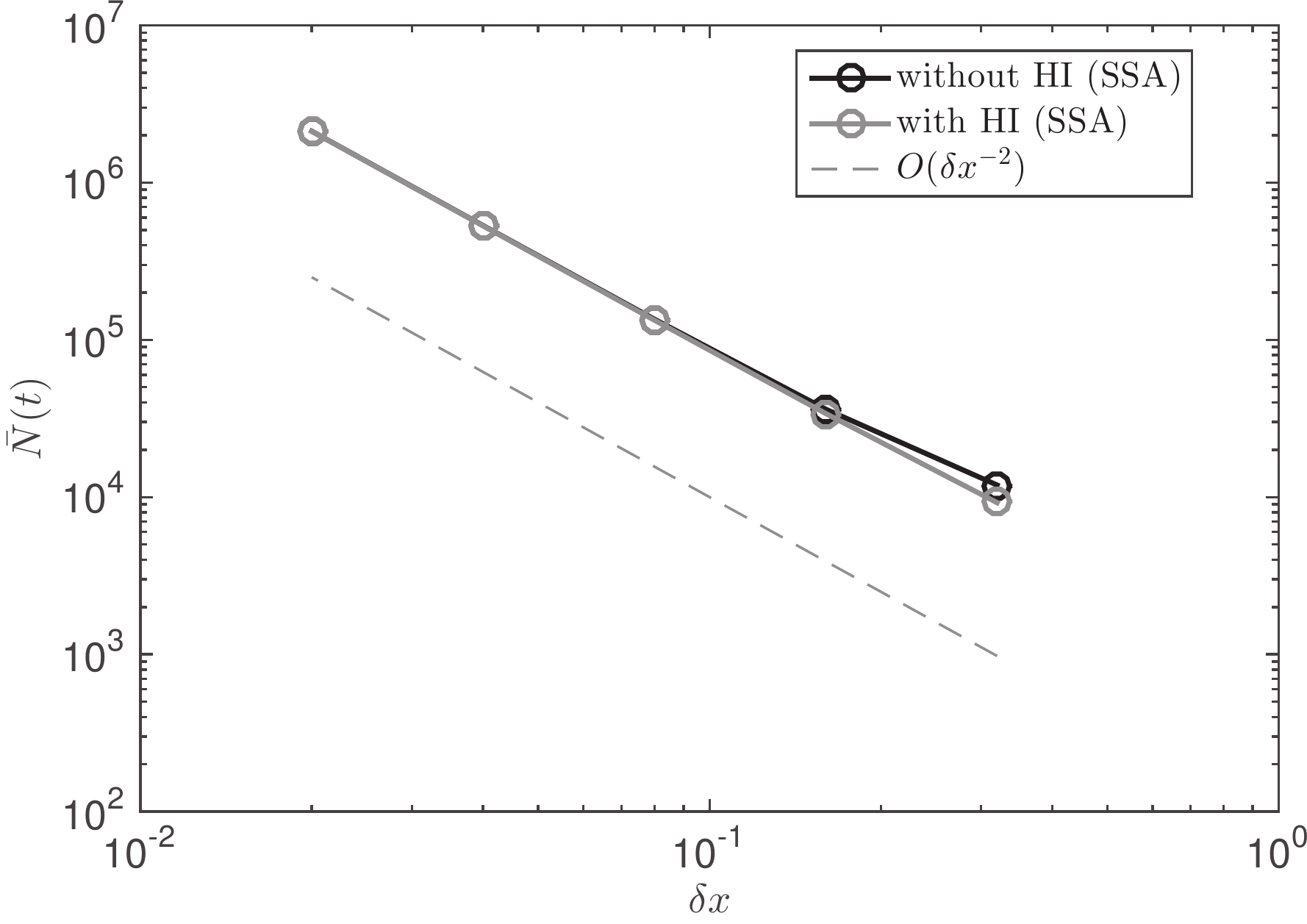}   \\
\includegraphics[width=0.8\textwidth]{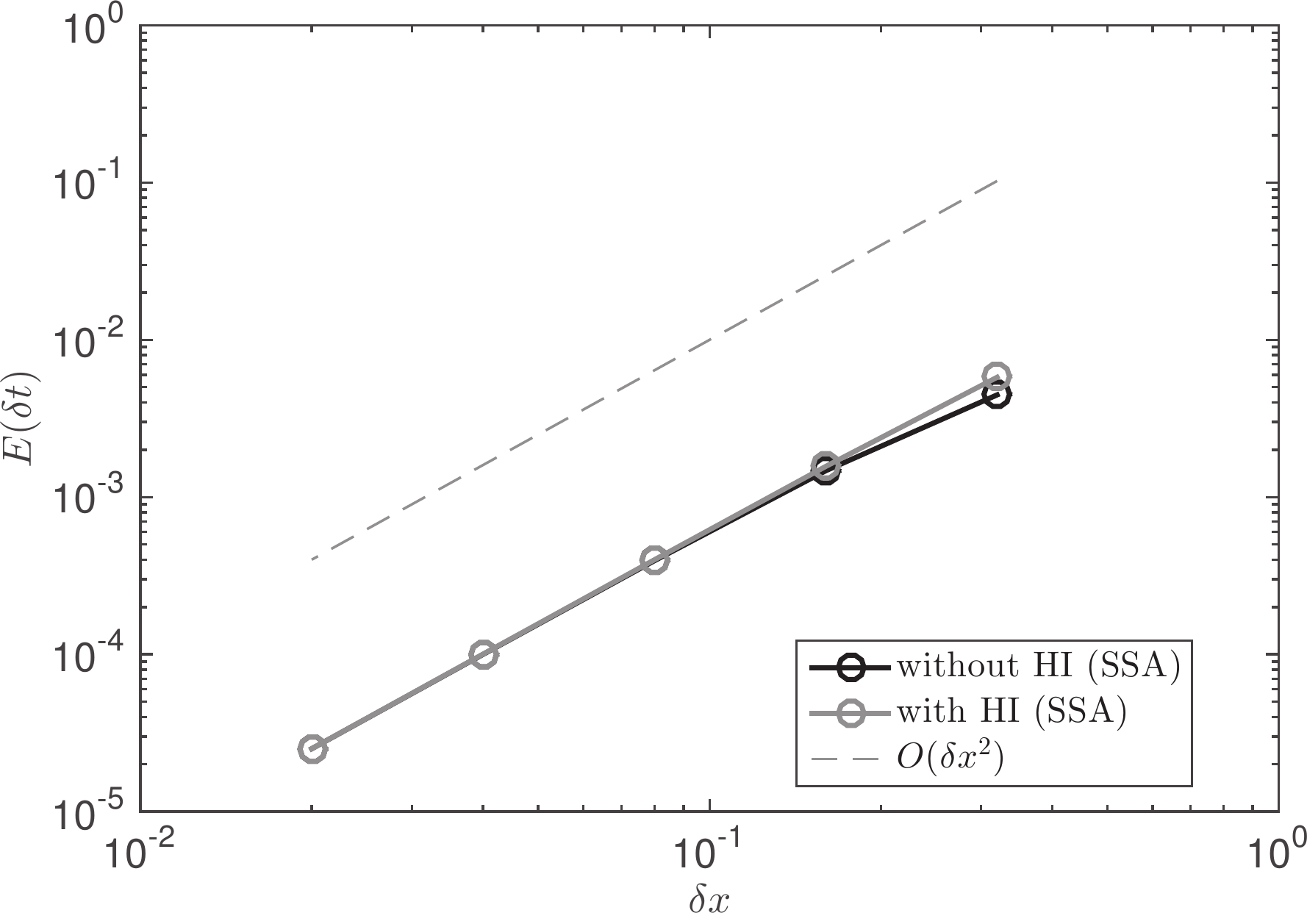}   
\end{center}
\caption{ \small {\bf Complexity of SSA in Brownian Dynamics Simulation in 39D.}  
  The top panel of this figure plots the mean number of computational steps of the SSA integrator as a function of the spatial step size.
    The bottom panel of this figure plots the mean holding time of the SSA integrator as a function of the spatial step size.
  The figure validates Proposition~\ref{lem:barN}.  
  This numerical result is produced using the SSA induced by the generator $Q_c$ in \eqref{eq:Qc}.
  }
\label{fig:bd_simulation_Nbar}
\end{figure}


\chapter{Analysis on Gridded State Spaces} \label{chap:analysis}

In this chapter we analyze the stability, complexity, and accuracy of the realizable discretization $Q_c$ in \eqref{eq:Qc} on a gridded state space.  (Refer to \S\ref{sec:gridded_vs_gridless} for the definition of a gridded state space.)  For the stability analysis, we assume that the drift field is locally Lipschitz continuous and weakly dissipative.  To show that the approximation is second-order accurate, we further assume that the drift field is of class $C^4$. In order to ensure that the state space is gridded, we assume that the noise is additive and isotropic.  These assumptions are sufficient to prove that the approximation has a stochastic Lyapunov function, which is then used to quantify the complexity and accuracy of the approximation.

\section{Assumptions}  \label{sec:preliminaries}

 Here we describe the main objects of the theory in this chapter:
 \begin{itemize}
 \item  generators of the processes, given in \eqref{eq:model_generator} and \eqref{eq:Qc2};
 \item  stochastic equations governing the dynamics of sample paths, given in \eqref{eq:model_sde} and \eqref{eq:Xsde}; and,
 \item  Kolmogorov equations governing the evolution of conditional expectations of an observable, given in \eqref{eq:model_kolmogorov} and \eqref{eq:Qc_backward_kolmogorov}.  
\end{itemize}

 \subsection{Some Notation}

 We shall use the following notation.  Let $| \cdot |$ denote the $2$-norm of an $n$-vector, let $| \cdot |_p$ denote the $p$-norm of an $n$-vector where $p \ge 1$ and  let $\| \cdot \|$  denote the standard operator norm.  For a real-valued function $f \in C^r(\mathbb{R}^n)$ where $r \in \mathbb{N}$, let $D^r f$ denote the $r$th derivative of $f$.   Let $B_b(\mathbb{R}^n)$ denote the Banach space of all Borel measurable and bounded functions from $\mathbb{R}^n$ to $\mathbb{R}$ endowed with the supremum norm: 
 \begin{equation}
\| f \|_{\infty} = \sup_{x \in \mathbb{R}^n} | f(x) | \;, \quad f \in B_b(\mathbb{R}^n) \;.
\end{equation} We denote by $C_b(\mathbb{R}^n)$ the subspace of $B_b(\mathbb{R}^n)$ consisting of all uniformly continuous and bounded functions.  This subspace with the supremum norm:
\begin{equation}
\| f \|_{\infty} = \sup_{x \in \mathbb{R}^n} | f(x) | \;, \quad f \in C_b(\mathbb{R}^n) \;,
\end{equation}  is  a Banach space. For $k \in \mathbb{N}$, we denote by $C_b^k(\mathbb{R}^n)$ the subspace of all $k$-times differentiable functions  whose derivatives up to $k$th order are uniformly continuous and bounded in the supremum norm.  This subspace is also a Banach space when endowed with the norm: \begin{equation} \label{eq:ksupnorm}
 \| f \|_k = \| f \|_{\infty}  + \sum_{i=1}^k  \sup_{x \in \mathbb{R}^n} \| D^i f(x) \| \;, \quad f \in C_b^k(\mathbb{R}^n) \;.
 \end{equation}

%
%

\subsection{SDE}

%
%

Consider an infinitesimal generator $L$, which maps a twice differentiable test function $f: \mathbb{R}^n \to \mathbb{R} $ to another function $Lf : \mathbb{R}^n \to \mathbb{R}$ defined as:  \begin{equation}  \label{eq:model_generator}
\boxed{
L f( x ) = \tr( Df(x) \mu(x)^T   + D^2 f(x) ) 
}
 \end{equation}
where $\mu: \mathbb{R}^n \to \mathbb{R}^n$ is the drift field. The SDE associated to $L$ has an unbounded domain $\mathbb{R}^n$ and isotropic (or direction-independent), additive (or state-independent) noise: 
\begin{equation} \label{eq:model_sde}
\boxed{
dY = \mu(Y) dt + \sqrt{2} \; d W \;, \quad Y(0) = x \in \mathbb{R}^n 
}
\end{equation} 
Under certain assumptions on the drift field (e.g.~those stated in Assumption~\ref{assumptions_on_drift}), this SDE uniquely defines in a pathwise sense a Markov process $Y(t)$ for any $t \ge 0$.   Basic properties of solutions to \eqref{eq:model_sde} are reviewed in \S\ref{sec:properties}.

For any $t\ge0$ and $x \in \mathbb{R}^n$, let $P_t$  and $\Pi_{t,x}$ resp.~denote the Markov semigroup and Markov transition probabilities of $Y(t)$ related by:\[
P_t f(x) = 
\int_{\mathbb{R}^n}  f(y) \Pi_{t,x}(dy)  = \Ex_x f( Y(t) ) \quad t \ge 0 \;, ~ x \in \mathbb{R}^n \;,~ f \in B_b(\mathbb{R}^n) \;.
\] 
Here  $\Ex_x$ denotes expectation conditional on $Y(0)=x \in \mathbb{R}^n$. Define the domain of $L$ to be the set: \[
 \dom(L) = \{ f \in B_b(\mathbb{R}^n) ~\mid~ \lim_{t \to 0^+} \frac{1}{t} (P_t f - f ) ~~\text{exists in $B_b(\mathbb{R}^n)$} \} \;.
 \]
 
 %
%
 
The Kolmogorov equation associated to the SDE problem \eqref{eq:model_sde} is given by: \begin{equation} \label{eq:model_kolmogorov}
\boxed{
\frac{\partial u}{\partial t}(x,t) = L u ( x, t) \quad \text{for all $x \in \mathbb{R}^n$ and $t \ge 0$}
}
\end{equation} with initial condition: \[
u(x,0) = f(x)  \quad \text{for all $x \in \mathbb{R}^n$ and for any $f \in \dom(L)$} \;.
\]  
The solution $u(x,t)$ to \eqref{eq:model_kolmogorov} has the following stochastic representation: \[
u(x,t) = \Ex_x f(Y(t) ) = P_t f(x) \;.
\]   

Under suitable conditions on the drift field (e.g.~those stated in Assumption~\ref{assumptions_on_drift}), the process $Y(t)$ admits a unique invariant probability measure $\Pi$ that is absolutely continuous with respect to Lebesgue measure on $\mathbb{R}^n$.  Let $\nu(x)$ be the stationary density associated to $\Pi$ that satisfies \[
(L^* \nu) (x) = 0 \quad \text{for all $x \in \mathbb{R}^n$}
\] where $L^*$ is the (formal) adjoint of the infinitesimal generator $L$ of the SDE solution.   Hence, \[
\Pi( L f ) = \int_{\mathbb{R}^n} L f(x) \nu(x) dx = 0 \;, \quad \text{for all $f \in \dom(L)$} \;.
\]
For the associated semigroup this invariance implies: \begin{align*}
\Pi( P_t f) &= \int_{\mathbb{R}^n} P_t f(x) \nu(x) dx = \int_{\mathbb{R}^n} f(x) \nu(x) dx = \Pi(f)  
\end{align*}
for any $t \ge 0$.  

\medskip

Next we introduce (space) discrete analogs of \eqref{eq:model_generator},  \eqref{eq:model_sde}, and \eqref{eq:model_kolmogorov}.

%
%

 \subsection{Approximation}  
 
 To introduce the generator of the approximation, let $\{ e_i \}_{i=1}^n$ be the standard basis on $\mathbb{R}^n$ and let $S_h = \{ x_i \}$ be an infinite Cartesian grid over $\mathbb{R}^n$ with edge length $h$, which we refer to as the spatial step size.  The precise location of this grid is typically set by the choice of initial condition.  Let $\ell^{\infty}(S_h)$ (or $\ell^{\infty}$ for short) be the Banach space of all grid functions $\varphi: S_h \to \mathbb{R}$ that are bounded in the supremum norm: \begin{equation}
\threebars \varphi \threebars_{\infty} = \sup_{x \in S_h} | \varphi(x) |  \;.
\end{equation}
 In the context of the SDE problem \eqref{eq:model_sde}, the approximate generator given in~\eqref{eq:Qc} maps a grid function $\varphi: S_h \to \mathbb{R}$ to another grid function $Q \varphi: S_h \to \mathbb{R}$ defined as: 
 
 %
 %
 
 %
 \begin{equation} \label{eq:Qc2}
 \boxed{
\begin{aligned}
 Q \varphi(x) =
 \sum_{i=1}^n
\frac{1}{h^2}\exp\left( \frac{h}{2}  \mu_i \right) &\left( \varphi\left( x+ h e_i  \right) - \varphi(x) \right)  \\
  \qquad\qquad +
 \frac{1}{h^2}\exp\left(-  \frac{h}{2}  \mu_i  \right) & \left( \varphi\left( x- h  e_i \right) - \varphi(x) \right)  
\end{aligned}
}
\end{equation}

Here we used the shorthand notation $\mu_i = \mu(x)^T e_i$.  We also dropped the subscript $c$ appearing in~\eqref{eq:Qc}, since the theory in this chapter focuses on proving properties of this generator only.  For any $t \ge 0$, the generator $Q$ induces a Markov jump process $X(t)$ on $S_h$, which can be exactly simulated using the SSA specified in Algorithm~\ref{algo:ssa}.  For basic facts about Markov jump processes, we refer the reader to \cite{ReYo1999,protter2004stochastic,EtKu2009,Kl2012}.   
  
 %
 %
 
Since our assumptions (given below) permit the drift field entering the reaction rates in \eqref{eq:Qc2} to be unbounded, $Q$ may not be a bounded operator with respect to grid functions in $\ell^{\infty}$.  Nevertheless under a weak dissipativity condition on the drift field of \eqref{eq:model_sde} given in Assumption~\ref{assumptions_on_drift}, we show that the approximation $X(t)$ has a stochastic Lyapunov function.  This property is then used to show that the local martingale \begin{equation} \label{eq:semimartingale_phi}
M^{\varphi}(t) = \varphi(X(t)) - \varphi(X(0)) -\int\limits_0^t  Q \varphi(X(s) ) ds \;, \quad t \ge 0
\end{equation} is a global martingale for all $\varphi: S_h \to \mathbb{R}$ satisfying a growth condition, which is developed in \S\ref{sec:properties}.  For functions satisfying this condition, Dynkin's formula holds: \begin{equation} \label{eq:dynkins_formula}
\Ex_x \varphi(X(t)) = \varphi(x) + \int\limits_0^t \Ex_x Q \varphi (X(s)) ds  \;, \quad t \ge 0\;.
\end{equation}
This class of functions includes locally Lipschitz continuous functions that have a local Lipschitz constant that does not grow faster than a sharp stochastic Lyapunov function for the SDE solution.  Note that the quadratic variation of $M^{\varphi}$ is given by \begin{equation} \label{eq:quadratic_variation_Mphi}
\langle M^{\varphi}, M^{\varphi} \rangle(t) = \int\limits_0^t ( Q \varphi^2(X(s)) - 2 \varphi(X(s)) Q \varphi(X(s)) ) \; ds \;. 
\end{equation}

 %
 %
 
By choosing $\varphi(X(t))$ in \eqref{eq:semimartingale_phi} to be the $i$th-component of $X(t)$, it can be shown that the process $X(t)$ induced by the generator $Q$ satisfies the following stochastic equation: \begin{equation}
\label{eq:Xsde}
\boxed{
d X = \mu^h(X) dt + \sqrt{2} \; d \mathcal{M} \;, \quad X(0) = x \in S_h 
}
\end{equation}
where we have introduced the drift field $\mu^h$ with $i$th component \[
\mu^h_i(x) =  \frac{2}{h} \sinh( \frac{h}{2} \mu_i(x) ) \quad \text{for $1 \le i \le n$}
\]
and a local (zero-mean) martingale \[
\mathcal{M}(t) = (\mathcal{M}_1(t), \dotsc, \mathcal{M}_n(t))^T  \in \mathbb{R}^n \;.
\]  The quadratic covariation of the components of $\mathcal{M}$ satisfies: \begin{equation}
\langle \mathcal{M}_i, \mathcal{M}_j \rangle(t) =
\begin{dcases*} 2 \int_0^t \cosh(\frac{h}{2} \mu_i(X(s))  ) ds  &  if $i=j$ \\
\vphantom{\int_0^t} 0 & otherwise 
\end{dcases*}
\end{equation}
for all $t \ge 0$.  (Since jumps only occur in each coordinate direction, the covariation of two distinct components of $\mathcal{M}$ is zero.) We stress that \eqref{eq:Xsde} is shorthand notation for a semimartingale representation of the Markov jump process $X(t)$.   The noise entering \eqref{eq:Xsde} is purely discontinuous and the drift field is less regular than in the original SDE.   However, this stochastic equation has the nice feature that realizations of $X$ can be produced using the SSA specified in Algorithm~\ref{algo:ssa}.   This stochastic equation resembles an ODE with random impulses \cite{SaSt1999}.  Properties of strong solutions to \eqref{eq:Xsde} are analyzed in \S\ref{sec:properties}.   We also remark that although there is a $1/h$ prefactor in $\mu^h$, to leading order $\mu^h \approx \mu$, as expected.

Note that $Q$ evaluated on the grid $S_h$ can be represented as an infinite matrix $\mathsf{Q}$ with the following nonzero off-diagonal transition rates from state $x \in S_h$ to state $y \in S_h$: \[
\mathsf{Q}(x, y) =  \frac{1}{h^2}\exp\left( \frac{1}{2}  \mu(x)^T (y - x)  \right)  \quad \text{if $|y-x| = h$} 
\]
or in words, if $y$ is a nearest neighbor of $x$.  The $Q$-matrix property given in \eqref{Qmatrix_property} implies that: \[
\mathsf{Q}(x,x) = -\sum_{y \in S_h \setminus \{x\}} \mathsf{Q} (x, y) \;.
\] Recall from Algorithm~\ref{algo:ssa}, that the diagonal term of $\mathsf{Q}$ is the reciprocal of the mean holding time of the approximation $X(t)$ at the state $x$.    

 %
 %

Consider the semi-discrete (discrete in space and continuous in time) Kolmogorov equation: 
\begin{equation} \label{eq:Qc_backward_kolmogorov}
\boxed{
\frac{d u^h}{dt}(x,t) = Q u^h(x,t) \quad \text{for all $x \in S_h$ and $t\ge0$} 
}
\end{equation} 
with initial condition: \[
u^h(x,0) =\varphi(x) \quad \text{for all $x \in S_h$}  \;.
\]  Analogous to the continuous case, this solution can be explicitly represented in terms of $X(t)$: \[
u^h(x,t) = \Ex_x \varphi (X(t)) = P^h_t \varphi (x) 
\]  where we have introduced  $P^h_t$, which is the Markov semigroup associated to $Q$.  In terms of which, the domain of $Q$ is defined as: \[
 \dom(Q) = \{ \varphi \in \ell^{\infty} ~\mid~ \lim_{t \to 0^+} \frac{1}{t} (P^h_t \varphi - \varphi ) ~~\text{exists in $ \ell^{\infty}$} \} \;.
 \]
Let $\Pi^h$ denote an invariant probability measure of $Q$, which we will prove exists under certain conditions on the drift field stated below in Assumption~\ref{assumptions_on_drift}.  In the particular case of a gridded state space, $\Pi^h$ is also unique.   In contrast, if the state space is gridless, uniqueness is nontrivial to prove because the process may lack irreducibility.   We address this point in Chapter~\ref{chap:gridless}.

Let $\nu^h(x)$ be the probability density function associated to $\Pi^h$, which satisfies the identity \[
(Q^* \nu^h)(x) = 0 \quad \text{for all $x \in S_h$} 
\] where $Q^*$ is the adjoint of $Q$.  This identity implies that \[
 \Pi^h( Q \varphi) = 0 \quad \text{ for all $\varphi \in \dom(Q)$} \;.
 \]  Moreover, an invariant measure $\Pi^h$ satisfies: \[
\Pi^h( P^h_t \varphi) =  \sum_{x \in S_h} P^h_t \varphi(x) \nu^h(x) = \sum_{x \in S_h} \varphi(x) \nu^h(x) = \Pi^h( \varphi) 
\] for any $t \ge 0$.   
 

Table~\ref{notation} summarizes this notation.

 \setlength\tabcolsep{1.1pt}

\noindent
\begin{table}[ht!]
\centering
\begin{tabular}{|c|c|c|c|c|c|c|c|}
\hline
 & process & generator & \begin{tabular}{c} transition \\ probability \end{tabular}  & semigroup & \begin{tabular}{c}  invariant  \\ measure \& density \end{tabular}  \\
\hline
\begin{tabular}{c} SDE \\ Solution \end{tabular} & $Y(t) \in \mathbb{R}^n$ & $L$ &  $\Pi_{t,x}$    & $P_t$ & \begin{tabular}{c} $\Pi(A) = \int_A \nu(x) dx$  \\ $A \subset \mathbb{R}^n$ \end{tabular} \\
  \hline
  \begin{tabular}{c} Numerical \\
Approximation \end{tabular} & $X(t) \in S_h$ &  $Q$  & $\Pi^h_{t,x}$     & $P^h_t$ & \begin{tabular}{c} $\Pi^h(A) = \sum_{x \in A} \nu^h(x)$  \\ $A \subset S_h$ \end{tabular} \\
  \hline
\end{tabular}
\caption{ \small {\bf Tabular summary of (space) continuous \& discrete objects.}  
}
\label{notation}
\end{table}

\subsection{Drift Field Assumptions} Here we state and illustrate our assumptions on the drift field.

%
%

\medskip

\begin{assumption} \label{assumptions_on_drift} 
The drift field $\mu: \mathbb{R}^n \to \mathbb{R}^n$ of the SDE in \eqref{eq:model_sde} is of class $C^4$ and satisfies conditions (A1) - (A3).

\medskip

\begin{description}
\item[(A1) One-Sided Lipschitz Continuity]  There exists $C^{\mu}_L \in \mathbb{R}$ such that: \[
D\mu(x) (\xi,\xi) \le C_L^{\mu} | \xi |^2
\]
for all $x, \xi \in \mathbb{R}^n$.

\medskip

\item[(A2) Polynomial Growth] There exist $m \in \mathbb{N}_0$ and $C^{\mu}_P>0$ such that: \[
\sup_{x \in \mathbb{R}^n} \frac{ | \mu(x) | }{ 1 + |x|^{2 m+1} } \vee \sup_{x \in \mathbb{R}^n} \frac{ \| D^k \mu(x) \| }{ 1 + |x|^{2 m+1-k} } \le C^{\mu}_P \quad 
\]
for any $1 \le k \le 4$.
 
\medskip

\item[(A3) Weak Dissipativity]  There exist $\beta_0>0$ and $\alpha_0 \ge 0$ such that: \begin{equation} \label{eq:weak_dissipativity_mui}
\mu_i(x) \sign(x_i)  \le \alpha_{0} \; ( 1 +   |x|^{2m} )  - \beta_0 \; | x_i |^{2m + 1}
\end{equation}
for all $x \in \mathbb{R}^n$, for every $1 \le i \le n$, and where $m$ is the natural number appearing in Assumption (A2).

\medskip

\item[(A4) Minimal Growth]  For any $\ell \in \mathbb{N}$, there exist  $\beta_{\ell}>0$ and $\alpha_{\ell} \ge 0$ whose growth in $\ell$ is subfactorial (i.e.~is less than the growth of $[\ell !]$) which satisfy: 
\[
| \mu_i(x) |^{2 \ell} \ge - \alpha_{\ell} \; ( 1 +   |x|^{2 m} )  | x_i |^{(2m + 1) (2 \ell - 1)}  + \beta_{\ell} \; | x_i |^{(2m + 1) (2 \ell)} 
\]
for all $x \in \mathbb{R}^n$, for every $1 \le i \le n$, and where $m$ is the natural number appearing in Assumption (A2).
\end{description}

\end{assumption}


%
%

We chose these assumptions with the following drift fields in mind.

\begin{example} Let $g: \mathbb{R}^n \to \mathbb{R}^n$ be of class $C^4$ with component form \[
g(x) = (g_1(x), \dotsc, g_n(x))^T
\] that satisfies the polynomial growth condition: \begin{equation} \label{eq:g_polynomial_growth}
 \sup_{x \in \mathbb{R}^n} \frac{ | g(x) | }{ 1 + |x|^{2 m} }  \vee \sup_{x \in \mathbb{R}^n} \frac{ \| D^k g(x) \| }{ 1 + |x|^{2 m-k} } < C^{g}_P   \quad \text{for any $1 \le k \le 4$}  
\end{equation}
and the one-sided Lipschitz property: \[
D g(x) (\xi, \xi) \le C^{g}_L | \xi |^2 \quad \text{for all $x, \xi \in \mathbb{R}^n$} 
\]
for some constants  $C^g_P > 0$ and $C^g_L \in \mathbb{R}$.  The drift field whose $i$th-component can be decomposed as: \[ \boxed{
\mu_i(x) = - a_i \; x_i^{2 m +1} + g_i(x) \;, \quad a_i > 0 \;, \quad 1 \le i \le n
}
\] satisfies Assumption~\ref{assumptions_on_drift}.  Indeed, \begin{align*}
D \mu(x)(\xi,\xi) &= \sum_{1 \le i,j \le n} \frac{\partial \mu_i}{\partial x_j} \xi_i \xi_j \\
 &= 
\sum_{1 \le i,j \le n}  -a_i \; (2 m+1) \; x_i^{2m} \; \xi_i^2 \; \delta_{ij} + \frac{\partial g_i}{\partial x_j} \xi_i \xi_j \le C^{g}_L | \xi |^2
\end{align*}
where $\delta_{ij}$ is the Kronecker delta.  This inequality implies that Assumption (A1) holds with $C^{\mu}_L = C^g_L$.   According to the hypotheses made on $g$, it is apparent that Assumption (A2) holds.  We also have that  \begin{align*}
\mu_i(x) \sign(x_i) &= - a_i \; |x_i|^{2 m +1}  + g_i(x) \sign(x_i) \\
&\le - a_i \; |x_i|^{2 m +1} + C^g_P (1+|x|^{2m})  
\end{align*} for all $x \in \mathbb{R}^n$, which implies  Assumption (A3) holds with $\alpha_0 = C^g_P$ and $\beta_0 = \min_{1 \le i \le n} a_i $.  
By Bernoulli's inequality and \eqref{eq:g_polynomial_growth}, there exists $\alpha_{\ell} \ge 0$ such that: \begin{align*}
| \mu_i(x)|^{2 \ell} &=  a_i^{2 \ell} \; x_i^{(2 m +1) (2\ell)} \left( 1  -  \frac{g_i(x)}{a_i x_i^{2m+1}} \right)^{2 \ell} \\
&\ge a_i^{2 \ell} \; x_i^{(2 m +1) (2\ell)} - 2 \ell a_i^{2 \ell -1} C^g_P (1+ |x|^{2m} )   x_i^{(2m+1) (2 \ell-1)} 
\end{align*}
for all $x \in \mathbb{R}^n$, which implies Assumption (A4) holds with constants $\beta_{\ell} = \beta_0^{2 \ell}$ and $\alpha_{\ell} =  2 \ell C^g_P \max_{1 \le i \le n} a_i^{2 \ell -1} $ whose dependence on $\ell$ is subfactorial.
\end{example}

\begin{lemma}
Assumption~\ref{assumptions_on_drift} (A1) implies that $\mu$ satisfies 
\[
\left\langle \mu(y) - \mu(x) , 
y - x \right\rangle  
\le  C_L^{\mu} | x - y |^2 
\]
for all $x, y \in \mathbb{R}^n$.  
\end{lemma}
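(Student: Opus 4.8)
The plan is to deduce the global one-sided Lipschitz estimate from the infinitesimal hypothesis (A1) by integrating along the straight segment joining the two points. Fix $x, y \in \mathbb{R}^n$ and introduce the scalar auxiliary function
\[
\phi(t) = \langle \mu(x + t(y-x)), y-x \rangle \;, \quad t \in [0,1] \;.
\]
Since $\mu$ is of class $C^4$ (in particular $C^1$) by Assumption~\ref{assumptions_on_drift}, the function $\phi$ is continuously differentiable on $[0,1]$, and the chain rule gives
\[
\phi'(t) = \sum_{1 \le i,j \le n} \frac{\partial \mu_i}{\partial x_j}(x + t(y-x)) \, (y_j - x_j)(y_i - x_i) = D\mu\big(x + t(y-x)\big)(y-x, y-x) \;,
\]
where I use the same bilinear-form convention $D\mu(z)(\xi,\xi) = \sum_{i,j} \tfrac{\partial \mu_i}{\partial x_j}(z)\,\xi_i \xi_j$ as in the Example preceding this lemma.

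The key step is then a single application of the fundamental theorem of calculus together with (A1). Writing $\langle \mu(y) - \mu(x), y-x \rangle = \phi(1) - \phi(0) = \int_0^1 \phi'(t)\,dt$ and invoking (A1) at the point $z = x + t(y-x)$ with the fixed vector $\xi = y-x$ bounds the integrand pointwise by $C_L^{\mu} \, |y-x|^2$, uniformly in $t$. Integrating this constant bound over $t \in [0,1]$ yields
\[
\langle \mu(y) - \mu(x), y-x \rangle \le \int_0^1 C_L^{\mu} \, |y-x|^2 \, dt = C_L^{\mu} \, |x-y|^2 \;,
\]
which is the claim. (Alternatively one may apply the mean value theorem to the scalar function $\phi$ to obtain $\phi(1)-\phi(0) = \phi'(\theta)$ for some $\theta \in (0,1)$ and bound $\phi'(\theta)$ by (A1); the integral form is used here only because an inequality, not an equality, is needed.)

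I do not anticipate a genuine obstacle: the argument is essentially a one-line mean value inequality along a segment. The only point requiring care is the convention for the bilinear form, namely that the quantity $D\mu(x)(\xi,\xi)$ appearing in (A1) is precisely the quadratic form $\xi^T D\mu(x)\,\xi$, and that the vector $\xi = y - x$ differing the two endpoints is held fixed (independent of $t$) along the segment, so that $\phi'(t)$ matches the left-hand side of (A1) exactly.
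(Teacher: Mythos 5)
Your proof is correct and follows essentially the same route as the paper: the paper likewise writes $\left\langle \mu(y)-\mu(x), y-x\right\rangle = \int_0^1 D\mu(x+s(y-x))(y-x,y-x)\,ds$ via Taylor's integral formula and bounds the integrand pointwise by (A1). Your auxiliary function $\phi$ and the fundamental theorem of calculus are just a slightly more explicit packaging of that identical one-line argument.
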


\begin{proof}
Use Taylor's formula and Assumption~\ref{assumptions_on_drift} (A1) to obtain: \begin{align*}
\left\langle \mu(y) - \mu(x), y-x \right\rangle &= \int_0^1  D \mu(x + s (y-x) ) (y-x, y-x) ds \\
&\le C_L^{\mu} \; |y-x|^2
\end{align*}
as required.
\end{proof}

\begin{rem}[On Usage of Assumption~\ref{assumptions_on_drift}]  
Local Lipschitz continuity of the drift field is sufficient to show that for any initial condition $x \in \mathbb{R}^n$ and for any Brownian motion $W$ there a.s.~exists a unique process $Y$ adapted to $W$ that satisfies \eqref{eq:model_sde} up to a random explosion time.  The weak dissipativity condition (A3) is used in Lemma~\ref{SDEdrift} to prove that $Y$ possesses a stochastic Lyapunov function, which implies that this pathwise unique solution can be extended to any $t \ge 0$ \cite{Kh2012}.   Additionally, Lemma~\ref{SDEdrift} is used in Lemma~\ref{lem:moments_of_process} to obtain bounds on every moment of the solution that are uniform in time.  


Assumption (A1) and (A2) are used to prove Lemma~\ref{lem:derivatives_of_semigroup}, which implies that the semigroup $P_t$ is strongly Feller.  Local Lipschitz continuity of the drift field is also used in Lemma~\ref{SDEminorization} to prove that the process is irreducible and its transition probabilities are absolutely continuous with respect to Lebesgue measure.  The strong Feller property and irreducibility imply uniqueness of an invariant probability measure.  In fact, a stochastic Lyapunov function, irreducibility, and the strong Feller property imply that the process $Y$ is geometrically ergodic with respect to this unique invariant probability measure, as stated in Theorem~\ref{SDEgeometricallyergodic}.    

Regarding the stability of the approximation,  Assumptions (A2), (A3) and (A4) are used  in Lemmas~\ref{Qdrift} and \ref{Qq_drift_gridless} to prove that the approximation has a stochastic Lyapunov function.   In the particular case of gridded state space, the approximation is also irreducible, and it then follows that the approximation is geometrically ergodic with respect to a unique invariant probability measure as stated in Theorem~\ref{thm:geometric_ergodicity_Qc}.  Assumption (A2) is used to bound the growth of $\mu$ by this stochastic Lyapunov function in Lemmas~\ref{lem:martingale_solution} and \ref{lem:barN}. 
 In the gridless state space context, Assumption (A2) is also used in Lemma~\ref{lem:Qq_is_bounded} to show that a mollified generator is a bounded linear operator in the standard operator norm.  This property implies that the semigroup of the  approximation is Feller, and together with a stochastic Lyapunov function, that the approximation has an invariant probability measure.   

Regarding the accuracy of the approximation, we require the additional assumption that the drift field has four derivatives.  This assumption is used in Lemmas~\ref{lem:derivatives_of_solution} and~\ref{lem:derivatives_of_semigroup}.  The latter Lemma implies that if $f \in C^4_b(\mathbb{R}^n)$ then $P_t f \in C^4_b(\mathbb{R}^n)$ for any $t \ge 0$. This result is then used to estimate the global error of the approximation over finite (resp.~infinite) time intervals in Theorem~\ref{thm:finite_time_accuracy} (resp.~Theorem~\ref{thm:nu_accuracy}).  Four derivatives of the drift field are needed for proving second-order accuracy because the remainder term in $(Qf(x) - Lf(x))$ involves four derivatives of $f(x)$, see Lemma~\ref{lem:Qc_accuracy_model}.
\end{rem}

The following Lemma relates our weak dissipativity assumption to more standard ones.  Note that the converse implication does not hold, in general.

\begin{lemma} \label{lem:weak_dissipativity_mu}
Assumption (A3) implies that there exist constants $\alpha \ge 0$ and $\beta >0$ such that \[
\mu(x)^T x \le \alpha \; ( 1 + |x|^{2 m+1} ) - \beta \; |x|^{2 m + 2} \quad  \text{for all $x \in \mathbb{R}^n$} \;.
\] 
\end{lemma}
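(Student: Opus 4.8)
The plan is to expand $\mu(x)^T x = \sum_{i=1}^n \mu_i(x)x_i$ and apply the componentwise bound (A3) to each summand. The key observation is that $x_i = \sign(x_i)|x_i|$, so that $\mu_i(x)x_i = (\mu_i(x)\sign(x_i))\,|x_i|$. Since $|x_i|\ge 0$, multiplying the inequality in (A3) by $|x_i|$ preserves its direction and gives
\[
\mu_i(x)x_i \le \alpha_0(1+|x|^{2m})\,|x_i| - \beta_0\,|x_i|^{2m+2}.
\]
Summing over $i$ then yields
\[
\mu(x)^T x \le \alpha_0(1+|x|^{2m})\sum_{i=1}^n|x_i| - \beta_0\sum_{i=1}^n|x_i|^{2m+2}.
\]

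Next I would convert the two sums back into powers of the Euclidean norm $|x|$ using elementary norm-equivalence inequalities on $\mathbb{R}^n$. For the first sum, Cauchy--Schwarz gives $\sum_i|x_i| = |x|_1 \le \sqrt{n}\,|x|$, so the positive term is controlled by $\alpha_0\sqrt{n}\,(|x| + |x|^{2m+1})$. For the dissipative term, the relevant bound is $\sum_i|x_i|^{2m+2} = |x|_{2m+2}^{2m+2} \ge n^{-m}|x|^{2m+2}$; this follows from the standard inequality $|x| \le n^{1/2 - 1/(2m+2)}|x|_{2m+2}$ after raising to the power $2m+2$ and checking that $(2m+2)\bigl(\tfrac12 - \tfrac{1}{2m+2}\bigr) = m$. (The case $m=0$ is trivial, with the constant equal to $1$.)

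It remains to reorganize these terms into the claimed form. The residual linear term is absorbed by the elementary bound $|x| \le 1 + |x|^{2m+1}$, valid for all $x$ since $2m+1\ge 1$. Collecting constants then gives
\[
\mu(x)^T x \le 2\alpha_0\sqrt{n}\,(1 + |x|^{2m+1}) - \beta_0 n^{-m}\,|x|^{2m+2},
\]
so the conclusion holds with $\alpha = 2\alpha_0\sqrt{n} \ge 0$ and $\beta = \beta_0 n^{-m} > 0$.

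There is no serious obstacle in this argument: every step is elementary, and the only point requiring a little care is tracking the dimensional constants in the norm-equivalence inequality so that the coefficient of $|x|^{2m+2}$ stays strictly positive after summation. The remark that the converse fails is consistent with the structure of the hypotheses, since (A3) is a stronger coordinatewise estimate while the conclusion is only a single scalar dissipativity bound.
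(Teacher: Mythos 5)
Your proof is correct and follows essentially the same route as the paper: multiply the coordinatewise bound (A3) by $|x_i|$, sum over $i$, and then pass from $|x|_1$ and $|x|_{2m+2}^{2m+2}$ back to the Euclidean norm via norm equivalence. The only difference is cosmetic — you make the equivalence constants ($\sqrt{n}$ and $n^{-m}$) and the absorption step $|x| \le 1 + |x|^{2m+1}$ explicit, whereas the paper cites generic constants $C_0, D_0$.
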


\begin{proof}
Multiply both sides of \eqref{eq:weak_dissipativity_mui} by $|x_i|$ and sum over $i$ to obtain: \[
\mu(x)^T x \le \alpha_0 \; ( 1 + |x|^{2 m} ) |x|_1 - \beta_0 |x|^{2m+2}_{2m+2} \;.
\]  Since norms on $\mathbb{R}^n$ are equivalent, there exist $C_0, D_0>0$ such that:\[
|x|_1 \le C_0 |x| \quad \text{and} \quad |x|^{2m+2}_{2m+2} \ge D_0 |x|^{2m+2}  \quad \text{for all $x \in \mathbb{R}^n$} \;.
\] These inequalities combined with the preceding one imply the desired inequality.  
\end{proof}

%
%

\section{Stability by Stochastic Lyapunov Function} \label{sec:geometric_ergodicity} 

In this section we show that the approximation and the SDE solution are long-time stable under Assumption~\ref{assumptions_on_drift} on the drift field.  Specifically, we show that the Markov processes induced by the generators $L$ in \eqref{eq:model_generator} and $Q$ in \eqref{eq:Qc2} are geometrically ergodic.    The model problem considered for this analysis is \eqref{eq:model_sde}, which is an SDE of elliptic type with an unbounded state space, locally Lipschitz drift coefficients, and additive noise.   The weak dissipativity condition Assumption~\ref{assumptions_on_drift} (A3) is a crucial ingredient to prove that both the SDE solution and numerical approximation are geometrically ergodic.  We emphasize that for this model problem the state space of the approximation is gridded and the generator $Q$ can be represented by an infinite matrix.   Our main tool to prove geometric ergodicity is Harris Theorem, which we recall below.  We use the total variation (TV) metric to quantify the distance between probability measures.  

\medskip

\begin{defn}
If  $\mathsf{\Pi}_1$ and $\mathsf{\Pi}_2$ are two probability measures on a measurable space $\Omega$, their total variation is defined as:
\begin{equation}
\|\mathsf{\Pi}_1 - \mathsf{\Pi}_2\|_\TV = 2\sup_{A} |\mathsf{\Pi}_1(A) - \mathsf{\Pi}_2(A)|\;,
\end{equation}
where the supremum runs over all measurable sets in $\Omega$.  In particular, the total variation distance between two probability measures is two if and only if they are mutually singular.  
\end{defn}

\medskip

The factor two in this definition is convenient to include because this metric can be equivalently written as \[
\| \mathsf{\Pi}_1 - \mathsf{\Pi}_2\|_\TV = \sup\left\{ \left|  \int_{\Omega} \varphi  \; d \mathsf{\Pi}_1 -  \int_{\Omega} \varphi \; d \mathsf{\Pi}_2 \right| ~~\text{s.t.}~~
\begin{array}{c}
\varphi: \Omega \to \mathbb{R} \;, \\
|\varphi(x)| \le 1 ~\text{for all $x \in \Omega$}
\end{array} \right\} \;. 
\]  

\subsection{Harris Theorem} 

%
%

Harris Theorem states that if a Markov process admits a stochastic Lyapunov function such that its sublevel sets are `small', then it is geometrically ergodic.   More precisely, Harris Theorem applies to any Markov process $\mathsf{X}(t)$ on a Polish space $\Omega$, with generator $\mathsf{L}$, Markov semigroup $\mathsf{P}_t$, and transition probabilities \[ 
\mathsf{\Pi}_{t,x}( A) = \Pr( \mathsf{X}(t) \in A \mid \mathsf{X}(0) = x ) \quad t>0 \;, \quad  x \in \Omega \;, 
\]
that satisfies Assumptions~\ref{driftcondition} and ~\ref{minorization} given below.

\begin{assumption}[Infinitesimal Drift Condition] \label{driftcondition}
There exist a function $\Phi: \Omega \to \mathbb{R}^+$ and strictly positive constants $\mathsf{w}$ and $\mathsf{k}$, such that  \begin{equation} \label{infinitesimal_drift_condition}
\mathsf{L} \Phi ( x) \le \mathsf{k} - \mathsf{w} \; \Phi(x) \;,
\end{equation}
for all $x \in \Omega$.
\end{assumption} 

Assumption~\ref{driftcondition} implies that the associated semigroup $\mathsf{P}_t$ satisfies: \[
\mathsf{P}_t \Phi(x) \le e^{-\mathsf{w} t} \Phi(x) + \frac{\mathsf{k}}{\mathsf{w}} (1-e^{-\mathsf{w} t} ) 
\]
for every $t\ge0$ and for every $x \in \mathbb{R}^n$.  

\begin{assumption}[Associated `Small Set' Condition]  \label{minorization}
There exists a constant $\mathsf{a} \in (0,1)$ such that the sublevel set $\{ z \in \Omega \mid  \Phi(z) \le 2 \mathsf{k}/\mathsf{w}\}$
is `small' i.e.
\begin{equation} \label{weakerminor}
\| \mathsf{\Pi}_{t,x}- \mathsf{\Pi}_{t,y} \|_\TV \le 2(1 - \mathsf{a})
\end{equation}
for all $x, y$ satisfying $\Phi(x) \vee \Phi(y) \le 2 \mathsf{k}/\mathsf{w}$, where the constants $\mathsf{k}$ and $\mathsf{w}$ are taken from Assumption~\ref{driftcondition}.
\end{assumption}

\begin{rem} \label{rem:minorization}
Assumption~\ref{minorization} is typically verified by proving that:
\begin{enumerate}

\item
There exists an {\em accessible point}: that is a $y \in \Omega$ such that \begin{equation}
\mathsf{\Pi}_{t,x}(\mathcal{O}_y)>0
\end{equation} for all $x \in \Omega$, for every neighborhood $\mathcal{O}_y$ of $y$, and for some $t \ge 0$.  

\item
For every $t>0$ the associated semigroup is {\em strongly Feller}: \begin{equation} \label{eq:strongly_feller}
 \varphi \in B_b(\Omega) \implies \mathsf{P}_t \varphi \in C_b(\Omega) \;.
\end{equation}

\end{enumerate}
If a Markov process has an accessible point and its semigroup is strongly Feller, then every compact set is small (including the sublevel set of the Lyapunov function mentioned in Assumption~\ref{minorization}) and the process can have at most one invariant probability measure \cite{MeTw2009,DaZa1996,hairer2008ergodic}.    
\end{rem}

%
%

\begin{theorem}[Harris Theorem] \label{uncountable_harris}
Consider a Markov process $\mathsf{X}(t)$ on $\Omega$ with generator $\mathsf{L}$ and transition probabilities $\mathsf{\Pi}_{t,x}$,  which satisfies  Assumptions~\ref{driftcondition}  and~\ref{minorization}.   Then $\mathsf{X}(t)$ possesses a unique invariant probability measure $\mathsf{\Pi}$, and there exist positive constants $\mathsf{C}$ and $\mathsf{r}$ (both depending only on the constants $\mathsf{w}$, $\mathsf{k}$ and $\mathsf{a}$ appearing in the assumptions) such that
\[
\| \mathsf{\Pi}_{t,x} - \mathsf{\Pi} \|_\TV  \le \mathsf{C} \; \exp(-\mathsf{r} \; t)  \; \Phi(x)\;,
\]
for all $x \in \Omega$ and for any $t \ge 0$.  
\end{theorem}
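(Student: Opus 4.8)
The plan is to follow the coupling/weighted-metric approach of Hairer--Mattingly \cite{HaMa2011}, which reduces geometric ergodicity in total variation to a strict contraction of the time-$t_*$ semigroup in a suitable weighted transportation metric. First I would pass from the continuous-time generator to a discrete skeleton: fixing $t_* > 0$, the drift condition of Assumption~\ref{driftcondition} integrates (as recorded below \eqref{infinitesimal_drift_condition}) to the Foster--Lyapunov inequality $\mathsf{P}_{t_*} \Phi(x) \le \gamma \Phi(x) + K$ with $\gamma = e^{-\mathsf{w} t_*} \in (0,1)$ and $K = (\mathsf{k}/\mathsf{w})(1 - \gamma)$, so that $2K/(1-\gamma) = 2\mathsf{k}/\mathsf{w}$ is exactly the radius of the small set in Assumption~\ref{minorization}. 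Writing $\mathsf{P} = \mathsf{P}_{t_*}$, it then suffices to prove that $\mathsf{P}$ is a strict contraction in an appropriate metric, and afterwards to transfer the conclusion back to the full semigroup and to the total-variation norm.

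Introduce the weighted metric $d_\beta(x,y) = (2 + \beta \Phi(x) + \beta \Phi(y)) \mathbf{1}_{\{x \ne y\}}$ on $\Omega$, together with the induced Wasserstein (Kantorovich) distance $\|\mathsf{\Pi}_1 - \mathsf{\Pi}_2\|_\beta = \inf_\pi \int d_\beta \, d\pi$ over couplings $\pi$ of $\mathsf{\Pi}_1, \mathsf{\Pi}_2$. The central step is to choose $\beta > 0$ small and $\bar\alpha \in (0,1)$ so that $\|\mathsf{P}^* \mathsf{\Pi}_1 - \mathsf{P}^* \mathsf{\Pi}_2\|_\beta \le \bar\alpha \|\mathsf{\Pi}_1 - \mathsf{\Pi}_2\|_\beta$. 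By a gluing argument it is enough to establish the contraction on pairs of Dirac masses, namely $\|\mathsf{P}^*\delta_x - \mathsf{P}^*\delta_y\|_\beta \le \bar\alpha\, d_\beta(x,y)$ for all $x,y$, and I would split the verification into two regimes. When $\Phi(x) + \Phi(y)$ is large, the Foster--Lyapunov inequality makes the $\beta\Phi$ part of $d_\beta$ contract at rate near $\gamma$, which dominates the additive constant $2$; when both points lie in the small set $\{\Phi \le 2\mathsf{k}/\mathsf{w}\}$, the minorization \eqref{weakerminor} lets me build a coupling under which the two copies coalesce with probability at least $\mathsf{a}$, so that $d_\beta$ vanishes on that event and the expected distance drops by a factor controlled by $\mathsf{a}$. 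Balancing the two regimes fixes the threshold and yields $\bar\alpha < 1$; this optimization of $\beta$ and $\bar\alpha$ against $\gamma$, $K$ and $\mathsf{a}$ is the crux of the argument and the main obstacle.

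With the one-step contraction in hand, I would invoke the Banach fixed-point theorem on the complete space of probability measures with finite $\Phi$-moment; completeness under $\|\cdot\|_\beta$ follows since $d_\beta \ge 2\,\mathbf{1}_{\{x\ne y\}}$ dominates the total-variation metric while $\Phi$-moments pass to the limit. This yields a unique $\mathsf{P}$-invariant measure $\mathsf{\Pi}$ with $\mathsf{\Pi}(\Phi) < \infty$. That $\mathsf{\Pi}$ is invariant for the entire semigroup $\{\mathsf{P}_t\}$ follows because $\mathsf{P}_s^* \mathsf{\Pi}$ is again $\mathsf{P}$-invariant for every $s \ge 0$, hence equals $\mathsf{\Pi}$ by uniqueness, and a standard continuity argument removes the dependence on $t_*$.

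Finally I would convert the contraction into the stated bound. Iterating over $\lfloor t/t_* \rfloor$ steps gives $\|\mathsf{P}^*_{t_*\lfloor t/t_*\rfloor}\delta_x - \mathsf{\Pi}\|_\beta \le \bar\alpha^{\lfloor t/t_*\rfloor} \|\delta_x - \mathsf{\Pi}\|_\beta$, and $\|\delta_x - \mathsf{\Pi}\|_\beta \le 2 + \beta \Phi(x) + \beta \mathsf{\Pi}(\Phi)$ is bounded by a constant multiple of $\Phi(x)$ (assuming, with no loss, $\Phi \ge 1$). Since $\|\mathsf{\Pi}_{t,x} - \mathsf{\Pi}\|_\TV \le \|\mathsf{P}^*_t \delta_x - \mathsf{\Pi}\|_\beta$ because $d_\beta \ge 2\,\mathbf{1}_{\{x \ne y\}}$, and the leftover fractional time is absorbed using the semigroup estimate on $\mathsf{P}_s \Phi$, I obtain $\|\mathsf{\Pi}_{t,x} - \mathsf{\Pi}\|_\TV \le \mathsf{C} \exp(-\mathsf{r}\,t)\,\Phi(x)$ with $\mathsf{r} = -t_*^{-1}\log\bar\alpha$ and $\mathsf{C}$ depending only on $\mathsf{w}, \mathsf{k}, \mathsf{a}$, as claimed.
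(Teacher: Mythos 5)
You should first note that the paper does not actually prove Theorem~\ref{uncountable_harris}: it quotes it and points to \cite{MeTw2009}, \cite{HaMa2011}, and Appendix A of \cite{HiMaSt2002}, so reconstructing the Hairer--Mattingly argument is a reasonable plan in principle. However, your execution has a genuine gap, located precisely at the step you yourself call the crux. The weighted-contraction theorem of \cite{HaMa2011} requires the minorization to hold on a sublevel set $\{\Phi \le R\}$ with $R$ \emph{strictly} larger than $2K/(1-\gamma)$, whereas here the two quantities coincide identically: as you compute, $\gamma = e^{-\mathsf{w}t_*}$ and $K = (\mathsf{k}/\mathsf{w})(1-\gamma)$ give $2K/(1-\gamma) = 2\mathsf{k}/\mathsf{w}$ for \emph{every} choice of $t_*$, which is exactly the radius granted by Assumption~\ref{minorization}. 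You read this exact match as a feature; it is the obstruction. Concretely, your two regimes place incompatible demands on the split threshold $R_1$ for $S := \Phi(x)+\Phi(y)$: the coalescence regime needs $\{S \le R_1\}$ to force both points into the small set, i.e.\ $R_1 \le 2\mathsf{k}/\mathsf{w}$, while a uniform contraction in the drift regime needs $\gamma + 2K/R_1 < 1$, i.e.\ $R_1 > 2K/(1-\gamma) = 2\mathsf{k}/\mathsf{w}$. No admissible threshold exists, and this is not bookkeeping: for a pair with $\Phi(y)=0$ and $\Phi(x) = 2\mathsf{k}/\mathsf{w}+\epsilon$ (not covered by \eqref{weakerminor}), the one-step estimate yields the factor
\[
\frac{2+\beta\bigl(\gamma S + 2K\bigr)}{2+\beta S}
= \frac{2+\beta\bigl(2\mathsf{k}/\mathsf{w} + \gamma\epsilon\bigr)}{2+\beta\bigl(2\mathsf{k}/\mathsf{w} + \epsilon\bigr)} \;\longrightarrow\; 1
\quad\text{as } \epsilon \to 0,
\]
for every fixed $\beta$, so no $\bar\alpha < 1$ can be extracted, no matter how you balance $\beta$ and $t_*$.

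There are two ways to repair this. (a) Prove the theorem as literally stated by following the coupling/renewal argument of Appendix A of \cite{HiMaSt2002}, whose hypotheses use exactly the radius $2K/(1-\gamma)$: the key is that the single-chain relaxation level $K/(1-\gamma) = \mathsf{k}/\mathsf{w}$ sits strictly inside the small set, which gives geometric tails for the simultaneous return time of the coupled pair to $\mathcal{C}\times\mathcal{C}$ even though the pair-sum $S$ has its fixed point exactly on the boundary $2\mathsf{k}/\mathsf{w}$; one-step uniform contraction is replaced by contraction at random regeneration times. (b) Keep your argument but strengthen the hypothesis to smallness of $\{\Phi \le R\}$ for some $R > 2\mathsf{k}/\mathsf{w}$; in this paper that costs nothing, since Assumption~\ref{minorization} is verified via the strong Feller property and an accessible point (Remark~\ref{rem:minorization}), and Lemma~\ref{SDEminorization} in fact establishes \eqref{eq:minor} on \emph{every} sublevel set $\{V \le E\}$. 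The remaining components of your proposal --- skeletonization of the infinitesimal drift condition, pairwise maximal coupling from the mass-$2$ TV bound, gluing, the Banach fixed point on measures with finite $\Phi$-moment, transfer back to the full semigroup, and domination of the TV norm by $\|\cdot\|_\beta$ --- are all sound.
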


\begin{rem}
It follows from \eqref{infinitesimal_drift_condition} that: \[
\mathsf{\Pi}( \Phi ) \le \frac{ \mathsf{k} }{ \mathsf{w} } \;.
\]
\end{rem}

For further exposition and a proof of Harris Theorem in a general context, see the monograph \cite{MeTw2009} (or \cite{HaMa2011} for an alternative proof), and in the context of SDEs, see Section 2 and Appendix A of \cite{HiMaSt2002}.  On a countable state space, the conditions in Theorem~\ref{uncountable_harris} become less onerous, and in fact, if the process is irreducible then all finite sets are small, in the sense of Assumption~\ref{minorization}.  In fact, it is sufficient to check irreducibility and an infinitesimal drift condition \cite[see Theorem 7.1]{meyn1993stability}.  

%
%

\begin{theorem}[Harris Theorem on a Countable State Space]  \label{countable_harris}
Consider  an irreducible Markov process $\mathsf{X}$  on a countable state space $\Omega$ with generator $\mathsf{L}$ and transition probabilities $\mathsf{\Pi}_{t,x}$, which satisfies  the infinitesimal drift condition Assumption~\ref{driftcondition}.  Then $\mathsf{X}$ possesses a unique invariant probability measure $\mathsf{\Pi}$, and there exist positive constants $\mathsf{C}$ and $\mathsf{r}$ (both depending only on the constants $\mathsf{w}$ and $\mathsf{k}$ appearing in the assumption) such that
\[
\| \mathsf{\Pi}_{t,x} - \mathsf{\Pi} \|_\TV  \le \mathsf{C} \; \exp(-\mathsf{r} \; t) \; \Phi(x)\;,
\]
for all $x \in \Omega$ and for any $t \ge 0$.  
\end{theorem}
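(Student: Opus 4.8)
The plan is to deduce this countable-space statement from the general Harris Theorem (Theorem~\ref{uncountable_harris}) by showing that, on a countable state space, irreducibility together with the infinitesimal drift condition already forces the hypothesis stated separately in the general case, namely the small-set condition of Assumption~\ref{minorization}. Since Assumption~\ref{driftcondition} is assumed outright, the only work is to verify Assumption~\ref{minorization} for the sublevel set $C = \{ z \in \Omega \mid \Phi(z) \le 2\mathsf{k}/\mathsf{w} \}$; once both assumptions hold, the geometric convergence and uniqueness of $\mathsf{\Pi}$ follow verbatim from Theorem~\ref{uncountable_harris}. I would also record at the outset that the drift inequality \eqref{infinitesimal_drift_condition} yields the bound $\mathsf{P}_t \Phi(x) \le e^{-\mathsf{w}t}\Phi(x) + (\mathsf{k}/\mathsf{w})(1 - e^{-\mathsf{w}t})$ noted after Assumption~\ref{driftcondition}, which in particular rules out explosion, so that each $\mathsf{\Pi}_{t,x}$ is a genuine probability measure on $\Omega$.

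The first step in verifying Assumption~\ref{minorization} is to observe that $C$ is a \emph{finite} set. This is where the countable setting is used: because $\Phi$ is a norm-like Lyapunov function (its sublevel sets are finite, as they are in every application of the theorem, e.g. the coercive $\Phi$ built in \S\ref{sec:geometric_ergodicity}), the bound $\Phi(z) \le 2\mathsf{k}/\mathsf{w}$ is satisfied by only finitely many grid points. The second step exploits the continuous-time structure: for an irreducible continuous-time Markov chain on $\Omega$, between any two states there is a chain of transitions with strictly positive rates, and concatenating the associated exponential holding times shows that $\mathsf{\Pi}_{t,x}(\{z\}) > 0$ for \emph{every} $t > 0$ and every pair $x, z \in \Omega$ (the continuous-time fact that, once positivity holds at one time, it holds at all positive times).

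Fixing any reference state $z_0 \in \Omega$ and the time $t_\ast = 1$, I would then set
\[
\mathsf{a} = \Big( \min_{x \in C} \mathsf{\Pi}_{1,x}(\{z_0\}) \Big) \wedge \tfrac{1}{2} \;,
\]
which is strictly positive precisely because the minimum runs over the finite set $C$ and each term is positive by the previous step; by construction $\mathsf{a} \in (0,1)$. This gives the one-point minorization $\mathsf{\Pi}_{1,x} \ge \mathsf{a}\,\delta_{z_0}$ for all $x \in C$. Writing $\mathsf{\Pi}_{1,x} = \mathsf{a}\,\delta_{z_0} + (1-\mathsf{a}) R_x$ with $R_x$ a probability measure, for $x, y \in C$ the common mass cancels and $\| \mathsf{\Pi}_{1,x} - \mathsf{\Pi}_{1,y} \|_\TV = (1-\mathsf{a}) \| R_x - R_y \|_\TV \le 2(1-\mathsf{a})$, which is exactly \eqref{weakerminor}. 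Hence Assumption~\ref{minorization} holds and Theorem~\ref{uncountable_harris} applies, with $\mathsf{C}$ and $\mathsf{r}$ inherited from its conclusion (their dependence on the minorization constant $\mathsf{a}$ being subsumed, since $\mathsf{a}$ is determined by the irreducible kinetics on the finite sublevel set). This is the route also encoded in Theorem~7.1 of \cite{meyn1993stability}.

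The main obstacle is the uniform lower bound in the definition of $\mathsf{a}$: one must guarantee a single positive floor for $\mathsf{\Pi}_{1,x}(\{z_0\})$ across all $x \in C$. Both ingredients that make this possible are genuinely needed, namely the finiteness of $C$ (so the infimum over $C$ is a minimum of finitely many positive numbers, hence positive) and the all-times positivity $\mathsf{\Pi}_{t,x}(\{z_0\}) > 0$ for continuous-time irreducible chains (so that a common time $t_\ast$ works for every $x \in C$ simultaneously). Were $C$ only countably infinite, the infimum could vanish and the small-set property could fail; this is exactly why the hypothesis that $\Phi$ is norm-like is essential, and I would state it explicitly as part of the drift condition in this countable setting.
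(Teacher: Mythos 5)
Your proof is correct and is essentially the paper's own route: the paper offers no written-out proof beyond the remark preceding the theorem---that on a countable state space irreducibility makes all finite sets small---together with the citation to Theorem 7.1 of \cite{meyn1993stability}, and you have simply supplied the missing details (all-times positivity of $\mathsf{\Pi}_{t,x}(\{z_0\})$ for irreducible continuous-time chains, the minimum over the finite sublevel set, and the same residual-measure decomposition the paper uses in Lemma~\ref{SDEminorization}) before invoking Theorem~\ref{uncountable_harris}. Your caveat that $\Phi$ must be norm-like is well taken and genuinely needed: as literally stated, Assumption~\ref{driftcondition} is satisfied trivially by $\Phi \equiv 1$ with $\mathsf{k} \ge \mathsf{w}$, under which the theorem would wrongly assert geometric ergodicity of every irreducible countable-state process, so finiteness (or at least smallness) of the sublevel set is an implicit hypothesis---one that does hold in the paper's application, since the Lyapunov function of Lemma~\ref{Qdrift} is coercive on the grid $S_h$, and likewise the claimed dependence of $\mathsf{C}$, $\mathsf{r}$ on $\mathsf{w}$, $\mathsf{k}$ alone tacitly absorbs the minorization constant $\mathsf{a}$, exactly as you note.
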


\subsection{Geometric Ergodicity of SDE Solution}

The material here is an extension of Theorem 2.12 in \cite{BoHa2013} to non-self-adjoint SDEs.  We first show that a weak dissipativity condition on the drift implies that the generator $L$ of the SDE  \eqref{eq:model_sde} satisfies an infinitesimal drift condition.

%
%

\begin{lemma}\label{SDEdrift}
Suppose the drift field of the SDE~\eqref{eq:model_sde}  satisfies Assumption~\ref{assumptions_on_drift}.  Consider the function $V: \mathbb{R}^n \to \mathbb{R}_+$ defined as: \begin{equation} \label{eq:VforL}
V(x) = \exp\left( a |x|^{2m+2}  \right) \quad \text{for any $0 < a < \frac{\beta}{2 (m+1)} $}
\end{equation}
where $m$ and $\beta$ are the constants appearing in Assumption~\ref{assumptions_on_drift} (A2) and Lemma~\ref{lem:weak_dissipativity_mu}, respectively.  Then there exist positive constants $K$ and $\gamma$ such that 
\begin{equation} \label{SDE_LV}
L V (x)  \le K - \gamma \; V(x)
\end{equation}
holds for all $x \in \mathbb{R}^n$.
\end{lemma}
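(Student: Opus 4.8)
The plan is to compute $LV$ explicitly and show that the exponential growth of $V$ combines with the weak dissipativity of the drift to produce a strictly negative leading-order coefficient, precisely when $a$ is small enough.

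First I would write $V = e^{\phi}$ with $\phi(x) = a|x|^{2m+2}$ and apply the chain rule. Since $Lf = Df^T\mu + \tr(D^2 f)$ and $\tr(D^2 e^\phi) = e^\phi(\Delta\phi + |\nabla\phi|^2)$, a direct computation using $\nabla\phi = 2a(m+1)|x|^{2m}x$ and $\Delta\phi = 2a(m+1)(2m+n)|x|^{2m}$ gives
\[
LV(x) = V(x)\,B(x), \quad B(x) = 2a(m+1)|x|^{2m}\big(\mu(x)^T x\big) + 2a(m+1)(2m+n)|x|^{2m} + 4a^2(m+1)^2|x|^{4m+2}.
\]

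Next I would insert the bound $\mu(x)^T x \le \alpha(1+|x|^{2m+1}) - \beta|x|^{2m+2}$ from Lemma~\ref{lem:weak_dissipativity_mu}. After substituting, the two terms of top degree $|x|^{4m+2}$ are $-2a(m+1)\beta|x|^{4m+2}$, coming from the dissipative part of the drift, and $+4a^2(m+1)^2|x|^{4m+2}$, coming from the gradient-squared term $|\nabla\phi|^2$; together they form
\[
2a(m+1)\big(2a(m+1)-\beta\big)|x|^{4m+2}.
\]
The hypothesis $0 < a < \beta/(2(m+1))$ is exactly what forces $2a(m+1)-\beta < 0$, so this coefficient equals $-c$ with $c := 2a(m+1)(\beta-2a(m+1))>0$. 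Every remaining contribution to $B$ is of order $|x|^{4m+1}$ or $|x|^{2m}$, hence strictly lower order, so $B(x) \le -c|x|^{4m+2} + O(|x|^{4m+1})$ and in particular $B(x)\to-\infty$ as $|x|\to\infty$. Finally, to obtain \eqref{SDE_LV} I would fix any $\gamma>0$ and consider $g(x) := V(x)(B(x)+\gamma)$; proving the drift inequality amounts to showing $g$ is bounded above. Since $V(x)\ge 1$ and $V(x)\to\infty$ while $B(x)+\gamma\to-\infty$, the product satisfies $g(x)\to-\infty$ as $|x|\to\infty$; being continuous, $g$ then attains a finite maximum on $\mathbb{R}^n$, and I would set $K$ to be this maximum (enlarged to a positive value if necessary). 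Then $V(x)(B(x)+\gamma)\le K$, i.e.\ $LV(x)\le K-\gamma V(x)$ for all $x$, as claimed.

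The main obstacle is the bookkeeping in the second step: the inequality hinges entirely on the competition between the $+4a^2(m+1)^2|x|^{4m+2}$ term generated by $|\nabla\phi|^2$ (the carr\'e du champ of the exponential weight) and the $-2a(m+1)\beta|x|^{4m+2}$ term extracted from dissipativity. Getting the constant $2(m+1)$ in the threshold for $a$ right, and confirming that every other contribution is genuinely of lower degree in $|x|$, is where care is needed; once the leading coefficient is verified to be negative, the remainder is a routine continuity and compactness argument.
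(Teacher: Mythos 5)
Your proof is correct and takes essentially the same route as the paper's: the paper computes $LV/V$ via the composition $V=\Theta\circ\mathcal{E}$ with $\mathcal{E}(x)=x^Tx$ and $\Theta(u)=\exp(a u^{m+1})$, inserts the bound $\mu(x)^T x \le \alpha\,(1+|x|^{2m+1})-\beta\,|x|^{2m+2}$ from Lemma~\ref{lem:weak_dissipativity_mu}, and isolates exactly the same leading coefficient $2a(m+1)\bigl(2a(m+1)-\beta\bigr)|x|^{4m+2}$, which is negative precisely under the hypothesis $0<a<\beta/(2(m+1))$. Your direct computation with $\phi=\log V$ and your concluding continuity-and-compactness step (bounding $g=V(B+\gamma)$ above) are notational variants of the paper's argument, so there is nothing to correct.
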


\begin{rem}
Note that the Lyapunov function in \eqref{eq:VforL} is defined in terms of the natural number $m$ appearing in Assumption~\ref{assumptions_on_drift} (A2), which sets the leading order polynomial growth of the drift field $\mu$.  This Lyapunov function is sharp in the sense that it implies integrability of any observable that can be dominated by the exponential of the magnitude of the drift field.  A discrete analog of this lemma is given in Lemma~\ref{Qdrift}, and  is a crucial ingredient to the subsequent analysis of the approximation.
\end{rem}

\begin{proof}
Set $\mathcal{E}(x) = x^T x$ and $\Theta(u) = \exp(a u^p)$, so that $V = \Theta \circ \mathcal{E}$.  We assume that $p$ is a free parameter at first.  The proof then determines $p$ such that the desired infinitesimal drift condition holds. From these definitions it follows that: \begin{gather*}
 D\mathcal{E}(x)^T \eta = 2 x^T \eta \;, \quad D^2 \mathcal{E}(x)(\eta, \eta) = 2 \eta \eta^T \;,  \\
 \Theta'(u) = a  p u^{p-1} \Theta(u) \;, \quad \text{and} \quad \Theta''(u) = ( a^2  p^2 u^{2 p - 2} + a  p  (p-1) u^{p-2} ) \Theta(u) \;.
\end{gather*}
Hence, \begin{align*}
L (\Theta \circ \mathcal{E}) \over  \Theta \circ \mathcal{E} &= \frac{1}{\Theta \circ \mathcal{E}} \left( ( \Theta' \circ \mathcal{E}) L \mathcal{E} +   ( \Theta'' \circ \mathcal{E} ) |D \mathcal{E}|^2 \right)  \\
&= \frac{1}{\Theta \circ \mathcal{E}} \left( ( \Theta' \circ \mathcal{E}) L \mathcal{E} + 4  ( \Theta'' \circ \mathcal{E} ) \mathcal{E} \right) \\
&= a  p  \mathcal{E}^{p-1}  L \mathcal{E} + 4  a^2  p^2  \mathcal{E}^{2 p - 1} + 4  a  p  (p-1)  \mathcal{E}^{p-1} 
\end{align*}
Lemma~\ref{lem:weak_dissipativity_mu} implies that \[
L \mathcal{E} \le - 2  \beta  \mathcal{E}^{m+1} + 2  \alpha  \mathcal{E}^{m+1/2} + 2  (\alpha + n)  \;.
\]
Applying this inequality to the preceding equation yields,\begin{align*}
& {L (\Theta \circ \mathcal{E}) \over  \Theta \circ \mathcal{E}}  \le 2  a  p  ( 2  a  p  \mathcal{E}^{2 p - 1}  -   \beta  \mathcal{E}^{p+m}  ) \\
& \qquad\qquad + ( 4 a  p (p-1) + 2  a  p    (\alpha +  n) + 2  \alpha  \mathcal{E}^{m+1/2} ) \mathcal{E}^{p-1}   \;. 
\end{align*}
Given the hypothesis on $a$, the negative term $- \beta \; \mathcal{E}^{p+m}$ in this upper bound dominates over the leading order positive term $ 2 \; a \; p \; \mathcal{E}^{2 p - 1}$ provided that $p \le m+1$.  Hence, the largest $p$ can be is $m+1$.  Substituting this value of $p$ yields:
\[
{L (\Theta \circ \mathcal{E}) \over  \Theta \circ \mathcal{E}}  \le 2  a  (m+1) \; \underbrace{( 2  a  (m+1) - \beta)}_{ \text{$< 0 $ by hypothesis}} \; \mathcal{E}^{2 m + 1} + \text{lower order in $\mathcal{E}$ terms} \;.
\]
Hence, for any $\gamma >0$, there exists $E^{\star}$ such that: \[
{L (\Theta \circ \mathcal{E}) \over  \Theta \circ \mathcal{E}} \le - \gamma \quad \text{for all $|x|^2 > E^{\star}$} \;.
\]  Accordingly, a suitable constant $K$ in \eqref{SDE_LV} is: \[
K = \sup_{|x|^2 \le E^{\star} } | L (\Theta \circ \mathcal{E}) + \gamma (\Theta \circ \mathcal{E}) | 
\] which is bounded since the set $\{|x|^2 \le E^{\star}\}$ is compact and $\Theta \circ \mathcal{E}$ is smooth.
\end{proof}

\begin{lemma}\label{SDEminorization}
Suppose the drift field of the SDE~\eqref{eq:model_sde}  satisfies Assumption~\ref{assumptions_on_drift}.   For every $t>0$ and $E>0$, there exists $\epsilon \in (0,1)$ such that 
 \begin{equation} \label{eq:minor}
\| \Pi_{t,x} - \Pi_{t,y} \|_\TV \le 2(1 - \epsilon)\;,
\end{equation}
for all $x, y$ satisfying 
$V(x) \vee V(y) \le E$.  
\end{lemma}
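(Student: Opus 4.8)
The plan is to follow the recipe spelled out in Remark~\ref{rem:minorization}: I show that the sublevel set $K = \{ x \in \mathbb{R}^n : V(x) \le E \}$ is compact, that the process is irreducible with an accessible point and that its semigroup is strongly Feller, and then conclude that $K$ is a small set, which is exactly the assertion \eqref{eq:minor}. Because the function $V(x) = \exp(a|x|^{2m+2})$ from Lemma~\ref{SDEdrift} is continuous and coercive (it tends to $+\infty$ as $|x| \to \infty$), the set $K$ is closed and bounded, hence compact; this is the only place the specific form of $V$ enters, and it shows that the sublevel sets relevant to Harris Theorem are compact.

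First I would establish that for each $t>0$ the transition probability admits a density, i.e.\ $\Pi_{t,x}(dz) = p_t(x,z)\,dz$, and that $p_t$ is jointly continuous in $(x,z)$ and strictly positive on $\mathbb{R}^n \times \mathbb{R}^n$. The noise in \eqref{eq:model_sde} is additive, isotropic and nondegenerate, so the generator $L = \Delta + \mu \cdot \nabla$ is uniformly elliptic; together with the $C^4$ regularity and polynomial growth of $\mu$ from Assumption~\ref{assumptions_on_drift}, standard parabolic regularity (or the Stroock--Varadhan support theorem combined with a local parabolic Harnack inequality) yields such a density. Continuity of $x \mapsto p_t(x,\cdot)$ is the strong Feller property furnished by Lemma~\ref{lem:derivatives_of_semigroup}, while the strict positivity encodes irreducibility and makes every point accessible; local Lipschitz continuity of the drift is what guarantees a unique nonexploding solution on which these statements rest.

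Given the density, I would quantify the minorization through the overlap identity
\[
\| \Pi_{t,x} - \Pi_{t,y} \|_\TV = 2\left( 1 - \int_{\mathbb{R}^n} \min\{ p_t(x,z), p_t(y,z) \}\, dz \right),
\]
which follows from the definition of total variation together with absolute continuity. It then suffices to bound the overlap integral below uniformly over $x,y \in K$. Fix any closed ball $\overline{B} = \overline{B_R(0)}$. Since $p_t$ is continuous and strictly positive and $K \times \overline{B}$ is compact, the constant $c := \inf_{x \in K,\, z \in \overline{B}} p_t(x,z)$ is strictly positive. Hence for all $x,y \in K$,
\[
\int_{\mathbb{R}^n} \min\{ p_t(x,z), p_t(y,z) \}\, dz \ge \int_{\overline{B}} \min\{ p_t(x,z), p_t(y,z) \}\, dz \ge c\,|\overline{B}| =: \epsilon,
\]
and shrinking $\epsilon$ if necessary so that $\epsilon \in (0,1)$ gives \eqref{eq:minor} with this uniform $\epsilon$.

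The main obstacle is the second paragraph: verifying that the transition density is strictly positive with a positive infimum over the compact set $K \times \overline{B}$. The strong Feller property from Lemma~\ref{lem:derivatives_of_semigroup} handles the $x$-continuity, so the crux is the positivity (equivalently, irreducibility and accessibility) in the presence of a drift that is only polynomially bounded and not globally Lipschitz. One localizes to the compact set $K$ and the finite horizon $t$, where the uniform ellipticity estimates and the nonexplosion guaranteed by the Lyapunov bound of Lemma~\ref{SDEdrift} apply, so unbounded growth of $\mu$ at infinity causes no difficulty on the region that controls the overlap integral.
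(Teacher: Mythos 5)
Your proposal is correct and follows essentially the same route as the paper: both arguments rest on the existence, joint continuity, and strict positivity of the transition density (the paper cites Prop.~2.4.1 of \cite{cerrai2001second}), together with compactness of the sublevel set $\{V \le E\}$, to extract a uniform minorization $\Pi_{t,x} \ge \epsilon\,\eta$ with $\eta$ essentially normalized Lebesgue measure on a ball. Your use of the overlap identity $\| \Pi_{t,x} - \Pi_{t,y} \|_\TV = 2\bigl(1 - \int \min\{p_t(x,z), p_t(y,z)\}\,dz\bigr)$ is just a cosmetic repackaging of the paper's decomposition $\Pi_{t,x} = (1-\epsilon)\tilde{\Pi}_{t,x} + \epsilon\,\eta$ (and note the strong Feller property is Lemma~\ref{lem:semigroup_is_lipschitz}, not Lemma~\ref{lem:derivatives_of_semigroup}, though this citation slip does not affect the argument).
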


\begin{proof}
It follows from the assumed ellipticity of the equations and the assumed regularity of the drift field, that there exists a function $p(t,x,y)$ continuous in all of its arguments (for $t > 0$) such that the transition probabilities are given by  $\Pi_{t,x}(dy) = p(t,x,y)\,dy$.  Furthermore, $p$ is strictly positive (see, e.g., Prop.~2.4.1 of \cite{cerrai2001second}). Hence, by the compactness of the set $\{ x \,:\, V(x) \le E\}$, one can find a probability measure $\eta$ on $\mathbb{R}^n$ and a constant $\epsilon>0$ such that, 
\[
\Pi_{t,x} > \epsilon \eta
\]
for any $x$ satisfying $V(x) \le E$.   This condition implies the following transition probability $\tilde{P}_t$ on $\mathbb{R}^n$ is well-defined: 
\[
\tilde{\Pi}_{t,x} 
= \frac{1}{1-\epsilon} \Pi_{t,x} - \frac{\epsilon}{1-\epsilon} \eta
\]
for any $x$ satisfying $V(x) \le E$.   Therefore, 
\[
\| \Pi_{t,x} - \Pi_{t,y} \|_\TV  =
(1-\epsilon) \|   \tilde{\Pi}_{t,x} - \tilde{\Pi}_{t,y} \|_\TV
\]
for all $x, y$ satisfying $V(x) \vee V(y) \le E$.  Since the TV norm is always bounded by $2$, one obtains the desired result.
\end{proof}

With Lemmas~\ref{SDEdrift} and~\ref{SDEminorization}, we can now invoke Harris Theorem~\ref{uncountable_harris} to obtain the following geometric ergodicity result for the SDE solution.

%
%

\begin{theorem} \label{SDEgeometricallyergodic}
Suppose the drift field of the SDE~\eqref{eq:model_sde}  satisfies Assumption~\ref{assumptions_on_drift}.    Let  $V(x)$ be the Lyapunov function from Lemma~\ref{SDEdrift}. Then, the SDE admits a unique invariant probability measure $\Pi$.  Moreover,  there exist positive constants $\lambda$ and $C$ such that 
\begin{equation}\label{eq:boundTV}
\| \Pi_{t,x} - \Pi \|_\TV \le C \exp(-\lambda t) V(x)
 \end{equation}
for all $t \ge 0$ and for all $x \in \R^n$.
\end{theorem}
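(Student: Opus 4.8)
The plan is to deduce Theorem~\ref{SDEgeometricallyergodic} as a direct application of Harris Theorem~\ref{uncountable_harris} to the Markov process $Y(t)$ on the Polish space $\Omega = \mathbb{R}^n$, with generator $L$, semigroup $P_t$, and transition probabilities $\Pi_{t,x}$. All of the analytic content is already carried by the two preceding lemmas, so the argument reduces to verifying the two hypotheses of Harris Theorem and then invoking it.

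First I would check the infinitesimal drift condition, Assumption~\ref{driftcondition}, by taking the candidate Lyapunov function $\Phi = V$ with $V$ given in \eqref{eq:VforL}. Lemma~\ref{SDEdrift} supplies positive constants $K$ and $\gamma$ with $L V(x) \le K - \gamma\, V(x)$ for every $x \in \mathbb{R}^n$, so Assumption~\ref{driftcondition} holds with $\mathsf{w} = \gamma$ and $\mathsf{k} = K$. Next I would verify the associated small-set condition, Assumption~\ref{minorization}, whose relevant sublevel set is $\{ z : V(z) \le 2\mathsf{k}/\mathsf{w} \} = \{ z : V(z) \le 2K/\gamma \}$. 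Here I would apply Lemma~\ref{SDEminorization} at any fixed $t > 0$ with the threshold chosen to be $E = 2K/\gamma$; this produces some $\epsilon \in (0,1)$ for which $\| \Pi_{t,x} - \Pi_{t,y} \|_\TV \le 2(1-\epsilon)$ whenever $V(x) \vee V(y) \le 2K/\gamma$, which is exactly Assumption~\ref{minorization} with $\mathsf{a} = \epsilon$.

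With both hypotheses in place, Harris Theorem~\ref{uncountable_harris} then yields a unique invariant probability measure $\Pi$ together with positive constants $C$ and $r$, depending only on $\gamma$, $K$, and $\epsilon$, such that $\| \Pi_{t,x} - \Pi \|_\TV \le C \exp(-r t)\, V(x)$ for all $x \in \mathbb{R}^n$ and all $t \ge 0$; setting $\lambda = r$ gives the claimed bound \eqref{eq:boundTV}. Since the heavy lifting is done in Lemmas~\ref{SDEdrift} and~\ref{SDEminorization}, there is no serious obstacle in this final step itself. The only point demanding attention is bookkeeping: matching the sublevel set in Assumption~\ref{minorization} to the free threshold $E$ in Lemma~\ref{SDEminorization} via the choice $E = 2K/\gamma$, and recording that the minorization constant $\epsilon$ can be taken uniformly over that compact sublevel set---precisely the uniformity Lemma~\ref{SDEminorization} guarantees through the strict positivity and continuity of the transition density established there.
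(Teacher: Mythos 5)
Your proposal is correct and matches the paper's own proof, which likewise combines the drift condition from Lemma~\ref{SDEdrift} with the small-set condition from Lemma~\ref{SDEminorization} and invokes Harris Theorem~\ref{uncountable_harris}. Your explicit bookkeeping (taking $E = 2K/\gamma$ and $\mathsf{a}=\epsilon$) simply spells out details the paper leaves implicit.
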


\begin{proof}
According to Lemma~\ref{SDEdrift}, for any $a \in (0, \beta (2 (m+1))^{-1})$, $V(x)$ is a Lyapunov function for the SDE.  Moreover, by Lemma~\ref{SDEminorization} it satisfies a small set condition on every sublevel set of $V$. Hence, Harris Theorem implies that the SDE solution has a unique invariant probability measure $\Pi$ and that \eqref{eq:boundTV} holds.
\end{proof}

\subsection{Geometric Ergodicity of Approximation} 

Here we show that the Markov process induced by the generator $Q$ in \eqref{eq:Qc2} is geometrically ergodic.  The main ingredient in this proof is a discrete analog of Lemma~\ref{SDEdrift}, which shows that the approximation has a stochastic Lyapunov function.

%
%

\begin{lemma}\label{Qdrift}
Suppose the drift field of the SDE satisfies Assumption~\ref{assumptions_on_drift}.  Let \begin{equation} \label{eq:VforQc}
V(x) = \exp\left( a |x|^{2m+2}_{2m+2}  \right)  \quad \text{for any $0 < a < \frac{\beta_0}{2 (m+1)} $}
\end{equation}
where $m$ and $\beta_0$ are the constants appearing in Assumption~\ref{assumptions_on_drift} (A2).    Then, there exist positive constants $h_c$, $K$ and $\gamma$ (uniform in $h$) such that 
\begin{equation} \label{Qc_infinitesimal_drift_condition}
Q V(x)  \le K - \gamma V(x)
\end{equation}
holds for all $x \in \mathbb{R}^n$ and $h<h_c$.
\end{lemma}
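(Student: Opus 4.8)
The plan is to reproduce the mechanism of Lemma~\ref{SDEdrift}, but to exploit the fact that the discrete Lyapunov function \eqref{eq:VforQc} is built from the $\ell_{2m+2}$ norm, $V(x)=\exp\!\bigl(a\sum_{j=1}^n|x_j|^{2m+2}\bigr)=\prod_{j=1}^n\exp(a|x_j|^{2m+2})$, and therefore \emph{factorizes} over coordinates, while the generator \eqref{eq:Qc2} only ever displaces $x$ by $\pm h e_i$. First I would note that $V(x\pm he_i)/V(x)=\exp\!\bigl(a(|x_i\pm h|^{2m+2}-|x_i|^{2m+2})\bigr)$, since only the $i$th factor changes. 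Dividing \eqref{eq:Qc2} by $V(x)$ then yields the clean decomposition
\[
\frac{QV(x)}{V(x)} = \sum_{i=1}^n \frac{1}{h^2}\Bigl[ e^{\frac{h}{2}\mu_i}\bigl(e^{a\Delta_i^+}-1\bigr) + e^{-\frac{h}{2}\mu_i}\bigl(e^{a\Delta_i^-}-1\bigr)\Bigr],
\]
where $\mu_i=\mu(x)^Te_i$ and $\Delta_i^{\pm}=|x_i\pm h|^{2m+2}-|x_i|^{2m+2}$. The problem thus reduces to bounding each coordinate term $T_i(x)$, which depends on $x$ only through $x_i$ and $\mu_i(x)$.

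Next I would analyze a single $T_i$ for $|x_i|$ large, taking $x_i>0$ (the case $x_i<0$ is symmetric by the sign structure of \eqref{eq:weak_dissipativity_mui}). The decisive contribution is the backward, dissipative term $\tfrac{1}{h^2}e^{-\frac{h}{2}\mu_i}(e^{a\Delta_i^-}-1)$: by Assumption~\ref{assumptions_on_drift} (A3), $-\tfrac{h}{2}\mu_i\ge \tfrac{h\beta_0}{2}|x_i|^{2m+1}-\tfrac{h\alpha_0}{2}(1+|x|^{2m})$, and since $\Delta_i^-\to-\infty$ we have $e^{a\Delta_i^-}-1\to-1$, so this term behaves like $-\tfrac{1}{h^2}\exp(\tfrac{h\beta_0}{2}|x_i|^{2m+1})$. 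The only competing growing term is the forward term $\tfrac{1}{h^2}e^{\frac{h}{2}\mu_i+a\Delta_i^+}$; using the mean value theorem $\Delta_i^+\le (2m+2)(|x_i|+h)^{2m+1}h$ together with (A3), its exponent satisfies $\tfrac{h}{2}\mu_i+a\Delta_i^+ \le h|x_i|^{2m+1}\bigl(2a(m+1)-\tfrac{\beta_0}{2}\bigr)+(\text{lower order in }|x_i|)$. The crucial algebraic fact is that the constraint $a<\beta_0/(2(m+1))$ gives $\beta_0-2a(m+1)>0$, so the backward exponent $\tfrac{\beta_0}{2}$ strictly exceeds the forward exponent $2a(m+1)-\tfrac{\beta_0}{2}$; hence the forward term is negligible against the dissipative one and $T_i\le C\alpha_0(1+|x|^{2m})-c\exp(c'|x_i|^{2m+1})$ for suitable constants that I must check are uniform for $h<h_c$. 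Here Assumption (A4) enters: its subfactorial lower bounds on $|\mu_i|^{2\ell}$ are what allow one to dominate, term by term, the Taylor series of the exponential rate functions and confirm that the dissipative exponential genuinely controls the full expansion uniformly in $h$.

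I would then sum over $i$. If $|x|\to\infty$ then $\max_i|x_i|\to\infty$, and because at least one coordinate $k$ has $|x_k|$ comparable to $|x|$, it contributes $-c\exp(c'|x_k|^{2m+1})\sim -\exp(c''|x|^{2m+1})$. The positive contributions from coordinates with small $|x_i|$ grow only like $\exp(C|x|^{2m})$ (since (A3) caps $\mu_i$ from above by $\alpha_0(1+|x|^{2m})$ there), and there are only $n$ of them; an exponential in $|x|^{2m}$ is dominated by one in $|x|^{2m+1}$. Hence $QV(x)/V(x)\to-\infty$, so there is $R$ with $QV\le-\gamma V$ on $\{|x|>R\}$. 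On the compact set $\{|x|\le R\}$, $\mu$ is continuous and bounded, the jumps and $V$ are bounded, so $QV$ is bounded; taking $K=\sup_{|x|\le R}|QV(x)+\gamma V(x)|$ gives \eqref{Qc_infinitesimal_drift_condition}.

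The main obstacle is twofold. First, establishing rigorously that the backward dissipative exponential beats the forward one requires carefully tracking the $O(h)$ and $O(h^2)$ remainders produced by the mean value and Taylor bounds on $\Delta_i^{\pm}$ and by the growth hypotheses (A2)–(A4), and verifying that the gap $\beta_0-2a(m+1)$ survives these corrections; this is exactly where the sharpness of the constant $a$ and the threshold $h_c$ are forced. Second, one must ensure $\gamma$ and $K$ are genuinely uniform in $h<h_c$: for small $h$ the term $T_i$ degenerates to its continuous counterpart, recovering the bound of Lemma~\ref{SDEdrift} in the $\ell_{2m+2}$ norm, while for moderate $h<h_c$ the discrete exponentials are even more strongly dissipative, and the plan is to make this monotonicity explicit so that a single pair $(\gamma,K)$ works across the whole range $h<h_c$.
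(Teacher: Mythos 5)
Your setup coincides with the paper's Step~1 (the per-coordinate ratios $V(x\pm he_i)/V(x)$ and the Bernoulli-type bound on $\Delta_i^{\pm}$, with the same algebraic gap $\beta_0-2a(m+1)>0$), but your central mechanism --- splitting each coordinate term into a forward and a backward piece and letting the backward ``dissipative exponential'' beat the forward one --- cannot produce constants uniform in $h$, and that uniformity is the entire content of the lemma. The exponential-domination step needs $1-e^{a\Delta_i^-}$ bounded away from $0$, i.e.\ $h\,|x_i|^{2m+1}\gtrsim 1$; but $R^\star$ must not depend on $h$, so for every small $h$ there is a shell $R^\star<|x|\ll h^{-1/(2m+1)}$ where your estimates degenerate: your two one-sided bounds combine to roughly $\tfrac{1}{h^2}\bigl(e^{\kappa_1 hA}-\tfrac12 e^{\kappa_2 hA}\bigr)$ with $A=x_k^{2m+1}$ fixed and $\kappa_1<\kappa_2$, which tends to $+\tfrac{1}{2h^2}$ as $h\to 0$. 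The separated pieces are individually of order $h^{-1}$ and their sum is $O(1)$ only through a cancellation between forward and backward terms that your one-sided bounds discard; the ``monotonicity'' you invoke to glue the Taylor regime to the moderate-$h$ regime is precisely the missing hard step, not a routine verification. The paper never separates the pieces: it keeps the pairing $\phi_i(s,x)=\tfrac{2}{s^2}\bigl(e^{c_i s^2}\cosh(a_i s)-\cosh(b_i s)\bigr)$ with $a_i=2a(m+1)x_i^{2m+1}+\mu_i/2$, $b_i=\mu_i/2$, shows by l'H\^{o}pital that $\lim_{s\to 0}\sum_i\phi_i\le 2a(m+1)(2a(m+1)-\beta_0)|x|^{4m+2}_{4m+2}+\cdots$ via (A3), that $s=0$ is a local maximum (second-derivative limit, via (A4)), and then dominates the \emph{entire} Maclaurin series term by term using $a_i^{2k}-b_i^{2k}=(a_i^2-b_i^2)\sum_{j=1}^{k}a_i^{2(k-j)}b_i^{2(j-1)}$ together with the subfactorial growth of $\alpha_\ell,\beta_\ell$ in (A4); the uniform $h_c$, $R^\star$, $\gamma$ fall out of this series argument, which is the role you only gestured at for (A4).

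There is a second concrete gap: your claim that coordinates with small $|x_i|$ contribute at most $e^{C|x|^{2m}}$ ``since (A3) caps $\mu_i$ from above'' is unjustified, because (A3) caps only the \emph{outward} component $\mu_i\sign(x_i)$. When $0<x_i<h/2$ the backward jump crosses zero, so $\Delta_i^->0$ and $V$ increases, while its rate $e^{-h\mu_i/2}$ is governed by the \emph{inward} drift, which (A2) bounds only by $C^{\mu}_P(1+|x|^{2m+1})$; this yields a potentially positive contribution of order $h^{2m}\exp\bigl(\tfrac{h}{2}C^{\mu}_P|x|^{2m+1}\bigr)$, which can exceed your good term of magnitude $h^{-2}\exp\bigl(\tfrac{h}{2}\beta_0 n^{-(2m+1)/2}|x|^{2m+1}\bigr)$ whenever $C^{\mu}_P>\beta_0 n^{-(2m+1)/2}$. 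To close this in your framework you would need to observe that (A3) applied on both sides of $\{x_i=0\}$ forces $|\mu_i|\le\alpha_0(1+|x|^{2m})$ on that hyperplane and then propagate the cap over $|x_i|\le h/2$ with the derivative bound in (A2); in the paper's paired form the issue evaporates, since there $a_i^2-b_i^2=a'(a'+\mu_i)$ with $a'=2a(m+1)x_i^{2m+1}\ge 0$ is automatically nonpositive for inward drift. Finally, your closing step $K=\sup_{|x|\le R}|QV+\gamma V|$ is finite for each $h$ but its uniformity over $h<h_c$ is not automatic given the $h^{-2}$ prefactors; the paper obtains it by writing $QV=LV+(Q-L)V$ and using the $O(h^2)$ consistency estimate of Lemma~\ref{lem:Qc_accuracy_model} on the compact set.
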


\begin{rem}
Note that the stochastic Lyapunov function in \eqref{eq:VforQc} differs from the one defined in equation \eqref{eq:VforL} of Theorem~\ref{SDEdrift}.  In particular, the Lyapunov function in \eqref{eq:VforL} uses the $2$-norm whereas the one in \eqref{eq:VforQc} uses the $(2m+2)$-norm.   Using the $(2 m+2)$-norm in \eqref{eq:VforQc} is convenient for the analysis and makes the proof more transparent, since the reaction channels in $Q$ are the coordinate axes.  However, since all norms in $\mathbb{R}^n$ are equivalent, this difference is minor, and in particular, does not affect the type of observables we can prove are integrable.  
\end{rem}

The following lemma is used in the proof of Lemma~\ref{Qdrift}. 

\begin{lemma} \label{lem:bernoulli}
 Suppose that $\mathsf{a},\mathsf{b} \ge 0$ and $\mathsf{k} \in \mathbb{N}$.  Then we have the bound: 
 \begin{equation}  \label{eq:bernoulli2}
\mathsf{a}^{\mathsf{k}} - \mathsf{b}^{\mathsf{k}} \le \mathsf{k} \; (\mathsf{a} - \mathsf{b}) \; \mathsf{a}^{\mathsf{k}-1} 
\end{equation}
\end{lemma}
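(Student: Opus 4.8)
The plan is to reduce the claim \eqref{eq:bernoulli2} to the classical Bernoulli inequality $(1+t)^{\mathsf{k}} \ge 1 + \mathsf{k}\,t$, valid for every $t \ge -1$ and every $\mathsf{k} \in \mathbb{N}$, which is presumably the reason for the lemma's name. First I would dispose of the degenerate case $\mathsf{a}=0$. When $\mathsf{k} \ge 2$ the right-hand side of \eqref{eq:bernoulli2} vanishes since it carries the factor $\mathsf{a}^{\mathsf{k}-1}=0$, while the left-hand side equals $-\mathsf{b}^{\mathsf{k}} \le 0$; and when $\mathsf{k}=1$ both sides equal $\mathsf{a}-\mathsf{b}=-\mathsf{b}$. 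So \eqref{eq:bernoulli2} holds in this boundary case.

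For the main case $\mathsf{a}>0$, I would divide both sides of \eqref{eq:bernoulli2} by $\mathsf{a}^{\mathsf{k}}>0$ and introduce the variable $x = \mathsf{b}/\mathsf{a} \ge 0$. The inequality then becomes $1 - x^{\mathsf{k}} \le \mathsf{k}\,(1-x)$. Setting $t = x-1$, which satisfies $t \ge -1$ because $x \ge 0$, this is equivalent to $(1+t)^{\mathsf{k}} \ge 1 + \mathsf{k}\,t$, precisely Bernoulli's inequality on its admissible range. Since that inequality is standard (and is in any case already invoked by name earlier in the excerpt, e.g.\ in the worked Example), this completes the argument.

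There is essentially no serious obstacle here; the result is elementary, and the only points requiring a little care are the boundary value $\mathsf{a}=0$ and the verification that the substitution produces an exponent $t \ge -1$ in the correct range for Bernoulli. As an alternative that avoids case analysis entirely, I could instead invoke convexity of $s \mapsto s^{\mathsf{k}}$ on $[0,\infty)$ for $\mathsf{k}\ge 1$: the tangent-line (gradient) inequality at the point $\mathsf{a}$ reads $\mathsf{b}^{\mathsf{k}} \ge \mathsf{a}^{\mathsf{k}} + \mathsf{k}\,\mathsf{a}^{\mathsf{k}-1}(\mathsf{b}-\mathsf{a})$, and rearranging this directly yields \eqref{eq:bernoulli2}. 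A third route is the factorization $\mathsf{a}^{\mathsf{k}}-\mathsf{b}^{\mathsf{k}} = (\mathsf{a}-\mathsf{b})\sum_{j=0}^{\mathsf{k}-1}\mathsf{a}^{\mathsf{k}-1-j}\mathsf{b}^{j}$, bounding the sum by $\mathsf{k}\,\mathsf{a}^{\mathsf{k}-1}$ when $\mathsf{a}\ge \mathsf{b}$ and from below by $\mathsf{k}\,\mathsf{a}^{\mathsf{k}-1}$ when $\mathsf{a}<\mathsf{b}$ (the sign of $\mathsf{a}-\mathsf{b}$ flipping the inequality in the second case); I would present the Bernoulli reduction as the primary proof and note the convexity argument as a one-line alternative.
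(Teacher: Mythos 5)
Your proposal is correct and follows essentially the same route as the paper: the paper's proof also disposes of the boundary case $\mathsf{a}=0$ first and then, for $\mathsf{a}>0$, divides by $\mathsf{a}^{\mathsf{k}}$ to reduce \eqref{eq:bernoulli2} to the classical Bernoulli inequality $(\mathsf{b}/\mathsf{a})^{\mathsf{k}} \ge 1 + \mathsf{k}\,(\mathsf{b}/\mathsf{a}-1)$. Your convexity and factorization alternatives are valid one-line variants, but your primary argument is the paper's argument.
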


\begin{proof}
Note that the bound holds if $\mathsf{a}=0$ for any $\mathsf{b} \ge 0$ and for any $\mathsf{k} \in \mathbb{N}$. 
For $\mathsf{a}>0$, the bound comes from Bernoulli's inequality \begin{equation} \label{eq:bernoulli1}
\left( \frac{\mathsf{b}}{\mathsf{a}} \right)^{\mathsf{k}} \ge 1 + \mathsf{k} \; ( \frac{\mathsf{b}}{\mathsf{a}} - 1 ) \;.
\end{equation} Indeed, recall that Bernoulli's inequality holds for all $\mathsf{b}/\mathsf{a} \ge 0$ and $\mathsf{k} \in \mathbb{N}$, with equality if and only if $\mathsf{a}=\mathsf{b}$ or  $\mathsf{k}=1$.  Since  $\mathsf{a}>0$, \eqref{eq:bernoulli1} implies \eqref{eq:bernoulli2}.
\end{proof}

Here is the proof of Lemma~\ref{Qdrift}.

\begin{proof}

Apply the generator $Q$ in \eqref{eq:Qc2} to $V(x)$ to obtain: \begin{equation} \label{eq:Qc_on_V_over_V}
\begin{aligned}
 { Q V(x) \over V(x) }  &=  \sum_{i=1}^n ( A_i^{+}(x) + A_i^{-}(x) )
 \end{aligned}
\end{equation} where we have defined: \begin{align*}
A_i^{\pm}(x) = \frac{1}{h^2} \exp\left( \pm   \frac{h}{2} \mu_i(x)   \right)  \left( { V^{\pm}_i(x) \over V(x)} -1 \right)  \;, \quad V_i^{\pm}(x) = V(x \pm e_i h) 
\end{align*}  for any $1 \le i \le n$.

\medskip

\paragraph{\em Step 1: Bound $A_i^{\pm}$}
 Lemma~\ref{lem:bernoulli} with \[
 \mathsf{a}=(x_i \pm h)^{2}\;, \quad \mathsf{b}=x_i^{2}\;, \quad \text{and} \quad \mathsf{k}=m+1  
 \] implies that:\begin{align*}
 {V^{\pm}_i \over V}   
 &= \exp\left( \vphantom{\frac{h}{2}} a ((x_i \pm h)^{2})^{m+1}  - a (x_i^{2})^{m+1} \right) \\
 &\le  \exp\left( \vphantom{\frac{h}{2}} a (m+1) ( \pm 2 x_i h + h^2)  (x_i \pm h )^{2 m} \right)  \\
 &\le  \exp\left( \vphantom{\frac{h}{2}} \pm 2 a (m+1)  x_i^{2 m +1}  h + R_i^{\pm} \right)  
\end{align*}
where we have introduced: \[
R_i^{\pm} = a (m+1)  x_i^{2 m} h^2 + a (m+1) ( \pm 2 x_i h + h^2) \left( ((x_i \pm h )^{2})^{m} - (x_i^{2} )^{m} \right) \;.
\]
A second application of  Lemma~\ref{lem:bernoulli} shows that \[
|  ((x_i \pm h )^{2})^{m} - (x_i^{2} )^{m}  | \le m | (x_i \pm h)^2 - x_i^2 | ( x_i^2 \vee (x_i \pm h)^2 )^{m-1}
 \] from which it follows that \begin{align*}
R_i^{\pm} &\le   a (m+1)  x_i^{2 m} h^2 + a m (m+1) ( \pm 2 x_i h + h^2)^2 (2 x_i^2 + 2 h^2)^{m-1} \\
&  \le  a (m+1)  x_i^{2 m} h^2 + a m (m+1) (8 x_i^2 h^2 + 2 h^4 ) (2 x_i^2 + 2 h^2)^{m-1} \\
& \le ( C_1 x_i^{2 m} +  C_0 ) h^2  
\end{align*}
for some $C_0, C_1 \ge 0$, for all $x \in \mathbb{R}^n$ and $h<1$.  Thus, we obtain the following bound:\begin{equation} \label{eq:Aipm}
\begin{aligned}
 A_i^{\pm}  & \le \frac{1}{h^2} \exp\left(  \pm  \frac{h}{2} \mu_i  \right)\\
& \qquad  \times \left( \vphantom{\frac{h}{2}}  \exp\left(\pm 2 a (m+1)  x_i^{2 m+1}  h + ( C_1 x_i^{2 m} +  C_0 ) h^2 \right)  - 1 \right) 
\end{aligned}
\end{equation}

\medskip

\paragraph{\em Step 2: Merge bounds}
Substitute \eqref{eq:Aipm} into \eqref{eq:Qc_on_V_over_V} to obtain: \begin{equation} \label{eq:QVoV}
{Q V(x) \over V(x)} \le \phi(h,x) =  \sum_{i=1}^n \phi_i(h,x) 
\end{equation} where we have introduced: \begin{equation}
\phi_i(s,x) = \frac{2}{s^2} \left( \exp( c_i(x) s^2) \cosh( a_i(x) s) - \cosh( b_i(x) s) \right)
\end{equation} and the following functions of position only: \begin{equation}
\begin{cases}
 c_i(x) =  C_1 x_i^{2m} +  C_0 \\
 b_i(x) =  \frac{1}{2}  \mu_i   \\
 a_i(x) = 2  a  (m+1)  x_i^{2 m+1} + b_i(x)
\end{cases}
\end{equation}

\medskip

\paragraph{\em Step 3: Leverage monotonicity} 
We propose to upper bound the right-hand-side of \eqref{eq:QVoV} by the limit of $ \phi(h,x)$ as $h \to 0$.  Indeed, by l'H\^{o}pital's Rule this limit is given by:  \begin{align} 
\lim_{s \to 0} \phi(s,x) &= \sum_{i=1}^n a_i^2 -  b_i^2 + 2  c_i  \nonumber \\
&= \sum_{i=1}^n  2 a (m+1) x_i^{2 m}  (2 a (m+1) x_i^{2 m+2} + \mu_i x_i  ) + 2 c_i \nonumber \\
&= \sum_{i=1}^n 2 a (m+1)(2 a (m+1) - \beta_0) x_i^{4 m+2} + \cdots \nonumber \\
&\le 2 a (m+1) \underbrace{(2 a (m+1) - \beta_0)}_{\text{$<0$ by hypothesis on $a$}} | x |^{4 m+2}_{4 m+2}   + \text{lower order in $|x|$ terms}   \label{eq:limit_of_phi}
\end{align}  
where in the third step we used Assumption~\ref{assumptions_on_drift} (A3) to bound $\mu_i x_i = \mu_i \sign(x_i) |x_i|$.  Thus, this upper bound can be made arbitrarily large and negative if $|x|$ is large enough.  Another application of l'H\^{o}pital's Rule shows that \begin{equation} \label{eq:limit_of_phiprime}
\lim_{s \to 0} \frac{\partial \phi}{\partial s}(s,x) = 0 
\end{equation} and hence, the point $s=0$ is a critical point of $\phi(s,x)$.    To determine the properties of this critical point, we apply  l'H\^{o}pital's Rule and Assumption~\ref{assumptions_on_drift} (A4) to obtain: \begin{align}
\lim_{s \to 0} \frac{\partial^2 \phi}{\partial s^2}(s,x) &= \sum_{i=1}^n \frac{1}{12} (a_i^4 - b_i^4 + 12 a_i^2 c_i + 12 c_i^2 ) \;, \nonumber \\
&= \sum_{i=1}^n \frac{1}{12} (a_i^2 - b_i^2) (a_i^2 + b_i^2) + a_i^2 c_i + c_i^2 \nonumber \\
&\le \sum_{i=1}^n \frac{1}{6} a (m+1)  (2 a (m+1) - \beta_0)  x_i^{4 m+2}   (a_i^2 + b_i^2) + \cdots  \nonumber \\
&\le \sum_{i=1}^n \frac{1}{24} a (m+1)(2 a (m+1) - \beta_0) x_i^{4 m+2}   \mu_i^2  + \cdots   \nonumber \\
&\le \sum_{i=1}^n \frac{\beta_1}{24} a (m+1)(2 a (m+1) - \beta_0) x_i^{8 m+4}  + \cdots  \nonumber \\
&\le \frac{\beta_1}{24} a (m+1) \underbrace{(2 a (m+1) - \beta_0)}_{\text{$<0$ by hypothesis on $a$}}   |x|_{8 m+4}^{8 m+4} + \text{lower order in $|x|$ terms}   \label{eq:limit_of_phiprimeprime} 
\end{align}
which is negative if $|x|$ is large enough.  By the point-wise second derivative rule, $\lim_{s \to 0} \phi(s,x) $ is a local maximum of the function $\phi(s,x)$, and therefore, the function $\phi(s,x)$ is strictly decreasing in a neighborhood of $s=0$.   Thus, we may use $\lim_{s \to 0} \phi(s,x)$ in \eqref{eq:limit_of_phi} to bound $\phi(s,x)$ as long as $s$ is sufficiently small (in order to also justify the bound on $A_i^{\pm}$ in Step 1) and $|x|$ is large enough.   In the same way, we can bound a Maclaurin series expansion of $\phi(s,x)$ as follows \begin{align*}
& \phi (s,x) - \lim_{s \to 0} \phi(s,x) = \sum_{\substack{1 \le i \le n \\ 2 \le k < \infty}} \frac{2 s^{2 (k-1)}}{(2 k)!} \left[ a_i^{2 k} - b_i^{2 k } + \cdots \right] \\
&= \sum_{\substack{1 \le i \le n \\ 2 \le k < \infty}}  \frac{2 s^{2 (k-1)}}{(2 k)!} \left[ (a_i^{2} - b_i^{2 }) \left( \sum_{j=1}^k a_i^{2 (k-j)} b_i^{2 (j-1)} \right) + \cdots \right] \\
&\le\sum_{\substack{1 \le i \le n \\ 2 \le k < \infty}}  \frac{2 s^{2 (k-1)}}{(2 k)!} \left[ 2 a (m+1)  (2 a (m+1) - \beta_0)  x_i^{4 m+2} \left( \sum_{j=1}^k a_i^{2 (k-j)} b_i^{2 (j-1)} \right) + \cdots \right. \\
&\le\sum_{\substack{1 \le i \le n \\ 2 \le k < \infty}}  \frac{2 (s/2)^{2 (k-1)}}{(2 k)!} \left[ 2 a (m+1)  (2 a (m+1) - \beta_0)  x_i^{4 m+2}  \mu_i^{2 (k-1)}  + \cdots \right. \\
&\le\sum_{\substack{1 \le i \le n \\ 2 \le k < \infty}}  \frac{2 (s/2)^{2 (k-1)}}{(2 k)!} \left[ 2  \beta_{k-1} a (m+1)  (2 a (m+1) - \beta_0)  x_i^{(4 m+2) k}   + \cdots \right. \\
&\le   \sum_{k=2}^{\infty} \frac{2 (s/2)^{2 (k-1)}}{(2 k)!}    2 \beta_{k-1} a (m+1)\underbrace{(2 a (m+1) - \beta_0)}_{\text{$<0$ by hypothesis on $a$}}  |x|^{(4 m+2) k}_{(4 m+2) k}  \\
&\qquad +   \text{lower order in $|x|$ terms} 
\end{align*}
Since Assumption~\ref{assumptions_on_drift} (A4) states that the constants $\beta_{\ell}$ and $\alpha_{\ell}$ are subfactorial in $\ell \in \mathbb{N}$, the series appearing in this bound are convergent.  This bound shows that the leading term in $|x|$ at every order in the Maclaurin series expansion is negative, and therefore, the upper bound $\lim_{s \to 0} \phi(s,x)$ on $\phi(s,x)$ holds with distance from the origin. 

\medskip

\paragraph{\em Step 4: Conclude Proof}
We have shown that there exist positive constants $h_c$, $R^{\star}$ and $\gamma$ (all independent of $h$) such that: \begin{equation} \label{eq:target}
 { Q V(x) \over V(x)  }  < - \gamma \quad \text{for all $x \in \mathbb{R}^n$ satisfying $|x| > R^{\star}$} 
\end{equation}   
and for all $h<h_c$.  Define $K$ in \eqref{Qc_infinitesimal_drift_condition} to be: \[
K =   \sup_{|x| \le R^{\star}} | L V(x) + \gamma V(x) | + \sup_{0<h<h_c} \sup_{|x| \le R^{\star}}  | (Q-L) V(x) |  \;.
\]  The first term of $K$ is independent of $h$ and bounded, since $LV(x) + \gamma V(x)$ is smooth and the set $\{ |x| \le R^{\star} \}$ is compact.  According to Lemma~\ref{lem:Qc_accuracy_model} below, the difference $|(Q - L) V(x)|$ is $\mathcal{O}(h^2)$, involves four derivatives of $V(x)$ and the drift field $\mu$.  Since this function is continuous and the set $\{ |x| \le R^{\star} \}$ is compact, the last term can be bounded uniformly in $h$.   
\end{proof}

We are now in position to invoke the countable state space version of Harris Theorem~\ref{countable_harris}, which implies geometric ergodicity of the approximation.

%
%

\begin{theorem}[Geometric Ergodicity of Approximation] \label{thm:geometric_ergodicity_Qc}
Suppose the drift field of the underlying SDE \eqref{eq:model_sde} satisfies Assumption~\ref{assumptions_on_drift}.   Then the Markov process induced by $Q$ has a unique invariant probability measure $\Pi^h$ supported on the grid $S_h$.  Furthermore, there exist positive constants $\lambda$ and $C$ such that \begin{equation} \label{eq:ell1bound}
 \| \Pi^h_{t,x} - \Pi^h \|_{\TV} \le C \; V(x) \; e^{- \lambda t} 
\end{equation}
for  all $t \ge 0$ and all $x \in S_h$.
\end{theorem}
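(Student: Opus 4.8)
The plan is to verify the two hypotheses of the countable-state-space version of Harris Theorem (Theorem~\ref{countable_harris}) with the choice $\Phi = V$, where $V$ is the stochastic Lyapunov function in \eqref{eq:VforQc}, and then read off the conclusion directly. Since $S_h$ is a countable set, Theorem~\ref{countable_harris} requires only two things: irreducibility of the Markov jump process $X(t)$ induced by $Q$, and the infinitesimal drift condition of Assumption~\ref{driftcondition}. The second of these is already in hand, so the genuinely new work is light.

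First I would establish irreducibility. From the matrix representation $\mathsf{Q}$ described in \S\ref{sec:preliminaries}, the off-diagonal rate from $x$ to a nearest neighbor $y$ with $|y-x|=h$ is $\mathsf{Q}(x,y) = h^{-2}\exp(\tfrac{1}{2}\mu(x)^T(y-x))$, which is strictly positive for every such pair because the exponential never vanishes. Since $S_h$ is an infinite Cartesian grid, any two of its points are joined by a finite path of nearest-neighbor steps along the coordinate directions; concatenating the corresponding positive-rate transitions shows that every state is reachable from every other in finite time with positive probability, so $X(t)$ is irreducible on $S_h$. Next, the infinitesimal drift condition is exactly Lemma~\ref{Qdrift}: for $h<h_c$ there exist positive constants $K$ and $\gamma$ (uniform in $h$) with $QV(x)\le K-\gamma V(x)$ for all $x$, which is Assumption~\ref{driftcondition} with $\mathsf{k}=K$ and $\mathsf{w}=\gamma$.

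The point I expect to require the most care is regularity (non-explosion) of the jump process, since the total jump rate $\lambda(x)=\sum_{i=1}^n \tfrac{2}{h^2}\cosh(\tfrac{h}{2}\mu_i(x))$ is unbounded whenever the drift is unbounded, and an a priori explosive chain would not define a conservative Markov process on $S_h$ for all $t\ge 0$. Here the same drift inequality resolves the issue: because $V(x)\to\infty$ as $|x|\to\infty$ while $QV\le K-\gamma V$, the expected value of $V(X(t))$ cannot blow up in finite time, which precludes infinitely many jumps in finite time and shows $P^h_t 1 = 1$. With irreducibility, non-explosion, and the drift condition established, Theorem~\ref{countable_harris} applies with $\Phi=V$ and yields a unique invariant probability measure $\Pi^h$ supported on $S_h$ together with the bound $\|\Pi^h_{t,x}-\Pi^h\|_{\TV}\le \mathsf{C}\exp(-\mathsf{r}\,t)\,V(x)$, where $\mathsf{C}$ and $\mathsf{r}$ depend only on $K$ and $\gamma$. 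Setting $C=\mathsf{C}$ and $\lambda=\mathsf{r}$ gives \eqref{eq:ell1bound} and completes the proof.
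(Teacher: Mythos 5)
Your proposal is correct and takes essentially the same route as the paper's own proof: irreducibility from the strictly positive nearest-neighbor rates on the connected grid $S_h$, the infinitesimal drift condition supplied by Lemma~\ref{Qdrift}, and an application of the countable-state-space Harris Theorem~\ref{countable_harris} with $\Phi = V$. Your additional verification of non-explosion (using $QV \le K - \gamma V$ together with $V(x) \to \infty$ to rule out infinitely many jumps in finite time, despite the unbounded total rate $\lambda(x)$) is a sound refinement of a point the paper leaves implicit, not a different argument.
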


Recall that on a countable state space the TV distance between probability measures is equivalent to the $\ell_1$ distance between their probability densities.   Thus, \eqref{eq:ell1bound} is a bound on the $\ell_1$ distance between the transition and stationary probability densities of the approximation.

\begin{proof}
The Markov process induced by $Q$ is irreducible.  Indeed, at every point on the grid, the transition rate to nearest neighbors is strictly positive. Thus, for any two grid points $x$ and $y$,  it is possible to find a finite sequence of jumps from $x$ to $y$, and for any $t>0$, there is a strictly positive probability that this finite sequence of jumps will occur.  Moreover, the generator $Q$ satisfies an infinitesimal drift condition according to Lemma~\ref{Qdrift}.  Hence, Theorem~\ref{countable_harris} implies that (i) the process possesses a stationary distribution, and (ii) the transition probability of the process converges to this stationary distribution geometrically fast in the TV metric.   
\end{proof}

\begin{rem} \label{rem:v_norm_convergence}
Let $\| \cdot \|_V$ denote the $V$-weighted supremum norm defined as: \[
\| f \|_V = \sup_{x \in S_h} \frac{ | f(x) |}{1+ \left. V \right|_{S_h} (x) } 
\]
where $f: S_h \to \mathbb{R}$ and $V: \mathbb{R}^n \to \mathbb{R}$ is a Lyapunov function of $Q$ from Lemma~\ref{Qdrift}.  Define the following set of grid functions:  \[
\ell_V = \{ g: S_h \to \mathbb{R}~ \mid ~ \| g \|_V < \infty \} \;.
\] The conditions of Theorem~\ref{thm:geometric_ergodicity_Qc}  imply exponential convergence in this function space: \begin{equation} \label{Vnorm_convergence}
\| P^h_t f  - \Pi^h(f) \|_V \le  C \| f - \Pi^h(f) \|_V e^{- \tilde \lambda t} \;,
\end{equation}  for some $\tilde \lambda>0$ and $C>0$, for all $t\ge0$ and for all $f \in \ell_V$ \cite{RoRo1997,KoMe2003,KoMe2012}.  Here we used the notation: \[
\Pi^h(f) = \sum_{x \in S_h} f(x) \nu^h(x) \;.
\]
\end{rem}

\begin{rem} \label{rem:l2_convergence}
Consider the Hilbert space \[
\ell^2(\Pi^h) = \{ f: S_h \to \mathbb{R} ~ \mid ~ \sum_{x \in S_h} |f(x)|^2 \nu^h(x) < \infty \}
\] with inner product: \[
\langle f, g \rangle_{\ell^2(\Pi^h)} = \sum_{x \in S} f(x) g(x) \nu^h(x) \;.
\]
If $Q$ is self-adjoint, in the sense that \[
\langle  f, Q g \rangle_{\ell^2(\Pi^h)}  = \langle Q f, g  \rangle_{\ell^2(\Pi^h)} \quad \text{for all $f, g \in \ell_2(\Pi^h)$}
\] then \eqref{eq:ell1bound} holds if and only if there exists a $\tilde \lambda >0$ such that: \begin{equation} \label{ell2_convergence}
\|  P_t^h f - \Pi^h(f) \|_{\ell^2(\Pi^h)} \le C \| f - \Pi^h(f) \|_{\ell^2(\Pi^h)} e^{- \tilde \lambda t} \;,
\end{equation}
for all $t\ge0$ and for all $f \in \ell_2(\Pi^h)$ \cite{RoRo1997,chen2000equivalence}.  We stress that this equivalence does not hold in general if the Markov process is not self-adjoint.  
\end{rem}

\begin{rem} \label{rem:spectral_gap}
Let $\sigma(Q)$ denote the spectrum of $Q$: the set of all points $\lambda \in \mathbb{C}$ for which $Q - \lambda \; I$ is not invertible.  If $Q$ possesses a spectral gap in $\ell^2(\Pi^h)$, i.e., a gap between its zero eigenvalue and the eigenvalue of second smallest magnitude, then its transition probabilities converge exponentially fast in $\ell^2(\Pi^h)$.  Indeed, by the spectral mapping theorem, \[
\sigma( P_t^h ) = \{ e^{t \lambda} ~\mid~ \lambda \in \sigma(Q) \} 
\]  and hence, a spectral gap in $Q$ implies that $P_t^h$ has a gap between its unit eigenvalue and its second largest eigenvalue \cite{RoRo1997,chen2000equivalence}.   
\end{rem}

%
%

\section{Properties of Realizations} \label{sec:properties}

\subsection{Basic Properties of SDE Solution}

Here we review some basic properties of the SDE solution under Assumption~\ref{assumptions_on_drift} on the drift field. 

%
%

\begin{lemma} \label{lem:moments_of_process}
Suppose the drift field of the SDE~\eqref{eq:model_sde}  satisfies Assumption~\ref{assumptions_on_drift}.  For every $\varphi: \mathbb{R}^n \to \mathbb{R}$ satisfying $|\varphi| \le  V + C_0$ for some $C_0 > 0$ and $\varphi \in \dom(L)$, we have that: \[
| \Ex_x \varphi(Y(t)) | \le   V(x) + \frac{K}{\gamma} + C_0 
\] for all $t \ge 0$ and for all $x \in \mathbb{R}^n$. Here $K$ and $\gamma$ are taken from Lemma~\ref{SDEdrift}.
\end{lemma}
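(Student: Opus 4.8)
The plan is to convert the infinitesimal drift condition of Lemma~\ref{SDEdrift} into a uniform-in-time bound on $P_t V$ and then transfer it to the observable $\varphi$ using positivity of the Markov semigroup. First I would recall from Lemma~\ref{SDEdrift} that the Lyapunov function $V(x) = \exp(a|x|^{2m+2})$ satisfies $LV(x) \le K - \gamma V(x)$ for all $x \in \mathbb{R}^n$. The crucial step is to upgrade this pointwise generator inequality to the semigroup estimate
\[
P_t V(x) \le e^{-\gamma t} V(x) + \frac{K}{\gamma}\bigl(1 - e^{-\gamma t}\bigr),
\]
which is precisely the consequence of the infinitesimal drift condition noted after Assumption~\ref{driftcondition}.

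Because $V$ is unbounded, making this rigorous requires a localization argument: introduce the exit times $\tau_R = \inf\{ t \ge 0 : |Y(t)| \ge R \}$, apply Dynkin's formula to the stopped process $e^{\gamma (s \wedge \tau_R)} V(Y(s \wedge \tau_R))$, and observe that its drift is $e^{\gamma s}(\gamma V + LV)(Y(s)) \le e^{\gamma s} K$ on $\{ s < \tau_R \}$. Taking expectations and integrating in $s$ yields the estimate for the stopped process; since the Lyapunov function guarantees non-explosion, so that $\tau_R \to \infty$ almost surely (cf.~the remark on the usage of Assumption~\ref{assumptions_on_drift}), Fatou's lemma lets me send $R \to \infty$ to recover the displayed bound. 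Using $e^{-\gamma t} \le 1$ and $1 - e^{-\gamma t} \le 1$ then gives the cleaner inequality $P_t V(x) \le V(x) + K/\gamma$.

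Next I would transfer this to a general $\varphi$ with $|\varphi| \le V + C_0$. Since $P_t$ is a conservative, positivity-preserving Markov semigroup (so $P_t 1 = 1$ and $P_t g \ge 0$ whenever $g \ge 0$), Jensen's inequality in the form $|P_t \varphi| \le P_t |\varphi|$ together with $|\varphi| \le V + C_0$ gives
\[
|\Ex_x \varphi(Y(t))| = |P_t \varphi(x)| \le P_t|\varphi|(x) \le P_t V(x) + C_0 \le V(x) + \frac{K}{\gamma} + C_0,
\]
which is the desired conclusion, uniformly in $t \ge 0$.

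The main obstacle I anticipate is the localization step that justifies the semigroup bound for the unbounded Lyapunov function $V$: one must verify both that the stopped-process version of Dynkin's formula applies (routine once $V$ is smooth and $Y$ does not explode before $\tau_R$) and that the exchange of limit and expectation as $R \to \infty$ is legitimate. Both hinge on the non-explosion of $Y$, which is itself a consequence of the stochastic Lyapunov function established in Lemma~\ref{SDEdrift}; everything else reduces to elementary monotonicity estimates and the positivity of $P_t$.
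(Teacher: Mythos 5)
Your proof is correct and follows essentially the same route as the paper: the paper likewise passes from the drift condition $LV \le K - \gamma V$ of Lemma~\ref{SDEdrift} to the semigroup bound $P_t V(x) \le e^{-\gamma t}V(x) + \tfrac{K}{\gamma}$ and then uses positivity of $P_t$ with $|\varphi| \le V + C_0$ to conclude. The only difference is that you spell out the localization (stopping times, Dynkin's formula, Fatou) justifying the semigroup estimate for the unbounded $V$, which the paper leaves implicit by invoking the standard consequence of Assumption~\ref{driftcondition}.
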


As a consequence of this lemma, all moments of the SDE solution are bounded uniformly in time.  

\begin{proof}
Since $\varphi$ is dominated by $V$, this result follows directly from Lemma~\ref{SDEdrift}.  Indeed, \begin{align*}
| P_t \varphi(x) | \le  P_t | \varphi(x) |  \le P_t V(x) + C_0 \le e^{-\gamma t} V(x) + \frac{K}{\gamma} + C_0 \;.
\end{align*}
\end{proof}

%
%

The next lemma bounds the mean-squared displacement of the SDE solution.

\begin{lemma} \label{lem:displacement_of_process}
Suppose the drift field of the SDE~\eqref{eq:model_sde}  satisfies Assumption~\ref{assumptions_on_drift}.  Then the displacement of the SDE solution $Y$ satisfies: \[
\Ex_x | Y(t) - x|^2  \le 2 \; t^2  \; \left(  V(x) +  \frac{K}{\gamma} +  C_0 \right) + 4 \; n \; t 
\] for all $t \ge 0$ and for all $x \in \mathbb{R}^n$.   Here $K$, $\gamma$ and $C_0$ are taken from  Lemma~\ref{lem:moments_of_process}.
\end{lemma}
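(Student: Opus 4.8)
The plan is to start from the integral (mild) form of the SDE \eqref{eq:model_sde}, namely
\[
Y(t) - x = \int_0^t \mu(Y(s))\,ds + \sqrt{2}\,W(t),
\]
and to estimate the two contributions separately after splitting the square by the elementary inequality $|a+b|^2 \le 2|a|^2 + 2|b|^2$. This gives
\[
\Ex_x |Y(t)-x|^2 \le 2\,\Ex_x\Bigl| \int_0^t \mu(Y(s))\,ds \Bigr|^2 + 4\,\Ex_x |W(t)|^2.
\]
The Brownian term is immediate: since $W$ is an $n$-dimensional Brownian motion, $\Ex_x |W(t)|^2 = n t$, which produces the $4nt$ summand in the claim.

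For the drift term I would apply the Cauchy--Schwarz inequality in time, componentwise, to get
\[
\Bigl| \int_0^t \mu(Y(s))\,ds \Bigr|^2 \le t \int_0^t |\mu(Y(s))|^2\,ds,
\]
and then use Tonelli's theorem (the integrand is nonnegative) to exchange expectation and integration:
\[
\Ex_x \Bigl| \int_0^t \mu(Y(s))\,ds \Bigr|^2 \le t \int_0^t \Ex_x |\mu(Y(s))|^2\,ds.
\]
The key step is to bound $\Ex_x |\mu(Y(s))|^2$ uniformly in $s$ by $V(x) + K/\gamma + C_0$. To do so, I first observe that the polynomial growth condition Assumption~\ref{assumptions_on_drift} (A2) gives $|\mu(x)|^2 \le (C^{\mu}_P)^2 (1+|x|^{2m+1})^2$, a polynomial of degree $4m+2$, which is dominated by the exponentially growing Lyapunov function $V(x) = \exp(a|x|^{2m+2})$; hence there is a constant $C_0 > 0$ with $|\mu(x)|^2 \le V(x) + C_0$ for every $x$. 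Applying Lemma~\ref{lem:moments_of_process} to $\varphi = |\mu|^2$ then yields $\Ex_x |\mu(Y(s))|^2 \le V(x) + K/\gamma + C_0$ for all $s \ge 0$, with $K$ and $\gamma$ taken from Lemma~\ref{SDEdrift}. Substituting into the double bound above gives $t \cdot t \cdot (V(x)+K/\gamma+C_0)$ for the drift contribution, and combining with the Brownian term produces exactly the stated estimate.

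The main obstacle I anticipate is the technical justification of applying Lemma~\ref{lem:moments_of_process} to the unbounded observable $|\mu|^2$, since $\dom(L)$ as defined consists of bounded functions. I would resolve this by a truncation argument: apply the moment bound to the bounded, $L$-admissible observables $\varphi_R = |\mu|^2 \wedge R$ (each still dominated by $V + C_0$), obtain the uniform estimate $\Ex_x \varphi_R(Y(s)) \le V(x) + K/\gamma + C_0$, and pass to the limit $R \to \infty$ via the monotone convergence theorem. The exchange of expectation and time integral is harmless because the integrand is nonnegative, so Tonelli applies without any a priori integrability, while the finiteness of every quantity involved is guaranteed once the moment bound of Lemma~\ref{lem:moments_of_process} is in hand.
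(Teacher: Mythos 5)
Your proposal is correct and follows essentially the same route as the paper's proof: split the square of the semimartingale decomposition, apply Cauchy--Schwarz in time to the drift integral, bound $\Ex_x|\mu(Y(s))|^2$ via the polynomial growth condition (A2) together with the moment bound of Lemma~\ref{lem:moments_of_process}, and compute the Brownian contribution exactly (your $\Ex_x|W(t)|^2 = nt$ is the It\=o isometry step the paper cites). Your truncation argument for applying the moment lemma to the unbounded observable $|\mu|^2$ is a careful justification of a step the paper leaves implicit, not a departure from its method.
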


\begin{proof}
Apply Jensen and Cauchy-Schwarz inequalities to obtain \begin{align*}
| Y(t) - x |^2 &\le 2 | \int_0^t \mu(Y(s)) ds |^2 + 4 | \int_0^t dW(s) |^2\\
 & \le 2 t \int_0^t | \mu(Y(s)) |^2 ds + 4 | \int_0^t dW(s) |^2 \;.
\end{align*}  Take expectations, use the polynomial growth of $\mu$, apply Lemma~\ref{lem:moments_of_process}, and invoke the It\={o} isometry to derive the desired result.
\end{proof}

%
%

The one-sided Lipschitz property implies that the SDE solution is almost surely Lipschitz with respect to its initial condition as the next lemma states.

\begin{lemma} \label{lem:Lipschitz_property}
Suppose the drift field of the SDE~\eqref{eq:model_sde}  satisfies Assumption~\ref{assumptions_on_drift}.   For $s \le t$, let $Y_{t,s}(x)$ denote the evolution map of the SDE, which satisfies $Y_{s,s}(x)=x$ and $Y_{t,s} \circ Y_{s,r}(x) = Y_{t,r}(x)$ for $r \le s \le t$.  This evolution map satisfies: \[
| Y_{s+t,s}(x) -  Y_{s+t,s}(y)|^2  \le |x - y|^2 \; \exp(2 \; C_L^{\mu} \; t) \quad \text{a.s.}
\] for all $t \ge s$ and for all $x, y \in \mathbb{R}^n$.  Here $C_L^{\mu}$ is the one-sided Lipschitz constant of the drift field from Assumption~\ref{assumptions_on_drift} (A1).
\end{lemma}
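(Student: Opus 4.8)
The plan is to exploit the fact that the noise in \eqref{eq:model_sde} is additive and state-independent, so that the difference of two solutions driven by the same Brownian motion solves a random ODE with no martingale part. First I would fix the initial time $s$ and a Brownian path $W$, and abbreviate $Y_1(r) = Y_{s+r,s}(x)$ and $Y_2(r) = Y_{s+r,s}(y)$ for $r \in [0,t]$; both are defined for all $r \ge 0$ by the stochastic Lyapunov function of Lemma~\ref{SDEdrift} and are pathwise unique by the local Lipschitz continuity of $\mu$. Setting $Z(r) = Y_1(r) - Y_2(r)$, the $\sqrt{2}\,dW$ terms cancel upon subtracting the two copies of \eqref{eq:model_sde}, leaving the pathwise identity
\[
Z(r) = (x - y) + \int_0^r \big( \mu(Y_1(u)) - \mu(Y_2(u)) \big)\, du ,
\]
so that $Z$ is almost surely absolutely continuous in $r$ with $\dot Z(r) = \mu(Y_1(r)) - \mu(Y_2(r))$.

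Next I would differentiate $|Z(r)|^2$. Because $Z$ has finite variation and no martingale component, there is no It\^o correction term, and the ordinary chain rule gives
\[
\frac{d}{dr} |Z(r)|^2 = 2 \left\langle Z(r),\, \mu(Y_1(r)) - \mu(Y_2(r)) \right\rangle .
\]
Applying the one-sided Lipschitz estimate $\langle \mu(u) - \mu(v),\, u - v \rangle \le C_L^{\mu} |u-v|^2$ established above (the integrated form of Assumption~\ref{assumptions_on_drift} (A1)) with $u = Y_1(r)$ and $v = Y_2(r)$ yields the differential inequality
\[
\frac{d}{dr} |Z(r)|^2 \le 2\, C_L^{\mu}\, |Z(r)|^2 .
\]

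Finally I would integrate this inequality by Gr\"onwall's lemma on $[0,t]$ to obtain $|Z(t)|^2 \le |Z(0)|^2 \exp(2 C_L^{\mu} t) = |x-y|^2 \exp(2 C_L^{\mu} t)$ almost surely, which is precisely the claim. The argument is essentially deterministic once the noise has cancelled, so the only real care needed is the justification that $Z$ carries no quadratic variation; this is exactly where additivity and state-independence of the noise enter, and together with the appeal to global existence it ensures the trajectories, and hence the estimate, are defined for all $t \ge 0$. The main (mild) obstacle is thus the bookkeeping of the pathwise regularity of $Z$ rather than any substantive analytic difficulty: no moment bounds or dissipativity are invoked here, only the global one-sided Lipschitz property (A1).
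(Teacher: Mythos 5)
Your proposal is correct and follows essentially the same route as the paper: the paper applies the It\^o--Taylor formula to the squared difference of the two solutions (where the additive noise cancels, so no quadratic-variation correction appears), invokes the integrated one-sided Lipschitz estimate $\langle \mu(y)-\mu(x), y-x\rangle \le C_L^{\mu}|y-x|^2$ derived from Assumption~\ref{assumptions_on_drift} (A1), and concludes with Gronwall's Lemma, exactly as you do. The only difference is cosmetic: you make explicit the pathwise absolute continuity of the difference process and the resulting use of the ordinary chain rule, which the paper leaves implicit in its appeal to the It\^o--Taylor formula.
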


\begin{proof}
By Ito-Taylor's formula: \begin{align*}
&| Y_{s+t,s}(x) -  Y_{s+t,s}(y)|^2  \\
&\qquad \le |x -y|^2 + 2 \int_0^t \langle Y_{s+r,s}(x) -  Y_{s+r,s}(y), \mu( Y_{s+r,s}(x) ) -  \mu(Y_{s+r,s}(y)) \rangle \; dr \\
&\qquad \le  |x -y|^2  + 2 \; C_L^{\mu} \; \int_0^t  | Y_{s+r,s}(x) -  Y_{s+r,s}(y) |^2 \; dr
\end{align*} where we used the one-sided Lipschitz property of the drift field with Lipschitz constant $C_L^{\mu}$.   Gronwall's Lemma implies the desired result.
\end{proof}

We now turn to estimates related to the semigroup of the SDE solution.

%
%

\begin{lemma} \label{lem:semigroup_is_lipschitz}
Suppose Assumption~\ref{assumptions_on_drift} holds.   Then there exists $c>0$ such that \begin{align*}
| P_t \varphi(x) - P_t \varphi(y) | \le c \; (1 \wedge t)^{-1/2} \; \| \varphi \|_{\infty} \; |y -x|
\end{align*} for all $x, y \in \mathbb{R}^n$, $\varphi \in B_b(\mathbb{R}^n)$ and $t>0$.  
\end{lemma}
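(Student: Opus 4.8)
The plan is to prove the stronger gradient estimate $\sup_z\|D(P_t\varphi)(z)\|\le c\,(1\wedge t)^{-1/2}\|\varphi\|_{\infty}$ and then recover the Lipschitz bound by the fundamental theorem of calculus, $|P_t\varphi(x)-P_t\varphi(y)|\le\big(\sup_z\|D(P_t\varphi)(z)\|\big)\,|x-y|$. Since the noise in \eqref{eq:model_sde} is additive with constant, invertible diffusion coefficient $\sqrt2\,I$, the natural tool is the Bismut--Elworthy--Li integration-by-parts formula: for $\varphi\in C_b^1(\mathbb{R}^n)$ and $v\in\mathbb{R}^n$,
\[
D(P_t\varphi)(x)\,v=\frac{1}{\sqrt2\,t}\,\Ex_x\!\left[\varphi(Y(t))\int_0^t \langle J_s v,\,dW(s)\rangle\right],
\]
where $J_s=\partial Y(s)/\partial x$ is the derivative flow solving the variational equation $\dot J_s=D\mu(Y(s))\,J_s$, $J_0=I$, which carries no noise term precisely because the noise is additive. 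Differentiability of the flow and the moment bounds needed to differentiate under the expectation are supplied by Lemma~\ref{lem:derivatives_of_solution}.

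The crucial structural input is that the one-sided Lipschitz bound (A1) controls $J_s$ pathwise and deterministically: since $\tfrac{d}{ds}|J_s v|^2=2\,D\mu(Y(s))(J_s v,J_s v)\le 2C_L^{\mu}|J_s v|^2$, Gronwall gives $|J_s v|\le|v|\,e^{C_L^{\mu}s}$ almost surely, which is the infinitesimal form of Lemma~\ref{lem:Lipschitz_property}. Consequently $\Ex_x\int_0^t|J_s v|^2\,ds<\infty$, which both legitimizes the stochastic integral as a genuine martingale and, via Cauchy--Schwarz and the It\={o} isometry, yields
\[
|D(P_t\varphi)(x)\,v|\le\frac{1}{\sqrt2\,t}\,\|\varphi\|_{\infty}\left(\Ex_x\int_0^t|J_s v|^2\,ds\right)^{1/2}\le\frac{1}{\sqrt2}\,\|\varphi\|_{\infty}\,|v|\left(\frac{1}{t^2}\int_0^t e^{2C_L^{\mu}s}\,ds\right)^{1/2}.
\]
For $t\le1$ the last factor is bounded by $e^{(C_L^{\mu})_+}\,t^{-1/2}$, giving exactly the desired $t^{-1/2}$ rate.

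For $t\ge1$ the estimate above degrades exponentially when $C_L^{\mu}>0$, so I would instead use the semigroup property to trade the long time for a contraction in the sup norm: write $P_t\varphi=P_1\psi$ with $\psi=P_{t-1}\varphi$. Propagation of $C^1$ regularity through the flow (Lemma~\ref{lem:derivatives_of_solution}, via $D(P_s\varphi)(x)=\Ex_x[J_s^{T}D\varphi(Y(s))]$) shows $\psi\in C_b^1$, while the sup-norm contraction $\|\psi\|_{\infty}=\|P_{t-1}\varphi\|_{\infty}\le\|\varphi\|_{\infty}$ is immediate. Applying the $t=1$ gradient bound to $\psi$ gives $\sup_z\|D(P_t\varphi)(z)\|=\sup_z\|D(P_1\psi)(z)\|\le c_1\|\psi\|_{\infty}\le c_1\|\varphi\|_{\infty}$, uniformly in $t\ge1$. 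Combining regimes produces $\sup_z\|D(P_t\varphi)(z)\|\le c\,(1\wedge t)^{-1/2}\|\varphi\|_{\infty}$ for all $\varphi\in C_b^1$, and the fundamental theorem of calculus gives the claimed Lipschitz estimate for such $\varphi$. To remove smoothness, I would approximate $\varphi\in B_b(\mathbb{R}^n)$ by $\varphi_k\in C_b^1$ with $\|\varphi_k\|_{\infty}\le\|\varphi\|_{\infty}$ and $\varphi_k\to\varphi$ pointwise; since the transition probabilities are absolutely continuous with a continuous density $p(t,x,\cdot)$ (Lemma~\ref{SDEminorization}), dominated convergence gives $P_t\varphi_k(x)\to P_t\varphi(x)$ for each $x$, so the uniform Lipschitz bound passes to the limit.

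The main obstacle is the rigorous justification of the Bismut--Elworthy--Li formula in this setting, where the drift is only locally Lipschitz with polynomial growth rather than globally Lipschitz, so off-the-shelf versions do not apply. The resolution rests entirely on the one-sided Lipschitz condition (A1), which delivers the deterministic pathwise bound $|J_s v|\le|v|\,e^{C_L^{\mu}s}$: this is what makes the Malliavin weight square-integrable, so the integration by parts is valid and the stochastic integral is a true martingale, despite the unbounded drift. Secondary care is needed to differentiate under the expectation, handled by Lemma~\ref{lem:derivatives_of_solution}, and to ensure that the $t\ge1$ argument only ever feeds a sup-norm-bounded function into the time-one smoothing step.
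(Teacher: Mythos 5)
Your proposal is correct and takes essentially the same route as the proof the paper relies on: the paper defers to \S 2.3.1 of Cerrai's monograph, where the estimate is obtained precisely via the Bismut--Elworthy--Li formula, the pathwise Jacobian bound $|J_s v|\le |v|\,e^{C_L^{\mu}s}$ coming from the one-sided Lipschitz condition (A1), and the splitting $P_t=P_1P_{t-1}$ to handle $t\ge 1$. The only step to polish is the final approximation: an arbitrary $\varphi\in B_b(\mathbb{R}^n)$ can be approximated by mollifications only Lebesgue-almost everywhere rather than pointwise, but since $\Pi_{t,x}$ has a density $p(t,x,\cdot)$ this almost-everywhere convergence is exactly what the dominated-convergence passage needs.
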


We stress that the test function in this Lemma is not necessarily continuous.  Assuming that $t>0$, this Lemma shows that $P_t$ regularizes test functions, and specifically that the function $P_t \varphi$ is Lipschitz continuous for any $\varphi \in B_b(\mathbb{R}^n)$.  Since $P_t \varphi$ is also bounded, it follows that the semigroup $P_t$ is strongly Feller, see Remark~\ref{rem:minorization} for a definition of the strong Feller property.  This property of $P_t$ relies on the generator $L$ of the SDE being of elliptic type.  For a detailed proof of this result that only requires local Lipschitz continuity and a polynomial growth condition on the drift field, we refer to \S2.3.1 of \cite{cerrai2001second}.

\medskip

Now we make use of the fact that the drift field is four times differentiable and these derivatives satisfy a polynomial growth condition.

\begin{lemma} \label{lem:derivatives_of_solution}
Suppose Assumption~\ref{assumptions_on_drift} holds.  For any $p \ge 1$, for any $1 \le j \le 4$, for any $t \ge s$, and for all $x \in \mathbb{R}^n$, the evolution map $Y_{t,s}(x)$ is four times mean-square differentiable and   \[
 \sup_{x \in \mathbb{R}^n} \Ex \| D^j Y_{t,s}(x)  \|^p  \le c_p(t) \; \| \varphi \|_{\infty} 
\] where $c_p(t)>0$ is a continuous increasing function.  
\end{lemma}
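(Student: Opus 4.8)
The plan is to differentiate the stochastic flow $Y_{t,s}(x)$ with respect to the initial datum $x$ and to estimate the resulting variational equations by Gronwall-type arguments, exactly as one bounds moments of the process itself in Lemma~\ref{lem:moments_of_process}. First I would formally differentiate the integral form of \eqref{eq:model_sde}, namely $Y_{t,s}(x) = x + \int_s^t \mu(Y_{r,s}(x))\,dr + \sqrt{2}(W(t)-W(s))$. Since the noise is additive, the Brownian term drops out upon differentiation in $x$, so the first variation $J^{(1)}_{t,s}(x) = D Y_{t,s}(x)$ satisfies the random linear (matrix) ODE
\begin{equation*}
\frac{d}{dt} J^{(1)}_{t,s}(x) = D\mu(Y_{t,s}(x))\, J^{(1)}_{t,s}(x)\;, \qquad J^{(1)}_{s,s}(x) = I\;.
\end{equation*}
This is the key structural simplification: additivity of the noise makes every variational equation a pathwise random linear ODE with no stochastic integral, so that estimates reduce to deterministic Gronwall inequalities applied $\omega$-by-$\omega$, followed by taking $L^p$ expectations.

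For the first derivative, I would bound $\tfrac{d}{dt}|J^{(1)}_{t,s}(x)\xi|^2 = 2\langle J^{(1)}_{t,s}(x)\xi,\; D\mu(Y_{t,s}(x))(J^{(1)}_{t,s}(x)\xi)\rangle$ using the one-sided Lipschitz condition (A1), which gives $\le 2 C_L^{\mu} |J^{(1)}_{t,s}(x)\xi|^2$; Gronwall then yields the deterministic bound $\|J^{(1)}_{t,s}(x)\| \le \exp(C_L^{\mu}(t-s))$ almost surely, recovering Lemma~\ref{lem:Lipschitz_property} and giving all $p$-th moments trivially. The higher variations $J^{(j)}$ for $j=2,3,4$ satisfy analogous linear equations driven by $J^{(1)}$: differentiating once more, $J^{(2)}_{t,s}$ solves $\tfrac{d}{dt} J^{(2)} = D\mu(Y)\,J^{(2)} + D^2\mu(Y)(J^{(1)},J^{(1)})$, and in general the inhomogeneity at order $j$ is a polynomial in the lower-order variations with coefficients $D^k\mu(Y)$ for $2\le k\le j$ (Fa\`a di Bruno structure). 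I would proceed inductively on $j$: assuming the bounds hold for orders $<j$, apply the variation-of-constants formula against the homogeneous semigroup already controlled by $\exp(C_L^{\mu}(t-s))$, and estimate the forcing term using the polynomial growth condition (A2) on $D^k\mu$ together with the uniform-in-time moment bounds on $Y_{t,s}(x)$ from Lemma~\ref{lem:moments_of_process}.

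The main obstacle is controlling the inhomogeneous terms, because $\|D^k\mu(Y_{t,s}(x))\|$ grows polynomially in $|Y_{t,s}(x)|$ by (A2), which is unbounded, so the naive estimate does not give a supremum over $x$ finite. The resolution is to combine H\"older's inequality with the sharp moment bounds: for any fixed $p$, I would raise everything to a suitable power and use that $\Ex_x V(Y_{t,s}(x))$ and hence $\Ex_x |Y_{t,s}(x)|^q$ are bounded uniformly in $t$ and locally uniformly in $x$ for every $q$, as follows from Lemma~\ref{SDEdrift} and Lemma~\ref{lem:moments_of_process}. The polynomial growth of $D^k\mu$ is matched against these moment bounds, and because the homogeneous propagator contributes only the harmless factor $\exp(C_L^{\mu}(t-s))$, the product is integrable in $r$ on $[s,t]$ and yields a continuous increasing function $c_p(t)$. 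I would close the induction by absorbing all constants into $c_p(t)$ and noting that the factor $\|\varphi\|_\infty$ enters only because the statement is phrased for the semigroup applied to $\varphi$; strictly the flow bound itself is $\varphi$-independent, so the $\|\varphi\|_\infty$ is a (possibly vacuous) normalization that can be carried through or recognized as a typographical artifact of how the lemma will be invoked in Lemma~\ref{lem:derivatives_of_semigroup}.
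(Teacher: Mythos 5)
Your skeleton is the right one, and it is essentially the structure of the argument the paper relies on: the paper itself offers no proof, deferring entirely to Theorem 1.3.6 of \cite{cerrai2001second}, and that proof does proceed via the pathwise variational equations (which, as you note, contain no stochastic integral because the noise is additive), the almost-sure bound $\|DY_{t,s}(x)\|\le \exp(C_L^{\mu}(t-s))$ from (A1), and Duhamel plus induction for $j=2,3,4$. Your reading of the factor $\|\varphi\|_{\infty}$ as a typographical artifact is also correct: the flow bound is $\varphi$-independent, and the factor only makes sense at the point of use in Lemma~\ref{lem:derivatives_of_semigroup}.

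There is, however, a genuine gap at exactly the step you flag as ``the main obstacle.'' Your resolution -- H\"older against the moment bounds of Lemmas~\ref{SDEdrift} and~\ref{lem:moments_of_process} -- cannot produce the $\sup_{x\in\mathbb{R}^n}$ asserted in the statement, and that global uniformity is precisely what Lemma~\ref{lem:derivatives_of_semigroup} consumes to conclude $P_t\varphi\in C_b^4(\mathbb{R}^n)$. Lemma~\ref{lem:moments_of_process} controls $\Ex_x|Y_{r,s}(x)|^q$ only up to an additive $V(x)=\exp\bigl(a|x|^{2m+2}\bigr)$; indeed no $x$-uniform moment bound can hold, since for $r$ close to $s$ one has $\Ex_x|Y_{r,s}(x)|^q$ of order $|x|^q$. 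So pairing the crude propagator bound $e^{C_L^{\mu}(t-s)}$ with moment estimates on $D^k\mu(Y_{r,s}(x))$ yields at best $c_p(t)\,(1+V(x))$ -- the ``locally uniformly in $x$'' you concede mid-proof -- which is a strictly weaker lemma. The uniformity comes from a mechanism your estimate discards when you bound $J^{(1)}$ by $e^{C_L^{\mu}(t-s)}$: by dissipativity, $D\mu(Y_u)$ is strongly negative exactly where $|Y_u|$ is large, so the pathwise propagator $\Phi_{t,r}=\exp\bigl(\int_r^t D\mu(Y_u)\,du\bigr)$ and the factors $J^{(1)}_r$ decay fast enough in those regions to beat the polynomial growth of $D^k\mu(Y_r)$ inside the Duhamel integral. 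For instance, for $\mu(y)=-y^3$ the second-variation integrand is $6|Y_r|\exp\bigl(-6\int_s^r Y_u^2\,du\bigr)$, which along trajectories started at large $x$ integrates to $O(1/|x|)$, uniformly in $x$. Carrying out this compensation (rather than H\"older against Lemma~\ref{lem:moments_of_process}) is the actual content of the cited proof, and without it your induction closes only with a $V(x)$-weighted bound.
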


See Theorem 1.3.6 of \cite{cerrai2001second} for a detailed proof of Lemma~\ref{lem:derivatives_of_solution} under the stated assumptions.  We apply this result in the next lemma.

\begin{lemma} \label{lem:derivatives_of_semigroup}
Suppose Assumption~\ref{assumptions_on_drift} holds.  For any $1 \le j \le 4$, for any $t \ge 0$, and for any $\varphi \in C^4_b(\mathbb{R}^n)$, the function $P_t \varphi$ is $j$ times differentiable and there exists $C(t)>0$ such that  \[
 \| P_t \varphi \|_j \le C(t)  \; \| \varphi \|_j
\] where $\| \cdot \|_j$ is the norm introduced in \eqref{eq:ksupnorm}.
\end{lemma}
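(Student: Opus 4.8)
The plan is to write $P_t\varphi(x) = \Ex\,\varphi(Y_{t,0}(x))$ and differentiate repeatedly under the expectation, using the mean-square differentiability of the evolution map established in Lemma~\ref{lem:derivatives_of_solution} together with the boundedness of the derivatives of $\varphi$. First I would record the zeroth-order bound: since $P_t$ is a Markov semigroup it is a contraction on $B_b(\mathbb{R}^n)$, so $\| P_t \varphi \|_{\infty} \le \| \varphi \|_{\infty} \le \| \varphi \|_j$, which handles the first term in the norm \eqref{eq:ksupnorm}.

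For the higher derivatives the key tool is the multivariate chain rule (Fa\`a di Bruno's formula). Applied to the composition $\varphi \circ Y_{t,0}$, it expresses the $j$th spatial derivative $D^j(\varphi \circ Y_{t,0})(x)$ as a finite sum of terms of the form
\begin{equation*}
D^k \varphi(Y_{t,0}(x)) \bigl( D^{i_1} Y_{t,0}(x), \dots, D^{i_k} Y_{t,0}(x) \bigr) \;,
\end{equation*}
where $1 \le k \le j$ and the orders satisfy $i_1 + \cdots + i_k = j$ with each $i_l \ge 1$. Before using this identity I would justify that $P_t \varphi$ is genuinely $j$ times differentiable by a dominated-convergence argument: the difference quotients of $\varphi \circ Y_{t,0}$ converge in $L^1$ of the probability space because $Y_{t,0}$ is mean-square differentiable (Lemma~\ref{lem:derivatives_of_solution}) and $\varphi$ has bounded derivatives, and the moment bounds of Lemma~\ref{lem:derivatives_of_solution} supply a uniform integrable majorant. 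This permits interchanging $D^j$ with $\Ex$, so that $D^j P_t \varphi(x) = \Ex\, D^j(\varphi \circ Y_{t,0})(x)$.

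Then I bound each term in expectation. Pulling out $\| D^k \varphi \|_{\infty} \le \| \varphi \|_j$ and applying the generalized H\"older inequality with all exponents equal to $k$ gives
\begin{equation*}
\Ex \prod_{l=1}^k \| D^{i_l} Y_{t,0}(x) \| \le \prod_{l=1}^k \bigl( \Ex \| D^{i_l} Y_{t,0}(x) \|^k \bigr)^{1/k} \le c_k(t) \;,
\end{equation*}
where the last inequality is Lemma~\ref{lem:derivatives_of_solution} with $p = k$, absorbing constants into the continuous increasing function $c_k(t)$. Summing the finitely many Fa\`a di Bruno terms, then summing over $1 \le j \le 4$ and adding the zeroth-order contraction bound, yields $\| P_t \varphi \|_j \le C(t) \| \varphi \|_j$ with $C(t) > 0$ continuous in $t$, as claimed.

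The main obstacle I anticipate is the rigorous justification of differentiating under the expectation, i.e.\ upgrading the formal chain-rule identity to an actual statement about the differentiability of $x \mapsto P_t \varphi(x)$. This requires pairing the mean-square differentiability of the flow with dominated convergence, and some care is needed because $\varphi$ is only assumed to be of class $C^4_b$, so exactly four derivatives are available, which is why the statement is restricted to $j \le 4$. Once the interchange is secured, the combinatorial bookkeeping in Fa\`a di Bruno and the choice of H\"older exponents are routine.
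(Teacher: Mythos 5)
Your proposal is correct and follows essentially the same route as the paper: differentiate the probabilistic representation $P_t\varphi(x)=\Ex\,\varphi(Y_{t,0}(x))$ under the expectation and bound the resulting chain-rule terms using the moment estimates on $D^j Y_{t,0}(x)$ from Lemma~\ref{lem:derivatives_of_solution} together with $\|D^k\varphi\|_\infty \le \|\varphi\|_j$. You are in fact somewhat more thorough than the paper, which only works out the $j=1$ case explicitly and leaves the Fa\`a di Bruno bookkeeping, the H\"older step, and the dominated-convergence justification of the interchange implicit.
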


In short, this lemma states that $P_t$ maps $C_b^4(\mathbb{R}^n)$ into itself for any $t \ge 0$.  

\begin{proof}
For any $ 1\le j \le 4$, differentiate  $j$ times with respect to $x$ the probabilistic representation of the semigroup: \[
P_t \varphi(x)  = \Ex \varphi( Y_{t,0}(x) ) 
\]  and use Lemma~\ref{lem:derivatives_of_solution} to bound these derivatives of $Y_{t,0}(x)$.  For example, for $j=1$ the chain rule implies that: \[
D P_t \varphi(x) (\eta) = \Ex D \varphi( Y_{t,0}(x) )  (D Y_{t,0}(x) (\eta) ) 
\] for any $\eta \in \mathbb{R}^n$, for all $x \in \mathbb{R}^n$ and for all $t \ge 0$.  Hence, \begin{align*}
| D P_t \varphi(x) (\eta) | &\le  |\Ex D \varphi( Y_{t,0}(x) )  (D Y_{t,0}(x) (\eta) ) | \\
&\le  \Ex |D \varphi( Y_{t,0}(x) )  (D Y_{t,0}(x) (\eta) ) | \\
&\le  \Ex \; \|D \varphi( Y_{t,0}(x) )  \|  \; \| D Y_{t,0}(x) \|  \; | \eta | \\
&\le C(t)  | \eta |  ( \sup_{x \in \mathbb{R}^n} \| D \varphi(x) \| )  
\end{align*} for some $C(t)>0$.  Here we used Lemma~\ref{lem:derivatives_of_solution} and the fact that $\varphi \in C^4_b(\mathbb{R}^n)$.  Since this upper bound is uniform in $x$, we obtain the desired estimate.
\end{proof}

\begin{rem}
As an alternative to this probabilistic proof of Lemma~\ref{lem:derivatives_of_semigroup} which is due to \cite{cerrai2001second}, an analytic approach to bounding the derivatives of $P_t \varphi$ is available in \cite{lorenzi2006analytical,bertoldi2005estimates}.    This approach is based on approximating $P_t \varphi$ by the solutions to Cauchy problems on compact spaces and assumes conditions which are similar to Assumption~\ref{assumptions_on_drift}.
\end{rem}

%
%

\subsection{A Martingale Problem}  The process $X$ induced by $Q$ has the following classification.
\begin{itemize}
\item $X$ is right-continuous in time with left limits (c\`adl\`ag).   
\item $X$ is adapted to the filtration associated to the driving noise.
\item $X$ is a pure jump process, since the process only changes its state by jumps.  
\item $X$ has the Markov property, since conditional on $X(t_0)=x$, the time to the next jump is exponentially distributed with parameter that depends only on the state $x$; and, the jump itself has a distribution that only depends on the state $x$ too.    
\end{itemize}
As such the process admits the following semimartingale representation: \begin{equation} \label{eq:semimartingale}
X(t) = X(0) + \int\limits_0^t \mu^h( X(s) ) ds + \overbrace{\mathcal{M}(t)}^{\mathclap{\text{local martingale}}} \;.
\end{equation}
The stochastic equation~\eqref{eq:Xsde} is shorthand for this integral form of $X$.  Let us prove that the local martingale in this representation is a global martingale.  The next Lemma is useful for this purpose.

\begin{lemma} \label{lem:martingale_condition}
Suppose Assumption~\ref{assumptions_on_drift} holds on the drift field, let $V(x)$ be the stochastic Lyapunov function from Lemma~\ref{Qdrift}, and let $\varphi: \mathbb{R}^n \to \mathbb{R}$.  Consider the local martingale: \[
M^{\varphi}(t) = \varphi(X(t)) - \varphi(x) - \int\limits_{0}^t Q \varphi(X(s) ) ds, \quad t \ge 0\;, \quad X(0) = x \;.
\] If $\varphi$ satisfies the growth condition: \begin{equation} \label{eq:martingale_condition}
Q \varphi^2(x) - 2 \varphi(x) Q \varphi(x) \le V(x) + C  \quad \text{for all $x \in \mathbb{R}^n$}
\end{equation} for some $C>0$, then there exists a positive constant $h_c$ such that $M^{\varphi}(t) $ is a global martingale for all $h<h_c$.
\end{lemma}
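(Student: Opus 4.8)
The plan is to establish that the local martingale $M^\varphi$ is in fact a genuine (global) martingale by controlling its quadratic variation and invoking a standard localization-and-uniform-integrability argument. The key observation, recorded in equation~\eqref{eq:quadratic_variation_Mphi}, is that the predictable quadratic variation of $M^\varphi$ is
\[
\langle M^{\varphi}, M^{\varphi} \rangle(t) = \int_0^t \left( Q \varphi^2(X(s)) - 2 \varphi(X(s)) Q \varphi(X(s)) \right) ds \;,
\]
and the hypothesis~\eqref{eq:martingale_condition} says precisely that the integrand is dominated by $V(X(s)) + C$. So the entire burden is to show that $\Ex_x \int_0^t (V(X(s)) + C)\, ds < \infty$, which by Tonelli reduces to integrating the stochastic Lyapunov bound along the flow.

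First I would introduce a localizing sequence of stopping times $\tau_N = \inf\{ t \ge 0 : |X(t)| \ge N \}$, so that the stopped process $M^\varphi(t \wedge \tau_N)$ is a true martingale (this is automatic because before $\tau_N$ the process lives on a finite subset of the grid where $Q$ is a bounded operator). Next I would apply Lemma~\ref{Qdrift}: since $Q V(x) \le K - \gamma V(x)$ for all $h < h_c$, Dynkin's formula applied to $V$ along the stopped process gives
\[
\Ex_x V(X(t \wedge \tau_N)) = V(x) + \Ex_x \int_0^{t \wedge \tau_N} Q V(X(s))\, ds \le V(x) + K t \;,
\]
where the drift condition was used to drop the negative $-\gamma V$ term. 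Letting $N \to \infty$ and invoking Fatou's lemma yields the time-uniform moment bound $\sup_{s \le t} \Ex_x V(X(s)) \le V(x) + K t$ (indeed one recovers the sharper $e^{-\gamma t} V(x) + K/\gamma$ via the Gronwall-type argument already noted after Assumption~\ref{driftcondition}). Integrating in $s$ then gives $\Ex_x \int_0^t V(X(s))\, ds \le t\,(V(x) + K/\gamma) < \infty$, so by~\eqref{eq:martingale_condition} the expected quadratic variation $\Ex_x \langle M^\varphi, M^\varphi\rangle(t)$ is finite for every fixed $t$.

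With $\Ex_x \langle M^\varphi, M^\varphi\rangle(t) < \infty$ in hand, I would conclude using the standard fact that a local martingale whose quadratic variation has finite expectation at each time is an $L^2$-martingale: the stopped martingales $M^\varphi(t \wedge \tau_N)$ satisfy $\Ex_x [M^\varphi(t \wedge \tau_N)]^2 = \Ex_x \langle M^\varphi, M^\varphi\rangle(t \wedge \tau_N) \le \Ex_x \langle M^\varphi, M^\varphi\rangle(t)$, which is a uniform-in-$N$ bound, so the family $\{M^\varphi(t \wedge \tau_N)\}_N$ is bounded in $L^2$ and hence uniformly integrable. Uniform integrability lets us pass the martingale property through the limit $N \to \infty$, promoting the local martingale to a global one. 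The main obstacle is the first step of justifying that $V$ itself is amenable to Dynkin's formula along the stopped process and that the exchange of limit and expectation (Fatou/monotone convergence) is legitimate even though $V$ is unbounded and $Q$ is not a bounded operator on all of $\ell^\infty$; this is exactly where the sharpness of the Lyapunov function from Lemma~\ref{Qdrift} and the localization by $\tau_N$ must be combined carefully, since on each truncated state space $Q$ acts boundedly but the bounds must be shown to be uniform in the truncation level $N$.
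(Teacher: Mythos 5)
Your proposal is correct, and it shares the paper's central reduction---namely, using the quadratic variation formula \eqref{eq:quadratic_variation_Mphi} together with the growth condition \eqref{eq:martingale_condition} to reduce everything to the finiteness of $\Ex_x \int_0^t (V(X(s)) + C)\, ds$, which the Lyapunov bound of Lemma~\ref{Qdrift} supplies---but it closes the argument with a different key lemma. The paper applies the Burkholder--Davis--Gundy inequality with $p=1$ and Jensen's inequality to show that the maximal process $M^*(t) = \sup_{0 \le s \le t} |M^{\varphi}(s)|$ is integrable, then invokes the criterion that a local martingale dominated by an integrable random variable is a uniformly integrable martingale. You instead use the $L^2$ criterion: the stopped martingales $M^{\varphi}(t \wedge \tau_N)$ satisfy $\Ex_x [M^{\varphi}(t \wedge \tau_N)]^2 = \Ex_x \langle M^{\varphi}, M^{\varphi}\rangle(t \wedge \tau_N) \le \Ex_x \langle M^{\varphi}, M^{\varphi}\rangle(t) < \infty$ uniformly in $N$, which gives uniform integrability and lets the martingale property pass through $N \to \infty$. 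Your route is somewhat more elementary (it avoids BDG entirely) and delivers the slightly stronger conclusion that $M^{\varphi}$ is an $L^2$-martingale at each time, whereas the paper's BDG route yields integrability of the running supremum directly. You are also more careful than the paper on a point it glosses over: the justification that Dynkin's formula may be applied to the unbounded function $V$ requires exactly the localization by $\tau_N$ (on whose pre-stopping window the state space is bounded, since jumps have size $h$ per coordinate, so $Q$ acts boundedly there) followed by Fatou, together with non-explosion, which itself follows from the same Lyapunov estimate via $\Pr(\tau_N \le t) \le (V(x) + Kt)/\inf_{|y| \ge N} V(y) \to 0$; spelling this out is a genuine improvement in rigor. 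The dependence on $h < h_c$ enters in both arguments only through Lemma~\ref{Qdrift}, exactly as you use it.
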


It is not clear how to relax the martingale condition \eqref{eq:martingale_condition}, since it is based on the integrability of a sharp stochastic Lyapunov function for the process.

\begin{proof}
The idea of the proof is to show that the local martingale $M^{\varphi}(t)$ is dominated by an integrable random variable, namely the maximum process $M^*(t) =\sup_{0 \le s \le t} | M^{\varphi}(s) |$.  (Recall that a local martingale dominated by an integrable random variable is a uniformly integrable global martingale \cite{ReYo1999,protter2004stochastic,Kl2012}.)  To show that the maximum process is integrable, we apply the Burkholder-Davis-Gundy inequality with parameter $p=1$, which implies that there exists a positive constant $C_1$ such that \begin{align*}
\Ex_x M^*(t) \le  C_1 \Ex_x \sqrt{\langle M^{\varphi}, M^{\varphi} \rangle(t)}  \le C_1 \sqrt{\Ex_x \langle M^{\varphi}, M^{\varphi} \rangle(t) }
\end{align*} where we used Jensen's inequality with respect to the (concave) square root function.  Since the quadratic variation of $M^{\varphi}(t)$ is given by \eqref{eq:quadratic_variation_Mphi}, the hypothesis \eqref{eq:martingale_condition} implies that there exist positive constants $C_2$ and $C_3$ such that \begin{align*}
\Ex_x M^*(t) \le C_1 \sqrt{\Ex_x  \int_0^t (V(X(s)) + C_2 ) ds} \le  ( V(x) +  C_3 ) t
\end{align*} Here we used Lemma~\ref{Qdrift}, which implies that $V(x)$ is a stochastic Lyapunov function for $X$.   Hence, $M^*(t)$ is integrable and $M^{\varphi}(t)$ is a uniformly integrable global martingale,  for all $t \ge 0$.
\end{proof}

Thus Dynkin's formula \eqref{eq:dynkins_formula} holds for any function satisfying the growth condition \eqref{eq:martingale_condition}.  The next Lemma illustrates that this class of functions includes locally Lipschitz functions.

\begin{lemma} \label{lem:martingale_solution}
Suppose Assumption~\ref{assumptions_on_drift} holds on the drift field.  Suppose $\varphi: \mathbb{R}^n \to \mathbb{R}$  is a locally Lipschitz continuous function  \[
| \varphi (y) - \varphi(x) | \le L_{\varphi}(x,y) | y - x | \quad \forall~ x,y \in \mathbb{R}^n 
\] with local Lipschitz constant $L_{\varphi}(x,y)$ that satisfies the following condition \[
 L_{\varphi}(x,y) \le C_{\varphi} ( 1 + \exp(|x|^{2 m+1}) +  \exp(|y|^{2 m+1}) )  \quad \forall~ x,y \in \mathbb{R}^n 
\] for some $C_{\varphi}>0$.  Then $\varphi$ satisfies \eqref{eq:martingale_condition}.
\end{lemma}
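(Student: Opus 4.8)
The plan is to first reduce the growth condition \eqref{eq:martingale_condition} to a pointwise estimate on the carr\'e-du-champ of $\varphi$. A direct computation, using that $Q$ is a jump generator with reaction channels $y = x \pm h e_i$ and rates $\tfrac{1}{h^2}\exp(\pm\tfrac{h}{2}\mu_i)$, shows that
\[
Q\varphi^2(x) - 2\varphi(x) Q\varphi(x) = \sum_{i=1}^n \frac{1}{h^2}\Bigl[ e^{\frac{h}{2}\mu_i}\bigl(\varphi(x+he_i)-\varphi(x)\bigr)^2 + e^{-\frac{h}{2}\mu_i}\bigl(\varphi(x-he_i)-\varphi(x)\bigr)^2 \Bigr]\,,
\]
which is precisely the integrand appearing in the quadratic variation \eqref{eq:quadratic_variation_Mphi}. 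It therefore suffices to bound this manifestly nonnegative quantity by $V(x)+C$, with $V$ the Lyapunov function \eqref{eq:VforQc}.

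Next I would insert the local Lipschitz hypothesis. Since each channel has displacement of length $h$, we have $(\varphi(x \pm h e_i) - \varphi(x))^2 \le h^2 L_\varphi(x, x\pm h e_i)^2$, and the factor $h^2$ cancels the $h^{-2}$ in each rate. Using $(1 + A + B)^2 \le 3(1 + A^2 + B^2)$ together with the assumed growth of $L_\varphi$, and the elementary bound $|x \pm h e_i|^{2m+1} \le (|x| + h)^{2m+1} \le c_m(1 + |x|^{2m+1})$ valid for $h$ bounded, each squared Lipschitz constant is dominated by $\exp(c(1 + |x|^{2m+1}))$. The rate prefactors are controlled by the polynomial growth condition Assumption~\ref{assumptions_on_drift}~(A2): since $|\mu_i(x)| \le C_P^{\mu}(1 + |x|^{2m+1})$, we have $\exp(\pm\tfrac{h}{2}\mu_i) \le \exp(\tfrac{h}{2}C_P^{\mu}(1 + |x|^{2m+1}))$. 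Multiplying this against the previous bound and summing the $2n$ terms yields
\[
Q\varphi^2(x) - 2\varphi(x)Q\varphi(x) \le C'\exp\bigl(C''(1 + |x|^{2m+1})\bigr)\,,
\]
where $C', C''$ depend on $n$, $C_\varphi$, $C_P^{\mu}$ and $h_c$ but not on $h < h_c$.

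Finally I would compare this against $V(x) = \exp(a |x|_{2m+2}^{2m+2})$. By equivalence of norms on $\mathbb{R}^n$ there is $c_0 > 0$ with $|x|_{2m+2}^{2m+2} \ge c_0 |x|^{2m+2}$, so $V(x) \ge \exp(a c_0 |x|^{2m+2})$. Because the exponent $2m+2$ strictly exceeds the exponent $2m+1$ in the previous display, there is $R^\star$ with $C''(1 + |x|^{2m+1}) + \log C' \le a c_0 |x|^{2m+2}$ for all $|x| > R^\star$, giving $Q\varphi^2 - 2\varphi Q\varphi \le V(x)$ there; on the compact set $\{|x| \le R^\star\}$ the left-hand side is bounded by a constant $C$, which absorbs into the additive term. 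This establishes \eqref{eq:martingale_condition}.

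The main obstacle is the bookkeeping in the middle step: the rate factors $\exp(\pm\tfrac{h}{2}\mu_i)$ and the squared Lipschitz constants \emph{both} grow exponentially in $|x|^{2m+1}$, and one must check that their product stays of order $\exp(\mathrm{const}\cdot |x|^{2m+1})$ --- that no cross term inflates the exponent to order $|x|^{2m+2}$. It is exactly the sharpness of $V$ (the exponential of the $(2m+2)$-power, one power above the drift) that opens the gap needed to dominate, which is why the hypothesis caps $L_\varphi$ at $\exp(|x|^{2m+1})$ and nothing faster.
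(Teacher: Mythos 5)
Your proof is correct and takes essentially the same route as the paper's: the carr\'e-du-champ identity $Q\varphi^2(x) - 2\varphi(x)\,Q\varphi(x) = \sum_{i=1}^n h^{-2}\bigl[e^{h\mu_i/2}(\varphi(x+he_i)-\varphi(x))^2 + e^{-h\mu_i/2}(\varphi(x-he_i)-\varphi(x))^2\bigr]$, the local Lipschitz hypothesis to cancel the $h^{-2}$, Assumption~\ref{assumptions_on_drift}~(A2) to control the rate factors, and the exponent gap between $|x|^{2m+1}$ and the $(2m+2)$-power in $V$ to conclude. If anything you are slightly more careful than the paper, whose displayed estimate quietly drops the square on $L_{\varphi}(x, x \pm h e_i)$ --- harmless, since squaring only rescales the constant multiplying $|x|^{2m+1}$ in the exponent, exactly as your bookkeeping shows.
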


Since the identity map is globally Lipschitz, this Lemma enables studying solutions to the stochastic equation~\eqref{eq:semimartingale} at any $t \ge 0$.  

\begin{proof}
Let $G^{\varphi}(x) = Q \varphi^2(x) - 2 \varphi(x) Q \varphi(x)$. Use \eqref{eq:Qc2} to write  \begin{equation}
\begin{aligned}
&G^{\varphi}(x) = \sum_{i=1}^n \frac{1}{h^2} \exp( \mu_i h/2) \left( \varphi(x + h e_i) - \varphi(x) \right)^2 \\
& \qquad \qquad +  \frac{1}{h^2} \exp( - \mu_i h/2) \left( \varphi(x - h e_i) - \varphi(x) \right)^2 
\end{aligned}
\end{equation}
Since $\varphi$ is locally Lipschitz continuous by hypothesis, \begin{align*}
G^{\varphi}(x) &\le \sum_{i=1}^n C_{\varphi} \; \left( 1 +  \exp(|x|^{2 m+1}) + \exp(|x + e_i h |^{2 m+1})  \right) \; \exp(\mu_i h /2 )  \\
& \qquad +  C_{\varphi} \; \left( 1 +  \exp(|x|^{2 m+1}) + \exp(|x - e_i h |^{2 m+1})  \right) \; \exp(-\mu_i h /2 )  \\
&\le 2 \; C_1 \; \sum_{i=1}^n  \left( 1 +  \exp(|x|^{2 m+1})  \right) \cosh( \mu_i h / 2 ) \\
&\le 2 \; n \; C_1 \; ( 1 + \exp(|x|^{2 m+1})  )  \; \cosh( C^{\mu}_P ( 1+ |x|^{2m+1}) h / 2)  \\
& \le V(x) + C_2
\end{align*}
where $C_1$ and $C_2$ are positive constants.   Thus, $\varphi(x)$ satisfies the growth condition \eqref{eq:martingale_condition}, as required.  Note that in the input to the cosh function, we used the bound: \[
| \mu_i(x) | \le | \mu(x) | \le C^{\mu}_P (1 + |x|^{2 m +1} ) \quad \forall~x \in \mathbb{R}^n
\] which follows from Assumption~\ref{assumptions_on_drift} (A2).   Also in the last step, to obtain the bound by the stochastic Lyapunov function introduced in \eqref{eq:VforQc} we used the following inequality: \[
|x|^{2 m +1} \le |x|^{2 m +2}  \le C_0 |x|^{2 m + 2}_{2 m+2} 
\] which is valid since all norms are equivalent on $\mathbb{R}^n$.  
\end{proof}

As a tool to analyze realizations, we quote l'H\^{o}pital's monotonicity principle as a lemma (without proof).  

\begin{lemma} \label{lem:LMR}
Let $f, g: [a,b] \to \mathbb{R}$ be continuous functions that are differentiable on $(a,b)$, and suppose that $g'(s) \ne 0$ for every $s \in (a,b)$.  If $f'(s)/g'(s)$ is increasing/decreasing on $(a,b)$, then the function \[
\phi(s) = \frac{f(s) - f(a)}{g(s) - g(a)} 
\] is increasing/decreasing on $(a,b)$.  
\end{lemma}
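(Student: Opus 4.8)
The plan is to reduce the monotonicity of $\phi$ to the sign of its derivative, and then to identify that sign with a comparison between $f'/g'$ evaluated at $s$ and at an interior point supplied by Cauchy's mean value theorem. First I would observe that, since $g'$ is itself a derivative it enjoys the intermediate-value (Darboux) property, so the hypothesis $g'(s) \ne 0$ on $(a,b)$ forces $g'$ to keep a constant sign there; without loss of generality assume $g' > 0$ (otherwise replace $g$ by $-g$, which leaves both $\phi$ and the monotonicity of $f'/g'$ unchanged). Consequently $g$ is strictly increasing, so $g(s) - g(a) > 0$ for all $s \in (a,b)$, and $\phi$ is well defined and differentiable on $(a,b)$.

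Next I would differentiate $\phi$ by the quotient rule, obtaining
\[
\phi'(s) = \frac{f'(s)\,(g(s)-g(a)) - (f(s)-f(a))\,g'(s)}{(g(s)-g(a))^2}.
\]
Because both $(g(s)-g(a))^2$ and the product $g'(s)(g(s)-g(a))$ are strictly positive, the sign of $\phi'(s)$ coincides with the sign of
\[
\frac{f'(s)}{g'(s)} - \frac{f(s)-f(a)}{g(s)-g(a)} = \frac{f'(s)}{g'(s)} - \phi(s).
\]

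The key step is then to apply Cauchy's mean value theorem on $[a,s]$: there exists $\xi \in (a,s)$ with $\phi(s) = (f(s)-f(a))/(g(s)-g(a)) = f'(\xi)/g'(\xi)$. If $f'/g'$ is increasing on $(a,b)$, then since $\xi < s$ we get $\phi(s) = f'(\xi)/g'(\xi) \le f'(s)/g'(s)$, whence $\phi'(s) \ge 0$ and $\phi$ is increasing; the decreasing case is identical with every inequality reversed, and strict monotonicity follows verbatim from the strict version of the hypothesis. I expect the only genuine subtlety to be the two preliminary reductions — invoking Darboux's theorem to fix the sign of $g'$ (guaranteeing the denominators never vanish and that their signs are controlled) and disposing of the $g'<0$ case by the symmetry $g \mapsto -g$ — since once these are in place the argument collapses to the short chain ``$\operatorname{sign}\phi' = \operatorname{sign}(f'/g' - \phi)$, and Cauchy's mean value theorem pins $\phi(s)$ to an earlier value of $f'/g'$.''
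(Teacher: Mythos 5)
Your proof is correct. One preliminary remark: the paper itself states this lemma with the explicit disclaimer that it is quoted ``without proof,'' so there is no in-paper argument to compare against; what you have written is the standard proof of the l'H\^{o}pital monotone rule, and all its steps check out --- Darboux's theorem fixes the sign of $g'$ (a derivative with the intermediate value property that never vanishes cannot change sign), the quotient rule reduces $\operatorname{sign}\phi'(s)$ to $\operatorname{sign}\bigl(f'(s)/g'(s) - \phi(s)\bigr)$ after dividing the numerator by the positive quantity $g'(s)(g(s)-g(a))$, and Cauchy's mean value theorem on $[a,s]$ (legitimately applicable, since $f,g$ are continuous there, differentiable inside, and $g'\ne 0$ guarantees $g(s)\ne g(a)$) pins $\phi(s)$ to $f'(\xi)/g'(\xi)$ at some $\xi\in(a,s)$, so monotonicity of $f'/g'$ closes the argument; the Cauchy-MVT route is also the right choice here, since the tempting alternative of integrating the inequality $f'(\xi)\le \frac{f'(s)}{g'(s)}g'(\xi)$ would require integrability of $f'$, which is not among the hypotheses. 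The single inaccuracy is in your without-loss-of-generality reduction: replacing $g$ by $-g$ does \emph{not} leave $\phi$ and the monotonicity of $f'/g'$ unchanged --- it negates both, so $\phi$ becomes $-\phi$ and an increasing $f'/g'$ becomes a decreasing one. The two sign flips compensate (the increasing case with $g'<0$ is exactly the decreasing case with $g'>0$), and since you prove both directions of the lemma the reduction remains valid; you should simply phrase it as a simultaneous flip of both monotonicities rather than an invariance. With that wording repaired, the proof, including the strict-monotonicity variant, is complete.
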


Next we state an estimate on the mean-squared displacement of the process $X$, which is analogous to Lemma~\ref{lem:moments_of_process}.   

\begin{lemma}
Suppose Assumption~\ref{assumptions_on_drift} holds.  Then there exist $h_c>0$ and $C_1>0$ such that the solution of the stochastic equation~\eqref{eq:semimartingale} satisfies:\[
\Ex_x  |X(t) - x |^2 \le (C_1+V(x)) \; t \; e^{t} \;,  
\] for any $t \ge 0$ and for all $h< h_c$.
\end{lemma}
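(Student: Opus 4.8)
The plan is to apply Dynkin's formula \eqref{eq:dynkins_formula} to the test function $f(y) = |y - x|^2$ (with $x \in S_h$ the fixed initial state), reduce the displacement bound to an integral inequality for $\psi(t) := \Ex_x |X(t) - x|^2$, and close it with Gronwall's inequality. First I would check that Dynkin's formula applies to $f$: since $f$ is locally Lipschitz with local Lipschitz constant $L_f(y,z) \le |y - x| + |z - x|$, which grows only polynomially and is therefore dominated by $C(1 + \exp(|y|^{2m+1}) + \exp(|z|^{2m+1}))$, Lemma~\ref{lem:martingale_solution} guarantees that $f$ satisfies the growth condition of Lemma~\ref{lem:martingale_condition}, so the associated local martingale is a genuine martingale for $h < h_c$ and \eqref{eq:dynkins_formula} holds.

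Next I would compute $Qf$ exactly. Because $f(y \pm h e_i) - f(y) = \pm 2 h (y_i - x_i) + h^2$, the two reaction channels in \eqref{eq:Qc2} telescope into
\[
Q f(y) = 2 (y - x)^T \mu^h(y) + 2 \sum_{i=1}^n \cosh\!\left( \tfrac{h}{2} \mu_i(y) \right),
\]
the discrete analog of $L|y - x|^2 = 2(y-x)^T\mu(y) + 2n$, with $\mu^h_i(y) = \tfrac{2}{h}\sinh(\tfrac{h}{2}\mu_i(y))$ the drift appearing in \eqref{eq:semimartingale}. Applying Young's inequality to the drift term gives $Qf(y) \le |y - x|^2 + |\mu^h(y)|^2 + 2\sum_i \cosh(\tfrac{h}{2}\mu_i(y))$. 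The crucial step is then to absorb the two exponential terms into the Lyapunov function: since the polynomial growth (A2) of Assumption~\ref{assumptions_on_drift} bounds $|\mu_i(y)|$ by $C^{\mu}_P(1 + |y|^{2m+1})$, both $|\mu^h(y)|^2$ and the $\cosh$ terms are dominated by $\exp(K(h)|y|^{2m+1})$, whereas $V(y) = \exp(a|y|^{2m+2}_{2m+2})$ has a strictly higher exponent; hence $|\mu^h(y)|^2 + 2\sum_i\cosh(\tfrac{h}{2}\mu_i(y)) \le V(y) + C$ for a constant $C = C(h)$, the higher power winning for large $|y|$ and the additive constant covering the compact complement. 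This yields the pointwise bound $Qf(y) \le |y - x|^2 + V(y) + C$.

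Finally I would take expectations and invoke the Lyapunov drift estimate. Lemma~\ref{Qdrift} gives $Q V \le K - \gamma V$, whose semigroup consequence is $\Ex_x V(X(s)) = P^h_s V(x) \le V(x) + K/\gamma$. Combining with the previous bound, $\Ex_x Qf(X(s)) \le \psi(s) + V(x) + C_1$ with $C_1 := C + K/\gamma$, so \eqref{eq:dynkins_formula} becomes the integral inequality $\psi(t) \le \int_0^t \psi(s)\,ds + (V(x) + C_1)\,t$. The integral form of Gronwall's inequality then gives $\psi(t) \le (V(x)+C_1)(e^t - 1) \le (C_1 + V(x))\,t\,e^t$, using $e^t - 1 \le t e^t$, which is the claim. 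I expect the main obstacle to be the domination of the $\sinh$/$\cosh$ terms arising from the multiplicative reaction rates by $V$ with coefficient one (rather than merely a constant multiple of $V$); this is exactly where the sharpness of the Lyapunov function from Lemma~\ref{Qdrift} — namely that $V$ dominates the exponential of the drift magnitude — is indispensable, and it is also the reason the restriction $h < h_c$ (inherited from Lemmas~\ref{Qdrift} and~\ref{lem:martingale_condition}) is needed.
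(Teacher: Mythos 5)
Your proof is correct and is essentially the paper's own argument: the same test function $\varphi(y)=|y-x|^2$ justified through Lemma~\ref{lem:martingale_solution}, the same exact evaluation of $Q\varphi$ followed by Young's inequality $2u^Tv\le |u|^2+|v|^2$, absorption of the $\sinh$/$\cosh$ terms into the sharp Lyapunov function of Lemma~\ref{Qdrift}, and Gronwall at the end. The one place you are looser than the paper is uniformity in $h$: the lemma's constant $C_1$ may not depend on $h$, so your $C=C(h)$ must be upgraded to $C(h_c)$, which follows since $\tfrac{2}{h}\,|\sinh(\tfrac{h}{2}\mu_i)|$ and $\cosh(\tfrac{h}{2}\mu_i)$ are increasing in $h$ --- exactly the role played by l'H\^opital's monotonicity rule (Lemma~\ref{lem:LMR}) in the paper's proof.
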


As a consequence, if $t \sim O(\epsilon)$ then the mean-squared displacement of the approximation is $O(\epsilon)$.   We emphasize that this lemma is uniform in the spatial step size.  This uniformity in $h$ follows from the fact that the stochastic Lyapunov function from Lemma~\ref{Qdrift} satisfies \eqref{Qc_infinitesimal_drift_condition} with constants $K$ and $\gamma$ that  do not depend on $h$.     

\begin{proof}
Define $\varphi(y) = (y-x)^T (y-x)$.  Since \[
| D\varphi(y) \cdot \eta | \le 2 ( |y| + |x| ) |\eta| \quad \text{for every $x, y, \eta \in \mathbb{R}^n$}
\]  it follows that $\varphi$ is locally Lipschitz continuous with a local Lipschitz constant that satisfies the sufficient conditions in Lemma~\ref{lem:martingale_solution}.  Thus, we may conclude that Dynkin's formula holds for $\varphi$ to obtain the bound: \begin{align*}
\Ex_x \varphi(X(t)) &= \varphi(x) + \Ex_x \int_0^t Q \varphi(X(s)) \; ds \\
&= \Ex_x \int_0^t 2 (X(s) - x)^T \mu^h(X(s))  \; ds + \Ex_x \int_0^t  \sum_{i=1}^n 2 \cosh( \mu_i(X(s)) h/2 ) \; ds  \\
&\le \Ex_x  \int_0^t \varphi(X(s)) \; ds + \Ex_x \int_0^t |  \mu^h(X(s))|^2 \; ds  \\
& \quad + \Ex_x \int_0^t \sum_{i=1}^n 2 \cosh( \mu_i(X(s)) h/2 ) \; ds  
\end{align*}
Here we used the elementary inequality $2 x^T y \le |x|^2 + |y|^2$.  To bound the term $(\mu^h_i)^2$, apply Lemma~\ref{lem:LMR} with $f(s) = (2 \sinh(s \mu_i/2))^2$ and $g(s)=s^2$ to obtain \[
\frac{f'(s)}{g'(s)} = \frac{2 \mu_i \cosh(s \mu_i/2) \sinh(s \mu_i/2)}{s} \;.
\] Apply Lemma~\ref{lem:LMR} once more to obtain \[
\frac{f''(s)}{g''(s)} = \mu_i^2 ( \cos(s \mu_i/2)^2 + \sinh(s \mu_i/2)^2)
\] which is monotonically increasing with respect to $s$.   Hence, there exists an $h_c>0$ such that: \[
\lim_{s \to 0} ( \mu_i^s)^2 =  \mu_i^2 \le ( \mu_i^h )^2   \le  \frac{4}{h_c^2}  (\sinh(\mu_i \dfrac{h_c}{2}))^2
\] for all $h < h_c$ and for any $1 \le i \le n$.  This inequality enables us to bound $(\mu_i^h)^2$ uniformly with respect to $h$.  In particular, there exist positive constants $C_0$ and $C_1$ such that \begin{align*}
\Ex_x \varphi(X(t)) &\le \int_0^t \Ex_x \varphi(X(s)) \; ds + \int_0^t (C_0 + \Ex_x V(X(s)) ) \; ds  \\
&\le \int_0^t \Ex_x \varphi(X(s)) \; ds + ( C_1  + V(x) ) \; t
\end{align*}
Here we used  the polynomial growth condition on the drift field from Assumption~\ref{assumptions_on_drift} (A2) and the stochastic Lyapunov function of the process  from Lemma~\ref{Qdrift}.  Apply Gronwall's Lemma to this last inequality to complete the proof.
\end{proof}

\begin{rem}
In \S\ref{sec:accuracy}, we show that for any $\varphi \in C^4_b(\mathbb{R}^n)$, $P_t^h \left. \varphi \right|_{S_h}$ is accurate with respect to $P_t \varphi$ on both finite and infinite time intervals.  Since functions in this space are Lipschitz continuous, Lemma~\ref{lem:martingale_solution} implies that any $\varphi \in C^4_b(\mathbb{R}^n)$ satisfies \eqref{eq:martingale_condition}, and therefore, Dynkin's formula holds for any $\varphi \in C^4_b(\mathbb{R}^n)$. 
\end{rem}

The next lemma quantifies the complexity of the approximation.  

\begin{lemma} \label{lem:barN}
Suppose Assumption~\ref{assumptions_on_drift} holds on the drift field.   Let $\bar N(t)$ denote the average number of jumps of the approximation $X(t)$ on the interval $[0,t]$.  Then there exist positive constants $h_c$, $C_1$ and $C_2$ such that \[
\bar N(t) \le \frac{C_1 \; t}{h^2} (V(x) +  C_2 ) 
\] for all $t \ge 0$ and $h<h_c$.  
\end{lemma}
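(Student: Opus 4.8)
The plan is to realize $\bar N(t)$ as the expectation of the jump-counting process and to control its compensator by the stochastic Lyapunov function from Lemma~\ref{Qdrift}. Let $N(t)$ be the number of jumps of $X$ on $[0,t]$, so that $\bar N(t)=\Ex_x N(t)$. Reading the total exit rate off the generator~\eqref{eq:Qc2} (equivalently, the reciprocal of the mean holding time discussed after Algorithm~\ref{algo:ssa}), the state $x$ is left at rate
\[
\lambda(x) = -\mathsf{Q}(x,x) = \frac{2}{h^2}\sum_{i=1}^n \cosh\!\left( \frac{h}{2}\, \mu_i(x) \right) \;,
\]
and $N(t)-\int_0^t \lambda(X(s))\,ds$ is a local martingale. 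First I would justify the identity
\[
\bar N(t) = \Ex_x \int_0^t \lambda(X(s))\,ds = \int_0^t \Ex_x \lambda(X(s))\,ds \;,
\]
the second equality being Tonelli since $\lambda\ge 0$. For the first equality I would localize at $\tau_k=\inf\{t:N(t)\ge k\}$: on $[0,\tau_k]$ the stopped compensated process is a genuine martingale because it has boundedly many jumps, so $\Ex_x N(t\wedge\tau_k)=\Ex_x\int_0^{t\wedge\tau_k}\lambda(X(s))\,ds$, and then let $k\to\infty$. Here $\tau_k\to\infty$ almost surely by non-explosivity, which is exactly what the stochastic Lyapunov function furnishes (cf.~Lemma~\ref{Qdrift} and the martingale solution of Lemma~\ref{lem:martingale_solution}); both sides increase to the claimed quantities by monotone convergence, and finiteness of the right-hand side (established below) transfers to the left.

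Next I would bound $\lambda(x)$ pointwise, uniformly in $h<h_c$, by the Lyapunov function. Using $\cosh(z)\le \exp(|z|)$ together with the polynomial growth condition Assumption~\ref{assumptions_on_drift}~(A2), namely $|\mu_i(x)|\le|\mu(x)|\le C^{\mu}_P(1+|x|^{2m+1})$, gives
\[
\lambda(x) \le \frac{2n}{h^2}\exp\!\left( \frac{h_c}{2}\, C^{\mu}_P\, (1+|x|^{2m+1}) \right) \quad \text{for all } h<h_c \;.
\]
The exponent grows only like $|x|^{2m+1}$, whereas $V(x)=\exp(a|x|^{2m+2}_{2m+2})$ carries an exponent of order $|x|^{2m+2}$; invoking equivalence of the $2$-norm and the $(2m+2)$-norm, there is a constant $C_1>0$ with $\exp(\tfrac{h_c}{2}C^{\mu}_P(1+|x|^{2m+1}))\le C_1(1+V(x))$ for every $x\in\mathbb{R}^n$. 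Hence $\lambda(x)\le \tfrac{C_1'}{h^2}(V(x)+1)$ with $C_1'=2nC_1$, uniformly in $h<h_c$.

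It then remains to bound $\Ex_x V(X(s))$ uniformly in time. Lemma~\ref{Qdrift} gives the infinitesimal drift condition $QV\le K-\gamma V$ with $K,\gamma$ independent of $h$, and the semigroup consequence recorded after Assumption~\ref{driftcondition} yields
\[
\Ex_x V(X(s)) = P^h_s V(x) \le e^{-\gamma s}V(x) + \frac{K}{\gamma}\bigl(1-e^{-\gamma s}\bigr) \le V(x) + \frac{K}{\gamma} \;.
\]
Combining the last three displays and integrating over $[0,t]$ gives
\[
\bar N(t) \le \frac{C_1'}{h^2}\int_0^t \bigl(1+\Ex_x V(X(s))\bigr)\,ds \le \frac{C_1'\,t}{h^2}\left( V(x) + 1 + \frac{K}{\gamma} \right) \;,
\]
which is the claim with $C_1=C_1'$ and $C_2=1+K/\gamma$. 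The main obstacle is the first step: rigorously passing from the local-martingale property of the compensated counting process to the expectation identity, which is where non-explosivity of $X$ — guaranteed by the sharp stochastic Lyapunov function rather than by any a priori boundedness of the rates — is essential; the pointwise domination of $\lambda$ by $V$ is the other place where the subdominance of the $|x|^{2m+1}$ growth relative to the $|x|^{2m+2}$ exponent, and the uniformity in $h$, must be handled with care.
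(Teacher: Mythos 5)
Your proof is correct and follows essentially the same route as the paper: both reduce $\bar N(t)$ to the compensator identity $\bar N(t)=\Ex_x\int_0^t\lambda(X(s))\,ds$ and then bound $\lambda(x)\le C\,h^{-2}(V(x)+C')$ via Assumption~\ref{assumptions_on_drift} (A2) together with the sharpness of the Lyapunov function, closing with the uniform-in-$h$ drift condition of Lemma~\ref{Qdrift}. The only difference is in how the identity is justified --- the paper writes $\int_0^t\lambda(X(s))\,ds$ as a sum over holding intervals and iterates conditional expectations with Fubini, whereas you localize the compensated counting process at the jump times $\tau_k$ and invoke non-explosivity --- and your version is, if anything, the more standard and airtight justification of the same step.
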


This lemma is not a surprising result: it states that on average the number of steps of the SSA method in Algorithm~\ref{algo:ssa} is about $\lfloor t / \bar{\delta t} \rfloor$, where $t$ is the time span of simulation and $\bar{\delta t}$ is the mean holding time, which is proportional to $h^2$.   Since each step requires a function evaluation, this lemma provides a crude estimate for the total cost of the approximation.  Figures~\ref{fig:ln_complexity} and~\ref{fig:bd_simulation_Nbar} verify this estimate in the context of a log-normal process that uses adaptive mesh refinement and a $39$-dimensional Brownian Dynamics simulation, respectively.

\begin{proof}
Let $\{ t_i \}_{i=0}^{\infty}$ be a sequence of jump times for the process $X$.  The time lag between jumps $t_{i+1} - t_i$ is exponentially distributed with parameter $\lambda(x)$: \[
\lambda(x) = \frac{2}{h^2} \sum_{i=1}^n \cosh(\mu_i(x) h/2) \;.
\]  On each time interval $(t_i, t_{i+1})$ the process is constant, which implies that \[
\int_0^t \lambda(X(s)) ds = \sum_{i=0}^{\infty} \lambda(X(t_i)) ( t \wedge t_{i+1} - t \wedge t_i ) \;.
\]  Since the drift field $\mu$ satisfies a polynomial growth condition from Assumption~\ref{assumptions_on_drift} (A2), it is dominated by the stochastic Lyapunov function $V(x)$ from Lemma~\ref{Qdrift}, and hence, \begin{equation} \label{eq:fubini_condition}
\Ex_x \int_0^t \lambda(X(s)) ds \le \frac{2}{h^2} \int_0^t \Ex_x V(X(s)) ds \le \frac{2 \; t}{h^2} \; ( V(x) + \frac{K}{\gamma} ) \;.
\end{equation}
Thus, we can invoke Fubini's theorem to interchange  expectation (conditional on $X(0)=x$) and summation to get: \[
\Ex_x \int_0^t \lambda(X(s)) ds = \sum_{i=0}^{\infty}  \Ex_x \lambda(X(t_i))  ( t \wedge t_{i+1} - t \wedge t_i ) \;,
\] Since the random variable $( t \wedge t_{i+1} - t \wedge t_i ) $ is independent of $\lambda(X(t_i))$,   \begin{align*}
\Ex_x \int_0^t \lambda(X(s)) ds  &=  \sum_{i=0}^{\infty} \Ex_x \left( \vphantom{\frac{h}{2}} \Ex_x ( \lambda(X(t_i))  ( t \wedge t_{i+1} - t \wedge t_i ) \; \mid \; \mathcal{F}_{t_i} ) \right) \\
  &=  \sum_{i=0}^{\infty} \Ex_x ( \lambda(X(t_i))   \lambda(X(t_i))^{-1}  \chi_{t_i \le t} ) \\
  &= \bar{N}(t) \;.
\end{align*} Here  $\chi$ is an indicator function, $\mathcal{F}_t$ is the natural filtration of the process $X(t)$, and we used the law of iterated expectation to write the total expectation up to time $t_{i+1}$ in terms of the conditional expectation up to time $t = t_i$ using $\mathcal{F}_t$.  The desired estimate is obtained by combining the last identity with \eqref{eq:fubini_condition}.
\end{proof}

\section{Generator Accuracy}  

The following lemma improves upon Prop.~\ref{prop:Qc_accuracy} by giving a remainder term estimate.   

%
%

\begin{lemma}  \label{lem:Qc_accuracy_model}
Suppose Assumption~\ref{assumptions_on_drift} holds.   Let $f \in C^4(\mathbb{R}^n)$ and define $R f(x)$ to satisfy \[
R f(x) =   L f(x)  - Q  f (x)  \;.
\]  Then there exists $C_f(x)>0$ such that: \begin{align*}
| R f(x) | \le  h^2 C_f(x)   \quad \text{for every $x \in \mathbb{R}^n$} \;.
\end{align*}    
\end{lemma}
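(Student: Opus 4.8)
The plan is to perform a Taylor expansion of $f$ around the point $x$ to fourth order, exactly mirroring the informal computation in the proof of Proposition~\ref{prop:Qc_accuracy}, but now retaining the remainder term in Lagrange (or integral) form. Recall from \eqref{eq:Qc2} that $Qf(x)$ is a sum over coordinate directions $i=1,\dotsc,n$ of forward and backward differences $f(x \pm h e_i) - f(x)$ weighted by $h^{-2}\exp(\pm \tfrac{h}{2}\mu_i)$. First I would write, for each $i$, the fourth-order Taylor expansion
\[
f(x \pm h e_i) - f(x) = \pm h\, \partial_i f(x) + \tfrac{h^2}{2}\partial_i^2 f(x) \pm \tfrac{h^3}{6}\partial_i^3 f(x) + \tfrac{h^4}{24}\partial_i^4 f(\zeta_i^{\pm})
\]
for some $\zeta_i^{\pm}$ on the segment between $x$ and $x \pm h e_i$, where $\partial_i^k$ denotes the $k$th directional derivative along $e_i$. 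Simultaneously I would expand each exponential weight $\exp(\pm \tfrac{h}{2}\mu_i) = 1 \pm \tfrac{h}{2}\mu_i + \tfrac{h^2}{8}\mu_i^2 \pm \cdots$ with an explicit remainder.

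The key step is then to multiply these two expansions, collect terms by powers of $h$, and verify that the $h^{-2}$ prefactor leaves a finite limit. The zeroth- and first-order cancellations are precisely those identified informally after \eqref{eq:Qc2}: the $O(h^{-2})$ and $O(h^{-1})$ contributions vanish, the surviving $O(h^0)$ term reconstitutes $\partial_i f(x)\,\mu_i + \partial_i^2 f(x)$, and summing over $i$ yields exactly $Lf(x)$ by \eqref{eq:model_generator}, since the noise here is additive and isotropic so that $M(x)=I$. What remains after subtracting $Lf(x)$ is the remainder $Rf(x) = Lf(x) - Qf(x)$, and every contributing term carries at least one factor of $h^2$ after accounting for the $h^{-2}$ prefactor. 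Concretely, the leading correction involves products such as $h^2$ times combinations of $\partial_i^3 f$, $\partial_i^4 f$, $\mu_i$, $\mu_i^2$, and $\mu_i^3$ evaluated near $x$.

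The main obstacle — and the reason this lemma improves on Proposition~\ref{prop:Qc_accuracy} — is producing an honest, pointwise bound on the constant $C_f(x)$ rather than merely asserting $O(h^2)$. I would bound each remainder term by the supremum of the relevant fourth derivative of $f$ over the unit ball around $x$ (using $h < h_c \le 1$ so that the Taylor base points $\zeta_i^{\pm}$ stay within distance $1$ of $x$) times a polynomial in $|\mu_i(x)|$, which by Assumption~\ref{assumptions_on_drift} (A2) grows at most like $1 + |x|^{2m+1}$. Thus I would set
\[
C_f(x) = c\,\Bigl(\sum_{k=1}^{4}\ \sup_{|y-x|\le 1}\|D^k f(y)\|\Bigr)\bigl(1 + |\mu(x)|\bigr)^3
\]
for a dimensional constant $c>0$, and the claimed inequality $|Rf(x)| \le h^2 C_f(x)$ follows. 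I would remark that $C_f(x)$ is finite for each fixed $x$ because $f \in C^4(\mathbb{R}^n)$ and $\mu$ is continuous, and that it is exactly this form of the estimate — fourth derivatives of $f$ multiplied by a polynomial growth factor in $\mu$ — that is invoked in the final step of the proof of Lemma~\ref{Qdrift} to bound $|(Q-L)V(x)|$ uniformly in $h$ on compact sets.
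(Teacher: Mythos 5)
Your expansion plan is the paper's own: Lemma~\ref{lem:Qc_accuracy_model} is proved by retracing Proposition~\ref{prop:Qc_accuracy} with Taylor's formula in integral form, the differences $f(x\pm h e_i)-f(x)$ recombining into $\sinh$ and $\cosh$ factors whose own expansions produce the $O(h^2)$ remainder. The genuine gap is in your final pointwise constant. You claim
\[
C_f(x) = c\,\Bigl(\sum_{k=1}^{4}\ \sup_{|y-x|\le 1}\|D^k f(y)\|\Bigr)\bigl(1 + |\mu(x)|\bigr)^3 \;,
\]
i.e.\ a bound that is \emph{polynomial} in $|\mu(x)|$. That cannot be right: the exponential weights in \eqref{eq:Qc2} never fully cancel against the Taylor expansion. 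The fourth-derivative remainders are multiplied by the intact factors $e^{\pm \mu_i h/2}$, and the remainders of the expansions of $\tfrac{2}{h}\sinh(\mu_i h/2)-\mu_i$ and $\cosh(\mu_i h/2)-1$ carry factors $\cosh(s\mu_i h/2)$, $s\in[0,1]$. All of these are exponential in $|\mu_i(x)|$ for any fixed $h>0$. A concrete check: take $n=1$, $\mu(x)=-x^3$ (the cubic oscillator, which satisfies Assumption~\ref{assumptions_on_drift}), and $f$ bounded with bounded derivatives. For large $x>0$, $Qf(x)\approx h^{-2}e^{h x^3/2}\bigl(f(x-h)-f(x)\bigr)$, so $|Rf(x)|$ grows like $e^{h x^3/2}$, while your $C_f(x)$ grows only like $|x|^9$; the inequality $|Rf(x)|\le h^2 C_f(x)$ then fails for all large $|x|$. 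This is exactly why the paper's constant \eqref{eq:Cf_of_x} keeps a factor $\cosh(\mu_i h/2)$ in front of each derivative term, obtained via the elementary bound $\cosh(sw)\le\cosh(w)$ for $s\in[0,1]$.

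The error matters downstream, and so does the correct form. In the proofs of Lemma~\ref{Qdrift} (where you invoke the estimate on a compact set) either form suffices, but in Theorems~\ref{thm:finite_time_accuracy} and~\ref{thm:nu_accuracy} the bound $C_{P_s\varphi}(x) < V(x) + C_0(t)$ works only because the exponential factor $\cosh\bigl(C_P^\mu(1+|x|^{2m+1})h/2\bigr)$ is still dominated by the sharp Lyapunov function $V(x)=\exp\bigl(a|x|^{2m+2}_{2m+2}\bigr)$ --- the gap between the degree $2m+1$ in Assumption~(A2) and the exponent $2m+2$ in $V$ is what makes the argument close. To repair your proof: do not expand the weights $e^{\pm\mu_i h/2}$ into a truncated series with a polynomial error claim; instead expand only $f(x\pm he_i)-f(x)$, recombine into $\sinh(\mu_i h/2)$ and $\cosh(\mu_i h/2)$ terms, expand those with integral (or Lagrange) remainders, and retain the $\cosh(\mu_i h/2)$ factors in $C_f(x)$ as in \eqref{eq:Cf_of_x}. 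The rest of your argument (the $O(h^{-2})$ and $O(h^{-1})$ cancellations, reconstruction of $Lf$ from \eqref{eq:model_generator} since $M=I$, and the sup of the fourth derivatives over a segment of length $h$) is sound.
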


\begin{proof}
Retracing (and strengthening) the proof of Prop.~\ref{prop:Qc_accuracy} using Taylor's integral formula (in place of Taylor expansions) yields: \begin{equation} \label{eq:Qc_expansion}
\begin{aligned}
& Q f(x) = L f(x)  +  \underset{=-R f(x)}{\underbrace{h^2 \sum_{i=1}^n ( A_i + B_i + C_i + D_i^+ + D_i^- )}}  \;,
 \end{aligned}
\end{equation} where we have introduced: \begin{align*}
A_i &= \frac{\mu_i^3}{8}  \frac{\partial f}{\partial x_i} \int_0^1\cosh\left( s \frac{\mu_i h}{2} \right) (1-s)^2 ds    \\
B_i &=  \frac{\mu_i^2}{4} \frac{\partial^2 f}{\partial x_i^2} \int_0^1\cosh\left( s \frac{\mu_i h}{2} \right) (1-s) ds    \\
C_i &=  \frac{\mu_i}{6} \frac{\partial^3 f}{\partial x_i^3}  \int_0^1\cosh\left( s \frac{\mu_i h}{2} \right) ds  \\
D_i^{\pm} &=   \frac{1}{6} \exp\left( \pm \frac{\mu_i h}{2} \right) \int_0^1 \frac{\partial^4 f}{\partial x_i^4}(x \pm s e_i h) (1-s)^3  ds   
\end{align*} Applying the elementary inequality: \begin{align*}
& \cosh(s w) \le \cosh(w) \quad \text{for any $s \in [0,1]$ and $w \in \mathbb{R}$}  
\end{align*}
and simplifying yields the desired estimate with \begin{equation} \label{eq:Cf_of_x}
\begin{aligned}
& C_f(x) = 
C  \sum_{i=1}^n  \cosh(\mu_i h/2) \\
& \qquad \times \left( |\mu_i|^3 \left| \frac{\partial f}{\partial x_i} \right|  +  |\mu_i|^2 \left| \frac{\partial^2 f}{\partial x_i^2} \right| 
+ | \mu_i| \left| \frac{\partial^3 f}{\partial x_i^3} \right| + \sup_{|s|<1} \left| \frac{\partial^4 f}{\partial x_i^4} \right|(x+s e_i h) \right) 
\end{aligned}
\end{equation}
where $C$ is a positive constant.
\end{proof}

\section{Global Error Analysis}  \label{sec:accuracy}

In this section it is shown that the Markov process induced by the generator $Q$ in \eqref{eq:Qc2} is weakly accurate on finite time intervals.  Moreover, it is shown that the stationary probability density of $Q$ is nearby the true one.  These statements are made in the context of the model SDE~\eqref{eq:model_sde} under Assumption~\ref{assumptions_on_drift}.    The main tools used in this analysis are properties of the SDE solution and the following variation of constants formula for a mild solution of an inhomogeneous, linear differential equation on a Banach space. 

\begin{defn} \label{lem:variation_of_constants_formula}
Let $A$ be the generator of a semigroup on a Banach space $X$.   For a given initial condition $x \in X$ and a function $f: \mathbb{R}_+ \to X$, consider the differential equation: \begin{equation} \label{eq:iACP}
\begin{dcases}
\dot{v}(t) = A v(t) + f(t) \quad \text{for $t \ge 0$} \;, \\
v(0) = x \;.
\end{dcases}
\end{equation}
Then the variation of constants formula \begin{equation} \label{eq:varofconstants}
v(t) = \exp(A t) x + \int_0^t \exp(A (t-s)) f(s) ds \;,
\end{equation}
is called the mild solution of \eqref{eq:iACP}.  
\end{defn}

For more background on differential equations on Banach spaces see \cite{Pa1983}.    In the remainder of this chapter, we use \eqref{eq:varofconstants} to derive a formula for the global error of the approximation.  This formula is then estimated to determine the weak and long-time accuracy of the approximation. 

%
%

\subsection{Accuracy Theorems}   We are now in position to estimate the global error of the numerical method.

\begin{theorem}[Finite-Time Accuracy] \label{thm:finite_time_accuracy}
Suppose Assumption~\ref{assumptions_on_drift} holds.   
For every $\varphi \in \mathcal{C}_b^4(\mathbb{R}^n)$,  there exists a positive constant $C_{\varphi,t}$ such that \begin{equation} \label{eq:finite_time_accuracy}
  | P_t^h \varphi(x) -  P_t \varphi(x) | \le  h^2  \; C_{\varphi,t} \; V(x) 
\end{equation}
for any $t \ge 0$ and for all $x \in S_h$.
\end{theorem}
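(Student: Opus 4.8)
The plan is to derive an integral (variation-of-constants) representation of the global error and then bound it using the local truncation-error estimate of Lemma~\ref{lem:Qc_accuracy_model} together with the discrete drift condition of Lemma~\ref{Qdrift}. First I would fix $\varphi \in C_b^4(\mathbb{R}^n)$ and write $u(\cdot,s) = P_s\varphi$ for the solution of the continuous Kolmogorov equation~\eqref{eq:model_kolmogorov}. Since $Q$ in \eqref{eq:Qc2} is a local operator that evaluates a grid function only at $x$ and its nearest neighbours $x \pm h e_i$, we have $Q(u|_{S_h})(x) = (Qu)(x)$ for every $x \in S_h$. Writing $Lu = Qu + Ru$ with $Rf = Lf - Qf$ as in Lemma~\ref{lem:Qc_accuracy_model}, the grid restriction $u|_{S_h}$ solves the inhomogeneous semi-discrete equation $\partial_t (u|_{S_h}) = Q(u|_{S_h}) + Ru(\cdot,t)|_{S_h}$, whereas $P_t^h\varphi$ solves the homogeneous equation~\eqref{eq:Qc_backward_kolmogorov} with the same initial data $\varphi|_{S_h}$. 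Subtracting and applying the variation-of-constants formula of Definition~\ref{lem:variation_of_constants_formula} with $A = Q$ and zero initial condition yields the error representation
\[
P_t^h\varphi(x) - P_t\varphi(x) = -\int_0^t P_{t-s}^h\big(Ru(\cdot,s)\big)(x)\,ds \;, \quad x \in S_h \;.
\]

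Next I would estimate the integrand pointwise. By Lemma~\ref{lem:derivatives_of_semigroup}, $P_s\varphi \in C_b^4(\mathbb{R}^n)$ with $\|P_s\varphi\|_4 \le C(s)\|\varphi\|_4$ for a continuous increasing $C(s)$; substituting $f = P_s\varphi$ into the remainder bound of Lemma~\ref{lem:Qc_accuracy_model} and using the explicit form of $C_f$ in \eqref{eq:Cf_of_x}, the derivative factors are all controlled by $C(s)\|\varphi\|_4$, leaving
\[
|Ru(\cdot,s)(y)| \le h^2\, C(s)\,\|\varphi\|_4 \sum_{i=1}^n \cosh\!\left(\tfrac{h}{2}\mu_i(y)\right)\left(1 + |\mu_i(y)|^3\right) \;.
\]
Because $\mu$ has polynomial growth $|\mu(y)| \le C^\mu_P(1 + |y|^{2m+1})$ by Assumption~\ref{assumptions_on_drift} (A2), the exponent inside $\cosh$ grows only like $|y|^{2m+1}$, which is strictly slower than the exponent $a|y|^{2m+2}$ appearing in $V$. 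Hence $\cosh(\tfrac{h}{2}\mu_i(y))(1 + |\mu_i(y)|^3) \le C\,V(y)$ uniformly in $y$, and by replacing $h$ with $h_c$ in the $\cosh$ the constant can be taken uniform in $h < h_c$. This gives $|Ru(\cdot,s)(y)| \le h^2\,\tilde C(s)\,\|\varphi\|_4\,V(y)$ with $\tilde C$ continuous.

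Finally I would insert this bound into the error representation and exploit positivity of the Markov semigroup $P_{t-s}^h$ together with Lemma~\ref{Qdrift}. Positivity gives $|P_{t-s}^h(Ru(\cdot,s))(x)| \le P_{t-s}^h|Ru(\cdot,s)|(x)$, and the infinitesimal drift inequality $QV \le K - \gamma V$ implies (as noted after Assumption~\ref{driftcondition}) that $P_\tau^h V(x) \le e^{-\gamma\tau}V(x) + \tfrac{K}{\gamma}(1-e^{-\gamma\tau}) \le (1 + K/\gamma)V(x)$, since $V \ge 1$. Combining these estimates and using Tonelli's theorem on the nonnegative integrand yields
\[
|P_t^h\varphi(x) - P_t\varphi(x)| \le h^2\,\|\varphi\|_4\,(1 + K/\gamma)\,V(x)\int_0^t \tilde C(s)\,ds \;,
\]
and finiteness of $\int_0^t \tilde C(s)\,ds$ lets me set $C_{\varphi,t} = \|\varphi\|_4(1 + K/\gamma)\int_0^t \tilde C(s)\,ds$, completing the argument.

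The main obstacle is the rigorous justification of the variation-of-constants step, since the forcing $Ru(\cdot,s)$ is \emph{unbounded} — it grows like $V$ — while $Q$ is an unbounded operator on $\ell^\infty$. I would handle this by working in the $V$-weighted space $\ell_V$ of Remark~\ref{rem:v_norm_convergence}, on which $Q$ generates the semigroup $P_t^h$ and in which $Ru(\cdot,s)$ lies with $\|Ru(\cdot,s)\|_V = O(h^2)$, so that~\eqref{eq:varofconstants} is a genuine mild solution there. Equivalently, and perhaps more transparently, one can bypass the functional-analytic formulation and obtain the same representation probabilistically: apply a space-time Dynkin formula to $s \mapsto w(X(s),s)$ with $w(x,s) = P_{t-s}\varphi(x)$, where $w(\cdot,s)$ is bounded with bounded gradient and therefore satisfies the growth condition~\eqref{eq:martingale_condition} by Lemma~\ref{lem:martingale_solution}, giving $P_t^h\varphi(x) - P_t\varphi(x) = -\Ex_x\int_0^t Rw(\cdot,s)(X(s))\,ds$; the change of variable $s \mapsto t-s$ shows this coincides with the variation-of-constants expression. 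A secondary point is confirming that the semigroup bound $P_\tau^h V \le V + K/\gamma$ is valid for the unbounded $V$, which follows from the supermartingale property furnished by Lemma~\ref{Qdrift} via a standard localization and Fatou argument.
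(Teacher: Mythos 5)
Your proposal follows essentially the same route as the paper's proof: the identical variation-of-constants representation of the global error, the remainder estimate of Lemma~\ref{lem:Qc_accuracy_model} applied with $f = P_s\varphi$ via the semigroup regularity of Lemma~\ref{lem:derivatives_of_semigroup}, domination of the truncation term by $V$ using the polynomial growth in Assumption~\ref{assumptions_on_drift} (A2), and the Lyapunov bound from Lemma~\ref{Qdrift} to control $P^h_{t-s}$ acting on $V$. Your closing justification of the mild-solution step for the $V$-growing forcing (working in $\ell_V$, or equivalently the space-time Dynkin argument) is a welcome extra precision on a point the paper asserts without comment, but it does not alter the argument.
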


The main issue in the subsequent proof is that the generators of the SDE solution and numerical approximation may be unbounded.

\begin{proof}
Define the global error of the approximation as: \begin{equation} 
\epsilon(x,t) =  P_t \varphi(x) - P_t^h \varphi(x) \quad \text{for any $x \in S_h$ and $t \ge 0$} \;.
\end{equation}
Note that this global error $\epsilon(x,t)$ is a mild solution of the following inhomogeneous linear differential equation on the Banach space $\ell^{\infty}$: \begin{equation} \label{eq:global_error_ode}
\dot \epsilon(x,t) - Q \epsilon (x, t) = ( L - Q) P_t \varphi(x)  \quad \text{for any $x \in S_h$ and $t \ge 0$} 
\end{equation} with initial condition: \[
\epsilon(x,0) = 0 \quad \text{for all $x \in S_h$} \;.
\] From Definition~\ref{lem:variation_of_constants_formula}, a mild solution to this equation is given by the variation of constants formula: \begin{equation}
\epsilon(x,t) = \int_0^t P^h_{t-s} ( L - Q) P_s \varphi(x) ds \;.
\end{equation} 

In order to estimate the global error, it helps to express this formula in terms of transition probabilities: \[
\epsilon(x,t) = \int_0^t \sum_{z \in S_h} \Pi^h_{t-s,x}(z) ( L - Q) P_s \varphi(z) ds \;.
\]
Apply Lemma~\ref{lem:Qc_accuracy_model} to obtain \begin{align*}
| \epsilon(x,t) | &= | \int_0^t \sum_{z \in S_h} \Pi^h_{t-s,x}(z) ( L - Q) P_s \varphi(z) ds | \\
&\le \int_0^t \sum_{z \in S_h} \Pi^h_{t-s,x}(z) | ( L - Q) P_s \varphi(z) | ds \\
&\le h^2 \int_0^t \sum_{z \in S_h} \Pi^h_{t-s,x}(z) C_{P_s \varphi}(z)  ds
\end{align*}
where $C_{P_s \varphi}(x) $ is given in \eqref{eq:Cf_of_x} with $f=P_s \varphi$.   Note that there exists $C_0(t)>0$ such that, \[
C_{P_s \varphi}(x)  < V(x) + C_0(t)
\] for all $x \in \mathbb{R}^n$ and for all $s \in [0,t]$.  Indeed, from Lemma~\ref{lem:derivatives_of_semigroup} if $\varphi \in C^4_b(\mathbb{R}^n)$ then $P_s \varphi \in C^4_b(\mathbb{R}^n)$ for any $s \ge 0$, and by Assumption~\ref{assumptions_on_drift} (A2) on the drift field the terms that involve the drift field in \eqref{eq:Cf_of_x} are dominated by the Lyapunov function $V(x)$.
Hence, we obtain \begin{align*}
| \epsilon(x,t) | &\le h^2 \int_0^t \sum_{z \in S_h} \Pi^h_{t-s,x}(z) ( V(z) + C_0(t) ) ds  \\
&\le t \; h^2 \; ( V(x) + \tilde C_0(t) )
\end{align*}
for some $\tilde C_0(t) > 0$.  
\end{proof}


In addition, the Markov process generated by $Q$ accurately represents the stationary distribution of the SDE.

%
%

\begin{theorem}[Stationary Distribution Accuracy] \label{thm:nu_accuracy}
Suppose Assumption~\ref{assumptions_on_drift} holds.   
For every $\varphi \in \mathcal{C}_b^4(\mathbb{R}^n)$,  we have that   \begin{equation} \label{eq:nu_accuracy}
 | \Pi^h( \left. \varphi \right|_{S_h}) -  \Pi( \varphi ) | \le h^2 \; C_{\varphi} 
\end{equation}
where \[
C_{\varphi} = \sup_{0 < h \le h_c} \Pi^{h} \left( \int_0^{\infty} C_{P_s \varphi} ds \right) 
\] and $C_{P_s \varphi}(x) $ is given in \eqref{eq:Cf_of_x} with $f=P_s \varphi$.
\end{theorem}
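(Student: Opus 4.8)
The plan is to adapt the variation of constants argument used in Theorem~\ref{thm:finite_time_accuracy}, but instead of evaluating the global error pointwise on a finite horizon, I would integrate the error against the numerical invariant measure $\Pi^h$ and let the time horizon tend to infinity, exploiting geometric ergodicity of the SDE solution to identify the limit with $\Pi(\varphi)$. The starting point is the same mild-solution representation of the global error, namely
\begin{equation*}
\epsilon(x,t) = P_t \varphi(x) - P^h_t \varphi(x) = \int_0^t P^h_{t-s} (L-Q) P_s \varphi(x)\, ds,
\end{equation*}
which is valid for $x \in S_h$ by Definition~\ref{lem:variation_of_constants_formula}.

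First I would apply $\Pi^h$ to both sides and use that $\Pi^h$ is invariant for the semigroup $P^h_t$ (Theorem~\ref{thm:geometric_ergodicity_Qc}), so that $\Pi^h(P^h_{t-s} g) = \Pi^h(g)$ and $\Pi^h(P^h_t \left.\varphi\right|_{S_h}) = \Pi^h(\left.\varphi\right|_{S_h})$. This collapses the discrete semigroup and yields the identity
\begin{equation*}
\Pi^h\!\left( P_t \varphi \big|_{S_h} \right) - \Pi^h\!\left( \left.\varphi\right|_{S_h} \right) = \int_0^t \Pi^h\!\left( (L-Q) P_s \varphi \right) ds.
\end{equation*}
Next I would send $t \to \infty$. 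By geometric ergodicity of the SDE solution (Theorem~\ref{SDEgeometricallyergodic}) we have $|P_t \varphi(x) - \Pi(\varphi)| \le C e^{-\lambda t} V(x)$, and since $\Pi^h(V) \le K/\gamma$ by the infinitesimal drift condition of Lemma~\ref{Qdrift}, the left-hand side converges to $\Pi(\varphi) - \Pi^h(\left.\varphi\right|_{S_h})$ while the time integral converges to $\int_0^\infty \Pi^h((L-Q)P_s\varphi)\, ds$.

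It then remains to bound the right-hand side. I would invoke Lemma~\ref{lem:Qc_accuracy_model}, which gives $|(L-Q) P_s\varphi(z)| \le h^2 C_{P_s\varphi}(z)$ for each $z \in S_h$ (valid since $P_s\varphi \in C^4_b(\mathbb{R}^n)$ by Lemma~\ref{lem:derivatives_of_semigroup}). Combining this with Tonelli's theorem to interchange the $\Pi^h$-sum and the time integral gives
\begin{equation*}
\left| \Pi^h(\left.\varphi\right|_{S_h}) - \Pi(\varphi) \right| \le h^2 \int_0^\infty \Pi^h\!\left( C_{P_s\varphi} \right) ds = h^2\, \Pi^h\!\left( \int_0^\infty C_{P_s\varphi}\, ds \right) \le h^2\, C_\varphi,
\end{equation*}
which is the claimed estimate.

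The hard part will be justifying the passage to the limit $t\to\infty$ together with the finiteness (and uniform-in-$h$ boundedness) of the resulting time integral $\int_0^\infty C_{P_s\varphi}\,ds$ against $\Pi^h$. The quantity $C_{P_s\varphi}(z)$ from \eqref{eq:Cf_of_x} involves the derivatives of $P_s\varphi$ up to fourth order, weighted by powers of the drift and by $\cosh(\mu_i h/2)$; the naive bound from Lemma~\ref{lem:derivatives_of_semigroup} grows in $s$, so integrability in time is not immediate. The resolution is that geometric ergodicity forces $P_s\varphi$ to relax to the constant $\Pi(\varphi)$, whose derivatives vanish, so that $\|D^j P_s\varphi\|$ decays; one must splice the short-time regularizing estimate of Lemma~\ref{lem:derivatives_of_semigroup} on $[0,1]$ with an exponential-decay estimate of the derivatives for large $s$ to render $\int_0^\infty C_{P_s\varphi}\,ds$ finite, and then control the drift-dependent prefactors uniformly in $h$ using the sharp Lyapunov bound $\Pi^h(V) \le K/\gamma$ from Lemma~\ref{Qdrift}. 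Dominated convergence against $\Pi^h$, with a dominating function proportional to $V$, then legitimizes both the interchange of limits and the application of Tonelli's theorem.
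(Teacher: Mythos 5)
Your proposal matches the paper's proof essentially step for step: the paper likewise averages the mild-solution error formula against $\Pi^h$, uses the invariance $\Pi^h(P^h_t \varphi) = \Pi^h(\varphi)$ together with the remainder estimate of Lemma~\ref{lem:Qc_accuracy_model} and Fubini, and then passes to the limit $t \to \infty$ via the geometric ergodicity of Theorem~\ref{SDEgeometricallyergodic}. The one point where you go beyond the paper is in flagging the finiteness of $\int_0^\infty C_{P_s\varphi}\,ds$ against $\Pi^h$ and sketching how derivative decay of $P_s\varphi$ would supply it; the paper leaves this implicit by folding it into the definition of $C_\varphi$ (compare its closing remark citing the exponential derivative estimates of Talay), so your observation is a fair and accurate caveat rather than a departure.
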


\begin{proof}
Note that this result is not a corollary of Theorem~\ref{thm:finite_time_accuracy}, since the latter gives an upper bound on the global error that is increasing with $t$.  In order to obtain the desired estimate, we use a slightly different approach that begins with averaging the global error with respect to the invariant probability measure of the approximation $\Pi^h$ (which exists by Theorem~\ref{thm:geometric_ergodicity_Qc}): \begin{align*}
| \Pi^h( \epsilon(t) ) | &=| \sum_{z \in S_h} \Pi^h(z) \epsilon(t,z) | \\ 
&= | \Pi^h \left( \int_0^t (L-Q) P_s \varphi ds \right) | \\ 
&\le  \Pi^h \left( \int_0^t | (L-Q) P_s \varphi | ds \right) \\
&\le h^2 \Pi^h \left( \int_0^t C_{P_s \varphi} ds \right) 
\end{align*}  
where $C_{P_s \varphi}(x) $ is given in \eqref{eq:Cf_of_x} with $f=P_s \varphi$.  Here we used: (i) the Fubini Theorem to interchange space and time integrals; (ii) the remainder term estimate from  Lemma~\ref{lem:Qc_accuracy_model}; and (iii)  the fact that $ \Pi^h ( P^h_t \varphi ) = \Pi^h (\varphi)$.  
Passing to the limit as $t \to \infty$, and using Theorem~\ref{SDEgeometricallyergodic}, gives the desired estimate.
\end{proof}

\begin{rem}
One may be able to prove Theorem~\ref{thm:nu_accuracy} by adapting to this setting the version of Stein's method developed in \S6.2 of \cite{mattingly2010convergence}.  
\end{rem}

\begin{rem}
In the context of smooth initial conditions and a smooth drift field with at most polynomial growth at infinity, an estimate of the type: \[ 
 \| D^j P_t \varphi(x) \|  \le C ( 1 + |x|^s ) e^{-\gamma t} \;, \quad \forall~ t \ge 0 \;, \quad \forall ~j \ge 1 \;, \quad \forall ~x \in \mathbb{R}^n \;,
\] is available in Theorem~3.1 of \cite{Ta2002}.   Roughly, the proof of this Theorem shows exponential convergence of the first $n$ derivatives of the SDE semigroup $P_t$ in $L^2(\mathbb{R}^n)$ and then uses the Sobolev Embedding Theorem to transfer these estimates to $L^{\infty}(\mathbb{R}^n)$, i.e., trade regularity for integrability.  This procedure leads to point wise estimates on the derivatives of the semigroup.  One may be able to derive similar estimates in this context by assuming that the first $n$ derivatives of the drift field exist, but we wish to avoid making this assumption.
\end{rem}

\chapter{Analysis on Gridless State Spaces} \label{chap:gridless}

To study issues that may arise when the noise is multiplicative (or state-dependent) or if the spatial step size is adaptive, consider the generator in \eqref{eq:Qc2} but, with random spatial step size equal to $\xi h$ where $\xi \in U(1/2,1)$.    Figure~\ref{fig:gridvsgridless} illustrates the difference between points spaced in this variable way and equally spaced points on a grid.   In particular, with random spacing the state space of the approximation becomes gridless.  In this chapter we adapt the stability/accuracy results from Chapter~\ref{chap:analysis}  to this setting.   See \S\ref{sec:gridded_vs_gridless} for the definition of a gridless state space.

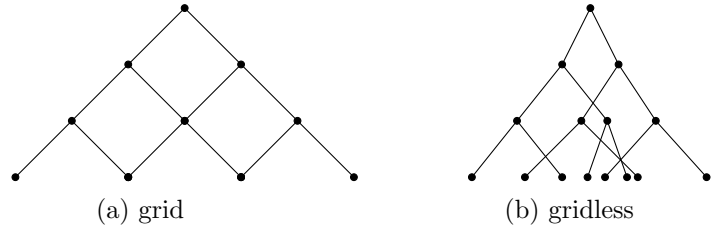
\begin{figure}[ht!]
\centering
\begin{tabular}{ll} 
\begin{tikzpicture}[scale=1.5]
\filldraw[color=black,fill=black] (3.0,2) circle (0.03);	
\filldraw[color=black,fill=black] (2.5,1.5) circle (0.03);	
\filldraw[color=black,fill=black] (3.5,1.5) circle (0.03);	
\filldraw[color=black,fill=black] (2.0,1.0) circle (0.03);	
\filldraw[color=black,fill=black] (3.0,1.0) circle (0.03);	
\filldraw[color=black,fill=black] (3.0,1.0) circle (0.03);	
\filldraw[color=black,fill=black] (4.0,1.0) circle (0.03);	
\filldraw[color=black,fill=black] (1.5,0.5) circle (0.03);	
\filldraw[color=black,fill=black] (2.5,0.5) circle (0.03);	
\filldraw[color=black,fill=black] (2.5,0.5) circle (0.03);	
\filldraw[color=black,fill=black] (3.5,0.5) circle (0.03);	
\filldraw[color=black,fill=black] (2.5,0.5) circle (0.03);	
\filldraw[color=black,fill=black] (3.5,0.5) circle (0.03);	
\filldraw[color=black,fill=black] (3.5,0.5) circle (0.03);	
\filldraw[color=black,fill=black] (4.5,0.5) circle (0.03);	
\draw[-, thin](3.0,2) -- (2.5,1.5);	
\draw[-, thin](3.0,2) -- (3.5,1.5);	
\draw[-, thin](2.0,1.0) -- (2.5,1.5);	
\draw[-, thin](3.0,1.0) -- (2.5,1.5);	
\draw[-, thin](3.0,1.0) -- (3.5,1.5);	
\draw[-, thin](4.0,1.0) -- (3.5,1.5);	
\draw[-, thin](2.0,1.0) -- (2.5,0.5);	
\draw[-, thin](2.0,1.0) -- (1.5,0.5);	
\draw[-, thin](3.0,1.0) -- (3.5,0.5);
\draw[-, thin](3.0,1.0) -- (2.5,0.5);	
\draw[-, thin](4.0,1.0) -- (3.5,0.5);
\draw[-, thin](4.0,1.0) -- (4.5,0.5);
\end{tikzpicture}    \hspace{0.5in}  &  
\begin{tikzpicture}[scale=1.5]
\filldraw[color=black,fill=black] (3.0,2) circle (0.03);	
\filldraw[color=black,fill=black] (2.75,1.5) circle (0.03);	
\filldraw[color=black,fill=black] (3.25,1.5) circle (0.03);	
\filldraw[color=black,fill=black] (2.35,1.0) circle (0.03);	
\filldraw[color=black,fill=black] (3.15,1.0) circle (0.03);	
\filldraw[color=black,fill=black] (3.58,1.0) circle (0.03);	
\filldraw[color=black,fill=black] (2.92,1.0) circle (0.03);	
\filldraw[color=black,fill=black] (2.75,0.5) circle (0.03);	
\filldraw[color=black,fill=black] (1.95,0.5) circle (0.03);	
\filldraw[color=black,fill=black] (3.325,0.5) circle (0.03);	
\filldraw[color=black,fill=black] (2.975,0.5) circle (0.03);	
\filldraw[color=black,fill=black] (3.13,0.5) circle (0.03);	
\filldraw[color=black,fill=black] (4.03,0.5) circle (0.03);	
\filldraw[color=black,fill=black] (2.42,0.5) circle (0.03);	
\filldraw[color=black,fill=black] (3.42,0.5) circle (0.03);	
\draw[-, thin](3.0,2.0)-- (2.75,1.5);	
\draw[-, thin](3.0,2.0) -- (3.25,1.5);	
\draw[-, thin](2.92,1.0) --  (3.25,1.5);	
\draw[-, thin](3.58,1.0) --  (3.25,1.5);	
\draw[-, thin](2.35,1.0)--(2.75,1.5);	
\draw[-, thin](3.15,1.0) -- (2.75,1.5);	
\draw[-, thin](3.15,1.0) -- (3.325,0.5);	
\draw[-, thin](3.15,1.0)  -- (2.975,0.5);	
\draw[-, thin](2.35,1.0) -- (2.75,0.5);
\draw[-, thin](2.35,1.0) -- (1.95,0.5);	
\draw[-, thin](3.58,1.0) -- (3.13,0.5);
\draw[-, thin](3.58,1.0)-- (4.03,0.5);
\draw[-, thin](2.92,1.0)-- (2.42,0.5);
\draw[-, thin](2.92,1.0)-- (3.42,0.5);
\end{tikzpicture} \\
\hbox{ \hspace{0.35in} (a) grid    }
& \hbox{   \hspace{0.1in} (b) gridless   } 
\end{tabular}
\caption{\small { \bf Gridded vs Gridless State Spaces.}
This figure illustrates the difference between gridded and gridless state spaces.  Points in these spaces are produced from a given seed in $\mathbb{R}$ using fixed spatial step size $h$ (left panel) and random spatial step size $ \xi h$ (right panel) where $ \xi \sim U(1/2,1)$.  The set of all such states is the state space of the Markov jump process.  In the case of a gridded state space, every pair of points is connected by a finite path on the grid.  Thus, if the jump rates are positive at every point, then the Markov process is irreducible on this gridded state space.  In contrast, in the gridless case, there exist pairs of nodes such that no finite path on the graph has those nodes as endpoints.  Thus, even if the jump rates are positive at every point, the Markov process is not irreducible on this collection of points.   }
  \label{fig:gridvsgridless}
\end{figure}


\section{A Random Walk in a Random Environment} \label{sec:context}

Here we introduce a realizable discretization that uses variable step sizes.   To introduce this discretization, in addition to the notation presented in Chapter~\ref{chap:analysis}: let $\tilde \xi: \mathbb{R}^n \to [1/2,1]$ with $\tilde \xi(x) \sim U(1/2,1)$ for all $x \in \mathbb{R}^n$, i.e., $\tilde \xi$ assigns to each point $x \in \mathbb{R}^n$ a uniformly distributed random variable on the interval $[1/2,1]$.  To ensure that the coefficients of the resulting generator are continuous, we regularize this function by convolving it with the standard mollifier  $\rho$ on $\mathbb{R}^n$ to obtain: \begin{equation} \label{eq:random_environment}
 \xi(x) = \int_{\mathbb{R}^n} \rho(x-y) \tilde \xi(y) dy \quad \text{for all $x \in \mathbb{R}^n$} \;.
 \end{equation}   Note that \[
 h_{\min} \le  \xi(x) h \le h_{\max} \quad \text{for all $x \in \mathbb{R}^n$}
 \]
where $ h_{\min} = h/2$ and $h_{\max}=h$.   The statements in this chapter are conditional on $\xi$.  In particular, we do not average over the random function $\xi$.  

Let $r \in \mathbb{N}_0$.  Below we derive conditions on this parameter in order to guarantee that the space-discrete operator is bounded and that the approximation has a stochastic Lyapunov function.  Given the field $\xi$, the spatial step size parameter $h$, and a test function $f: \mathbb{R}^n \to \mathbb{R}$, define the second-order generator:

\medskip

\begin{equation} \label{eq:Qq}
\boxed{
\begin{aligned}
& Q_q  f(x)  =    \frac{\exp\left( -  \; \xi(x)^2  h^2 \; |x|^{2r} \right)}{\xi(x)^2 h^2} \\
& \qquad \times \sum_{i=1}^n \left( \exp\left(+ \xi(x) h  \frac{\mu_i(x) }{2}  \right) ( f(x+\xi(x) h e_i) - f(x) ) \right. \\
& \qquad  \qquad   + \left. \exp\left( - \xi(x) h  \frac{\mu_i(x) }{2} \right) ( f(x-\xi(x) h e_i) - f(x) )  \right)
\end{aligned}
}
\end{equation}

\medskip

This generator induces an approximation on a gridless state space, which is a continuous-time random walk in a random environment \cite{sinai1983limiting,zeitouni2004part}.   Since a sum of discrete random variables is discrete, the state space of this approximation is countable.  However, the process is not irreducible on this gridless state space as illustrated in Figure~\ref{fig:gridvsgridless}.   In addition, if we embed the state space of the approximation into $\mathbb{R}^n$, the process is still not irreducible with respect to the standard topology on $\mathbb{R}^n$, since at any finite time there may be zero probability for the process to reach an open neighborhood of a designated point in state space starting from any other point.   Even though the approximation may lack irreducibility in the standard sense, it turns out the approximation is still long-time stable, weakly accurate, and accurate with respect to equilibrium expectations.

\section{Feller Property} \label{sec:feller}

Here we prove a regularity property of the semigroup associated to the gridless generator $Q_q$.   

%
%

\begin{lemma}  \label{lem:Qq_is_bounded}
Suppose Assumption~\ref{assumptions_on_drift} holds.    If the exponent $r$ in \eqref{eq:Qq} satisfies $r \ge m + 1$, then  for any  $h>0$ the linear operator $Q_q$ is bounded.
\end{lemma}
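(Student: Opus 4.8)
The plan is to reduce the claim to a uniform bound on the total jump rate and then to exploit the competition, as $|x|\to\infty$, between the Gaussian-type factor $\exp(-\xi(x)^2h^2|x|^{2r})$ and the exponential growth produced by taking $\cosh$ of the drift. First I would note that $Q_q$ is realizable: by the discussion preceding \eqref{eq:Qq} every rate multiplying a difference $f(x\pm\xi(x)he_i)-f(x)$ is strictly positive. Writing $Q_qf(x)=\sum_k q_k(x)\,(f(y_k(x))-f(x))$ with $q_k(x)\ge0$, and using $|f(y_k(x))-f(x)|\le 2\|f\|_\infty$ for every $f\in B_b(\mathbb{R}^n)$, one gets $|Q_qf(x)|\le 2\|f\|_\infty\,\lambda(x)$, where $\lambda(x)=\sum_k q_k(x)$ is the total rate out of $x$. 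Since $\xi$ is continuous by \eqref{eq:random_environment}, $\mu$ is $C^4$, and $x\mapsto x\pm\xi(x)he_i$ is continuous, $Q_qf$ is Borel measurable, so $Q_qf\in B_b(\mathbb{R}^n)$. It therefore suffices to prove $\sup_{x\in\mathbb{R}^n}\lambda(x)<\infty$, which yields $\|Q_q\|\le 2\sup_x\lambda(x)$.

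Next I would compute the total rate explicitly. Pairing the forward and backward channels in each coordinate direction gives
\[
\lambda(x)=\frac{\exp\!\left(-\,\xi(x)^2h^2|x|^{2r}\right)}{\xi(x)^2h^2}\sum_{i=1}^n 2\cosh\!\left(\xi(x)h\,\frac{\mu_i(x)}{2}\right).
\]
I would then estimate the three ingredients separately using the hypotheses already in place. From $\cosh(t)\le\exp(|t|)$ and Assumption~\ref{assumptions_on_drift}~(A2), which gives $|\mu_i(x)|\le|\mu(x)|\le C^{\mu}_P(1+|x|^{2m+1})$, together with $\xi(x)\le 1$, each cosh term is bounded by $\exp\!\big(\tfrac{h}{2}C^{\mu}_P(1+|x|^{2m+1})\big)$. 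For the prefactors I would use the two-sided bound $1/2\le\xi(x)\le 1$: the lower bound $\xi(x)\ge 1/2$ forces $\exp(-\xi(x)^2h^2|x|^{2r})\le\exp(-\tfrac{h^2}{4}|x|^{2r})$ and $\xi(x)^{-2}\le 4$. Collecting these yields
\[
\lambda(x)\le\frac{8n}{h^2}\,\exp\!\left(\frac{h}{2}C^{\mu}_P\big(1+|x|^{2m+1}\big)-\frac{h^2}{4}|x|^{2r}\right).
\]

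Finally I would close the argument by analyzing the exponent as a function of $t=|x|\ge0$. When $r\ge m+1$ we have $2r\ge 2m+2>2m+1$, so the negative term $-\tfrac{h^2}{4}t^{2r}$ strictly outgrows the positive polynomial $\tfrac{h}{2}C^{\mu}_P t^{2m+1}$, and the exponent tends to $-\infty$ as $t\to\infty$. Being continuous on $[0,\infty)$ and divergent to $-\infty$, it attains a finite maximum $M$, so $\lambda(x)\le\tfrac{8n}{h^2}e^{M}<\infty$ uniformly in $x$, proving boundedness. The main point of the proof—and the only place the hypothesis $r\ge m+1$ is used—is precisely this exponent competition: taking $\cosh$ of the drift converts the polynomial drift growth of order $2m+1$ from (A2) into exponential growth, and the mollified Gaussian factor must decay faster, i.e.\ $2r>2m+1$, for the rate to stay bounded. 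There is no deep obstacle here; the only subtleties to handle carefully are that the lower bound $\xi\ge1/2$ is what keeps the decay factor from degenerating (an arbitrarily small $\xi$ would kill the bound), and that it is the full Euclidean norm $|x|$ appearing in the prefactor, so the estimate $|\mu_i(x)|\le|\mu(x)|$ is exactly the right one to match it.
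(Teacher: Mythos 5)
Your proposal is correct and follows essentially the same route as the paper's proof: bound $|Q_qf(x)|\le 2\|f\|_\infty\,\lambda(x)$ and show the total rate is uniformly bounded because, by Assumption~\ref{assumptions_on_drift}~(A2), the drift contributes at most $\exp\bigl(\tfrac{h}{2}C^{\mu}_P(1+|x|^{2m+1})\bigr)$ per channel while the mollifier contributes $\exp(-\xi(x)^2h^2|x|^{2r})$ with $2r\ge 2m+2>2m+1$ dominating. Your pairing of forward/backward channels into a $\cosh$ and your explicit use of $\xi\ge 1/2$ to keep the decay factor nondegenerate are only cosmetic refinements of the paper's per-rate estimate $J_i^{\pm}(x)\le C(h)$, which uses the same ingredients.
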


\begin{proof}
It suffices to show that there exists $C>0$ such that \begin{equation} \label{Qc_is_bounded}
\| Q_q f \|_{\infty}  \le C \; \| f \|_{\infty} \;,
\end{equation} for an arbitrary $f \in B_b(\mathbb{R}^n)$.   In order to prove this statement, let $J_i^{\pm}(x)$ be the reaction rate from $x$ to the state $y_i^{\pm}(x) = x\pm e_i \xi(x) h$.   By Assumption~\ref{assumptions_on_drift} (A2) the drift field satisfies a polynomial growth condition with parameter $C^{\mu}_P$, and since $r \ge m+1$, we have that \begin{align*}
J_i^{\pm}(x) &=   \frac{1}{\xi(x)^2 h^2} \exp\left( \pm \frac{h \xi(x)}{2} \mu_i(x)  -  \;  \xi(x)^2  h^2 \; |x|^{2 r} \right) \\
&\le  \frac{1}{\xi(x)^2 h^2} \exp\left( h  C^{\mu}_P \; (1 + |x|^{2 m +1} ) -  \; \xi(x)^2  h^2 \; |x|^{2 r} \right)  \\
&\le C(h)
\end{align*}
for all $x \in \mathbb{R}^n$ and for any $1 \le i \le n$.  Apply this bound on the reaction rates to obtain the following bound on $Q_q f$, \begin{align*}
\| Q_q f \|_{\infty} &= \sup_{x \in \mathbb{R}^n}  \left| \sum_{i=1}^{n} J_i^{+}(x) ( f(y_i^{+}) - f(x) ) + J_i^{-}(x) ( f(y_i^{-}) - f(x) )\right| \\
&\le 2  \| f \|_{\infty} \sup_{x \in \mathbb{R}^n} \sum_{i=1}^{n} ( J_i^{+}(x) + J_i^-(x) ) \\
&\le 4 \; n \; C(h) \; \| f \|_{\infty}
\end{align*}
which shows that the generator $Q_q$ is bounded.
\end{proof}

Since $Q_q$ is a bounded linear operator, the semigroup it induces is Feller, as the next Lemma states.

%
%

\begin{lemma} \label{lem:feller}
Suppose Assumption~\ref{assumptions_on_drift} holds.   For any $t \ge 0$ and $r \ge m+1$, the semigroup $P_t^h$ generated by $Q_q$ is Feller, which means that for each $f \in C_b(\mathbb{R}^n)$ we have that $P_t^h f \in C_b(\mathbb{R}^n)$.  
\end{lemma}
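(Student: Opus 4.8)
The plan is to deduce the Feller property directly from the boundedness of $Q_q$ established in Lemma~\ref{lem:Qq_is_bounded}, using the fact that a bounded generator produces a uniformly continuous semigroup given by the operator exponential. First I would invoke Lemma~\ref{lem:Qq_is_bounded}: since $r \ge m+1$, the operator $Q_q$ is a bounded linear operator on $B_b(\mathbb{R}^n)$ with $\| Q_q f \|_\infty \le C \| f \|_\infty$ for some $C>0$. Because $Q_q$ is bounded, the semigroup it generates admits the explicit representation
\begin{equation} \label{eq:feller_exp}
P_t^h = \exp(t Q_q) = \sum_{k=0}^{\infty} \frac{t^k Q_q^k}{k!} \;,
\end{equation}
where the series converges in the operator norm on $B_b(\mathbb{R}^n)$ for every $t \ge 0$. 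This is standard semigroup theory for bounded generators (see \cite{Pa1983}), and it is exactly the setting where no unbounded-operator subtleties arise.

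The crux of the argument is to show that $Q_q$ maps $C_b(\mathbb{R}^n)$ into itself, i.e.~that $C_b(\mathbb{R}^n)$ is an invariant subspace under $Q_q$. For $f \in C_b(\mathbb{R}^n)$, I would inspect the defining formula \eqref{eq:Qq} term by term. The key observation is that $\xi(x)$ is continuous (indeed smooth) because it was obtained in \eqref{eq:random_environment} by convolving the bounded measurable function $\tilde \xi$ with the standard mollifier $\rho$; moreover $\xi(x) \ge h_{\min}/h = 1/2 > 0$, so the prefactor $1/(\xi(x)^2 h^2)$ is continuous and bounded. The drift field $\mu$ is of class $C^4$ by Assumption~\ref{assumptions_on_drift}, hence continuous, so each exponential weight $\exp(\pm \xi(x) h \mu_i(x)/2)$ and the Gaussian cutoff $\exp(-\xi(x)^2 h^2 |x|^{2r})$ are continuous in $x$. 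Finally, the shifted evaluations $x \mapsto f(x \pm \xi(x) h e_i)$ are compositions of the continuous map $x \mapsto x \pm \xi(x) h e_i$ with the continuous bounded function $f$, hence continuous and bounded. Since $Q_q f$ is a finite sum of products of such continuous bounded functions, $Q_q f \in C_b(\mathbb{R}^n)$.

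With $C_b(\mathbb{R}^n)$ invariant under $Q_q$, it is invariant under every power $Q_q^k$, and being a closed subspace of $B_b(\mathbb{R}^n)$ it is invariant under the norm-convergent series \eqref{eq:feller_exp}. Therefore $P_t^h f \in C_b(\mathbb{R}^n)$ whenever $f \in C_b(\mathbb{R}^n)$, which is precisely the Feller property for every $t \ge 0$. I expect the main obstacle to be the verification that $Q_q$ preserves continuity rather than mere boundedness; specifically one must be careful that the state-dependent step size does not destroy continuity, and this is where the mollification in \eqref{eq:random_environment} is essential, since the raw field $\tilde\xi$ is only measurable. Everything downstream is routine once the invariance of $C_b(\mathbb{R}^n)$ under the bounded operator $Q_q$ is in hand.
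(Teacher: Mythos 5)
Your proof is correct and follows essentially the same route as the paper's: since $Q_q$ is bounded, $P_t^h = \exp(t Q_q)$ is given by a norm-convergent series, and the Feller property follows from invariance of $C_b(\mathbb{R}^n)$. Your explicit verification that $Q_q$ preserves continuity --- using the mollified step-size field $\xi$, the continuity of $\mu$, and the continuity of the shifted evaluations $x \mapsto f(x \pm \xi(x) h e_i)$ --- fills in a step the paper leaves implicit, and is a welcome addition rather than a deviation.
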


\begin{proof}
Since $Q_q$ is a bounded linear operator on the Banach space $C_b(\mathbb{R}^n)$, then the operator $P_t^h = \exp(Q_q t)$ is bounded on $C_b(\mathbb{R}^n)$, and the homogeneous Cauchy problem associated to $Q_q$ has a unique classical solution for every initial condition in $C_b(\mathbb{R}^n)$.  Finally, this solution belongs to $C_b(\mathbb{R}^n)$ for all $t \ge 0$. 
\end{proof}

\section{Generator Accuracy} \label{sec:generator_accuracy}

Here we state that $Q_q$ is accurate with respect to the infinitesimal generator of the SDE.  

%
%

\begin{lemma}  \label{lem:Qq_accuracy_model}
Suppose Assumption~\ref{assumptions_on_drift} holds.   
Let $f \in C^4(\mathbb{R}^n)$ and set \[
 R_q f(x) =     L f(x)  - Q_q  f (x)  \;.
\]  Then there exists $C_f(x)>0$ (uniform in $\xi$) such that: \begin{align*}
| R_q f(x) | \le  h^2 \; C_f(x)  \quad \text{for every $x \in \mathbb{R}^n$} \;.
\end{align*}    
\end{lemma}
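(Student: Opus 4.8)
The plan is to prove the $O(h^2)$ remainder estimate for $Q_q$ by essentially retracing the proof of Lemma~\ref{lem:Qc_accuracy_model}, since $Q_q$ in \eqref{eq:Qq} differs from $Q$ in \eqref{eq:Qc2} in only two structural ways: the uniform step size $h$ is replaced by the spatially-varying step size $\xi(x) h$ (which is still bounded by $h_{\min} = h/2$ and $h_{\max} = h$), and there is an additional scalar prefactor $\exp(-\xi(x)^2 h^2 |x|^{2r})$ multiplying the entire operator. The first modification is harmless because $\xi(x) \in [1/2,1]$ is a fixed (conditioned) multiplicative factor lying in a compact interval bounded away from zero, so any Taylor-with-integral-remainder expansion in powers of $\xi(x) h$ produces coefficients that are comparable to the corresponding powers of $h$ up to constants independent of $\xi$. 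The second modification is the genuinely new feature that must be tracked carefully.

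First I would fix $x \in \mathbb{R}^n$ and write $s := \xi(x) h$, treating $\xi(x)$ as a constant for this pointwise argument. I would Taylor-expand the forward and backward differences $f(x \pm s e_i) - f(x)$ using Taylor's integral formula to fourth order, exactly as in the expansion \eqref{eq:Qc_expansion}, obtaining terms with first through third derivatives of $f$ multiplied by the exponential drift weights $\exp(\pm s \mu_i/2)$, plus a fourth-order integral remainder. Combined with the $1/(\xi(x)^2 h^2) = 1/s^2$ normalization, the leading terms reproduce $Lf(x)$ (by the same computation as in Proposition~\ref{prop:Qc_accuracy}), and the remaining terms carry a factor of $s^2 = \xi(x)^2 h^2 \le h^2$. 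Collecting these yields, before the prefactor is reinstated, an expression of the form $\tilde{Q}_q f(x) = Lf(x) + s^2 \sum_i (A_i + B_i + C_i + D_i^\pm)$ with the $A_i, B_i, C_i, D_i^\pm$ identical in form to those in Lemma~\ref{lem:Qc_accuracy_model} but with $h$ replaced by $s$, so they are bounded using $\cosh(t\, s\mu_i/2) \le \cosh(s \mu_i/2) \le \cosh(h \mu_i/2)$ for $t \in [0,1]$.

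The step I expect to be the main obstacle is controlling the prefactor $\theta(x) := \exp(-\xi(x)^2 h^2 |x|^{2r})$, which multiplies the whole operator and which does \emph{not} appear in the additive-noise generator $Q$. The key observation is that $Q_q f(x) = \theta(x)\, \tilde{Q}_q f(x)$ where $\tilde{Q}_q$ is the unmollified version, so
\[
R_q f(x) = Lf(x) - Q_q f(x) = (1 - \theta(x)) Lf(x) + \theta(x)\bigl(Lf(x) - \tilde{Q}_q f(x)\bigr).
\]
The second summand is $O(h^2)$ by the retraced expansion above together with $0 < \theta(x) \le 1$. For the first summand I would use the elementary bound $0 \le 1 - e^{-u} \le u$ for $u \ge 0$ with $u = \xi(x)^2 h^2 |x|^{2r} \le h^2 |x|^{2r}$, giving $|1 - \theta(x)| \le h^2 |x|^{2r}$; since $f \in C^4$, the factor $|Lf(x)|$ is a continuous function of $x$, and the product $|x|^{2r}|Lf(x)|$ is absorbed into the position-dependent constant $C_f(x)$. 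Crucially, because the bounds $\xi(x) \in [1/2,1]$ and $1 - e^{-u} \le u$ hold pointwise regardless of the realization of the random environment, every constant produced is uniform in $\xi$, which is exactly the uniformity claimed in the lemma.

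Assembling the pieces, I would define
\[
C_f(x) = |x|^{2r} |Lf(x)| + C \sum_{i=1}^n \cosh\!\left(\tfrac{h \mu_i}{2}\right)\left( |\mu_i|^3 \Bigl|\tfrac{\partial f}{\partial x_i}\Bigr| + |\mu_i|^2 \Bigl|\tfrac{\partial^2 f}{\partial x_i^2}\Bigr| + |\mu_i| \Bigl|\tfrac{\partial^3 f}{\partial x_i^3}\Bigr| + \sup_{|t|<1}\Bigl|\tfrac{\partial^4 f}{\partial x_i^4}\Bigr|(x + t e_i s) \right),
\]
in direct analogy with \eqref{eq:Cf_of_x}, and conclude $|R_q f(x)| \le h^2 C_f(x)$ with $C_f$ independent of $\xi$. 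The only conceptual care needed beyond Lemma~\ref{lem:Qc_accuracy_model} is the clean algebraic splitting of $R_q$ that isolates the mollification prefactor, so that the new $|x|^{2r}$-growth enters only through the benign $1 - e^{-u} \le u$ estimate and is harmlessly packaged into the pointwise constant.
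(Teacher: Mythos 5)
Your proof is correct and follows the same basic route as the paper's: invoke the gridded-case expansion of Lemma~\ref{lem:Qc_accuracy_model} with $h$ replaced by $\xi(x)h \le h_{\max}$, then dispose of the mollifier. But your execution is more complete than the paper's at exactly one point, and the difference matters. The paper's proof handles the prefactor $\theta(x)=\exp(-\xi(x)^2h^2|x|^{2r})$ simply by ``bounding the mollifier by unity'' and then asserts that the resulting bounding function is the \emph{same} $C_f(x)$ as in \eqref{eq:Cf_of_x}. In your splitting $R_qf(x)=(1-\theta(x))\,Lf(x)+\theta(x)\bigl(Lf(x)-\tilde Q_qf(x)\bigr)$, the bound $\theta\le 1$ controls only the second summand; the first summand produces the extra term $h^2|x|^{2r}|Lf(x)|$ via $1-e^{-u}\le u$, and this term is genuinely necessary rather than cosmetic. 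Concretely, take $\mu_i(x)=-x_i^{2m+1}$ (which satisfies Assumption~\ref{assumptions_on_drift}) and $f(x)=x_1^2/2\in C^4(\mathbb{R}^n)$: at points $x=(0,R,0,\dots,0)$ one has $\mu_1(x)=0$, the unmollified operator reproduces $Lf(x)=1$ exactly, so $R_qf(x)=1-\theta(x)>0$, while every term of the paper's displayed constant \eqref{eq:Cf_of_x_q} vanishes there. Hence the constant as printed in the paper cannot dominate the error, whereas your augmented $C_f(x)$, containing $|x|^{2r}|Lf(x)|$, does; and your uniformity-in-$\xi$ claim is right, since $\xi(x)\in[1/2,1]$ and $1-e^{-u}\le u$ hold pointwise for every realization of the environment. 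The modification is also harmless downstream: in Lemma~\ref{Qq_drift_gridless} the remainder estimate is used only on compact sets, and in Theorem~\ref{thm:nu_accuracy_gridless} the extra polynomially weighted term remains integrable against $\Pi^h$ thanks to the Lyapunov function. In short: same approach as the paper, but you have supplied the step that the paper's one-line proof elides.
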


\begin{proof}
This result follows immediately from Lemma~\ref{lem:Qc_accuracy_model}, since applying the bound from that lemma with $h$ replaced by $\xi h$, and bounding the mollifier in $Q_q f(x)$ by unity, yields: \[
| R_q f(x) | \le \xi^2 h^2 \tilde C_f(x)  \le h_{\max}^2 C_f(x) 
\] since $\xi h < h_{\max}$.   Note that the bounding positive function $C_f(x)$ is the same as the one appearing in Lemma~\ref{lem:Qc_accuracy_model} \begin{equation} \label{eq:Cf_of_x_q}
\begin{aligned} 
& C_f(x) = 
C  \sum_{i=1}^n  \cosh(\mu_i h/2) \\
& \qquad \times \left( |\mu_i|^3 \left| \frac{\partial f}{\partial x_i} \right|  +  |\mu_i|^2 \left| \frac{\partial^2 f}{\partial x_i^2} \right| 
+ | \mu_i| \left| \frac{\partial^3 f}{\partial x_i^3} \right| + \sup_{|s|<1} \left| \frac{\partial^4 f}{\partial x_i^4} \right|(x+s e_i h) \right) 
\end{aligned}
\end{equation}
where $C$ is a positive constant.
\end{proof}

\section{Stability by Stochastic Lyapunov Function} \label{sec:lyapunov_gridless}

Here we show that the approximation on a gridless state space has a stochastic Lyapunov function.  

%
%

\begin{lemma}\label{Qq_drift_gridless}
Suppose the drift field of the SDE~\eqref{eq:model_sde} satisfies Assumption~\ref{assumptions_on_drift}.  Let \begin{equation}
V(x) = \exp\left( a |x|^{2m+2}_{2m+2}  \right)  \quad \text{for any $0 < a < \frac{\beta_0}{2 (m+1)} $}
\end{equation}
where $m$ and $\beta_0$ are the constants appearing in Assumption~\ref{assumptions_on_drift} (A2).   Then, there exist positive constants $K$ and $\gamma$ (uniform in $h$ and $\xi$) such that 
\begin{equation} \label{Qq_infinitesimal_drift_condition_gridless}
Q_q V(x)  \le K - \gamma V(x)
\end{equation}
holds for all $x \in \mathbb{R}^n$, $h$ sufficiently small, and $r < 2 m+1$.
\end{lemma}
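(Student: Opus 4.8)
The plan is to follow the proof of Lemma~\ref{Qdrift} line by line, correcting for the two features that distinguish $Q_q$ from the gridded generator $Q$: the spatial increment is now the state-dependent quantity $s(x) := \xi(x) h \in [h/2,h]$ rather than the constant $h$, and every reaction rate carries the extra mollifying factor $\exp(-s(x)^2 |x|^{2r})$. Since the statement is conditional on $\xi$ and only asks for constants uniform in $h$ and $\xi$, I would treat $s(x)$ as an arbitrary measurable function valued in $[h/2,h]$ and keep all estimates uniform over this range. First I would apply $Q_q$ to $V$ and factor out $V(x)$, writing
\[
\frac{Q_q V(x)}{V(x)} = \exp\!\left(-s(x)^2 |x|^{2r}\right)\sum_{i=1}^n \big(A_i^+(x) + A_i^-(x)\big),
\]
where $A_i^{\pm}(x) = s(x)^{-2}\exp(\pm\tfrac12 s(x)\mu_i(x))\,(V(x\pm s(x)e_i)/V(x)-1)$ are exactly the quantities bounded in Step~1 of Lemma~\ref{Qdrift}, with $h$ replaced by $s(x)$.

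Next I would reuse verbatim the two applications of Bernoulli's inequality (Lemma~\ref{lem:bernoulli}) from that proof to control $V(x\pm s e_i)/V(x)$ and the remainder $R_i^{\pm}$, obtaining $\sum_i(A_i^+ + A_i^-)\le \phi(s(x),x)$ with the same function $\phi(s,x)=\sum_i \tfrac{2}{s^2}(\exp(c_i s^2)\cosh(a_i s)-\cosh(b_i s))$ and the same position-only coefficients $a_i,b_i,c_i$. The advantage of this reduction is that everything established about $\phi$ in Lemma~\ref{Qdrift} transfers with $s=s(x)\le h$: the $s\to 0^+$ limit $\lim_{s\to 0}\phi(s,x)=\sum_i(a_i^2-b_i^2+2c_i)$, whose leading term is $2a(m+1)(2a(m+1)-\beta_0)|x|^{4m+2}_{4m+2}$ and is negative by the hypothesis $0<a<\beta_0/(2(m+1))$, together with the Maclaurin/monotonicity estimates (using Assumption~\ref{assumptions_on_drift} (A3)--(A4)) showing that for $s$ small and $|x|$ large one may replace $\phi(s,x)$ by its $s\to 0$ limit.

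The crux, and the step I expect to be the main obstacle, is to show that the mollifying factor $\exp(-s(x)^2|x|^{2r})$ does not destroy the dissipativity supplied by $\phi$. The delicate point is that for $|x|$ large the relevant contribution is negative, so multiplying by a factor in $(0,1]$ makes it \emph{less} negative; one therefore cannot simply bound $\exp(-s^2|x|^{2r})\le 1$ and must instead retain the mollifier and verify that the product still lies below $-\gamma$. This is precisely where the requirement that the mollification be ``not too strong'' enters: the condition $r<2m+1$ keeps the mollification exponent $2r$ strictly below the order $4m+2$ of the leading dissipative term in $\lim_{s\to 0}\phi(s,x)$, so that the negative dissipative term dominates the mollifier correction for $|x|$ large. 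I would carry this out by combining the admissibility $2r<4m+2$ with the polynomial-growth bound on $\mu$ from Assumption~\ref{assumptions_on_drift} (A2), checking the domination uniformly in $x$, in $h<h_c$, and in $\xi$ (using $h/2\le s(x)\le h$), which is the technical heart of the argument. This would yield a threshold $R^{\star}$ and a constant $\gamma>0$, both independent of $h$ and $\xi$, such that $Q_q V(x)/V(x)\le -\gamma$ for $|x|>R^{\star}$.

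Finally, to obtain the constant $K$ on the complementary compact set $\{|x|\le R^{\star}\}$ I would write $Q_q V = LV + (Q_q-L)V$ and set
\[
K = \sup_{|x|\le R^{\star}} |LV(x)+\gamma V(x)| \;+\; \sup_{0<h<h_c}\ \sup_{|x|\le R^{\star}} |(Q_q-L)V(x)| .
\]
The first supremum is finite since $LV+\gamma V$ is smooth and the set is compact; the second is finite and uniform in $h$ and $\xi$ by Lemma~\ref{lem:Qq_accuracy_model}, which gives $|(Q_q-L)V(x)|\le h^2 C_V(x)$ with $C_V$ continuous and independent of $\xi$. Combining the two regimes gives $Q_q V\le K-\gamma V$ on all of $\mathbb{R}^n$ for $h$ sufficiently small, with $K$ and $\gamma$ uniform in $h$ and $\xi$, as claimed.
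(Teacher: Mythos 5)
Your reduction to the gridded computation, your bookkeeping of $s(x)=\xi(x)h\in[h/2,h]$, and your endgame on the compact set (defining $K$ through Lemma~\ref{lem:Qq_accuracy_model}) all match the paper, and you correctly spotted the danger that since the useful contribution is negative one may not simply bound the mollifier by $1$. But your proposed resolution of that danger has a genuine gap. After factoring the mollifier out, you bound the remaining sum by the gridded $\phi(s,x)$ and replace it, for small $s$ and large $|x|$, by its limit $\lim_{u\to 0}\phi(u,x)\sim -c\,|x|^{4m+2}$; you then want $\exp(-s(x)^2|x|^{2r})\cdot\bigl(-c\,|x|^{4m+2}\bigr)\le -\gamma$ on $|x|>R^{\star}$ by comparing the polynomial degrees $2r<4m+2$. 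That comparison is powerless here: for any fixed $h>0$ the factor $\exp(-s(x)^2|x|^{2r})$ decays in $|x|$ faster than any polynomial grows (already for $r\ge 1$), so $|x|^{4m+2}\exp(-h^2|x|^{2r}/4)\to 0$ as $|x|\to\infty$, and no threshold $R^{\star}$ uniform in $h$ can come out of this route. A degree comparison between $2r$ and $4m+2$ is only meaningful between coefficients of like powers of $s$, not between a polynomial bound and an exponential prefactor; multiplying a previously established polynomial-in-$|x|$ bound by the mollifier afterwards is exactly the step that fails in the far field $|x|$ of order $h^{-1}$ and beyond.

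The paper's proof is organized so that this situation never arises: the mollifier is never factored out, but is kept inside
\begin{equation*}
\phi_i(s,x)=\frac{2}{\xi^2 s^2}\left(e^{c_i\xi^2 s^2}\cosh(a_i\xi s)-\cosh(b_i\xi s)\right)e^{-\xi^2 s^2 d(x)}\;,\qquad d(x)=|x|^{2r}\;,
\end{equation*}
and the entire expansion in powers of $s$ is redone with $d$ present. The $s\to 0$ limit is then unchanged (the mollifier tends to $1$), so $\lim_{s\to 0}\phi(s,x)$ is the same negative quantity of order $-|x|^{4m+2}$ as in Lemma~\ref{Qdrift}; the mollifier first enters at the second-derivative level, where $\lim_{s\to0}\frac{\partial^2\phi}{\partial s^2}(s,x)$ acquires the extra term $3(b_i^2-a_i^2)\,d\,\xi^2$. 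There, and only there, your intended comparison is legitimate: using Assumption~\ref{assumptions_on_drift} (A4), the negative term $a_i^4-b_i^4$ is of order $-|x|^{8m+4}$ while $(b_i^2-a_i^2)d$ is of order $|x|^{4m+2+2r}$, and $2r<4m+2$ is precisely the hypothesis $r<2m+1$; the analogous cross terms carrying powers of $d$ in the Maclaurin series are likewise of strictly lower degree in $|x|$ than the negative $\cosh$-difference terms at each fixed order in $s$. So the fix is structural: expand rates and mollifier together and check signs coefficient-by-coefficient in $s$, rather than bound the gridded sum first and multiply by the mollifier at the end. With that reorganization your uniformity claims in $h$ and $\xi$, and your compact-set definition of $K$, can be kept as written.
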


\begin{proof}
The argument in this proof is the similar to the gridded state space context, see Lemma~\ref{Qdrift}.  Specifically, the proof shows that there exist $h_c$, $R^{\star}$ and $\gamma>0$  such that: \begin{equation} \label{eq:Qq_target}
 { Q_q V(x) \over V(x)  }  < - \gamma \quad \text{for all $x \in \mathbb{R}^n$ satisfying $|x| > R^{\star}$} 
\end{equation}   
and for all $h< h_c$.  We then choose $K$ in \eqref{Qq_infinitesimal_drift_condition_gridless} to satisfy: \[
K \ge  \sup_{|x| \le R^{\star}} | L V(x) + \gamma V(x) | + \sup_{0<h<h_c} \sup_{|x| \le R^{\star}}  | (Q_q-L) V(x) |  \;.
\]  The first two terms of $K$ are independent of $h$ and $\xi(x)$ since $V(x)$ is smooth and the set $\{ |x| \le R^{\star} \}$ is compact.  According to Lemma~\ref{lem:Qq_accuracy_model}, the difference $|(Q_q - L) V(x)|$ is $\mathcal{O}(h^2_{\max})$, involves four derivatives of $V(x)$, and is independent of the random environment.  Since $V(x)$ is smooth and the set $\{ |x| \le R^{\star} \}$ is compact, the last term can be bounded uniformly in $h$.    Thus, we obtain the desired estimate if we can find $R^{\star}$ (uniform in $h$ and $\xi$) such that \eqref{eq:Qq_target} holds.  

Apply the generator $Q_q$ in \eqref{eq:Qq} to $V(x)$ to obtain: \begin{equation} \label{eq:Qq_on_V_over_V}
\begin{aligned}
 { Q_q V(x) \over V(x) } = \sum_{i=1}^n A_i(x) \exp\left( -   \xi(x)^2 \;  h^2 \; |x|^{2r} \right)
 \end{aligned}
\end{equation} where we have defined: \begin{align*}
&A_i(x) = \frac{1}{\xi^2 h^2} \exp\left(    \frac{\xi h}{2} \mu_i   \right)  \left( \exp\left( \vphantom{\frac{h}{2}}  a ((x_i+ \xi h)^2)^{m+1} - a ((x_i)^2)^{m+1}  \right) -1 \right) \\
& \qquad +  \frac{1}{\xi^2 h^2} \exp\left(  -  \frac{\xi h}{2} \mu_i   \right)  \left( \exp\left( \vphantom{\frac{h}{2}}   a ((x_i- \xi h)^2)^{m+1} - a ((x_i)^2)^{m+1}  \right) -1 \right) 
\end{align*} 
Note that $A_i$ involves positive and negative terms, and in order to obtain the estimate \eqref{eq:Qq_target} we must show that the negative terms dominate in a way that is uniform in $h$ and $\xi$.   For any $x \in \mathbb{R}^n$, note that  Lemma~\ref{lem:bernoulli}  implies that $A_i$ satisfies \begin{align*}
  A_i(x)& \le 
\frac{1}{\xi^2 h^2}\exp\left(   \frac{\xi h}{2} \mu_i \right)   \left( \exp\left( \vphantom{\frac{h}{2}} a ( 2 x_i \xi h + \xi^2 h^2  ) (m+1) (x_i + \xi h)^{2m} \right)  -1 \right) \\
&       + 
\frac{1}{\xi^2 h^2}\exp\left( - \frac{\xi h}{2} \mu_i \right)  \left( \exp\left( \vphantom{\frac{h}{2}} a ( -2 x_i \xi h + \xi^2 h^2  ) (m+1) (x_i - \xi h) ^{2m} \right)  -1 \right) 
\end{align*}
for  $i=1,\dotsc,n$.

These bounds on $A_i$ imply that: \begin{equation} \label{eq:QqVoV}
{Q_q V(x) \over V(x)} \le \phi(h,x) =  \sum_{i=1}^n \phi_i(h,x) 
\end{equation} where we have introduced: \begin{equation}
\phi_i(s,x) = \frac{2}{\xi^2 s^2} \left( \exp( c_i(x) \xi^2 s^2) \cosh( a_i(x) \xi s) - \cosh( b_i(x) \xi s) \right) \exp\left( - \xi^2 s^2 d(x) \right)
\end{equation} and the following functions of position only: \begin{equation}
\begin{cases}
 a_i(x) = 2 \; a \; (m+1) \; x_i^{2 m+1} + \frac{1}{2} \mu_i \\
 b_i(x) =  \frac{1}{2} \mu_i   \\
 c_i(x) =  C_1 x_i^{2m} +  C_0 \\
 d(x) =   \; |x|^{2 r}
\end{cases}
\end{equation}
where $C_1$ and $C_0$ are positive constants.  

We now derive an upper bound on the right-hand-side of \eqref{eq:QqVoV} from the limit of $ \phi(h,x)$ as $h \to 0$.  By l'H\^{o}pital's Rule this limit is given by:  \begin{align} 
\lim_{s \to 0} \phi(s,x) &= \sum_{i=1}^n \frac{ ( a_i^2 -  b_i^2 ) + 2  c_i}{2}  \nonumber \\
&=  2 a (m+1) (2 a (m+1) - \beta) |x|_{4 m+2}^{4 m+2} + \text{lower order in $|x|$ terms} \label{eq:limit_of_phi_2}
\end{align}  
Note that if $|x|$ is large enough, $\lim_{s \to 0} \phi(s,x)$ is large and negative.  Another application of l'H\^{o}pital's Rule shows that \begin{equation} \label{eq:limit_of_phiprime_2}
\lim_{s \to 0} \frac{\partial \phi}{\partial s}(s,x) = 0 
\end{equation} and hence, the point $s=0$ is a critical point of $\phi(s,x)$.    To determine the properties of this critical point, we apply  l'H\^{o}pital's Rule a third time to obtain: \begin{align}
\lim_{s \to 0} \frac{\partial^2 \phi}{\partial s^2}(s,x) &= \sum_{i=1}^n \frac{1}{12} (a_i^4 - b_i^4 + 12 a_i^2 c_i +12 c_i^2 + 3 (b_i^2 - a_i^2) d ) \xi^2 \;.
\end{align}
Since $\xi$ is uniformly bounded from below  and  $r < 2 m+1$, from the proof of Lemma~\ref{Qdrift} it follows that this term can be made negative if $|x|$ is large enough.  By the point-wise second derivative rule, $s=0$ is a local maximum of the function $\phi(s,x)$, and therefore, we can find $h_c$ such that: for any $h<h_c$, $\phi(h,x)$ is bounded by $\lim_{h \to 0} \phi(h,x) <0$, as required by the estimate in \eqref{eq:Qq_target}.  Following the proof of Lemma~\ref{Qdrift}, we can also prove that the higher order terms in a Maclaurin series expansion of $\phi(h,x)$ in powers of $h$ are dominated by a negative term if $|x|$ is large enough.  Thus, the bound $\lim_{h \to 0} \phi(h,x)$ holds as the distance from the origin increases.
\end{proof}

\begin{rem}
We will henceforth assume that $r$ in \eqref{eq:Qq} satisfies: \[
 r = m+1
\] This hypothesis ensures that the semigroup associated to the approximation is Feller (by Lemma~\ref{lem:feller}), and that the approximation has a stochastic Lyapunov function (by Lemma~\ref{Qq_drift_gridless}).
\end{rem}

Next we apply this stochastic Lyapunov function to study a martingale problem associated to the Markov process induced by $Q_q$.  

%
%

\begin{lemma} \label{lem:martingale_condition_Qq}
Suppose Assumption~\ref{assumptions_on_drift} holds on the drift field, let $V(x)$ be the Lyapunov function from Lemma~\ref{Qq_drift_gridless}, and let $\varphi: \mathbb{R}^n \to \mathbb{R}$.  Consider the local martingale: \[
M^{\varphi}(t) = \varphi(X(t)) - \varphi(X(0)) - \int_{0}^t Q_q \varphi(X(s) ) ds, \quad t \ge 0 \;.
\] If $\varphi$ satisfies the growth condition: \begin{equation} \label{eq:martingale_condition_Qq}
Q_q \varphi^2(x) - 2 \varphi(x) Q_q \varphi(x) \le V(x) + C  \quad
\end{equation} for some positive constant $C$ and for all $x \in \mathbb{R}^n$, then $M^{\varphi}(t) $ is a global martingale.  
\end{lemma}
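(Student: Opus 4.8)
The plan is to mirror the proof of Lemma~\ref{lem:martingale_condition} verbatim, replacing the gridded generator $Q$ by the gridless generator $Q_q$ throughout, and invoking the gridless stochastic Lyapunov function from Lemma~\ref{Qq_drift_gridless} in place of Lemma~\ref{Qdrift}. The governing idea is unchanged: the growth condition \eqref{eq:martingale_condition_Qq} is exactly the quantity controlling the quadratic variation of $M^{\varphi}$, and a local martingale dominated by an integrable random variable is a uniformly integrable (hence global) martingale \cite{ReYo1999,protter2004stochastic,Kl2012}. Thus it suffices to show that the maximum process $M^*(t) = \sup_{0 \le s \le t} |M^{\varphi}(s)|$ has finite conditional expectation $\Ex_x M^*(t)$.

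First I would record the quadratic variation of $M^{\varphi}$. Since $X$ is the pure jump process induced by $Q_q$, the gridless analog of \eqref{eq:quadratic_variation_Mphi} holds, namely
\[
\langle M^{\varphi}, M^{\varphi} \rangle(t) = \int_0^t \left( Q_q \varphi^2(X(s)) - 2 \varphi(X(s)) \, Q_q \varphi(X(s)) \right) ds \;.
\]
Next I would apply the Burkholder-Davis-Gundy inequality with $p=1$, followed by Jensen's inequality with respect to the concave square root, to obtain a constant $C_1>0$ with
\[
\Ex_x M^*(t) \le C_1 \, \Ex_x \sqrt{\langle M^{\varphi}, M^{\varphi}\rangle(t)} \le C_1 \sqrt{\Ex_x \langle M^{\varphi}, M^{\varphi}\rangle(t)} \;.
\]
Then I would use the hypothesis \eqref{eq:martingale_condition_Qq} to bound the integrand by $V(X(s)) + C$ and invoke the infinitesimal drift condition \eqref{Qq_infinitesimal_drift_condition_gridless} from Lemma~\ref{Qq_drift_gridless}, which (for $h$ sufficiently small) gives $\Ex_x V(X(s)) \le V(x) + K/\gamma$ uniformly in $s$. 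This yields
\[
\Ex_x \langle M^{\varphi}, M^{\varphi}\rangle(t) \le \int_0^t \Ex_x \left( V(X(s)) + C \right) ds \le \left( V(x) + C + K/\gamma \right) t \;,
\]
which is finite. Combining this with the BDG bound shows $\Ex_x M^*(t) < \infty$, so $M^{\varphi}$ is dominated by the integrable random variable $M^*(t)$ and is therefore a global martingale for all $h$ in the regime of Lemma~\ref{Qq_drift_gridless}.

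The step I expect to require the most care is the justification of the quadratic variation identity together with the underlying local martingale property in the gridless setting, since the process is not irreducible and its countable state space embeds into $\mathbb{R}^n$ as a topologically disconnected set. However, Lemma~\ref{lem:Qq_is_bounded} guarantees that $Q_q$ is a bounded linear operator, so the total jump rate is uniformly bounded at every state and the process is trivially non-explosive; consequently the general semimartingale theory for Markov jump processes \cite{ReYo1999,EtKu2009} applies without modification, and the gridless geometry plays no role in these identities. The only input beyond the gridded proof is that the Lyapunov bound of Lemma~\ref{Qq_drift_gridless} is uniform in the random environment $\xi$, which has already been established there.
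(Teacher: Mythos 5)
Your proposal is correct and follows essentially the same route as the paper, whose entire proof is the remark that the argument of Lemma~\ref{lem:martingale_condition} carries over verbatim: the BDG inequality with $p=1$, Jensen's inequality, the growth condition \eqref{eq:martingale_condition_Qq} controlling the quadratic variation, and the Lyapunov bound from Lemma~\ref{Qq_drift_gridless} in place of Lemma~\ref{Qdrift}. Your additional observation that Lemma~\ref{lem:Qq_is_bounded} makes the total jump rate uniformly bounded (hence the process non-explosive, so the semimartingale identities need no separate justification) is a sound strengthening that the paper leaves implicit, not a departure from its method.
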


The proof of Lemma~\ref{lem:martingale_condition} can be used to prove Lemma~\ref{lem:martingale_condition_Qq}.  It follows from Lemma~\ref{lem:martingale_condition_Qq} that Dynkin's formula \eqref{eq:dynkins_formula} holds for any function satisfying the growth condition \eqref{eq:martingale_condition_Qq}.  Lemma~\ref{lem:martingale_solution} (with $Q$ replaced by $Q_q$) provides a practical way to test if a function satisfies this condition.

\section{Accuracy} \label{sec:accuracy_gridless}

Here we consider finite-time and long-time accuracy of the approximation.

%
%

\begin{theorem}[Finite-Time Accuracy] \label{thm:finite_time_accuracy_Qq}
Suppose Assumption~\ref{assumptions_on_drift} holds.   
For every $\varphi \in \mathcal{C}_b^4(\mathbb{R}^n)$,  there exists a positive constant $C_{\varphi,t}$ such that \begin{equation} \label{eq:finite_time_accuracy_Qq}
  \| P_t^h  \varphi  -  P_t \varphi  \|_{\infty} \le  h^2 \; C_{\varphi, t}
\end{equation}
for any $t>0$.  
\end{theorem}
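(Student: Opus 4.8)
The plan is to mirror the finite-time accuracy proof on gridded state spaces (Theorem~\ref{thm:finite_time_accuracy}) essentially verbatim, replacing the generator $Q$ with the mollified generator $Q_q$ and the grid-restricted semigroup with the gridless semigroup $P_t^h$ induced by $Q_q$. First I would define the global error
\[
\epsilon(x,t) = P_t \varphi(x) - P_t^h \varphi(x) \;,
\]
and observe that, because $P_t$ and $P_t^h$ satisfy the Kolmogorov equations governed by $L$ and $Q_q$ respectively, $\epsilon$ is a mild solution of the inhomogeneous linear equation $\dot\epsilon - Q_q \epsilon = (L - Q_q) P_t \varphi$ on the Banach space $C_b(\mathbb{R}^n)$ with zero initial data. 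The key structural input here is Lemma~\ref{lem:Qq_is_bounded}, which guarantees $Q_q$ is a bounded linear operator (using $r = m+1$), so that $P_t^h = \exp(Q_q t)$ is a genuine strongly continuous semigroup on $C_b(\mathbb{R}^n)$ and Definition~\ref{lem:variation_of_constants_formula} applies. This yields the variation of constants representation
\[
\epsilon(x,t) = \int_0^t P_{t-s}^h (L - Q_q) P_s \varphi(x) \, ds \;.
\]

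Next I would estimate the integrand in the supremum norm. Applying the remainder estimate of Lemma~\ref{lem:Qq_accuracy_model} with $f = P_s\varphi$ gives the pointwise bound $|(L-Q_q) P_s\varphi(z)| \le h^2 C_{P_s\varphi}(z)$, where $C_{P_s\varphi}$ is the function in \eqref{eq:Cf_of_x_q}. Here the crucial regularity input is Lemma~\ref{lem:derivatives_of_semigroup}: since $\varphi \in C_b^4(\mathbb{R}^n)$ implies $P_s\varphi \in C_b^4(\mathbb{R}^n)$ with $\|P_s\varphi\|_4 \le C(s)\|\varphi\|_4$, the derivative factors in $C_{P_s\varphi}$ are uniformly bounded on finite time intervals. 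Because $P_{t-s}^h$ is a Markov (hence positivity-preserving, norm-one) semigroup on $C_b(\mathbb{R}^n)$, one can pull the supremum norm through, obtaining
\[
\|\epsilon(\cdot,t)\|_\infty \le h^2 \int_0^t \| C_{P_s\varphi} \|_\infty \, ds \;.
\]

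The remaining step is to verify that $\|C_{P_s\varphi}\|_\infty$ is finite and integrable over $[0,t]$, so that the integral collapses to a finite constant $C_{\varphi,t}$. This is where the proof differs most from the gridded case: in Theorem~\ref{thm:finite_time_accuracy} the bound carried a factor $V(x)$ because the drift $\mu$ in $C_{P_s\varphi}$ has polynomial growth and was controlled only by the Lyapunov function, whereas here we claim a uniform (state-independent) bound. The point I would emphasize is that the mollified generator's rate functions contain the Gaussian-type cutoff $\exp(-\xi(x)^2 h^2 |x|^{2r})$ with $r = m+1$; this decay dominates the $\cosh(\mu_i h/2)$ and polynomial drift factors appearing in $C_{P_s\varphi}$ (via Assumption~\ref{assumptions_on_drift}(A2)), so that $C_{P_s\varphi}(x)$ is in fact uniformly bounded in $x$ once the cutoff is accounted for—this is precisely the reason the error estimate \eqref{eq:finite_time_accuracy_Qq} has no $V(x)$ factor. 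The main obstacle, and the step requiring care, is making this uniform-in-$x$ bound rigorous: one must confirm that the time dependence of $\|C_{P_s\varphi}\|_\infty$ through $C(s)$ from Lemma~\ref{lem:derivatives_of_semigroup} remains integrable on $[0,t]$, which follows since $C(s)$ is continuous and increasing, giving a finite $C_{\varphi,t} = \int_0^t \|C_{P_s\varphi}\|_\infty \, ds$ and completing the proof.
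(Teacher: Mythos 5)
Your setup reproduces the paper's argument exactly: the error as a mild solution of $\dot\epsilon - Q_q\epsilon = (L-Q_q)P_t\varphi$ on a Banach space of bounded functions, the variation of constants formula made legitimate by the boundedness of $Q_q$ (Lemma~\ref{lem:Qq_is_bounded}, Lemma~\ref{lem:feller}), the consistency estimate of Lemma~\ref{lem:Qq_accuracy_model} with $f=P_s\varphi$, and the regularity input Lemma~\ref{lem:derivatives_of_semigroup}. The genuine gap is in your final step. Your claim that the cutoff $\exp(-\xi(x)^2h^2|x|^{2r})$ renders $C_{P_s\varphi}(x)$ uniformly bounded in $x$ is false, and it contradicts the very lemma you invoke: in the proof of Lemma~\ref{lem:Qq_accuracy_model} the mollifier is bounded \emph{by unity}, and the resulting bounding function \eqref{eq:Cf_of_x_q} is identical to the gridded one \eqref{eq:Cf_of_x} --- it contains $\cosh(\mu_i h/2)$ and powers of $\mu_i$, hence is unbounded in $x$ under Assumption (A2). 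More fundamentally, no state-independent pointwise consistency bound can hold, because the cutoff suppresses $Q_q$ but not $L$: in the regime $h^2|x|^{2r}\gtrsim 1$ the jump rates of $Q_q$ are exponentially small, so $Q_qP_s\varphi(x)\approx 0$, while $LP_s\varphi(x)$ retains the term $\mu(x)^TDP_s\varphi(x)$, of size up to $C^{\mu}_P(1+|x|^{2m+1})\sup_z\|DP_s\varphi(z)\|$. Consequently $\|C_{P_s\varphi}\|_\infty=\infty$ whenever $DP_s\varphi$ does not vanish at infinity, and your concluding inequality $\|\epsilon(\cdot,t)\|_\infty\le h^2\int_0^t\|C_{P_s\varphi}\|_\infty\,ds$ bounds the error by infinity. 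Pulling the sup norm through $P^h_{t-s}$ is precisely the move that destroys the estimate.

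The repair is the paper's route, namely the proof of Theorem~\ref{thm:finite_time_accuracy} with Lemma~\ref{lem:Qc_accuracy_model} replaced by Lemma~\ref{lem:Qq_accuracy_model}: keep the estimate pointwise, dominate $C_{P_s\varphi}(x)\le V(x)+C_0(t)$ using (A2) together with Lemma~\ref{lem:derivatives_of_semigroup}, and then control the expectation of $V$ along the approximation via the stochastic Lyapunov function of Lemma~\ref{Qq_drift_gridless}, whose constants are uniform in $h$ and in the random environment $\xi$; this gives $P^h_{t-s}\bigl(V+C_0(t)\bigr)(x)\le e^{-\gamma(t-s)}V(x)+K/\gamma+C_0(t)$ and hence $|\epsilon(x,t)|\le t\,h^2\,(V(x)+\tilde C_0(t))$. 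Two remarks follow. First, the mollifier's role in this theorem is only structural --- it makes $Q_q$ bounded so that the semigroup is Feller and the variation of constants formula applies --- and it buys nothing for the consistency error. Second, the argument actually delivered by this route (and by the paper's own proof-by-reference) is the $V$-weighted pointwise bound, exactly as in the gridded case; so your instinct that the state-independent form of \eqref{eq:finite_time_accuracy_Qq} requires a new mechanism was sound, but the cutoff cannot supply that mechanism.
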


As in the proof of Theorem~\ref{thm:finite_time_accuracy}, the main issue in the subsequent proof is how to deal with the unbounded property of the drift field.

\begin{proof}
Defined the global error as: \begin{equation} 
\epsilon(x,t) = P_t \varphi(x) - P_t^h \varphi(x) \quad \text{for any $x \in \mathbb{R}^n$ and $t\ge0$} \;.
\end{equation}
This global error $\epsilon(x,t)$ is a mild solution of the following inhomogeneous linear differential equation on the Banach space $B_b(\mathbb{R}^n)$: \begin{equation} \label{eq:global_error_ode_Qq}
\dot \epsilon(x,t) - Q_q \epsilon (x, t) = ( L - Q_q) P_t \varphi(x)  \quad \text{for any $x \in \mathbb{R}^n$  and $t\ge0$}  
\end{equation} with initial condition: \[
\epsilon(x,0) = 0 \quad \text{for all $x \in \mathbb{R}^n$} \;.
\]   Definition~\ref{lem:variation_of_constants_formula} implies this solution can be written in terms of a variation of constants formula: \begin{equation}
\epsilon(x,t) = \int_0^t P^h_{t-s} ( L - Q_q) P_s \varphi(x) ds \;.
\end{equation} 
The desired estimate follows from the proof of Theorem~\ref{thm:finite_time_accuracy} with Lemma~\ref{lem:Qc_accuracy_model} replaced with Lemma~\ref{lem:Qq_accuracy_model}.
\end{proof}

%
%

\begin{lemma} \label{gridless_IM}
Suppose Assumption~\ref{assumptions_on_drift} holds on the drift field.  Then the Markov process generated by $Q_q$ has at least one invariant measure.
\end{lemma}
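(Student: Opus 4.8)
The plan is to invoke the Krylov--Bogolyubov theorem, exactly as anticipated in the introduction: existence of an invariant probability measure for a Feller process follows once one exhibits a tight family of time-averaged occupation measures. The two ingredients are already in hand. First, Lemma~\ref{lem:feller} shows that the semigroup $P_t^h$ generated by $Q_q$ is Feller, i.e.~$P_t^h f \in C_b(\mathbb{R}^n)$ whenever $f \in C_b(\mathbb{R}^n)$; this is what will let us pass to the limit. Second, Lemma~\ref{Qq_drift_gridless} provides the Lyapunov drift condition $Q_q V(x) \le K - \gamma V(x)$ with $V(x) = \exp(a|x|^{2m+2}_{2m+2})$, which supplies the tightness. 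Fix any initial state $x_0$ in the state space and define the averaged occupation measures
\[
\mu_T(A) = \frac{1}{T}\int_0^T \Pi^h_{t,x_0}(A)\,dt\;, \qquad A \in \mathcal{B}(\mathbb{R}^n)\;, \quad T>0\;,
\]
so that $\mu_T(f) = \frac{1}{T}\int_0^T P_t^h f(x_0)\,dt$ for bounded $f$.

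First I would establish tightness. The drift condition implies the semigroup bound
\[
P_t^h V(x_0) \le e^{-\gamma t} V(x_0) + \frac{K}{\gamma}(1 - e^{-\gamma t}) \le V(x_0) + \frac{K}{\gamma}\;,
\]
recorded in the remark following Assumption~\ref{driftcondition}. To justify this bound for the unbounded $V$ I would run a localization argument: since $r = m+1$, the reaction rates of $Q_q$ are uniformly bounded (the proof of Lemma~\ref{lem:Qq_is_bounded}), so $X$ is a non-explosive pure jump process, and applying Dynkin's formula to $V$ stopped at the exit time of a large ball, then taking the ball to infinity via Fatou, yields the supermartingale estimate above. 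Averaging in $t$ gives $\mu_T(V) \le V(x_0) + K/\gamma$ uniformly in $T$. Since $V$ has compact sublevel sets (it grows to $+\infty$ as $|x| \to \infty$), Markov's inequality gives, for every $R>0$,
\[
\mu_T(\{V > R\}) \le \frac{\mu_T(V)}{R} \le \frac{V(x_0) + K/\gamma}{R}\;,
\]
which is made arbitrarily small, uniformly in $T$, by taking $R$ large. Hence $\{\mu_T\}_{T>0}$ is tight.

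By Prokhorov's theorem there is a sequence $T_k \to \infty$ along which $\mu_{T_k}$ converges weakly to a probability measure $\Pi^h$. To see that $\Pi^h$ is invariant I would use the Feller property: for any $f \in C_b(\mathbb{R}^n)$ and $s \ge 0$ we have $P_s^h f \in C_b(\mathbb{R}^n)$, so weak convergence applies to it and, using the semigroup property,
\[
\Pi^h(P_s^h f) = \lim_{k} \mu_{T_k}(P_s^h f) = \lim_{k}\frac{1}{T_k}\int_0^{T_k} P_{t+s}^h f(x_0)\,dt = \lim_k \mu_{T_k}(f) = \Pi^h(f)\;,
\]
where the third equality holds because shifting the interval of integration by $s$ changes the average by at most $\frac{2 s \|f\|_{\infty}}{T_k} \to 0$. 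Thus $\Pi^h P_s^h = \Pi^h$ for all $s \ge 0$, i.e.~$\Pi^h$ is invariant, which proves the claim. See \cite{DaZa1996} for this form of the Krylov--Bogolyubov argument.

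The main obstacle is the tightness step, and specifically the passage from the infinitesimal inequality $Q_q V \le K - \gamma V$ to the integrated semigroup bound on the unbounded function $V$: here the boundedness of the jump rates from Lemma~\ref{lem:Qq_is_bounded} and a stopping-time localization are essential, since $V \notin B_b(\mathbb{R}^n)$ while $Q_q$ is only bounded on $B_b(\mathbb{R}^n)$, and $V$ grows too fast to fall directly under the martingale criterion of Lemma~\ref{lem:martingale_solution}. I emphasize that the argument yields only existence, not uniqueness: because the process is not irreducible on the gridless state space (Figure~\ref{fig:gridvsgridless}), the small-set hypothesis of Harris' theorem is unavailable, and it is precisely this lack of irreducibility that forces the weaker Krylov--Bogolyubov route.
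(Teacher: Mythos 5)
Your proposal is correct and follows exactly the route the paper takes: its proof of Lemma~\ref{gridless_IM} is a one-line appeal to the Krylov--Bogolyubov argument of \cite{DaZa1996}, resting on precisely the two ingredients you use, the Feller property from Lemma~\ref{lem:feller} and the stochastic Lyapunov function from Lemma~\ref{Qq_drift_gridless}. Your write-up merely fills in the standard details the paper leaves implicit (tightness of the occupation measures via the Lyapunov bound and Prokhorov, invariance via the shift argument, and the localization needed to integrate the drift condition for the unbounded $V$), all of which are sound.
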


\begin{proof}
This result follows by the classical Krylov-Bogoliubov argument \cite{DaZa1996}, since the semigroup associated to $Q_q$ is Feller and the process generated by $Q_q$ has a stochastic Lyapunov function.
\end{proof}

%
%

\begin{theorem}[Stationary Distribution Accuracy] \label{thm:nu_accuracy_gridless}
Suppose Assumption~\ref{assumptions_on_drift} holds.   For every $\varphi \in \mathcal{C}_b^4(\mathbb{R}^n)$, we have   \begin{equation} \label{eq:nu_accuracy_gridless}
 | \Pi^h( \varphi ) -  \Pi( \varphi ) | \le  \; h^2 \; C_{\varphi} 
\end{equation}
where $C_{\varphi}$ is defined as: \[
C_{\varphi} = \sup_{0 < h \le h_c} \Pi^{h} \left( \int_0^{\infty} C_{P_s \varphi} ds \right) 
\] and $C_{P_s \varphi}(x) $ is given in \eqref{eq:Cf_of_x_q} with $f=P_s \varphi$.
\end{theorem}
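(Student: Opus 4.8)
The plan is to follow the exact strategy used in the proof of Theorem~\ref{thm:nu_accuracy} from the gridded case, substituting the gridless generator $Q_q$ for $Q$ and the gridless accuracy estimate (Lemma~\ref{lem:Qq_accuracy_model}) for Lemma~\ref{lem:Qc_accuracy_model}. First I would note that, unlike the finite-time estimate of Theorem~\ref{thm:finite_time_accuracy_Qq}, this result cannot be deduced by simply setting $t$ large, because the finite-time bound grows with $t$. Instead the key device is to average the global error against the invariant measure $\Pi^h$ of the approximation, whose existence is guaranteed by Lemma~\ref{gridless_IM} (via the Krylov--Bogoliubov argument, since $Q_q$ induces a Feller semigroup by Lemma~\ref{lem:feller} and admits a stochastic Lyapunov function by Lemma~\ref{Qq_drift_gridless}).

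The central computation runs as follows. Let $\epsilon(x,t) = P_t\varphi(x) - P_t^h\varphi(x)$ denote the global error, which by the variation-of-constants formula (Definition~\ref{lem:variation_of_constants_formula}) equals $\int_0^t P_{t-s}^h (L - Q_q) P_s\varphi(x)\, ds$. Averaging against $\Pi^h$ and using its invariance under $P_t^h$, namely $\Pi^h(P_t^h \psi) = \Pi^h(\psi)$, I would write
\begin{align*}
| \Pi^h(\epsilon(t)) |
&= \left| \Pi^h\left( \int_0^t (L - Q_q) P_s\varphi\, ds \right) \right| \\
&\le \Pi^h\left( \int_0^t | (L - Q_q) P_s\varphi |\, ds \right) \\
&\le h^2\, \Pi^h\left( \int_0^t C_{P_s\varphi}\, ds \right),
\end{align*}
where the last line applies the remainder estimate of Lemma~\ref{lem:Qq_accuracy_model} with $C_{P_s\varphi}$ the positive function of \eqref{eq:Cf_of_x_q}. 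Here I use Fubini's theorem to interchange the spatial average and the time integral, and crucially the identity $\Pi^h(P_s^h\varphi) = \Pi^h(\varphi)$, which lets the $P_{t-s}^h$ factor be absorbed under the measure. Passing to the limit $t \to \infty$, invoking geometric ergodicity of the SDE solution (Theorem~\ref{SDEgeometricallyergodic}) to identify $\lim_{t\to\infty} P_t\varphi = \Pi(\varphi)$ and recalling that $\Pi^h(P_t^h\varphi) = \Pi^h(\varphi)$, yields $|\Pi^h(\varphi) - \Pi(\varphi)| \le h^2 C_\varphi$ with $C_\varphi = \sup_{0<h\le h_c} \Pi^h\left( \int_0^\infty C_{P_s\varphi}\, ds \right)$.

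The main obstacle I anticipate is verifying that $C_\varphi$ is actually finite, i.e.~that the time integral $\int_0^\infty C_{P_s\varphi}\,ds$ converges and that its $\Pi^h$-average is uniformly bounded in $h$. This requires two ingredients working in concert. First, from Lemma~\ref{lem:derivatives_of_semigroup} one has $P_s\varphi \in C_b^4(\mathbb{R}^n)$ whenever $\varphi \in \mathcal{C}_b^4(\mathbb{R}^n)$, and by Assumption~\ref{assumptions_on_drift} (A2) the drift-dependent terms in \eqref{eq:Cf_of_x_q} (which carry factors $|\mu_i|^3$, $|\mu_i|^2$, $|\mu_i|$ and the $\cosh(\mu_i h/2)$ prefactor) are dominated by the sharp stochastic Lyapunov function $V(x)$; since $V$ is $\Pi^h$-integrable by the drift condition \eqref{Qq_infinitesimal_drift_condition_gridless} (which gives $\Pi^h(V) \le K/\gamma$ uniformly in $h$), the spatial average is controlled uniformly. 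Second, the decay in $s$ must come from the exponential convergence of the derivatives of $P_s\varphi$ to those of the constant $\Pi(\varphi)$, as discussed in the final Remark of Chapter~\ref{chap:analysis} and available through estimates of the type in Theorem~3.1 of \cite{Ta2002}; this ensures integrability over $[0,\infty)$. Since the mechanism is identical to the gridded case and the only structural changes are the uniformity of the constants in the random environment $\xi$ (already secured by Lemmas~\ref{lem:Qq_accuracy_model} and~\ref{Qq_drift_gridless}), I expect the argument to transfer with only cosmetic modification, and would close by remarking that the proof is verbatim that of Theorem~\ref{thm:nu_accuracy} with $Q$ replaced by $Q_q$.
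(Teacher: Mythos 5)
Your proposal is correct and follows essentially the same route as the paper's own proof: averaging the variation-of-constants representation of the global error against $\Pi^h$ (whose existence comes from Lemma~\ref{gridless_IM}), exploiting the invariance $\Pi^h(P_t^h\psi)=\Pi^h(\psi)$, bounding the defect via Lemma~\ref{lem:Qq_accuracy_model}, and passing to the limit $t\to\infty$ using ergodicity of the SDE solution. Your additional discussion of why $C_\varphi$ is finite (domination of the drift-dependent terms in $C_{P_s\varphi}$ by the Lyapunov function $V$ with $\Pi^h(V)\le K/\gamma$ uniformly in $h$, plus exponential decay of the semigroup derivatives in $s$) is a sound elaboration of a point the paper leaves implicit in its definition of $C_\varphi$.
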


\begin{proof}
 In order to obtain the desired estimate, we average the global error with respect to the invariant probability measure of the approximation $\Pi^h$ (which exists by Lemma~\ref{gridless_IM}): \begin{align*}
\Pi^h( \epsilon ) &= \sum_{z \in S_h} \Pi^h(z) \epsilon(t,z) \\ 
&= \Pi^h \left( \int_0^t (L-Q) P_s \varphi ds \right)  
\end{align*}  
Passing to the limit $t \to \infty$, and using ergodicity of the SDE solution, gives the desired estimate.

\end{proof}

\chapter{Tridiagonal Case}  \label{chap:tridiagonal} 

For scalar SDEs, the operators given in \S\ref{sec:scalar_case} have an infinite tridiagonal matrix representation.  We use this representation in this chapter to show that the invariant density, mean first passage time, and exit probability of the approximation satisfy a tridiagonal system of equations that can be recursively solved.   Explicit formulas are given for these solutions, which were used to benchmark the simulations of the scalar SDEs appearing in \S\ref{sec:cubic_oscillator}, \S\ref{sec:downhill_time} and \S\ref{sec:cir_process}.   
 Most of these results are classical, but their application to numerical methods for SDEs seems somewhat new.

\section{Invariant Density} \label{sec:nu_1D}

Let $S = \{ x_i \}_{i \in \mathbb{Z}}$ be a collection of grid points in $\mathbb{R}$ ordered by index so that: \[
\dotsc < x_{i-1} < x_i < x_{i+1} < \dotsc 
\]  The  one-dimensional generators in \eqref{eq:tQu_1d}, \eqref{eq:tQc_1d}, and \eqref{eq:Qe_1D}  can all be written as a tridiagonal operator $Q$ whose action on a grid function $f: S \to \mathbb{R}$ is given by: \begin{equation} \label{eq:tridiagonal_generator}
(Q f)_i = Q_{i, i+1} f_{i+1} - ( Q_{i, i+1} + Q_{i, i-1} ) f_i + Q_{i, i-1} f_{i-1} 
\end{equation}
where we used the shorthand notation: $f_i = f(x_i)$ and $Q_{i,i+1} = Q(x_i, x_{i+1})$  for grid points $x_i, x_{i+1} \in S$.   Let $Q^*$ be the transpose of $Q$, so that $(Q^*)_{i,j} = Q_{j,i}$ for all $i,j \in \mathbb{Z}$.  Recall, that an invariant (not necessarily integrable) density of $Q$ is a grid function $\nu_d: S \to \mathbb{R}$ that solves: \begin{equation} \label{invariant_density_recurrence}
 (Q^* \nu_d)_i = Q_{i+1,i} (\nu_d)_{i+1} - (Q_{i,i+1} + Q_{i,i-1}) (\nu_d)_i + Q_{i-1,i} (\nu_d)_{i-1} = 0 
\end{equation}
for every $i \in \mathbb{Z}$.   A generator of the form \eqref{eq:tridiagonal_generator} is always `$\nu_d$-symmetric' as the next Proposition states.  

\begin{prop} \label{prop:nu_symmetry_1d}
Suppose $\nu_d$ satisfies \eqref{invariant_density_recurrence} with seed values $(\nu_d)_0 = 1$ and $(\nu_d)_1 = Q_{0,1}/Q_{1,0}$. 
Then $Q$ is $\nu_d$-symmetric, in the sense that \begin{equation} \label{rho_symmetry}
 (\nu_d)_i Q_{i,i+1} = (\nu_d)_{i+1} Q_{i+1, i} \quad \text{for all $i \in \mathbb{Z}$} \;.
\end{equation}
The converse is also true.
\end{prop}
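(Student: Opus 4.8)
The plan is to phrase both conditions in terms of a single quantity—the discrete probability current across each edge of the lattice—and then to exploit the fact that $\mathbb{Z}$ carries no cycles, so that a divergence-free current is forced to be spatially constant. This reduces the proposition to pinning down one constant using the prescribed seed.

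First I would introduce, for each $i \in \mathbb{Z}$, the current
\[
J_i = (\nu_d)_i \, Q_{i,i+1} - (\nu_d)_{i+1} \, Q_{i+1,i},
\]
which records the net probability flux from $x_i$ to $x_{i+1}$. By definition, the $\nu_d$-symmetry condition \eqref{rho_symmetry} is exactly the assertion that $J_i = 0$ for every $i$, so the whole proposition amounts to showing that the seeded invariance recurrence forces all currents to vanish.

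The key algebraic step is to regroup the three terms of \eqref{invariant_density_recurrence} into a discrete divergence of the current. A one-line computation gives
\[
(Q^* \nu_d)_i = \bigl[ (\nu_d)_{i-1} Q_{i-1,i} - (\nu_d)_i Q_{i,i-1} \bigr] - \bigl[ (\nu_d)_i Q_{i,i+1} - (\nu_d)_{i+1} Q_{i+1,i} \bigr] = J_{i-1} - J_i.
\]
Consequently the invariance equations $(Q^* \nu_d)_i = 0$ for all $i$ are equivalent to $J_{i-1} = J_i$ for all $i$; that is, the current is independent of $i$, say $J_i \equiv J$. It then remains only to evaluate this constant at the seed: using $(\nu_d)_0 = 1$ and $(\nu_d)_1 = Q_{0,1}/Q_{1,0}$,
\[
J = J_0 = 1 \cdot Q_{0,1} - \frac{Q_{0,1}}{Q_{1,0}} \, Q_{1,0} = 0,
\]
so $J_i = 0$ for every $i$, which is \eqref{rho_symmetry}. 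For the converse I would simply reverse the logic: if \eqref{rho_symmetry} holds then $J_i \equiv 0$, whence $(Q^*\nu_d)_i = J_{i-1} - J_i = 0$ shows $\nu_d$ solves \eqref{invariant_density_recurrence}, and the relation $J_0 = 0$ is precisely the seed ratio $(\nu_d)_1/(\nu_d)_0 = Q_{0,1}/Q_{1,0}$.

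There is no serious obstacle here; the content is the structural observation that on the acyclic graph $\mathbb{Z}$ a discretely divergence-free current must be globally constant, after which the seed pins it to zero. The only points requiring a word of care are the sign bookkeeping in the regrouping identity and the standing assumption that the neighbouring rates $Q_{1,0}$ (and more generally $Q_{i+1,i}$) are nonzero, so that the seed is well defined—this is guaranteed by the realizability condition (Q2) together with irreducibility of the chain.
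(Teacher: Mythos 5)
Your proof is correct, and at bottom it runs on the same algebra as the paper's, but with a different logical organization: the paper proves the nontrivial direction by induction outward from the seed, treating $i \ge 0$ and $i < 0$ separately, and its inductive step
\[
Q_{j+1,j}(\nu_d)_{j+1} - Q_{j,j+1}(\nu_d)_j = Q_{j,j-1}(\nu_d)_j - Q_{j-1,j}(\nu_d)_{j-1}
\]
is precisely your statement $J_j = J_{j-1}$ in disguise. By packaging the recurrence as the discrete divergence identity $(Q^*\nu_d)_i = J_{i-1} - J_i$, you replace the two one-sided inductions with the single structural observation that a divergence-free current on $\mathbb{Z}$ must be constant, and then you kill the constant by evaluating at the seed. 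This buys you a unified treatment of positive and negative indices (the paper's ``similar proof for $i<0$'' disappears), and it explains conceptually what the seed values are doing: they encode $J_0 = 0$, i.e., detailed balance across one edge, which constancy then propagates everywhere. Your converse is likewise complete, since $J \equiv 0$ gives the recurrence directly and $J_0 = 0$ recovers the seed ratio. One small correction to your closing remark: the realizability condition (Q2) guarantees only $Q_{i+1,i} \ge 0$, not strict positivity; positivity of the rates is a separate hypothesis (it appears as condition (C1) in the paper's ergodicity theorem, and holds for the concrete generators \eqref{eq:tQu_1d}, \eqref{eq:tQc_1d}, \eqref{eq:Qe_1D}). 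Since the ratio $Q_{0,1}/Q_{1,0}$ is part of the proposition's hypothesis anyway, this does not affect the validity of your argument.
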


\begin{proof}
The $\nu_d$-symmetry condition \eqref{rho_symmetry} immediately implies the invariant density condition \eqref{invariant_density_recurrence}.  For the converse, use induction: the hypothesis on the seed values implies that \eqref{rho_symmetry} holds for $i=0$.  Assume \eqref{rho_symmetry}  holds for $i = j-1 > 0$.  Since $\nu_d$ satisfies \eqref{invariant_density_recurrence}, it follows that \[
Q_{j+1,j} (\nu_d)_{j+1} - Q_{j,j+1} (\nu_d)_j =  Q_{j,j-1} (\nu_d)_j - Q_{j-1,j} (\nu_d)_{j-1} 
\]  The inductive hypothesis implies that the right-hand-side of this equation is zero, and therefore, \eqref{rho_symmetry}  holds for all $i \ge 0$.  A similar proof works for $i<0$ and is therefore omitted.
\end{proof}

In addition to $\nu_d$-symmetry, a closed form solution to the recurrence relation in \eqref{invariant_density_recurrence} is available.

\begin{prop}[Formula for Invariant Density]
A formula for the solution to  \eqref{invariant_density_recurrence} with seed values $(\nu_d)_0 = 1$ and $(\nu_d)_1 = Q_{0,1}/Q_{1,0}$
is given by  \begin{equation} \label{rho_formula}
(\nu_d)_{i} = \begin{dcases}
\prod_{k=0}^{|i|-1} \frac{Q_{k,k+1}}{Q_{k+1,k}} &  \text{if $i > 0$} \\
\prod_{k=0}^{|i|-1} \frac{Q_{-k,-k-1}}{Q_{-k-1,-k}}  & \text{if $i< 0$} \\
\vphantom{\prod_{k=0}^{-i-1}} 1 & \text{if $i=0$} 
\end{dcases}
\end{equation}
\end{prop}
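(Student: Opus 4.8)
The plan is to verify directly that the formula \eqref{rho_formula} produces a grid function satisfying the $\nu_d$-symmetry condition \eqref{rho_symmetry}, and then to appeal to Proposition~\ref{prop:nu_symmetry_1d} to conclude that it solves the invariant density recurrence \eqref{invariant_density_recurrence}. By that proposition, $\nu_d$-symmetry and the invariant density condition are equivalent given the stated seed values, so it suffices to check symmetry. First I would confirm the seed values: at $i=0$ the formula gives $(\nu_d)_0 = 1$ (the empty product), and the symmetry relation \eqref{rho_symmetry} for $i=0$ reads $(\nu_d)_0 Q_{0,1} = (\nu_d)_1 Q_{1,0}$, which forces $(\nu_d)_1 = Q_{0,1}/Q_{1,0}$, matching the prescribed seed. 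Indeed at $i=1$ the formula yields the single-factor product $Q_{0,1}/Q_{1,0}$, confirming consistency.

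The main body of the argument is an induction in $|i|$, handled separately for positive and negative indices. For $i>0$, I would assume the formula holds at index $i$ and verify \eqref{rho_symmetry}, namely that $(\nu_d)_i Q_{i,i+1} = (\nu_d)_{i+1} Q_{i+1,i}$. From the product formula, $(\nu_d)_{i+1}$ differs from $(\nu_d)_i$ by precisely the factor $Q_{i,i+1}/Q_{i+1,i}$, since
\begin{equation*}
(\nu_d)_{i+1} = \prod_{k=0}^{i} \frac{Q_{k,k+1}}{Q_{k+1,k}} = \left( \prod_{k=0}^{i-1} \frac{Q_{k,k+1}}{Q_{k+1,k}} \right) \frac{Q_{i,i+1}}{Q_{i+1,i}} = (\nu_d)_i \, \frac{Q_{i,i+1}}{Q_{i+1,i}} \;.
\end{equation*}
Rearranging gives $(\nu_d)_{i+1} Q_{i+1,i} = (\nu_d)_i Q_{i,i+1}$, which is exactly \eqref{rho_symmetry}. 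The negative-index case is symmetric: for $i<0$ the formula is built from the ratios $Q_{-k,-k-1}/Q_{-k-1,-k}$, and an analogous telescoping comparison between consecutive indices establishes \eqref{rho_symmetry} there as well. Since \eqref{rho_symmetry} holds for every $i \in \mathbb{Z}$, Proposition~\ref{prop:nu_symmetry_1d} guarantees that $\nu_d$ solves \eqref{invariant_density_recurrence}.

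This proof is essentially a bookkeeping exercise, so there is no deep obstacle; the only point demanding care is the indexing convention in the negative branch. The formula for $i<0$ is written in terms of $|i|$ with products running over forward-oriented ratios $Q_{-k,-k-1}/Q_{-k-1,-k}$, and I would need to be careful that the telescoping between $(\nu_d)_i$ and $(\nu_d)_{i-1}$ (moving further negative) correctly isolates the single factor $Q_{i,i-1}/Q_{i-1,i}$ so that the symmetry relation $(\nu_d)_{i-1} Q_{i-1,i} = (\nu_d)_i Q_{i,i-1}$ emerges cleanly. An alternative, perhaps cleaner, presentation would bypass the case split by observing that the recurrence \eqref{rho_symmetry} uniquely determines $\nu_d$ from the seed $(\nu_d)_0 = 1$ by stepping outward in both directions, and that the displayed product is simply the closed-form accumulation of these ratios; I would then only need to check that the product formula reproduces the one-step recurrence in each direction, which is immediate from the factor structure.
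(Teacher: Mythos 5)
Your proposal is correct and follows essentially the same route as the paper's own proof: verify that the product formula \eqref{rho_formula} satisfies the $\nu_d$-symmetry relation \eqref{rho_symmetry} by telescoping consecutive indices, then invoke Proposition~\ref{prop:nu_symmetry_1d} to conclude it solves \eqref{invariant_density_recurrence}. Your extra care with the seed values and the negative-index bookkeeping matches what the paper leaves implicit ("a similar result holds for $i<-1$"), so there is nothing to add.
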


This expression for $\nu_d$ in terms of the entries of $Q$ can be easily implemented in e.g.~MATLAB.  

\begin{proof}
The proof shows that $\nu_d$ in \eqref{rho_formula} satisfies $\nu_d$-symmetry, which by Prop.~\ref{prop:nu_symmetry_1d} implies that $\nu_d$ solves  \eqref{invariant_density_recurrence}.  Evaluating \eqref{invariant_density_recurrence} at $i=0$ implies that the formula \eqref{rho_formula} holds for $i= \pm 1$, since $(\nu_d)_{\pm 1} =   Q_{0,\pm 1} / Q_{\pm 1, 0}$.  For $i > 1$,  \begin{align*}
 (\nu_d)_{i+1} Q_{i+1,i} &=  \overset{(\nu_d)_{i+1}}{\overbrace{\left(  \prod_{k=0}^i \frac{Q_{k,k+1}}{Q_{k+1,k}}  \right)}} Q_{i+1, i} =  \underset{(\nu_d)_{i}}{\underbrace{\left(  \prod_{k=0}^{i-1} \frac{Q_{k,k+1}}{Q_{k+1,k}}  \right)}} Q_{i,i+1} = (\nu_d)_i Q_{i, i+1} 
\end{align*}
which implies the desired result for $i \ge 0$.   A similar result holds for $i<-1$.
\end{proof}

A Markov process on a countable state space is ergodic (or positive recurrent) if it is: \begin{itemize}
\item irreducible; 
\item recurrent (or persistent); and,
\item has an invariant probability distribution.
\end{itemize}
Thus, we have the following result \cite{Ki2015}.

\begin{theorem} [Sufficient Conditions for Ergodicity]
Let $X(t)$ be the Markov process with tridiagonal generator $Q$ given by \eqref{eq:tridiagonal_generator}, Markov semigroup $\{ P_{ij}(t) \}_{t \ge0}$ defined as: \[
P_{ij}(t) = \P( X(t) = x_j ~|~ X(0) = x_i ) \;. 
\] 
and invariant density $\nu_d$ whose formula is given in \eqref{rho_formula}.  This process is ergodic if it satisfies the following conditions: 
\begin{description}
\item[(C1)] For every integer $i$ \[
Q_{i,i+1}>0  \quad \text{and} \quad  Q_{i,i-1}>0 
\]
\item[(C2)] The following series converges \[
Z = \sum_{i\in \mathbb{Z}} (\nu_d)_i < \infty
\]
\end{description}
Let $\pi_d = \nu_d/Z$.  Then we have that: \[
\sum_{j \in \mathbb{Z}} | P_{ij}(t) - (\pi_d)_j | \to 0  \quad \text{ as $t \to \infty$ for any $i \in \mathbb{Z}$} \;.
\]
\end{theorem}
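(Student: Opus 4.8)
The plan is to invoke the standard classification theory for continuous-time Markov chains on a countable state space, verifying the three ingredients of ergodicity (irreducibility, recurrence, existence of an invariant probability measure) and then appealing to the associated convergence theorem for the transition semigroup. First I would observe that condition (C1) gives irreducibility directly: since every nearest-neighbor jump rate $Q_{i,i+1}$ and $Q_{i,i-1}$ is strictly positive, one can connect any two states $x_i$ and $x_j$ by a finite chain of nearest-neighbor transitions, each occurring with positive probability, so the chain is irreducible on the connected grid $S$. This is the same argument already used in the proof of Theorem~\ref{thm:geometric_ergodicity_Qc}.

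Next I would show that (C2) supplies the invariant probability distribution. By the preceding Proposition, the grid function $\nu_d$ given explicitly in \eqref{rho_formula} solves the invariant density recurrence \eqref{invariant_density_recurrence}, equivalently satisfies the detailed-balance relation \eqref{rho_symmetry}. When $Z = \sum_{i \in \mathbb{Z}} (\nu_d)_i < \infty$, the normalized measure $\pi_d = \nu_d/Z$ is a genuine probability distribution satisfying $(Q^* \pi_d)_i = 0$ for all $i$, hence an invariant probability measure for the semigroup $\{P_{ij}(t)\}$. So under (C1)--(C2) we have both irreducibility and an invariant probability distribution.

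The remaining ingredient is recurrence, and this is where the bookkeeping must be handled carefully rather than quoted glibly. The cleanest route is to use the general fact that an irreducible continuous-time Markov chain which admits an invariant probability distribution is automatically positive recurrent; this is a classical result (see, e.g., the reference \cite{Ki2015} cited in the statement). Thus recurrence need not be checked by hand via a separate series criterion on the $Q_{i,i\pm1}$ ratios; it follows once (C1) and (C2) hold. With all three defining properties of ergodicity established, the chain is positive recurrent and irreducible with stationary law $\pi_d$, and the convergence theorem for such chains yields $\sum_{j} |P_{ij}(t) - (\pi_d)_j| \to 0$ as $t \to \infty$ for every fixed $i$, which is exactly the $\ell^1$ (equivalently total-variation) convergence asserted.

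The main obstacle, if one wants a self-contained proof rather than an appeal to the cited monograph, is establishing recurrence without circularity and guaranteeing that the chain is non-explosive (so that the transition probabilities $P_{ij}(t)$ form an honest, conservative semigroup with $\sum_j P_{ij}(t)=1$). Explosion is a real concern on an unbounded grid with rates that may grow, and the convergence statement implicitly presumes a non-exploding process. I would address this either by citing the fact that the existence of an invariant probability measure for an irreducible chain rules out explosion and forces positive recurrence, or—more in the spirit of the rest of the paper—by noting that the stochastic Lyapunov function of Lemma~\ref{Qdrift} already guarantees non-explosion and geometric ergodicity, of which this theorem is a weaker (merely ergodic) scalar special case. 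The delicate point to state precisely is exactly which of these two hypotheses, (C2) alone or the Lyapunov drift condition, is being leaned on to exclude explosion.
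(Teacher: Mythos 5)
Your first two steps (irreducibility from (C1), stationarity of $\pi_d$ from (C2) via the $\nu_d$-symmetry \eqref{rho_symmetry}) match the paper's framing exactly; note that the paper offers no proof at all, simply quoting the result from \cite{Ki2015} after listing irreducibility, recurrence, and an invariant probability distribution as three \emph{separate} ingredients. The genuine gap is in your third step. In continuous time it is \emph{not} true that an irreducible chain admitting a probability solution of $\pi Q = 0$ is automatically positive recurrent, and such a solution does \emph{not} rule out explosion, so your proposed fix (a) is false. Concretely, take the tridiagonal (birth--death) generator on $\mathbb{Z}_{\ge 0}$ with $Q_{n,n+1} = 5^n$ and $Q_{n,n-1} = 5^n/2$ (extended to $\mathbb{Z}$ with any rates giving geometric decay of $\nu_d$ on the negative axis): (C1) holds, and the formula \eqref{rho_formula} gives $(\nu_d)_{n+1}/(\nu_d)_n = Q_{n,n+1}/Q_{n+1,n} = 2/5$, so (C2) holds; yet the embedded jump chain moves up with probability $2/3$ and is transient, while the mean holding times are of order $5^{-n}$ and hence summable along the escaping path, so the minimal process explodes with positive probability. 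Its semigroup is strictly sub-stochastic, $\sum_j P_{ij}(t) < 1$, and the asserted total-variation convergence fails. This is precisely why recurrence (equivalently, here, regularity of the semigroup) appears as an independent hypothesis in the classical statement the paper quotes: the correct implication is irreducible $+$ \emph{non-explosive} $+$ invariant probability distribution $\Rightarrow$ positive recurrent.

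Your fix (b) --- importing the stochastic Lyapunov function --- does rule out explosion, but it is not available at the level of generality of this theorem: Lemma~\ref{Qdrift} concerns the generator \eqref{eq:Qc2} under Assumption~\ref{assumptions_on_drift}, whereas the theorem here is stated for an arbitrary tridiagonal generator \eqref{eq:tridiagonal_generator} satisfying only (C1)--(C2), with no drift hypotheses. For the scalar discretizations the paper actually benchmarks (\eqref{eq:Qu_1d} and \eqref{eq:Qc_1d} applied to dissipative drifts, where the rates grow like $\exp(c\,|x|^{2m+1})$ and non-explosion is genuinely nontrivial), that Lyapunov route is sound and is exactly what Theorem~\ref{thm:geometric_ergodicity_Qc} carries out. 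But to repair your self-contained proof of the theorem as stated, you must either add honesty/regularity of $\{P_{ij}(t)\}$ as an explicit hypothesis (as the cited source effectively does) or verify recurrence of the embedded chain or a non-explosion criterion by hand; it cannot be extracted from (C1) and (C2) alone.
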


Here is a simple sufficient condition for (C2) to hold.

\begin{lemma} \label{lemma_ratio_test}
Assume that $\nu_d$ satisfies  \eqref{invariant_density_recurrence}  with seed values
$(\nu_d)_0 = 1$ and $(\nu_d)_1 = Q_{0,1}/Q_{1,0}$.  If the following inequalities are satisfied:
\[
\lim_{k \to \infty} \frac{Q_{k,k+1}}{Q_{k+1,k}} < 1 \quad \text{and} \quad 
\lim_{k \to -\infty} \frac{Q_{k,k-1}}{Q_{k-1,k}} < 1  
\]  then $\nu_d$ is normalizable.
\end{lemma}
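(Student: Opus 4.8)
The plan is to establish normalizability of $\nu_d$ by splitting the sum $Z = \sum_{i \in \mathbb{Z}} (\nu_d)_i$ into its nonnegative-index and negative-index parts and applying the ratio test to each separately. Using the explicit product formula \eqref{rho_formula} for $(\nu_d)_i$, the ratio of consecutive terms collapses to a single factor, which is exactly where the hypotheses enter.

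First I would handle the tail as $i \to +\infty$. From \eqref{rho_formula}, for $i > 0$ the successive ratio is
\[
\frac{(\nu_d)_{i+1}}{(\nu_d)_i} = \frac{\prod_{k=0}^{i} Q_{k,k+1}/Q_{k+1,k}}{\prod_{k=0}^{i-1} Q_{k,k+1}/Q_{k+1,k}} = \frac{Q_{i,i+1}}{Q_{i+1,i}} \;.
\]
By the first hypothesis, $\lim_{i \to \infty} Q_{i,i+1}/Q_{i+1,i} < 1$, so the ratio test guarantees that $\sum_{i \ge 0} (\nu_d)_i$ converges. Symmetrically, for $i < 0$ the formula gives
\[
\frac{(\nu_d)_{-(j+1)}}{(\nu_d)_{-j}} = \frac{Q_{-j,-j-1}}{Q_{-j-1,-j}} \;,
\]
so writing $k = -j$ and invoking the second hypothesis $\lim_{k \to -\infty} Q_{k,k-1}/Q_{k-1,k} < 1$ shows by the ratio test that $\sum_{i < 0} (\nu_d)_i$ converges as well. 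Adding the two convergent tails (together with the finite central term $(\nu_d)_0 = 1$) yields $Z < \infty$, so $\nu_d$ is normalizable.

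There is essentially no serious obstacle here, since the product structure of \eqref{rho_formula} makes the consecutive-ratio computation immediate; the statement is really just a packaging of the ratio test. The one point requiring a little care is the bookkeeping of signs and indices in the negative tail, to confirm that the quantity controlled by the second hypothesis ($Q_{k,k-1}/Q_{k-1,k}$ as $k \to -\infty$) is genuinely the consecutive ratio of $(\nu_d)$ terms marching toward $-\infty$. I would also note, for completeness, that the hypotheses implicitly presuppose condition (C1) (all off-diagonal rates strictly positive) so that the ratios in \eqref{rho_formula} are well-defined and the $(\nu_d)_i$ are strictly positive; under (C1) each term is positive and the ratio test applies in its elementary nonnegative-series form.
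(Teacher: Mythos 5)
Your proposal is correct and follows essentially the same route as the paper's proof: apply the ratio test to each tail of $\sum_{i\in\mathbb{Z}}(\nu_d)_i$, using the product formula \eqref{rho_formula} so that the consecutive ratio collapses to $Q_{k,k+1}/Q_{k+1,k}$ (and its mirror image for negative indices). Your extra remarks on index bookkeeping for the negative tail and on positivity of the rates are sound refinements, not deviations.
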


\begin{proof}
This lemma is an application of the ratio test, which gives a sufficient condition to establish convergence of a series.  
From \eqref{rho_formula} the quotient this test uses can be written as a ratio of jump rates: \[
\frac{(\nu_d)_{k+1}}{(\nu_d)_k} =  \frac{Q_{k,k+1}}{Q_{k+1,k}} \;. 
\] Hence, the ratio test for convergence of the series $\sum_{i \in \mathbb{N}} (\nu_d)_i$  reduces to confirming that: \[
\lim_{k \to \infty} \frac{Q_{k,k+1}}{Q_{k+1,k}} < 1 \;,
\]   and similarly for $i<0$.
\end{proof}

\section{Stationary Density Accuracy}  \label{sec:nu_accuracy_1D}

As an application of Formula~\eqref{rho_formula}, we obtain explicit estimates for the error in (sup norm) of the numerical stationary density of the generator $Q_c$ introduced in \eqref{eq:tQc_1d}.  

\begin{prop} \label{nu_tQc_1d}
Consider the SDE \eqref{sde_1d} with $\Omega=\mathbb{R}$.  Assume that:
\medskip
\begin{itemize}
\item $U(x)$ is three times differentiable for all $x \in \mathbb{R}$, and \[
\sign(x) U'(x) \to \infty \quad \text{as $|x| \to \infty$} \;.
\]
\item  $M(x)$ is differentiable and $M(x)>0$, for all $x \in \mathbb{R}$.
\end{itemize}
\medskip
Let $\nu(x) = \exp(-U(x))$.  Suppose that the spatial step size is constant and that $x_i = i h$ for all $i \in \mathbb{Z}$.
Consider the second-order scheme \eqref{eq:tQc_1d} with off-diagonal entries given by: \[
( Q_c)_{i,i+1} = \frac{1}{h^2} \frac{ M_i + M_{i+1} }{2} \exp\left( -U_i^{\prime} \frac{h}{2} \right) \;, \quad ( Q_c)_{i,i-1} = \frac{1}{h^2} \frac{ M_i + M_{i-1} }{2} \exp\left( U_i^{\prime} \frac{h}{2} \right) \;.
\]   Let $\nu_c$ be an invariant density of $ Q_c$ normalized so that $\nu_c(x_0) = \nu(x_0)$.  Then, \begin{equation}
\sup_{x_i \in S} | \nu(x_i) - \nu_c(x_i) | <  C h^2
\end{equation}
where \[
C = \frac{1}{12} \sup_{x} \nu(x)  |x| \sup_{\xi < |x|} | U'''(\xi) |  \exp\left( \frac{h^2}{12} |x| \sup_{\xi < |x|} | U'''(\xi) | \right) \;.
\]
\end{prop}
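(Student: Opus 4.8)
The plan is to exploit the explicit product formula \eqref{rho_formula} for the invariant density together with the trapezoidal-rule interpretation of the discrete flux balance. First I would invoke Proposition~\ref{prop:nu_symmetry_1d}, which guarantees that any invariant density $\nu_c$ of the tridiagonal generator $Q_c$ satisfies the detailed balance relation $(\nu_c)_i (Q_c)_{i,i+1} = (\nu_c)_{i+1} (Q_c)_{i+1,i}$, so that the one-step ratio is
\[
\frac{(\nu_c)_{i+1}}{(\nu_c)_i} = \frac{(Q_c)_{i,i+1}}{(Q_c)_{i+1,i}}.
\]
Substituting the stated off-diagonal entries and using $(Q_c)_{i+1,i} = h^{-2}\,\tfrac{M_{i+1}+M_i}{2}\exp(U_{i+1}'\,h/2)$, the symmetric factors $\tfrac{M_i+M_{i+1}}{2}$ cancel, leaving the clean expression
\[
\frac{(\nu_c)_{i+1}}{(\nu_c)_i} = \exp\!\left(-\frac{h}{2}\bigl(U_i' + U_{i+1}'\bigr)\right).
\]
The crucial observation is that the diffusion coefficient drops out entirely, so the numerical stationary density depends on the potential $U$ alone.

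Second, I would compare this against the exact ratio $\nu(x_{i+1})/\nu(x_i) = \exp(-(U_{i+1}-U_i))$ and read off the per-step logarithmic error
\[
\delta_k := \log\frac{(\nu_c)_{k+1}/(\nu_c)_k}{\nu(x_{k+1})/\nu(x_k)} = (U_{k+1}-U_k) - \frac{h}{2}\bigl(U_k' + U_{k+1}'\bigr).
\]
This is precisely the error of the trapezoidal quadrature rule applied to $\int_{x_k}^{x_{k+1}} U'(s)\,ds = U_{k+1}-U_k$. Since $U \in C^3$ by hypothesis, the standard remainder formula for the trapezoidal rule (with integrand $U'$, whose second derivative is $U'''$) yields $\delta_k = -\tfrac{h^3}{12} U'''(\xi_k)$ for some $\xi_k \in (x_k, x_{k+1})$.

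Third, I would telescope. Because $\nu_c(x_0)=\nu(x_0)$ and $x_i = ih$, summing the per-step errors gives, for $i>0$,
\[
\epsilon_i := \log\frac{\nu_c(x_i)}{\nu(x_i)} = \sum_{k=0}^{i-1}\delta_k, \qquad |\epsilon_i| \le \frac{h^3}{12}\sum_{k=0}^{i-1}|U'''(\xi_k)| \le \frac{h^2}{12}\,|x_i|\sup_{\xi<|x_i|}|U'''(\xi)|,
\]
using $|i|\,h = |x_i|$ together with the fact that each $\xi_k$ lies between $0$ and $x_i$; the case $i<0$ is identical up to the sign of $\delta_k$, since the product formula \eqref{rho_formula} for negative indices produces the same step ratios in reverse. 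Finally I would convert this logarithmic bound into an absolute one via the elementary inequality $|e^{\epsilon}-1| \le |\epsilon|\,e^{|\epsilon|}$, which gives
\[
|\nu_c(x_i) - \nu(x_i)| = \nu(x_i)\,|e^{\epsilon_i}-1| \le \nu(x_i)\,\frac{h^2}{12}|x_i|\sup_{\xi<|x_i|}|U'''(\xi)|\,\exp\!\left(\frac{h^2}{12}|x_i|\sup_{\xi<|x_i|}|U'''(\xi)|\right),
\]
and taking the supremum over $x_i \in S$ produces the claimed bound $Ch^2$ with $C$ exactly as stated.

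The computations are essentially routine once the trapezoidal-rule structure is recognized; the main conceptual step I would emphasize is the cancellation of the $M$-factors in the rate ratio, which reduces the entire error analysis to the accuracy of trapezoidal quadrature of $U'$. The only mild bookkeeping obstacle is the negative-index branch of \eqref{rho_formula} and the verification that all intermediate quadrature nodes $\xi_k$ fall within the range over which $\sup|U'''|$ is taken, so that a single constant controls every term of the telescoping sum. Finiteness of $C$ itself is a separate matter, governed by the balance between the superlinear growth forced by $\sign(x)U'(x)\to\infty$ (and hence the decay of $\nu=e^{-U}$) and the growth of $|x|\sup_{\xi<|x|}|U'''(\xi)|$; I would treat it as part of the standing hypotheses rather than reprove it.
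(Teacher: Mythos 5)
Your proposal is correct and follows essentially the same route as the paper's proof: the paper likewise obtains the one-step ratio $\exp\bigl(-\tfrac{h}{2}(U_k'+U_{k+1}')\bigr)$ by substituting the rates into the product formula \eqref{rho_formula} (equivalent to your detailed-balance derivation via Proposition~\ref{prop:nu_symmetry_1d}), identifies the per-step logarithmic error as the trapezoidal remainder $-\tfrac{h^3}{12}U'''(\xi_k)$ (derived there by two integrations by parts rather than quoted), telescopes from the normalization $\nu_c(x_0)=\nu(x_0)$ treating $i>0$ and $i<0$ separately, and concludes with the same inequality $|e^a-1|\le|a|e^{|a|}$. The only differences are cosmetic: the paper opens by verifying normalizability of the invariant density via the ratio test of Lemma~\ref{lemma_ratio_test} (this is where the hypothesis $\sign(x)U'(x)\to\infty$ is actually used), and be aware that Proposition~\ref{prop:nu_symmetry_1d} yields detailed balance only for the solution of \eqref{invariant_density_recurrence} with the specific seed values of \eqref{rho_formula}, not for an arbitrary solution of $Q_c^*\nu=0$ --- the same implicit selection the paper makes.
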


\begin{proof}
First, we apply the test given in Lemma~\ref{lemma_ratio_test}  to check that an invariant density of $Q_c$ is normalizable.  The hypothesis on $U'(x)$ implies that: \[
\lim_{k \to \infty} \frac{(Q_c)_{k,k+1}}{(Q_c)_{k+1,k}}  =\lim_{k \to \infty} \exp\left( - \frac{h}{2} ( U'_k + U'_{k+1} ) \right) = 0
\] and similarly for $k < 0$.  Thus, $Q_c$ has a stationary density.   

Next, we use formula~\ref{rho_formula} to quantify the point-wise error in the approximation to the density $\nu(x)$.   Let $\tilde \nu_c(x)$ denote the stationary density of $Q_c$.  (This density will be rescaled in order to obtain $\nu_c(x)$.)  Substituting the rates appearing in~\eqref{eq:tQc_1d} into formula~\ref{rho_formula} and simplifying yields: \[
\tilde \nu_c(x_i) = \begin{dcases}
 \exp\left( - \frac{h}{2} \sum_{k=0}^{|i|-1} ( U_k' + U_{k+1}' )  \right) &  \text{if $i > 0$} \\
 \exp\left( \frac{h}{2} \sum_{k=0}^{|i|-1} ( U_{-k}' + U_{-k-1}' )  \right)   & \text{if $i< 0$} \\
1 & \text{if $i=0$} 
\end{dcases}
\]  
We consider two cases depending on the value of $x_i$.  Set $\nu_c = \tilde \nu_c / \exp( U_0)$, so that $\nu_c(x_0) = \nu(x_0)$.

\begin{itemize}
\item When $x_i > 0$, \[
 \nu_c(x_i) = \exp\left( - \int_0^{x_i} U'(s) ds - U_0 + \epsilon_h^+ \right) = \exp\left( - U_i + \epsilon_h^+ \right) \;,
\] where we have introduced\begin{align*}
\epsilon_h^+ &= \int_0^{x_i} U'(s) ds - \frac{h}{2}  \sum_{k=0}^{i-1} ( U'_k + U'_{k+1} ) \\
&= \sum_{k=0}^{i-1} \underset{\delta_k^+}{\underbrace{\left( \int_{x_k}^{x_{k+1}} U'(s) ds - \frac{h}{2} ( U'_k + U'_{k+1} ) \right)}} = \sum_{k=0}^{i-1} \delta_k^+
\end{align*}
Use integration by parts twice to obtain the following expression for $\delta_k^+$: \[
\delta_k^+ = \frac{1}{2} \int_{x_k}^{x_{k+1}} \left( (s-x_{k+\frac{1}{2}} )^2 - \left( \frac{h}{2} \right)^2 \right) U'''(s) ds = - \frac{h^3}{12} U'''(\xi_k^+)
\] where $x_{k+\frac{1}{2}} = (x_k + x_{k+1})/2$ and $x_k \le \xi_k^+ \le x_{k+1}$.   
 Next we apply the elementary inequality: \begin{equation} \label{eq:exp_inequality}
 |e^a - 1| \le e^{|a|} - 1 \le |a| e^{|a|} \quad \forall~~a \in \mathbb{R}
 \end{equation}
to obtain \begin{align*}
 | \nu(x_i) - \nu_c(x_i) |  &=   \nu(x_i) \left| \exp\left( - \frac{h^3}{12} \sum_{k=0}^{i-1} U'''( \xi_k^+ ) \right) -1 \right| \\
&  \le  \frac{h^2}{12}   \nu(x_i)  |x_i| \sup_{\xi < |x_i|} | U'''(\xi) |  \exp\left( \frac{h^2}{12}  |x_i| \sup_{\xi < |x_i|} | U'''(\xi) | \right) \;.
\end{align*}

\medskip

\item When $x_i < 0$, we obtain:\[
 \nu_c(x_i) = \exp\left( \int_{x_i}^{0} U'(s) ds - U_0 + \epsilon_h^- \right) = \exp\left( - U_i  + \epsilon_h^- \right) \;,
\] and as in the previous case, \[
 \epsilon_h^- =  - \sum_{k=0}^{i-1} \frac{h^3}{12} U'''(\xi_{k}^-) 
\] where $\xi_{k}^- \in [x_{k-1}, x_k]$.
Applying \eqref{eq:exp_inequality} we obtain, \begin{align*}
| \nu(x_i) - \nu_c(x_i) |  &=   \nu(x_i) \left| 1 - \exp\left( - \frac{h^3}{12} \sum_{k=0}^{i-1} U'''( \xi_k^- ) \right) \right| \\
&  \le  \frac{h^2}{12}   \nu(x_i)  |x_i| \sup_{\xi < |x_i|} | U'''(\xi) |  \exp\left( \frac{h^2}{12}  |x_i| \sup_{\xi < |x_i|} | U'''(\xi) | \right) \;.
\end{align*}
\end{itemize}

Taking the sup over $x_i$ in the last inequalities in each case gives the desired estimate.
\end{proof}

\section{Exit Probability} \label{sec:committor_1D}

Here we demonstrate how to compute the exit probability or committor function of the approximation induced by $Q$.  As a side remark,  the committor function can be used to analytically describe the statistical properties of trajectories between two metastable states of an SDE \cite{EVa2004, Va2006, EVa2006, EVa2010}.   Given `reactant' and `product' states at the points $a$ and $b$ (resp.) with $a<b$, the committor function is defined as the probability that the process initiated at a point $x \in [a,b]$ reacts in the sense that it first hits $b$ before it hits $a$.  Assume that  the reactant and product states are the left and right endpoints of the grid $S$ and suppose that the grid points are arranged so that: \[
 x_0 = a  < \dotsb <  x_N = b 
 \]
Let $q_d = \{ (q_d)_i \}_{i=0}^N$ denote the committor function associated to the generator $Q$ which satisfies: \begin{equation}
\label{committor_recurrence}
\begin{aligned}
(Q q_d)_i &= 0 \quad  \text{if $0< i < N$} \;, \\
 (q_d)_0 &=0 \;, \quad (q_d)_N =1 \;.
\end{aligned}
\end{equation}
The following proposition gives a formula for $q_d$ in terms of the reaction rates and invariant density $\nu_d$ of the approximation.

\begin{prop}[Formula for Exit Probability]  \label{prop:committor}
The committor function which solves \eqref{committor_recurrence} can be written as: \begin{equation} \label{eq:committor_1d}
(q_d)_i = \begin{dcases}
Z \sum_{j=0}^{i-1}  \frac{(\nu_d)_1 Q_{1,0}}{(\nu_d)_{j+1} Q_{j+1,j}} &  \text{if $0< i \le N$} \\
\vphantom{\sum_{j=0}^{i-1}} 0 & \text{if $i=0$} 
\end{dcases}
\end{equation} where $Z$ is a constant selected so that $(q_d)_N = 1$.  
\end{prop}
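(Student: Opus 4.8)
The plan is to reduce the second-order recurrence $(Qq_d)_i = 0$ to a first-order recurrence for the discrete gradient of $q_d$, solve that explicitly by a telescoping product, sum, and finally rewrite the resulting product of rate ratios in terms of the invariant density $\nu_d$ using the $\nu_d$-symmetry established in Proposition~\ref{prop:nu_symmetry_1d}.

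First I would introduce the discrete gradient $D_i = (q_d)_{i+1} - (q_d)_i$. Writing out $(Qq_d)_i = Q_{i,i+1}(q_d)_{i+1} - (Q_{i,i+1}+Q_{i,i-1})(q_d)_i + Q_{i,i-1}(q_d)_{i-1}$ and grouping the differences shows that, for $0<i<N$, the condition $(Qq_d)_i=0$ in \eqref{committor_recurrence} is equivalent to the flux-balance relation $Q_{i,i+1}D_i = Q_{i,i-1}D_{i-1}$. This is a first-order linear recurrence in $D_i$, whose solution is $D_i = D_0 \prod_{k=1}^i \frac{Q_{k,k-1}}{Q_{k,k+1}}$, with the empty product ($i=0$) equal to $1$. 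The left boundary condition $(q_d)_0 = 0$ gives $D_0 = (q_d)_1$, and summing the gradient telescopically yields $(q_d)_i = \sum_{j=0}^{i-1} D_j = D_0 \sum_{j=0}^{i-1} \prod_{k=1}^{j}\frac{Q_{k,k-1}}{Q_{k,k+1}}$ for $0 < i \le N$.

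The key step is to identify the telescoping product with the invariant-density expression appearing in \eqref{eq:committor_1d}. By Proposition~\ref{prop:nu_symmetry_1d} the invariant density satisfies the detailed-balance identity $(\nu_d)_k Q_{k,k+1} = (\nu_d)_{k+1}Q_{k+1,k}$ for every $k$. I would apply this twice, at indices $k-1$ and $k$, to rewrite each factor as $\frac{Q_{k,k-1}}{Q_{k,k+1}} = \frac{(\nu_d)_{k-1}Q_{k-1,k}}{(\nu_d)_{k+1}Q_{k+1,k}}$. Introducing the forward flux $g_k := (\nu_d)_k Q_{k,k+1}$, detailed balance says precisely $g_k = (\nu_d)_{k+1}Q_{k+1,k}$, so the factor's numerator is $g_{k-1}$ and its denominator is $g_k$; the product therefore collapses to $\frac{g_0}{g_j}$. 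Re-expressing the endpoints once more via detailed balance, $g_0 = (\nu_d)_1 Q_{1,0}$ and $g_j = (\nu_d)_{j+1}Q_{j+1,j}$, gives $\prod_{k=1}^{j}\frac{Q_{k,k-1}}{Q_{k,k+1}} = \frac{(\nu_d)_1 Q_{1,0}}{(\nu_d)_{j+1}Q_{j+1,j}}$. Substituting this back produces $(q_d)_i = D_0 \sum_{j=0}^{i-1}\frac{(\nu_d)_1 Q_{1,0}}{(\nu_d)_{j+1}Q_{j+1,j}}$, and renaming $Z := D_0$, determined by the right boundary condition $(q_d)_N = 1$, is exactly formula~\eqref{eq:committor_1d}.

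The main obstacle is the bookkeeping in the telescoping step: one must track carefully which index of $\nu_d$ and which directional rate appear in each factor, so that the product genuinely collapses to a ratio of two boundary fluxes. The clean cancellation hinges entirely on the detailed-balance identity $g_k = (\nu_d)_{k+1}Q_{k+1,k}$, so verifying the $j=0$ empty-product case and the endpoint re-indexing is where I expect to spend the most care. Everything else is routine once the flux-balance form $Q_{i,i+1}D_i = Q_{i,i-1}D_{i-1}$ is in hand.
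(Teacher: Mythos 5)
Your proposal is correct and follows essentially the same route as the paper's proof: both reduce $(Qq_d)_i=0$ to the flux-balance relation $Q_{i,i+1}D_i = Q_{i,i-1}D_{i-1}$, telescope the resulting first-order recurrence, and invoke the $\nu_d$-symmetry of Proposition~\ref{prop:nu_symmetry_1d} to collapse the product of rate ratios into the boundary-flux quotient $(\nu_d)_1 Q_{1,0}/\bigl((\nu_d)_{j+1}Q_{j+1,j}\bigr)$. The only difference is cosmetic bookkeeping: the paper multiplies the recurrence by $(\nu_d)_i$ and sums by parts to obtain constancy of the flux $(\nu_d)_{k+1}Q_{k+1,k}D_k$ directly (working with an auxiliary solution seeded at $(\tilde q_d)_1=1$ and normalizing at the end), whereas you solve the product first and then apply detailed balance, fixing $Z=D_0$ by the right boundary condition.
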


\begin{proof}
Let $\tilde q_d$ solve the recurrence relation in \eqref{committor_recurrence}, which implies that \begin{equation} \label{tilde_s_recurrence}
Q_{i,i+1} ((\tilde q_d)_{i+1} - (\tilde q_d)_i) =  Q_{i,i-1} ( (\tilde q_d)_i -(\tilde q_d)_{i-1} )     \quad \text{(for $i \ge 1$)}
\end{equation} with seed values $(\tilde q_d)_0 = 0$ and $(\tilde q_d)_1 = 1$.   Then $q_d = \tilde q_d / (\tilde q_d)_N$ solves the same recurrence relation with the desired boundary conditions: $( q_d)_0=0$ and $( q_d)_N = 1$. Hence, this proof derives a solution to \eqref{tilde_s_recurrence}.  Multiply \eqref{tilde_s_recurrence} by $(\nu_d)_i$ and use $\nu_d$-symmetry to obtain: \[
(\nu_d)_{i+1} Q_{i+1,i} ((\tilde q_d)_{i+1} - (\tilde q_d)_i) = (\nu_d)_i Q_{i,i-1} ( (\tilde q_d)_i -(\tilde q_d)_{i-1} ) 
\]  Sum these equations over the index $i$ from $1$ to any $k \ge 1$ and use  summation by parts to obtain: \begin{gather*}
 \sum_{i=1}^k (\nu_d)_{i+1} Q_{i+1, i} ((\tilde q_d)_{i+1} - (\tilde q_d)_i ) = \sum_{i=1}^k  (\nu_d)_i Q_{i,i-1} ((\tilde q_d)_i - (\tilde q_d)_{i-1} ) \\
 \sum_{i=2}^{k+1} (\nu_d)_{i} Q_{i,i-1} ((\tilde q_d)_i - (\tilde q_d)_{i-1}) =  \sum_{i=1}^k  (\nu_d)_i Q_{i,i-1} ((\tilde q_d)_i - (\tilde q_d)_{i-1} )   \\
\vphantom{ \sum_{i=2}^{k+1} } (\nu_d)_{k+1} Q_{k+1,k} ((\tilde q_d)_{k+1} -(\tilde q_d)_k ) =  (\nu_d)_1 Q_{1,0} ((\tilde q_d)_1 - (\tilde q_d)_0) \;.
\end{gather*}
Combine this result with a telescoping sum to obtain: \begin{align*}
(\tilde q_d)_{k+1} &= \vphantom{ \sum_{j=0}^k} (\tilde q_d)_{k+1} - (\tilde q_d)_0 + (\tilde q_d)_0 \\
& = \sum_{j=0}^k ((\tilde q_d)_{j+1} -(\tilde q_d)_j ) + (\tilde q_d)_0 \\
& = \sum_{j=0}^k  \frac{ (\nu_d)_1 Q_{1,0}}{ (\nu_d)_{j+1} Q_{j+1,j} } ((\tilde q_d)_1 -(\tilde q_d)_0) + (\tilde q_d)_0  \quad \text{for $k \ge 0$} \;.
\end{align*} 
Substituting the seed values $(\tilde q_d)_0=0$ and $(\tilde q_d)_1=1$ gives the solution to \eqref{tilde_s_recurrence}.
The desired formula is obtained by setting $q_d = \tilde q_d / (\tilde q_d)_N$ or $Z=1 / (\tilde q_d)_N$ in \eqref{eq:committor_1d}.
\end{proof}

\section{Mean First Passage Time} \label{sec:mfpt_1D}

A similar recurrence relation can be derived for the mean first passage time (MFPT) of the approximation from any point in $[x_L, x_R]$ to $x_L$ or $x_R$.  
Define $\tau$ as \[
\tau = \min \{ t \ge 0 ~|~ X(t) \in \{x_L, x_R\} \;,~~ X(0) = x_i \} \;.
\]  
Let $u_d = \{ (u_d)_i \}_{i=0}^N$ be the MFPT, which satisfies $u_d(x_i) = \Ex_{x_i}(\tau)$ and, \begin{equation} \label{mfpt_recurrence}
\begin{aligned}
(Q u_d)_i &= -1 \quad  \text{if $0< i < N$} \;, \\
u_d(x_L) &= u_d(x_R) = 0
\end{aligned}
\end{equation}   
The following proposition gives a formula for $u_d$ in terms of the reaction rates and invariant density $\nu_d$ of the approximation.

\begin{prop}[Formula for MFPT]  \label{prop:mfpt}
The MFPT which solves \eqref{mfpt_recurrence} can be written as: \begin{equation}
(u_d)_i = \begin{dcases}
Z \sum_{j=0}^{i-1}  \frac{(\nu_d)_1 Q_{1,0}}{(\nu_d)_{j+1} Q_{j+1,j}} - \sum_{j=0}^k \sum_{i=1}^j \frac{(\nu_d)_i}{(\nu_d)_{j+1} Q_{j+1,j}}&  \text{if $0< i < N$} \\
\vphantom{\sum_{j=0}^{i-1}}  0 & \text{if $i=N$ or $i=0$} 
\end{dcases}
\end{equation} where $Z$ is a constant selected so that $(u_d)_N = 0$.
\end{prop}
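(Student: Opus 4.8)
The plan is to mirror the structure of Proposition~\ref{prop:committor} almost verbatim, since the MFPT recurrence \eqref{mfpt_recurrence} differs from the committor recurrence \eqref{committor_recurrence} only by the inhomogeneous term $-1$ on the right-hand side. First I would work with an auxiliary solution $\tilde u_d$ of the same recurrence but with convenient seed values, namely $(\tilde u_d)_0 = 0$ and $(\tilde u_d)_1$ free, and then rescale/shift to impose the two-point boundary conditions $u_d(x_L) = u_d(x_R) = 0$. Writing the recurrence $(Q u_d)_i = -1$ out using the tridiagonal form \eqref{eq:tridiagonal_generator} gives
\begin{equation*}
Q_{i,i+1}\left((\tilde u_d)_{i+1} - (\tilde u_d)_i\right) = Q_{i,i-1}\left((\tilde u_d)_i - (\tilde u_d)_{i-1}\right) - 1 \quad \text{for } 1 \le i < N,
\end{equation*}
which is exactly the homogeneous relation \eqref{tilde_s_recurrence} perturbed by the constant $-1$.

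Next I would repeat the key algebraic manipulation from the committor proof: multiply through by the invariant density $(\nu_d)_i$ and invoke the $\nu_d$-symmetry identity \eqref{rho_symmetry} from Proposition~\ref{prop:nu_symmetry_1d} to convert the equation into telescoping form. Summing the resulting identities from $i=1$ to $k$ and applying summation by parts, the homogeneous part collapses just as before to $(\nu_d)_{k+1} Q_{k+1,k}\left((\tilde u_d)_{k+1} - (\tilde u_d)_k\right)$ plus a boundary term, but now the accumulated $-1$ contributions produce the extra double sum $\sum_{i=1}^j (\nu_d)_i$ appearing in the claimed formula. Solving for the first differences $(\tilde u_d)_{k+1} - (\tilde u_d)_k$ and then reconstructing $(\tilde u_d)_{k+1}$ via a telescoping sum from the seed value yields an expression of the form
\begin{equation*}
(\tilde u_d)_{k+1} = \sum_{j=0}^k \frac{(\nu_d)_1 Q_{1,0}}{(\nu_d)_{j+1} Q_{j+1,j}} - \sum_{j=0}^k \sum_{i=1}^j \frac{(\nu_d)_i}{(\nu_d)_{j+1} Q_{j+1,j}},
\end{equation*}
which is the homogeneous committor-type term minus the inhomogeneous correction.

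Finally I would enforce the boundary conditions. The seed choice already gives $(\tilde u_d)_0 = 0$, matching $u_d(x_0)=0$; to also achieve $u_d(x_N) = 0$ I would set $u_d = \tilde u_d + Z\, q_d$ where $q_d$ is the committor from Proposition~\ref{prop:committor} (a harmonic function vanishing at the left endpoint), and choose the constant $Z$ so that the value at $i=N$ cancels. This is why the stated formula reuses the committor summand $Z\sum_{j=0}^{i-1} (\nu_d)_1 Q_{1,0}/\left((\nu_d)_{j+1} Q_{j+1,j}\right)$: adding a multiple of a harmonic function preserves $(Q u_d)_i = -1$ in the interior while adjusting the right boundary value to zero. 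The main obstacle, such as it is, will be bookkeeping in the summation-by-parts step — correctly tracking the index shifts so that the inner sum $\sum_{i=1}^j (\nu_d)_i$ emerges with the right limits, and verifying that the constant $Z$ is uniquely determined (which requires $(\tilde q_d)_N \neq 0$, guaranteed by positivity of all jump rates as in condition (C1)). Everything else is a direct transcription of the committor argument, so no genuinely new idea is needed beyond the superposition of a particular solution with the harmonic committor.
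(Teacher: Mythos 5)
Your proposal is correct, and its computational core is identical to the paper's: multiply the recurrence \eqref{mfpt_recurrence} by $(\nu_d)_i$, use the $\nu_d$-symmetry \eqref{rho_symmetry} to put it in telescoping form, sum by parts to isolate $(\nu_d)_{k+1}Q_{k+1,k}\left((u_d)_{k+1}-(u_d)_k\right)$ together with the accumulated source term $\sum_{i=1}^k (\nu_d)_i$, and telescope back up from the left endpoint. The one structural difference is how the free constant is handled, and interestingly the paper calls it out explicitly: its proof ``is similar to the proof of Proposition~\ref{prop:committor}, \emph{except that we do not introduce an auxiliary solution}.'' The paper telescopes $u_d$ itself, carrying the unknown $(u_d)_1$ through the computation, and then solves the single linear equation $(u_d)_N=0$ for $(u_d)_1$, which yields the closed-form value $Z=(u_d)_1$ directly. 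You instead fix a particular solution $\tilde u_d$ with seeds $(\tilde u_d)_0=0$, $(\tilde u_d)_1=1$ and superpose a multiple of the committor, $u_d=\tilde u_d+Z\,q_d$, using that $q_d$ is harmonic and vanishes at $x_0$ while $(q_d)_N=1>0$ guarantees $Z$ is uniquely determined. The two routes are linearly equivalent (your $Z$ absorbs the coefficient $1$ from the particular solution plus the committor multiple, both being multiples of the same sum $\sum_{j=0}^{i-1}(\nu_d)_1 Q_{1,0}/\left((\nu_d)_{j+1}Q_{j+1,j}\right)$); your superposition viewpoint makes the well-posedness of the boundary-value adjustment transparent and reuses Proposition~\ref{prop:committor} as a black box, while the paper's direct route is one step shorter and produces the explicit quotient formula for $Z$. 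As a side note, your derivation also implicitly corrects the index slip in the stated formula, where the outer sum of the double sum should run $j=0,\dots,i-1$ (the statement's $k$ is a typo carried over from the proof's running index) and the inner summation index should not collide with $i$.
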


\begin{proof}
This proof is similar to the proof of Proposition~\ref{prop:committor}, except that we do not introduce an auxiliary solution.   Multiply \eqref{mfpt_recurrence} by $(\nu_d)_i$ and use $\nu_d$-symmetry to obtain: \[
(\nu_d)_{i+1} Q_{i+1,i} ((u_d u_d)_{i+1} - (u_d)_i) - (\nu_d)_i Q_{i,i-1} ( (u_d)_i -(u_d)_{i-1} ) = -( \nu_d)_i
\]  Sum these equations over the index $i$ from $1$ to any $k \ge 1$ and use summation by parts to obtain: \begin{gather*}
 \sum_{i=1}^k (\nu_d)_{i+1} Q_{i+1, i} ((u_d)_{i+1} - (u_d)_i ) = \sum_{i=1}^k  (\nu_d)_i Q_{i,i-1} ((u_d)_i - (u_d)_{i-1} )  - \sum_{i=1}^k (\nu_d)_i\\\
 \sum_{i=2}^{k+1} (\nu_d)_{i} Q_{i,i-1} ((u_d)_i - (u_d)_{i-1}) =  \sum_{i=1}^k  (\nu_d)_i Q_{i,i-1} ((u_d)_i - (u_d)_{i-1} ) - \sum_{i=1}^k (\nu_d)_i\  \\
 \vphantom{\sum_{i=2}^{k+1}} (\nu_d)_{k+1} Q_{k+1,k} ((u_d)_{k+1} -(u_d)_k ) =  (\nu_d)_1 Q_{1,0} ((u_d)_1 - (u_d)_0) - \sum_{i=1}^k (\nu_d)_i\;.
\end{gather*}
Combine this result with a telescoping sum to obtain: \begin{align*}
(u_d)_{k+1} &= \vphantom{\sum_{i=2}^{k+1}} (u_d)_{k+1} - (u_d)_0 + (u_d)_0 \\
& = \sum_{j=0}^k ((u_d)_{j+1} -(u_d)_j ) + (u_d)_0 \\
& =(u_d)_1  \sum_{j=0}^k  \frac{ (\nu_d)_1 Q_{1,0}}{ (\nu_d)_{j+1} Q_{j+1,j} }  - \sum_{j=0}^k \sum_{i=1}^j \frac{(\nu_d)_i}{ (\nu_d)_{j+1} Q_{j+1,j} }
\end{align*} 
To enforce the boundary condition at $x_N = b$ we solve \[
(u_d)_N = (u_d)_1  \sum_{j=0}^{N-1}  \frac{ (\nu_d)_1 Q_{1,0}}{ (\nu_d)_{j+1} Q_{j+1,j} }  - \sum_{j=0}^{N-1} \sum_{i=1}^j \frac{(\nu_d)_i}{ (\nu_d)_{j+1} Q_{j+1,j} } = 0 
\] for $(u_d)_1$ to obtain \[
(u_d)_1= \frac{ \sum_{j=0}^{N-1} \sum_{i=1}^j \dfrac{(\nu_d)_i}{ (\nu_d)_{j+1} Q_{j+1,j} } }{  \sum_{j=0}^{N-1}  \dfrac{ (\nu_d)_1 Q_{1,0}}{ (\nu_d)_{j+1} Q_{j+1,j} }   } 
\]
which gives the desired formula with $Z=(u_d)_1$.
\end{proof}

By construction, the mean holding time of the generator $Q_e$ in \eqref{eq:Qe_1D} at every grid point $x_i$ is equal to the mean first passage time of $Y(t)$ to $(x_{i-1}, x_{i+1})^c$ conditional on $Y(0) = x_i$.   Here we show that this property implies that the Markov process associated to $Q_e$ also preserves the mean first passage time of $Y(t)$ to $(x_k, x_j)^c$ for any $x_k, x_j \in S$ with $x_k < Y(0) = x_i < x_j$.  Abstractly speaking, this result manifests the additive property of mean first passage times.

\begin{lemma}
Let $Q_e$ denote the generator in \eqref{eq:Qe_1D} that uses exact jump rates and transition probabilities.  Let $v(x)$ be the exact solution to: \begin{equation} \label{eq:bvp_kj}
L v(x) = - 1 \quad \text{and} \quad v(x_k) = v(x_j ) = 0 
\end{equation} where $L$ is the generator of the SDE and $x_k, x_j \in S$ with $x_k < x_j$.  Then $v(x)$ satisfies: \[
Q_e v(x_i) = -1 \;,
\] for all grid points $x_i \in (x_k, x_j)$.
\end{lemma}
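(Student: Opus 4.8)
The plan is to exploit the defining property of $Q_e$—namely that its mean holding time at each grid point $x_i$ equals the exact MFPT of $Y(t)$ to $(x_{i-1},x_{i+1})^c$, and its transition probabilities equal the exact exit probabilities from $(x_{i-1},x_{i+1})$—to reduce the claim to a local identity that the true solution $v(x)$ of \eqref{eq:bvp_kj} must satisfy at each interior grid point. The key observation is that $Q_e v(x_i)$ is a three-point expression built only from $v(x_{i-1})$, $v(x_i)$, $v(x_{i+1})$ and the local rates $(Q_e)_{i,i+1}$, $(Q_e)_{i,i-1}$, so whether it equals $-1$ depends only on how $v$ restricted to three neighboring points relates to the local exit statistics at $x_i$. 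Since these local statistics are exactly those of $Y(t)$, the natural strategy is to compare the discrete equation $Q_e v(x_i) = -1$ against the local continuous identity one gets by decomposing the first-passage problem at $x_i$.

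First I would write out $Q_e v(x_i)$ using \eqref{eq:Qe_1D}, giving
\begin{equation*}
Q_e v(x_i) = (Q_e)_{i,i+1}(v(x_{i+1}) - v(x_i)) + (Q_e)_{i,i-1}(v(x_{i-1}) - v(x_i)),
\end{equation*}
and substitute the explicit rates from \eqref{eq:intensity} and \eqref{eq:transition_probabilities}. This recasts the target identity $Q_e v(x_i) = -1$ as
\begin{equation*}
p_i^+ \, (v(x_{i+1}) - v(x_i)) + p_i^- \, (v(x_{i-1}) - v(x_i)) = - u(x_i),
\end{equation*}
where $p_i^{\pm}$ are the exact exit probabilities to the right/left from $x_i$ out of $(x_{i-1},x_{i+1})$ and $u(x_i)$ is the exact MFPT to $(x_{i-1},x_{i+1})^c$. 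The next step is to recognize that this is precisely a first-step (or optional-stopping/strong Markov) decomposition of $v$. Namely, starting from $x_i$, let $\tau_{i}$ be the first hitting time of $\{x_{i-1},x_{i+1}\}$; applying Dynkin's formula to $v$ with the generator $L$ and stopping at $\tau_i$ gives
\begin{equation*}
\Ex_{x_i} v(Y(\tau_i)) - v(x_i) = \Ex_{x_i}\!\int_0^{\tau_i} L v(Y(s))\, ds = -\,\Ex_{x_i}(\tau_i) = -u(x_i),
\end{equation*}
where I used $Lv = -1$ on the open interval $(x_k,x_j) \supset (x_{i-1},x_{i+1})$, which holds because $x_i$ is interior and the neighbors $x_{i\pm1}$ still lie in $[x_k,x_j]$. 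Since $Y(\tau_i) \in \{x_{i-1},x_{i+1}\}$ almost surely (one-dimensional continuity rules out overshoot), $\Ex_{x_i} v(Y(\tau_i)) = p_i^+ v(x_{i+1}) + p_i^- v(x_{i-1})$, and the decomposition reproduces exactly the discrete identity above.

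The main obstacle, and the step requiring genuine care, is justifying the decomposition $\Ex_{x_i} v(Y(\tau_i)) = p_i^+ v(x_{i+1}) + p_i^- v(x_{i-1})$ together with the validity of optional stopping: one must confirm that $v$ (the solution of \eqref{eq:bvp_kj} on the larger interval) is regular enough for Dynkin's formula, that $\tau_i$ has finite expectation so the stopped martingale argument applies, and that there is no boundary subtlety since $x_{i-1}$ or $x_{i+1}$ could coincide with $x_k$ or $x_j$ (where $v$ vanishes, which is harmless). A clean way to handle this is to invoke that $v$ solves a boundary value problem of the form \eqref{eq:mfpt_bvp} on a subinterval and that the exact rates defining $Q_e$ were constructed precisely from the solutions of \eqref{eq:committor_bvp} and \eqref{eq:mfpt_bvp} with $a = x_{i-1}$, $b = x_{i+1}$; so the identity is really a statement that the true MFPT $v$ is consistent with the true local exit statistics. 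I would therefore close the argument by citing the strong Markov property of $Y$ and the additivity of mean first passage times, noting that the only content of the lemma is that $Q_e$ reproduces these local statistics exactly, so $v$ automatically satisfies $Q_e v(x_i) = -1$ at every interior $x_i$.
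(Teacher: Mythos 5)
Your proof is correct, but it takes a genuinely different route from the paper's. The paper argues purely analytically: it writes down the explicit closed-form solutions of the two boundary value problems --- $v$ on $(x_k,x_j)$ as in \eqref{eq:mfpt_kj} and the local MFPT $u$ on $(x_{i-1},x_{i+1})$ as in \eqref{eq:mfpt_iip1}, both expressed through double integrals of $\nu(r)/(M(y)\nu(y))$ --- substitutes them into the three-point expression $Q_e v(x_i)$, and verifies by direct cancellation of the integral terms that the result is $-1$. You instead stop the diffusion at the first exit time $\tau_i$ of $(x_{i-1},x_{i+1})$ and apply Dynkin's formula together with the fact that continuous one-dimensional paths exit only through $\{x_{i-1},x_{i+1}\}$, so that $\Ex_{x_i} v(Y(\tau_i)) = p_i^+ v(x_{i+1}) + p_i^- v(x_{i-1})$ and $v(x_i) - \Ex_{x_i} v(Y(\tau_i)) = u(x_i)$, which is exactly the target identity after dividing by $u(x_i)$ (using $p_i^+ + p_i^- = 1$). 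Your route is shorter and more conceptual: it makes visible that the lemma is nothing but the additivity of mean first passage times under the strong Markov property, and it would generalize verbatim to any $v$ with $Lv = g$ on the larger interval. The price is the need to verify the optional-stopping hypotheses --- $v \in C^2$ up to the closed subinterval and $\Ex_{x_i}\tau_i < \infty$ --- which you correctly flag; both follow from ellipticity ($M>0$) and the regularity of the coefficients on the compact interval, and indeed from the explicit formula \eqref{eq:mfpt_kj} itself. The paper's computation buys self-containedness: it needs no stochastic-analysis machinery at all, since the closed-form solutions are already available in the tridiagonal setting, at the cost of heavier integral bookkeeping.
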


\begin{proof}
The unique solution to the boundary value problem \eqref{eq:bvp_kj} is: \begin{equation} \label{eq:mfpt_kj}
v(x) = -  \int_{x_k}^x \int_{x_k}^y \frac{\nu(r)}{M(y) \nu(y)} dr dy + K_1 \int_{x_k}^x  \frac{dy}{M(y) \nu(y)}  
\end{equation}
where $K_1$ is the following constant: \[
K_1 = \dfrac{  \int_{x_k}^{x_j} \int_{x_k}^y \dfrac{\nu(r)}{M(y) \nu(y)} dr dy}{ \int_{x_k}^{x_j}  \dfrac{dy}{M(y) \nu(y)}} \;.
\]
Apply $Q_e$ on $v(x)$ to obtain: \begin{align*}
 (Q_e v)_i &= \frac{z_i - z_{i-1}}{z_{i+1} - z_{i-1}} \frac{1}{u_i} v_{i+1} - \frac{v_i}{u_i} + \frac{z_{i+1} - z_{i}}{z_{i+1} - z_{i-1}} \frac{1}{u_i} v_{i-1} \\
&=  \frac{1}{u_i}  \frac{1}{z_{i+1} - z_{i-1}} \left[ (z_{i+1} - z_{i-1}) (v_{i-1} -v_i) + (z_i - z_{i-1}) (v_{i+1} - v_{i-1}) \right] \;. 
\end{align*}
To simplify this expression, recall that the exact mean first passage time to $(x_{i-1}, x_{i+1})^c$ is given by: \begin{equation} \label{eq:mfpt_iip1}
u(x) = -  \int_{x_{i-1}}^x \int_{x_{i-1}}^y \frac{\nu(r)}{M(y) \nu(y)} dr dy + \tilde K_1 \int_{x_{i-1}}^x  \frac{dy}{M(y) \nu(y)}  
\end{equation}
where $\tilde K_1$ is the following constant: \[
\tilde K_1 = \dfrac{  \displaystyle \int_{x_{i-1}}^{x_{i+1}} \int_{x_{i-1}}^y \dfrac{\nu(r)}{M(y) \nu(y)} dr dy}{\displaystyle  \int_{x_{i-1}}^{x_{i+1}}  \dfrac{dy}{M(y) \nu(y)}} \;.
\]
To be sure, $u(x)$ equals $v(x)$ when $x_k = x_{i-1}$ and $x_j = x_{i+1}$.  From \eqref{eq:mfpt_kj} and \eqref{eq:mfpt_iip1} it follows that:  \begin{align*}
  (Q_e v)_i &=  \frac{1}{u_i}  \frac{1}{z_{i+1} - z_{i-1}}  \left[ (z_{i+1} - z_{i-1}) \int_{x_{i-1}}^{x_i} \int_{x_k}^y \frac{\nu(r)}{M(y) \nu(y)} dr dy \right. \\
& \quad \quad \quad\quad\quad\quad\quad\quad\quad - \left. (z_i - z_{i-1})   \int_{x_{i-1}}^{x_{i+1}} \int_{x_k}^y \frac{\nu(r)}{M(y) \nu(y)} dr dy \right]  \\
 &= \frac{1}{u_i} \left[ - u_i + u_i +  \int_{x_{i-1}}^{x_i} \int_{x_k}^y \frac{\nu(r)}{M(y) \nu(y)} dr dy \right. \\
& \quad \quad\quad \left. - \frac{z_i - z_{i-1}}{z_{i+1} - z_{i-1}}   \int_{x_{i-1}}^{x_{i+1}} \int_{x_k}^y \frac{\nu(r)}{M(y) \nu(y)} dr dy \right]  \\
 &= \frac{1}{u_i} \left[ - u_i + u_i +  \int_{x_{i-1}}^{x_i} \int_{x_{i-1}}^y \frac{\nu(r)}{M(y) \nu(y)} dr dy \right. \\
& \quad \quad\quad \left. - \left( \frac{ \displaystyle \int_{x_{i-1}}^{x_{i+1}} \int_{x_{i-1}}^y \frac{\nu(r)}{M(y) \nu(y)} dr dy}{\displaystyle \int_{x_{i-1}}^{x_{i+1}}  \dfrac{dy}{M(y) \nu(y)} }   \right) \;  \int_{x_{i-1}}^{x_i}  \frac{dy}{M(y) \nu(y)}  \right]  \\
&= -1
\end{align*}
as required.   Thus, the truncation error of $Q_e$ with respect to the exact MFPT $v(x)$ is zero.
\end{proof}

\chapter{Conclusion} \label{chap:conclusion}

This paper presented methods to simulate SDEs that permit fixing the spatial step size of the approximation.  This feature of the approximation stabilized the method for both finite and long-time simulations.  To construct solvers with this capability, we considered spatial discretizations of the infinitesimal generator of the SDE, which constitutes a shift in perspective from the standard approach based on time-discretization of realizations of the SDE.  In order to obtain scalable approximations, we required that these spatial discretizations satisfy a realizability condition.  Intuitively speaking, this condition is a non-negativity requirement on the weights used in the finite difference approximation of partial derivatives, and implied that the spatial discretizations generate a Markov jump process.  Realizations of this process can be produced without time discretization error using the SSA algorithm.  Roughly speaking, the SSA produces a simulation of a continuous-time random walk with a user-defined jump size, jump directions given by the columns of the (local) noise coefficient matrix, and a mean holding time that is determined by the generator.   Since we used a Monte-Carlo method to simulate the approximation, our approach applies to problems that practically speaking are beyond the reach of standard numerical PDE methods.  Moreover, our approach did not require discretizing any part of the domain of the SDE problem.  Thus, we were able to construct realizable discretizations for SDE problems with general domains using simple upwinded and central difference methods that are not necessarily tailored to a grid, though other types of discretizations (like finite volume methods) are also possible.  These realizable discretizations were tested on a variety of SDE problems with additive and multiplicative noise.

In the numerical tests, we found that the mean holding time of the approximation in any state adjusts according to the stiffness of the SDE coefficients.  In particular, the mean holding time was inversely proportional to the magnitude of drift.  We also found that once a bound on the spatial step size is set, the mean holding time at any state is determined by the discretization of the infinitesimal generator.  As we illustrated in the Cox-Ingersoll-Ross and Lotka-Volterra SDE problems, this spatial step size can be chosen adaptively according to some measure of the local Lipschitz constant of the drift field of the SDE.  These examples also showed that the method does not produce moves outside the domain of definition of the SDE.  This property is a consequence of the spatial updates being defined to lie in the domain of the SDE.  Numerical tests on non-self-adjoint, planar diffusions illustrated that the integrator is stable and accurate for SDEs with locally Lipschitz continuous drift fields, unknown stationary distributions, and irregular coefficients (internal discontinuities or singularities).  We also considered a Brownian dynamics simulation of a cluster of 13 particles immersed in an unbounded solvent.  The solvent induced interactions between particles, which were long-range and modeled using multiplicative noise.  This example demonstrated that the spatial step size can be set according to a characteristic length scale in the SDE problem -- namely, the Lennard-Jones radius -- in order to avoid exploding trajectories.

The first part of the theory analyzed the properties of a second-order accurate, realizable discretization on a gridded state space with fixed spatial step size.  The main issues in this analysis were that the state space may not be finite-dimensional, and the linear operator associated to the approximation may not be bounded with respect to bounded functions.  To deal with the unbounded nature of the state space and the generator, we used probabilistic tools to show that the approximation is stable.   In particular, we showed that under a weak dissipativity condition on the drift field, a sharp stochastic Lyapunov function was inherited by the approximation.   A key result in this analysis was that the infinitesimal drift condition we derived is uniform in the spatial step size of the approximation.   We then used Harris theorem to prove that the approximation is geometrically ergodic with respect to a unique invariant probability measure.  The stochastic Lyapunov function of the approximation was also used to solve a martingale problem associated to the approximation.  Using a global semimartingale representation of realizations, we analyzed some strong properties of the approximation, including the complexity of the approximation as a function of the spatial step size. To estimate the global error of the approximation, we used a continuous-time analog of the Talay-Tubaro expansion.  This analog was derived using tools from the theory of semigroups produced by linear operators on Banach spaces.  Specifically, we derived an expression for the global error using a variation of constants formula.  An analysis of this formula revealed that the accuracy of the approximation at finite and infinite-time is determined by the accuracy of the generator of the approximation.  

Variable step sizes were considered in the second part of the theory.  We analyzed a second-order accurate, realizable discretization that uses randomly rescaled spatial steps.  The main issue in this analysis turned out to be lack of irreducibility in the standard sense, and also lack of a strong Feller property.  These issues were not surprising since the approximation is a pure jump process with a countable state space, even though it may not be confined to a grid.  Therefore the transition probabilities of the approximation were not absolutely continuous with respect to Lebesgue measure, and the noise driving the approximation did not have a regularizing effect.  That said, we were still able to derive a sharp stochastic Lyapunov function, prove finite-time weak accuracy, prove existence of an invariant probability measure, and prove accuracy with respect to equilibrium expectations.   The latter results required that the semigroup of the approximation be Feller.  To obtain this property, we mollified the generator of the approximation in such a way that its jump rates were bounded and the approximation still had a sharp stochastic Lyapunov function.  

Finally, we analyzed the one-dimensional case where the space-discrete generators admit an infinite, tridiagonal matrix representation.  By solving tridiagonal systems of equations, we verified the $n$-dimensional theory on gridded state spaces, and obtained explicit expressions for the stationary density, mean first passage time, and exit probability of the approximation.  These formulas were used in the numerical tests to validate the approximations.

\backmatter
\bibliographystyle{amsplain}
\bibliography{nawaf}



\end{document}